        \title{On the Farrell--Jones Conjecture for Waldhausen's $A$--theory}
          \author[N. Enkelmann]{Nils-Edvin Enkelmann}
          \address{Rheinische Wilhelms-Universit\"at Bonn\\
               Mathematisches Institut\\
               Endenicher Allee 62, 53115 Bonn, Germany}      
       \email{nils.edvin@gmail.com}
       \urladdr{https://www.hcm.uni-bonn.de/people/profile/nils-edvin-enkelmann/}
       \author[W. L\"uck]{Wolfgang L\"uck}
          \address{Rheinische Wilhelms-Universit\"at Bonn\\
               Mathematisches Institut\\
               Endenicher Allee 62, 53115 Bonn, Germany}      
       \email{wolfgang.lueck@him.uni-bonn.de}
      \urladdr{https://www.him.uni-bonn.de/lueck/}
           \author[M. Pieper]{Malte Pieper}
          \address{Rheinische Wilhelms-Universit\"at Bonn\\
               Mathematisches Institut\\
               Endenicher Allee 62, 53115 Bonn, Germany}      
       \email{mpieper@uni-bonn.de}
       \urladdr{https://www.hcm.uni-bonn.de/people/profile/malte-pieper/}
      \author[M. Ullmann]{Mark Ullmann}
          \address{FU Berlin, Insitut f\"ur Mathematik, Arnimallee 7, 14195 Berlin, Germany}      
       \email{mark.ullmann@math.fu-berlin.de}
      \urladdr{http://www.mi.fu-berlin.de/math/groups/top/members/postdoc/ullmann.html}
     \author[C. Winges]{Christoph Winges}
          \address{Hausdorff Research Institute for Mathematics (HIM), Poppelsdorfer Allee 45, 53115 Bonn, Germany} 
       \email{christoph.winges@wwu.de}
      \urladdr{http://wwwmath.uni-muenster.de/reine/u/christoph.winges/}
\keywords{Farrell--Jones Conjecture, aspherical closed manifolds and their automorphism groups, A-theory, Whitehead spaces, spaces of 
stable pseudo-isotopies,  spaces of stable $h$-cobordisms}
    \subjclass[2010]{19D10, 57Q10, 57Q60}
\DeclareMathAlphabet\EuR{U}{eur}{m}{n}
\SetMathAlphabet\EuR{bold}{U}{eur}{b}{n}
\newcommand{\calall}{{\mathcal A} {\mathcal L}{\mathcal L}}
\newcommand{\calfj}{{\mathcal F}\!{\mathcal J}}
\newcommand{\calfin}{{\mathcal F}{\mathcal I}{\mathcal N}}
\newcommand{\calvcyc}{{\mathcal V}{\mathcal C}{\mathcal Y}}
\newcommand{\calc}{{\mathcal C}}
\newcommand{\cale}{{\mathcal E}}
\newcommand{\calf}{{\mathcal F}}
\newcommand{\calg}{{\mathcal G}}
\newcommand{\calh}{{\mathcal H}}
\newcommand{\cals}{{\mathcal S}}
\newcommand{\calt}{{\mathcal T}}
\newcommand{\cA}{\mathcal{A}}
\newcommand{\cC}{\mathcal{C}}
\newcommand{\cD}{\mathcal{D}}
\newcommand{\cF}{\mathcal{F}}
\newcommand{\cH}{\mathcal{H}}
\newcommand{\cR}{\mathcal{R}}
\newcommand{\cS}{\mathcal{S}}
\newcommand{\IF}{{\mathbb F}}
\newcommand{\IH}{{\mathbb H}}
\newcommand{\IQ}{{\mathbb Q}}
\newcommand{\IZ}{{\mathbb Z}}
\newcommand{\FF}{\mathbb{F}}
\newcommand{\JJ}{\mathbb{J}}
\newcommand{\KK}{\mathbb{K}}
\newcommand{\NN}{\mathbb{N}}
\newcommand{\RR}{\mathbb{R}}
\newcommand{\Ss}{\mathbb{S}}
\newcommand{\ZZ}{\mathbb{Z}}
\newcommand{\bfA}{{\mathbf A}}
\newcommand{\bfE}{{\mathbf E}}
\newcommand{\bfH}{{\mathbf H}}
\newcommand{\bfi}{{\mathbf i}}
\newcommand{\bfK}{{\mathbf K}}  
\newcommand{\bfL}{{\mathbf L}}
\newcommand{\bfNA}{{\mathbf{NA}}}
\newcommand{\bfP}{{\mathbf P}} 
\newcommand{\bfp}{{\mathbf p}}
\newcommand{\bfS}{{\mathbf S}}
\newcommand{\bfT}{{\mathbf T}}
\newcommand{\bfU}{{\mathbf U}}
\newcommand{\bfWh}{{\mathbf W}{\mathbf h}}
\newcommand{\bfWhs}{{\mathbf W}{\mathbf h}{\mathbf s}}
\newcommand{\curs}{\EuR}
\newcommand{\CWCOMPLEXES}{\curs{CW}\text{-}\curs{COMPLEXES}}
\newcommand{\GCWCOMPLEXES}{G\text{-}\curs{CW}\text{-}\curs{COMPLEXES}}
\newcommand{\GROUPOIDS}{\curs{GROUPOIDS}}
\newcommand{\Or}{\curs{Or}}
\newcommand{\SPACES}{\curs{SPACES}}
\newcommand{\SPECTRA}{\curs{SPECTRA}}
\newcommand{\fC}{\mathfrak{C}}
\newcommand{\fS}{\mathfrak{S}}
\newcommand{\fZ}{\mathfrak{Z}}
\newcommand{\CAT}{\operatorname{CAT}}
\newcommand{\zentrum}{\operatorname{center}}
\newcommand{\colim}{\operatorname{colim}}
\newcommand{\DIFF}{\operatorname{DIFF}}
\newcommand{\G}{\operatorname{G}}
\newcommand{\hocolim}{\operatorname{hocolim}}
\newcommand{\id}{\operatorname{id}}
\newcommand{\ind}{\operatorname{ind}}
\newcommand{\map}{\operatorname{map}}
\newcommand{\Out}{\operatorname{Out}}
\newcommand{\PL}{\operatorname{PL}}
\newcommand{\pr}{\operatorname{pr}}
\newcommand{\supp}{\operatorname{supp}}
\newcommand{\TOP}{\operatorname{TOP}}
\DeclareMathOperator{\diam}{diam}
\DeclareMathOperator{\trans}{trans}
\DeclareMathOperator{\incl}{incl}
\numberwithin{equation}{section}
\theoremstyle{plain}
\newtheorem{theorem}[equation]{Theorem}
\newtheorem{thm}[equation]{Theorem}
\newtheorem{lemma}[equation]{Lemma}
\newtheorem{lem}[equation]{Lemma}
\newtheorem{corollary}[equation]{Corollary}
\newtheorem{cor}[equation]{Corollary}
\newtheorem{prop}[equation]{Proposition}
\newtheorem{conjecture}[equation]{Conjecture}
\newtheorem*{theorem*}{Theorem}
\newtheorem*{mtheorem*}{Main Theorem}
\theoremstyle{definition}
\newtheorem{definition}[equation]{Definition}
\newtheorem{dfn}[equation]{Definition}
\newtheorem{example}[equation]{Example}
\newtheorem{remark}[equation]{Remark}
\newtheorem{rem}[equation]{Remark}
\newtheorem{notation}[equation]{Notation}
\theoremstyle{remark}
\newtheorem*{summary*}{Summary}
\newcommand{\pt}{\{\bullet\}}
\renewcommand{\epsilon}{\varepsilon}
\renewcommand{\theta}{\vartheta}
\renewcommand{\phi}{\varphi}
\renewcommand{\le}{\leq}
\renewcommand{\subset}{\subseteq}
\newcommand{\abs}[1]{\lvert #1 \rvert}
\newcommand{\norm}[1]{\lVert #1 \rVert}
\newcommand{\cells}{\diamond\,}
\newcommand{\cellsPlus}{\diamond_+\,}
\newcommand{\gen}[1]{\langle #1 \rangle}
\newcommand{\sing}[2]{\cS^{#1}_{#2}}
\newcommand{\skel}[2]{\mathrm{sk}_{#1}(#2)}
\newcommand{\EGF}[2]{E_{#2}(#1)}                   
\newcommand{\higherlim}[3]{{\setbox1=\hbox{\rm lim}
        \setbox2=\hbox to \wd1{\leftarrowfill} \ht2=0pt \dp2=-1pt
        \mathop{\vtop{\baselineskip=5pt\box1\box2}}
        _{#1}}^{#2}#3}
\begin{document}

\begin{abstract}
  We prove the Farrell--Jones Conjecture for (non-connective) $A$--theory with coefficients
  and finite wreath products for hyperbolic groups, $\mathrm{CAT}(0)$-groups, cocompact lattices in
  almost connected Lie groups and fundamental groups of manifolds of dimension less or
  equal to three.  Moreover, we prove inheritance properties such as passing to subgroups,
  colimits of direct systems of groups, finite direct products and finite free
  products. These results hold also for Whitehead spectra and spectra of stable pseudo-isotopies
  in the topological, piecewise linear and smooth category.  
\end{abstract}

\maketitle

\newlength{\origlabelwidth} \setlength\origlabelwidth\labelwidth


\section{Introduction}
\label{sec:Introduction}

In this paper we investigate the Farrell--Jones Conjecture for Waldhausen's $A$--theory. Our main result is

\begin{theorem}[Main result]
\label{the:main_result}
Let $\calfj_A$ be the class of groups for which the Farrell--Jones~Conjecture \ref{con:FJC_for_A-theory_with_c_and_fwp}  
for (non-connective) $A$--theory with coefficients and finite wreath products holds.

\begin{enumerate}
\item \label{the:main_result:groups} 
The class  $\calfj_A$ contains the following groups:\

\begin{itemize}

\item Hyperbolic groups

\item  $\mathrm{CAT}(0)$--groups

\item Virtually poly-cyclic groups

\item  Cocompact lattices in almost connected Lie groups

\item Fundamental groups of (not necessarily compact) $d$--dimensional manifolds (possibly with boundary) for $d \le 3$

\end{itemize}

\item \label{the:main_result:inheritance} The class $\calfj_A$ has the following inheritance properties:

\begin{itemize}

\item If $G_1$ and $G_2$ belong to
      $\calfj_A$, then $G_1 \times G_2$ and $G_1 \ast G_2$ belong to $\calfj_A$.

\item If $H$ is a subgroup of $G$ and
      $G \in \calfj_A$, then $H \in \calfj_A$.

    \item Let $1 \to K \to G \xrightarrow{p} Q \to 1$ be an extension of groups. Suppose
      that $K$, $Q$ and $p^{-1}(C)$ for every infinite cyclic subgroup $C \subseteq Q$ belong
      to $\calfj_A$. Then $G$ belongs to $\calfj_A$.

\item If $H \subseteq G$ is a subgroup of $G$  with $[G:H] < \infty$ and $H \in \calfj_A$, then
          $G \in \calfj_A$.

 \item Let $\{G_i \mid i\in I\}$ be a
      directed system of groups (with not necessarily injective structure maps)
      such that $G_i \in \calfj_A$ for $i \in I$.
      Then $\colim_{i \in I} G_i$ belongs to $\calfj_A$.

\end{itemize}
\end{enumerate}
\end{theorem}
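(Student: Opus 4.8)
The plan is to deduce the entire statement from one structural fact: that non-connective $A$-theory with coefficients fits the abstract Farrell--Jones machinery, so that the geometry of a group and the formal inheritance arguments apply just as in the established $K$- and $L$-theoretic cases. Concretely, I would first package $A$-theory into an equivariant homology theory on the orbit category of a group $G$, built from controlled (metric) versions of Waldhausen's categories $\calr_{hf}(X)$ of homotopy finite retractive spaces over $X$, together with a Bass-type non-connective delooping; the Conjecture~\ref{con:FJC_for_A-theory_with_c_and_fwp} then asserts that the relative assembly map for the family $\calvcyc$ of virtually cyclic subgroups is a weak equivalence. Once this is available, the proof of part~\ref{the:main_result:groups} feeds theory-independent geometric inputs into an axiomatic ``Farrell--Jones setup'', and part~\ref{the:main_result:inheritance} is obtained from the standard inheritance principles for the fibered conjecture.

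For part~\ref{the:main_result:groups}: hyperbolic groups and $\mathrm{CAT}(0)$-groups are transfer reducible over $\calvcyc$ --- equivalently, admit finitely $\calvcyc$-amenable actions --- by the work of Bartels--Lück--Reich and Bartels--Lück; virtually poly-cyclic groups are covered by Wegner's argument for virtually solvable groups; cocompact lattices in almost connected Lie groups reduce to the preceding classes following Bartels--Farrell--Lück; and fundamental groups of $d$-manifolds with $d\le 3$ reduce, via the prime and JSJ decompositions furnished by geometrization, to fundamental groups of geometric pieces, which are $\mathrm{CAT}(0)$, hyperbolic or virtually poly-cyclic, the graph-of-groups bookkeeping being supplied by the inheritance properties of part~\ref{the:main_result:inheritance}. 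In each case one plugs the geometric input into the abstract theorem for the $A$-theory functor constructed above and concludes membership in $\calfj_A$.

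For part~\ref{the:main_result:inheritance}: closure under subgroups is restriction; the extension statement for $1\to K\to G\xrightarrow{p}Q\to 1$ is the Transitivity Principle applied to $p$, combined with the pullback argument turning the hypotheses on $K$, $Q$ and the preimages $p^{-1}(C)$ into the conjecture for $G$; closure under finite direct products is one of the standard inheritance properties (via the finite-index property and the known stability of the conjecture under $G\mapsto G\times\IZ$); the colimit statement follows from the continuity of the $A$-theory functor, which commutes with filtered colimits of groups, together with the colimit argument of Bartels--Echterhoff--Lück; and the finite-index overgroup statement is precisely where the strengthening to finite wreath products is used, since $G$ embeds into $H\wr F$ for a suitable finite $F$, this wreath product lies in $\calfj_A$ because $H$ does, and subgroup closure then gives $G\in\calfj_A$. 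Finally, $G_1\ast G_2$ acts on its Bass--Serre tree with trivial edge stabilizers and vertex stabilizers conjugate to $G_1$ and $G_2$, so the assembly map relative to the family generated by $G_1$, $G_2$ and $\trivial$ is a weak equivalence; combining this with the cases already known for the factors and with the subgroup and extension properties yields $G_1\ast G_2\in\calfj_A$.

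The step I expect to be the main obstacle is the first one: proving that non-connective $A$-theory with coefficients really does define such an equivariant homology theory, with all the structure the machinery consumes. This requires a controlled version of Waldhausen's category of homotopy finite retractive spaces over a metric space, verification of the cofibration and saturation axioms and of the approximation theorem in this controlled setting, homotopy invariance and additivity (so that cones become trivial and the relevant Mayer--Vietoris and excision statements hold), a Bass-type non-connective delooping compatible with all of this, and continuity. The extra ``space-level'' data --- retractive spaces in place of modules --- and the Nil-type phenomena it produces inside the $\calvcyc$-assembly are what make this substantially harder than the module-theoretic prototypes; once it is in place, the geometric inputs and the inheritance arguments proceed in parallel with the known $K$- and $L$-theoretic cases.
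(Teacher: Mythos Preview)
Your overall architecture is right, and for the inheritance properties in part~\ref{the:main_result:inheritance} your outline matches the paper closely (restriction, Transitivity Principle for extensions, continuity for colimits, wreath-product embedding for finite-index overgroups). Two minor differences: the paper handles $G_1 \ast G_2$ not via Bass--Serre trees but by mapping $p \colon G_1 \ast G_2 \to G_1 \times G_2$ and observing that $p^{-1}(C)$ is free (hence hyperbolic) for cyclic $C$; and direct products are handled via the extension property together with the fact that products of virtually cyclic groups are virtually abelian, which falls under Ullmann--Winges.

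However, there is a genuine gap in your treatment of part~\ref{the:main_result:groups}, and it is tied to a misidentification of where the real difficulty lies. You locate the main obstacle in packaging controlled $A$-theory as an equivariant homology theory with delooping, continuity, etc. That work was already done by Ullmann--Winges and is taken as input here. What does \emph{not} come for free is the step you gloss as ``plug the geometric input into the abstract theorem'': the implication ``transfer reducible $\Rightarrow$ Farrell--Jones'' is not theory-independent. In the linear case the transfer is built by tensoring a geometric module with the cellular chain complex of the auxiliary space $X$; for $A$-theory one needs a \emph{space-level} transfer on controlled retractive CW-complexes, and there is no obvious replacement for tensoring with a chain complex. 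The paper's main technical contribution is precisely this construction: a balanced-product transfer $\trans_X(Y) = \cS^{\alpha,d}_{X,Y} \rightthreetimes_{\diamond_+ Y} \cC_Y$ built cell-by-cell from singular simplices in the strictification $M\Gamma$ of the homotopy coherent $G$-action, together with the observation that this is only functorial on the subcategory of ``cellwise $0$-controlled'' morphisms, which must then be shown (via Waldhausen's Approximation Theorem) to suffice for $K$-theory. Without this, the geometric transfer-reducibility of hyperbolic and $\mathrm{CAT}(0)$-groups does not by itself yield the $A$-theoretic conjecture.

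A smaller point: virtually poly-cyclic groups are not obtained here from Wegner's solvable-group argument (which is for linear $K$- and $L$-theory); they are covered directly by Ullmann--Winges. Indeed the paper explicitly leaves the virtually solvable case for $A$-theory open.
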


The Farrell--Jones Conjecture for $A$--theory aims at the computation of the homotopy groups
of $\bfA(BG)$ for a group $G$, where $\bfA \colon \SPACES \to \SPECTRA$ sends a space $X$ 
to the non-connective $A$--theory spectrum $\bfA(X)$ modeling Waldhausen's $A$--theory space $A(X)$.
More precisely, it predicts the bijectivity of the assembly map
\[
H_n^G(\EGF{G}{\calvcyc};\bfA^B) \xrightarrow{\cong} H_n^G(G/G;\bfA^B) = \pi_n(\bfA(BG))
\]
induced by the projection of the classifying space $\EGF{G}{\calvcyc}$ for the family of virtually cyclic subgroups of $G$
to $G/G$. It essentially reduces the computation of $\pi_n(\bfA(BG))$ to the computation of the system $\{\pi_n(\bfA(BV))\}$,
where $V$ ranges over the virtually cyclic subgroups $V$ of $G$.
Following the setup of Davis--L\"uck~\cite{Davis-Lueck(1998)}, we give the precise formulations of the various versions
of the Farrell--Jones Conjecture in \Cref{sec:The_Isomorphism_Conjecture}.
The equivalent original formulation of the Farrell--Jones Conjecture can be found in Farrell--Jones~\cite{Farrell-Jones(1993a)}.

\Cref{sec:Relations_between_the_various_theories} relates the Farrell--Jones Conjecture for $A$--theory
to the corresponding conjectures for other functors.
In particular, we discuss the equivalence of the conjectures for $\bfA$, $\bfWh^{\CAT}$ and $\bfP^{\CAT}$,
where the latter denote the non-connective spectra modeling the Whitehead space and the space of stable pseudo-isotopies,
with $\CAT$ being $\TOP$, $\PL$ or $\DIFF$.

As an illustration about the impact of the Farrell--Jones Conjecture for $A$--theory we discuss
applications to the automorphism groups of aspherical closed manifolds in 
\Cref{subsec:Topological_automorphism_groups_of_aspherical_closed_manifolds}, where also the proof 
of the following \Cref{the:widetilde(TOP)(M)/TOP(M)_and_bfN(A)} is given.

Let $\bfNA(\pt)$ be the Nil-term occurring in the Bass--Heller--Swan-isomorphisms for non-connective $A$--theory,
see~\cite{Huettemann-Klein-Waldhausen-Williams(2001),Huettemann-Klein-Vogell-Waldhausen-Williams(2002)}.
\begin{equation}
\pi_n(\bfA(S^1)) = \pi_n(\bfA(\pt)) \oplus \pi_{n-1}(\bfA(\pt)) \oplus \pi_n(\bfNA(\pt)) \oplus \pi_n(\bfNA(\pt)).
\label{Bass-Heller_Swan_for_A-theory}
\end{equation}
We conclude 
$\pi_n(\bfNA(\pt)) = \{0\}$ for $n \le 1$ and $\pi_n(\bfNA(\pt)) \otimes_{\IZ} \IQ = \{0\}$ for $n \in \IZ$ 
from \Cref{the:Relating_A-theory_to_algebraic_K_theory} and~\cite[Theorem~0.3]{Lueck-Steimle(2015splitasmb)}.
On the other hand, $\pi_n(\bfNA(\pt))$
for $n = 2,3$ is an infinite-dimensional $\IF_2$--vector space. For more information about $\pi_n(\bfNA(\pt))$
we refer to~\cite{Grunewald-Klein-Macko(2008),Hesselholt(2009)}.
The next result is already explained in the special case of closed manifolds with negative sectional curvature 
in~\cite[Section~6.3]{Weiss-Williams(2001)}, based on the work of 
Farrell and Jones~\cite{Farrell-Jones(1990c),Farrell-Jones(1991comp),Farrell-Jones(1990b),Farrell-Jones(1991),Farrell-Jones(1993a)},
and we can extend it  to torsionfree  hyperbolic groups.

\begin{theorem}\label{the:widetilde(TOP)(M)/TOP(M)_and_bfN(A)}
\
\begin{enumerate}

\item\label{the:widetilde(TOP)(M)/TOP(M)_and_bfN(A)-it1} Let $G$ be a torsionfree hyperbolic group. Then we get an equivalence
 \[ \bfWh^{\TOP}(BG) \simeq \bigvee_C \bfWh^{\TOP}(BC) \simeq \bigvee_C \bfNA(\pt) \vee \bfNA(\pt), \]
 where $C$ ranges over the conjugacy classes of maximal infinite cyclic subgroups of $G$.
 
 In particular, $\bfWh^{\TOP}(BG)$ is connective.
 
\item\label{the:widetilde(TOP)(M)/TOP(M)_and_bfN(A)-it2} Let $M$ be a smoothable aspherical closed manifold of dimension $\ge 10$,
whose fundamental group $\pi$ is hyperbolic.

Then there is a $\ZZ/2$--action on $\bfWh^{\TOP}(B\pi)$ such that we obtain for $1 \le n  \leq \min\{( \dim M -7 ) / 2, (\dim M - 4)/3\}$ isomorphisms
\[
 \pi_n(\TOP(M)) \cong \pi_{n+2}\Big( E\ZZ/2_+ \wedge_{\ZZ/2} \big( \bigvee_C \bfWh^{\TOP}(BC) \big) \Big)
\]
and an exact sequence
\[
 1 \to \pi_{2}\Big( E\ZZ/2_+ \wedge_{\ZZ/2} \big( \bigvee_C \bfWh^{\TOP}(BC) \big) \Big)   \to\pi_0(\TOP(M)) \to \Out(\pi) \to 1,
\]
where $C$ ranges over the conjugacy classes of maximal infinite cyclic subgroups of $\pi$.
\end{enumerate}
\end{theorem}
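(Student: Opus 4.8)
The plan for part~(i) is to combine the Farrell--Jones Conjecture for $\bfWh^{\TOP}$ with a L\"uck--Weiermann decomposition of $\EGF{G}{\calvcyc}$, and for part~(ii) to feed part~(i) into the Weiss--Williams description of $\TOP(M)$.

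\emph{Part~(i).} The hyperbolic group $G$ lies in $\calfj_A$ by \Cref{the:main_result}, so by the comparison of the conjectures for $\bfA$ and $\bfWh^{\TOP}$ in \Cref{sec:Relations_between_the_various_theories} the assembly map $H^G_\ast(\EGF{G}{\calvcyc};\bfWh^{\TOP})\to\pi_\ast(\bfWh^{\TOP}(BG))$ is an isomorphism. Since $G$ is torsionfree, its nontrivial virtually cyclic subgroups are the infinite cyclic ones, and a maximal infinite cyclic subgroup $C$ equals its own commensurator in the hyperbolic group $G$; thus the L\"uck--Weiermann passage from the trivial family to $\calvcyc$ provides a $G$--pushout
\[
\begin{CD}
\coprod_{(C)} G\times_C EC @>>> EG \\
@VVV @VVV \\
\coprod_{(C)} G/C @>>> \EGF{G}{\calvcyc}
\end{CD}
\]
with $(C)$ running over the conjugacy classes of maximal infinite cyclic subgroups. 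I would apply $H^G_\ast(-;\bfWh^{\TOP})$: since $\bfWh^{\TOP}(\pt)\simeq\ast$ (\Cref{sec:Relations_between_the_various_theories}) and $EG$, $EC$ are free (as $G$, $C$ are torsionfree), the upper two corners have vanishing homology, so the Mayer--Vietoris sequence collapses to an isomorphism $\bigoplus_{(C)}\pi_\ast(\bfWh^{\TOP}(BC))\xrightarrow{\cong}\pi_\ast(\bfWh^{\TOP}(BG))$ which is induced by the maps $BC\to BG$; hence $\bigvee_C\bfWh^{\TOP}(BC)\to\bfWh^{\TOP}(BG)$ is an equivalence. For the remaining equivalence, $BC\simeq S^1$, and by the relation between $\bfWh^{\TOP}$ and $\bfA$ (\Cref{sec:Relations_between_the_various_theories}) the Bass--Heller--Swan decomposition underlying \eqref{Bass-Heller_Swan_for_A-theory} exhibits $\bfWh^{\TOP}(S^1)$ as the cofibre of the split inclusion $\bfA(\pt)\vee\Sigma\bfA(\pt)\hookrightarrow\bfA(S^1)$, that is, as $\bfNA(\pt)\vee\bfNA(\pt)$. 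Connectivity of $\bfWh^{\TOP}(BG)$ then follows since $\pi_n(\bfNA(\pt))=0$ for $n\le 1$.

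\emph{Part~(ii).} Write $\pi=\pi_1(M)$; it is torsionfree, infinite and non--elementary (no closed aspherical manifold of dimension $\ge 2$ has infinite cyclic fundamental group), hence has trivial centre, so the monoid of self--homotopy equivalences satisfies $\operatorname{hAut}(M)\simeq\operatorname{hAut}(B\pi)\simeq\Out(\pi)$ with contractible components. By topological rigidity of high--dimensional closed aspherical manifolds --- valid here because the $K$-- and $L$--theoretic Farrell--Jones Conjectures hold for hyperbolic groups (Bartels--L\"uck--Reich, Bartels--L\"uck) --- the parametrised block structure space of $M$ is contractible, so $\widetilde{\TOP}(M)\to\operatorname{hAut}(M)$ is a homotopy equivalence; thus $\pi_0(\widetilde{\TOP}(M))=\Out(\pi)$, $\pi_n(\widetilde{\TOP}(M))=0$ for $n\ge 1$, and every outer automorphism is realised by a homeomorphism. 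Next I would invoke the Weiss--Williams comparison of $\TOP(M)$ with its block version~\cite{Weiss-Williams(2001)}, using smoothability of $M$ and smoothing theory to move between the smooth, $\PL$ and topological categories: on homotopy in degrees $\le\min\{(\dim M-7)/2,(\dim M-4)/3\}$ the homotopy fibre $F$ of $\TOP(M)\to\widetilde{\TOP}(M)$ is identified with $\Omega^{\infty+2}$ of the homotopy orbit spectrum $E\ZZ/2_+\wedge_{\ZZ/2}\bfWh^{\TOP}(M)$ for the duality $\ZZ/2$--action, the shift by $2$ being the double delooping relating $\bfWh^{\TOP}$ and the stable pseudo--isotopy spectrum. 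The long exact sequence of $F\to\TOP(M)\to\widetilde{\TOP}(M)$ together with $\pi_n(\widetilde{\TOP}(M))=0$ for $n\ge 1$ then gives $\pi_n(\TOP(M))\cong\pi_{n+2}(E\ZZ/2_+\wedge_{\ZZ/2}\bfWh^{\TOP}(M))$ for $1\le n$ in this range, and at the bottom the short exact sequence $1\to\pi_2(E\ZZ/2_+\wedge_{\ZZ/2}\bfWh^{\TOP}(M))\to\pi_0(\TOP(M))\to\Out(\pi)\to 1$ (surjectivity from rigidity, left exactness from $\pi_1(\widetilde{\TOP}(M))=0$). Finally $M\simeq B\pi$ and part~(i) yield $\bfWh^{\TOP}(M)\simeq\bfWh^{\TOP}(B\pi)\simeq\bigvee_C\bfWh^{\TOP}(BC)$; transporting the $\ZZ/2$--action along this equivalence produces the asserted formulas.

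The step I expect to be hardest is the Weiss--Williams input for part~(ii): fixing the precise form of their identification (variance, the $\Omega^{\infty+2}$--shift, and the fact that in the stable range the homotopy orbit construction is the relevant one), verifying that the degrees in question lie in the concordance stable range when $\dim M\ge 10$, and handling the passage between the smooth, $\PL$ and topological automorphism spaces --- this is where smoothability of $M$ and the dimension hypothesis enter. Setting up the surgery--theoretic input that makes $\widetilde{\TOP}(M)\to\operatorname{hAut}(M)$ an equivalence is routine by comparison, but still needs the parametrised form of topological rigidity.
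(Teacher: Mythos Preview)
Your proposal is correct and follows essentially the same route as the paper: for part~(i) you use the Farrell--Jones Conjecture for $\bfWh^{\TOP}$ together with the L\"uck--Weiermann decomposition of $\EGF{G}{\calvcyc}$ and the Bass--Heller--Swan identification of $\bfWh^{\TOP}(S^1)$, and for part~(ii) you combine Weiss--Williams with topological rigidity (the paper packages this as \Cref{the:widetilde(TOP)(M)/TOP(M)_and_bfWh}) and the triviality of the centre of a non-cyclic torsionfree hyperbolic group. The only cosmetic difference is that the paper establishes connectivity of $\bfWh^{\TOP}(BG)$ first, via the linearization map and the vanishing of $K_{\le 0}(\IZ G)$, whereas you deduce it at the end from $\pi_{\le 1}(\bfNA(\pt))=0$; both arguments are valid.
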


\begin{remark}
 The $\ZZ/2$--action we refer to in \Cref{the:widetilde(TOP)(M)/TOP(M)_and_bfN(A)} is induced by the one given in \cite{Weiss-Williams(1988)}.
 Vogell described in \cite{Vogell(1984)} another $\ZZ/2$--action on $\bfWh(B\pi)$ which depends on the choice of a spherical fibration over $B\pi$.
 It was shown in \cite{Huettemann-Klein-Vogell-Waldhausen-Williams(2002)} that for a certain fibration this action corresponds under the Bass--Heller--Swan decomposition to switching the Nil-terms via a homeomorphism.
 If the arguments presented in \cite{Huettemann-Klein-Vogell-Waldhausen-Williams(2002)} carry over to other spherical fibrations and the two actions on $\bfWh^{\TOP}(B\pi)$ agree for a suitably chosen fibration, then the homotopy orbits appearing in \Cref{the:widetilde(TOP)(M)/TOP(M)_and_bfN(A)} can be identified as
 \[
  E\ZZ/2_+ \wedge_{\ZZ/2} \big( \bigvee_C \bfWh^{\TOP}(BC) \big) \simeq \bigvee_C \bfNA(\pt).
 \]
 These issues will be discussed in \cite{PieperPhD}.
\end{remark}

The rest of the paper is devoted to the proof of \Cref{the:main_result}.
The main technical part of this paper concerns the proof for hyperbolic groups and $\mathrm{CAT}(0)$--groups.
It is given in \Cref{Proof of the Farrell--Jones Conjecture for hyperbolic and CAT(0)--groups} and \Cref{The Transfer: Final Part of the Proof}, and is motivated by the proof of the $K$--theoretic Farrell--Jones Conjecture for $\mathrm{CAT}(0)$--groups given in \cite{Wegner(2012)} based on the method of Bartels--L\"uck \cite{Bartels-Lueck(2012annals)}. Our approach, which is based on~\cite{Ullmann-Winges(2015)}, requires us to define an analog of the transfer on geometric modules which works on Waldhausen categories of controlled retractive spaces. Virtually poly-cyclic groups have already been treated by Ullmann--Winges~\cite{Ullmann-Winges(2015)}. 

In conjunction with the inheritance properties in \Cref{the:main_result}~\ref{the:main_result:inheritance},
the case of a cocompact lattice in an almost connected Lie group or a fundamental group of a (not necessarily compact) $d$--dimensional manifold (possibly with boundary) for $d \le 3$
follows via the argument presented in ~\cite{Bartels-Farrell-Lueck(2014)}.
The inheritance properties for the $A$--theoretic conjecture are taken care of in~\Cref{sec:Inheritance_properties_of_the_Isomorphism_Conjectures}.

\begin{remark}[Solvable groups] \label{rem_Solvable_groups} If one can show that
  $\calfj_A$ contains all virtually solvable groups, then it contains any (not necessarily cocompact)
  lattice in a second countable locally compact Hausdorff group with finitely many path
  components, the groups $GL_n(\IQ)$ and $GL_n(F(t))$ for $F(t)$ the function field over a
  finite field $F$, and all $S$--arithmetic groups. The arguments in
\cite{Kammeyer-Lueck-Rueping(2016),Rueping(2016_S-arithmetic)} carry over to show the prerequisites of \Cref{the:main_result} and \Cref{cor:hypcatfollowproof}, respectively.
\end{remark}

\subsection*{Acknowledgments.}
The idea to use the categories of ``cellwise $0$-controlled morphisms'' in
\Cref{The Transfer: Final Part of the Proof} is due to Arthur Bartels
and Paul Bubenzer. 
The paper is financially supported by the Leibniz-Preis of
the second author granted by the {DFG}, the ERC Advanced Grant ``KL2MG-interactions''
(no.  662400) of the second author granted by the European Research Council,
the Cluster of Excellence  ``Hausdorff Center for Mathematics'' at Bonn, the SFB 647 ``Space -- Time -- Matter'' at Berlin, the SFB 878 ``Groups, Geometry \& Actions'' at M\"unster,
and the Junior Hausdorff Trimester Program ``Topology'' at the Hausdorff Research Institute for Mathematics (HIM).

\tableofcontents


\section{The  Isomorphism Conjecture}
\label{sec:The_Isomorphism_Conjecture}

In this section we state various versions of the Isomorphism Conjectures we
want to consider.  We assume familiarity with the notion of a $G$-equivariant
homology theory from~\cite{Davis-Lueck(1998)} and the notion of an equivariant
homology theory from~\cite{Lueck(2002b)}.  As usual, we use a convenient
category of compactly generated spaces.


\subsection{The  Meta-Isomorphism Conjecture for functors from spaces to spectra}
\label{subsec:The_Meta_Isomorphism_Conjecture_for_functors_from_spaces_to_spectra}

Let $\bfS \colon \SPACES\to \SPECTRA$ be a covariant functor. Throughout this section we
will assume that \emph{$\bfS$  respects weak equivalences and disjoint unions}, i.e., a weak
homotopy equivalence of spaces $f \colon X \to Y$ is sent to a weak homotopy equivalence of spectra $\bfS(f) \colon \bfS(X) \to \bfS(Y)$
and for a collection of spaces $\{X_i \mid i \in I\}$ 
for an arbitrary index set $I$ the canonical map 
\begin{equation}
\bigvee_{i \in I} \bfS(X_i) \to \bfS\biggl(\coprod_{i \in I} X_i\biggr)
\end{equation}
is a weak homotopy equivalence of spectra.  Weak equivalences of spectra are
understood to be the stable equivalences, i.e., the maps which induce
isomorphisms on all stable homotopy groups.
We obtain a covariant functor
\begin{equation}
\bfS^B \colon \GROUPOIDS \to \SPECTRA, \quad \calg \mapsto \bfS(B\calg),
\label{bfs_upper_B}
\end{equation}
where $B\calg$ is the classifying space of the groupoid $\calg$ which is the geometric
realization of the simplicial set given by its nerve and denoted by
$B^{\operatorname{bar}}\calg$ in~\cite[page~227]{Davis-Lueck(1998)}.  Let
$H^?_n(-;\bfS^B)$ be the equivariant homology theory in the sense
of~\cite[Section~1]{Lueck(2002b)} which is associated to $\bfS^B$ by the construction
in~\cite[Proposition~157 on page~796]{Lueck-Reich(2005)}. Equivariant homology theory
essentially means that we get for every group $G$ a $G$--homology theory $H^G_n(-;\bfS^B)$
satisfying the disjoint union axiom and for every group homomorphism $\alpha \colon H \to G$ 
and $H$--CW--pair $(X,A)$ we get natural maps compatible with boundary homomorphisms of
pairs $H^H_*(X,A;\bfS^B) \to H^G_*(\alpha_*(X,A);\bfS^B)$ which are bijective if the kernel of
$\alpha$ acts freely on $X$.  Moreover, for any group $G$, subgroup $H \subseteq G$ and $n \in \IZ$ 
we have canonical identifications
\begin{equation*}
H_n^G(G/H;\bfS^B) \cong H_n^H(H/H;\bfS^B) \cong \pi_n(\bfS(BH)).
\end{equation*}

\begin{conjecture}[Meta-Isomorphism Conjecture for functors from spaces to spectra] 
\label{con:Meta_Isomorphisms_Conjecture_for_functors_from_spaces_to_spectra}
Let $\bfS \colon \SPACES\to \SPECTRA$ be a covariant functor which 
respects weak equivalences and disjoint unions. The group $G$ satisfies the 
\emph{Meta-Isomorphism Conjecture for $\bfS$}
with respect to the family $\calf$ of subgroups of $G$
if  the assembly map induced by the projection $\pr \colon \EGF{G}{\calf} \to G/G$
\[
H_n^G(\pr;\bfS^B) \colon H^G_n(\EGF{G}{\calf};\bfS^B) \to H_n^G(G/G;\bfS^B) \cong \pi_n(\bfS(BG))
\]
is bijective for all $n \in \IZ$.
\end{conjecture}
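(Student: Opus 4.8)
This statement is a conjecture, not a theorem: for an arbitrary functor $\bfS$, an arbitrary family $\calf$ and an arbitrary group $G$ the assembly map need not be bijective. Already for $\bfS$ algebraic $K$--theory and $\calf$ the family of trivial subgroups it fails by Bass--Heller--Swan, and singling out $\calvcyc$ is precisely what absorbs such Nil- and Bass--Heller--Swan-type obstructions. The content of this paper is therefore to \emph{verify} \Cref{con:Meta_Isomorphisms_Conjecture_for_functors_from_spaces_to_spectra} in the case $\bfS=\bfA$ with $\calf=\calvcyc$ (in the strengthened ``with coefficients and finite wreath products'' form that makes the inheritance arguments run), for the groups listed in \Cref{the:main_result}. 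The point worth making about the general formulation is that almost all of the proof is insensitive to $\bfS$: one uses only that $\bfS$ respects weak equivalences and disjoint unions, so that $H^?_*(-;\bfS^B)$ is an equivariant homology theory in the sense of \cite{Davis-Lueck(1998),Lueck(2002b)}, together with a single genuinely $A$--theoretic ingredient entering the geometric step. I would organise the verification in three layers.

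\emph{The formal layer.} First I would invoke the transitivity principle (see \cite{Farrell-Jones(1993a),Lueck-Reich(2005)}): for a fixed $G$, bijectivity relative to $\calvcyc$ follows from bijectivity relative to the family $\calfin$ of finite subgroups, \emph{together with} bijectivity relative to $\calfin$ for every virtually cyclic subgroup of $G$. This ``virtually cyclic base case'' --- where the Nil-terms in~\eqref{Bass-Heller_Swan_for_A-theory} live --- I would import from the already-established $K$--theoretic Farrell--Jones Conjecture, using the comparison of $\bfA$ with the Whitehead and pseudo-isotopy spectra and with algebraic $K$--theory carried out in \Cref{sec:Relations_between_the_various_theories} (in particular \Cref{the:Relating_A-theory_to_algebraic_K_theory}) and~\cite[Theorem~0.3]{Lueck-Steimle(2015splitasmb)}. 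In the same formal spirit one establishes the inheritance properties of \Cref{the:main_result}~\ref{the:main_result:inheritance} --- subgroups, finite products, finite free products, finite-index overgroups, directed colimits, and the extension criterion --- by manipulating classifying spaces for families and the homology theory $H^?_*(-;\bfA^B)$, which is the bookkeeping of \Cref{sec:Inheritance_properties_of_the_Isomorphism_Conjectures}; this also reduces cocompact lattices in almost connected Lie groups and fundamental groups of manifolds of dimension $\le 3$ to hyperbolic and $\mathrm{CAT}(0)$--groups, via the argument of \cite{Bartels-Farrell-Lueck(2014)}.

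\emph{The geometric layer} is the heart of the matter: bijectivity relative to $\calfin$ for $G$ hyperbolic or a $\mathrm{CAT}(0)$--group. Here I would follow the controlled-topology strategy of Bartels--L\"uck \cite{Bartels-Lueck(2012annals)} and, in the $\mathrm{CAT}(0)$ case, Wegner \cite{Wegner(2012)}: represent an $A$--theory class on a controlled model built over the flow space associated to $\EGF{G}{\calfin}$, push it towards infinity along the geodesic flow using long thin equivariant covers of the flow space, and conclude that the pushed class vanishes because the control improves without bound while the ``size'' of its support stays bounded by a uniform constant coming from the covering dimension. To run this machine for $\bfA$ rather than $K$--theory I would replace geometric modules over a group ring by the Waldhausen categories of controlled retractive relative CW--complexes over the relevant $G$--metric spaces, following Ullmann--Winges \cite{Ullmann-Winges(2015)}, and --- this is the one truly $A$--theoretic step --- construct an analogue of the \emph{transfer} in that setting: a functor that glues in a controlled copy of a fibre, compatible with the assembly map, together with its formal properties (additivity, behaviour under the flow, and the crucial contracting estimate). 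I expect this transfer to be the main obstacle, since $A$--theory offers no trace on modules to imitate directly; the device that makes the estimates work is passing to categories of ``cellwise $0$--controlled morphisms''. This occupies \Cref{Proof of the Farrell--Jones Conjecture for hyperbolic and CAT(0)--groups} and \Cref{The Transfer: Final Part of the Proof}.

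Finally I would combine the geometric layer (hyperbolic groups, $\mathrm{CAT}(0)$--groups, and --- from \cite{Ullmann-Winges(2015)} --- virtually poly-cyclic groups), the formal layer (transitivity to pass from $\calfin$ to $\calvcyc$, inheritance to reach lattices and $3$--manifold groups), and the $K$--theoretic input (virtually cyclic base case and rational vanishing of the Nil-terms) to obtain \Cref{the:main_result}. In short: the Meta-Isomorphism Conjecture is the scaffolding and is verified case by case rather than proved in the abstract; for $\bfA$ and $\calvcyc$ the new mathematical work is the transfer on controlled retractive spaces, and everything else is transitivity, inheritance, and comparison with $K$--theory.
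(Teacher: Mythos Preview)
You correctly recognise that this is a definition rather than a theorem, and your broad picture of the paper --- inheritance from \Cref{sec:Inheritance_properties_of_the_Isomorphism_Conjectures}, reduction of lattices and $3$--manifold groups via \cite{Bartels-Farrell-Lueck(2014)}, virtually poly-cyclic groups from \cite{Ullmann-Winges(2015)}, and the $A$--theoretic transfer on controlled retractive spaces (with the cellwise $0$--controlled device) as the new ingredient --- is accurate.

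But your ``formal layer'' contains a fatal gap. You propose to deduce $(G,\calvcyc)$ from $(G,\calfin)$ together with $(V,\calfin)$ for every virtually cyclic $V \subseteq G$, importing the latter from the $K$--theoretic conjecture via linearization and \Cref{the:Relating_A-theory_to_algebraic_K_theory}. This fails twice over. First, the linearization map $\bfA \to \bfK$ is only $2$--connected (and a rational equivalence on aspherical spaces); it is not an integral equivalence, so the $K$--theoretic result does not transfer to $\bfA$ in all degrees. Second, and decisively, $(V,\calfin)$ is simply \emph{false} for $\bfA$ when $V$ is infinite virtually cyclic: already for $V = \IZ$, the Bass--Heller--Swan decomposition~\eqref{Bass-Heller_Swan_for_A-theory} shows that the assembly map relative to the trivial family misses two copies of $\pi_*(\bfNA(\pt))$, and the paper itself records that $\pi_n(\bfNA(\pt)) \neq 0$ for $n = 2,3$. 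So your ``virtually cyclic base case'' does not exist, and the reduction to $\calfin$ cannot be carried out for $A$--theory.

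The paper never attempts this reduction. The homotopy-transfer-reducible criterion (\Cref{def:transfer-reducible}, \Cref{thm:afjc-transfer-reducible}) takes as geometric input $G$--simplicial complexes $\Sigma_n$ with isotropy in $\calvcyc$ --- precisely what the flow-space covers for hyperbolic and $\mathrm{CAT}(0)$--groups furnish, since flow lines carry infinite cyclic stabilisers --- and yields $(G,\calvcyc)$ directly. The only transitivity argument appears at the end of \Cref{cor:hypcatfollowproof} and runs in the opposite direction: from the larger family $\calvcyc \wr F$ on $G \wr F$ down to $\calvcyc$, with \cite{Ullmann-Winges(2015)} supplying the input for the virtually abelian members of the larger family.
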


\begin{example}[The $K$-- and $L$--theoretic Farrell--Jones Conjecture]
\label{exa:The_K-and_L-theoreticFJC}
Let $R$ be a ring (with involution). There are covariant
functors~\cite[Theorem 158]{Lueck-Reich(2005)}
\begin{eqnarray*}
\bfK_R \colon \GROUPOIDS & \to & \SPECTRA,
\\
\bfL_R^{\langle -\infty \rangle} \colon \GROUPOIDS & \to & \SPECTRA,
\end{eqnarray*}
such that for every group $G$, which we can consider as a groupoid $\underline{G}$ with precisely one
object and $G$ as its group of automorphisms, and $n \in \IZ$ we have
\begin{eqnarray*}
K_n(RG) 
& = & 
\pi_n(\bfK_R(\underline{G})),
\\
L_n^{\langle -\infty \rangle} (RG) 
& = & 
\pi_n(\bfL^{\langle -\infty \rangle} _R(\underline{G})).
\end{eqnarray*}
Then the $K$--theoretic and $L$--theoretic Farrell--Jones Conjectures, which
were originally
formulated in~\cite[1.6 on page 257]{Farrell-Jones(1993a)}, are equivalent to the
statement that the covariant functors $\bfS \colon \SPACES \to \SPECTRA$, given by the
composition of $\bfK_R$ and$\bfL_R$, with the functor sending a space to its fundamental
groupoid, satisfy the Meta-Isomorphism
Conjecture~\ref{con:Meta_Isomorphisms_Conjecture_for_functors_from_spaces_to_spectra} for
the family $\calvcyc$ of virtually cyclic subgroups of $G$.
\end{example}

Our main example is the following case.  Let $\bfA \colon \SPACES\to \SPECTRA$ 
be the functor sending a space $X$ to the
spectrum $\bfA(X)$ given by the non-connective version
of Waldhausen's algebraic $K$--theory of spaces in the sense of \cite{Ullmann-Winges(2015)}.

\begin{lemma}\label{lem:bfA_respects_weak_equivalences_and_disjoint_unions}\
\begin{enumerate}

\item \label{lem:bfA_respects_weak_equivalences_and_disjoint_unions:dis_union}
The functor  $\bfA \colon \SPACES\to \SPECTRA$ respects weak equivalences
and disjoint unions.

\item \label{lem:bfA_respects_weak_equivalences_and_disjoint_unions:hocolim}
 For any directed systems of spaces $\{X_i \mid i \in I\}$ indexed over an arbitrary
  directed set $I$ the canonical map
  \[
  \hocolim_{i \in I} \bfA(X_i) \to \bfA\bigl(\hocolim_{i \in I} X_i\bigr)
  \]
  is a weak homotopy equivalence. 
\end{enumerate}

\end{lemma}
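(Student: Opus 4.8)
The plan is to reduce both statements to known facts about Waldhausen's $A$--theory by exploiting the explicit construction of the non-connective spectrum $\bfA(X)$ from~\cite{Ullmann-Winges(2015)}. Recall that the connective $A$--theory space $A(X)$ is the $K$--theory of the Waldhausen category of retractive spaces over $X$ that are finitely dominated (or, equivalently by a cofinality argument, of finite homotopy type), and that the non-connective delooping is obtained by a formal procedure — an iterated Bass--Heller--Swan / suspension construction, or a Pedersen--Weibel type construction on controlled categories — which preserves filtered colimits and weak equivalences because it is built levelwise from such operations. The key point throughout is that $A(X)$ only depends on the weak homotopy type of $X$ and is compatible with the relevant colimits; everything else is formal.

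First I would treat part~\ref{lem:bfA_respects_weak_equivalences_and_disjoint_unions:dis_union}. That $\bfA$ respects weak equivalences is essentially built into the construction: a weak homotopy equivalence $f\colon X\to Y$ induces an exact equivalence (up to homotopy) between the associated Waldhausen categories of retractive spaces, hence an equivalence on $K$--theory spaces, and the non-connective delooping is functorial and preserves such equivalences level by level. For disjoint unions, the canonical map $\bigvee_{i\in I}\bfA(X_i)\to\bfA(\coprod_{i\in I}X_i)$: a retractive space of finite homotopy type over $\coprod_i X_i$ has its "finite part" supported over only finitely many $X_i$ (this is where finiteness, not just finite domination, is convenient — one invokes cofinality to replace finitely dominated objects by honestly finite ones), so the category of such retractive spaces is the filtered colimit over finite subsets $J\subseteq I$ of the product over $J$ of the categories over $X_j$; since $K$--theory commutes with such filtered colimits and with finite products, and the wedge is the filtered colimit of finite wedges, the map is a weak equivalence. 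The non-connective version inherits this because the delooping construction commutes with filtered colimits and finite coproducts of spectra.

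For part~\ref{lem:bfA_respects_weak_equivalences_and_disjoint_unions:hocolim}, I would argue that $\hocolim_{i\in I} X_i$ can be modeled so that every object of finite homotopy type over it is, up to weak equivalence, pulled back from some $X_i$ (a compactness argument: a finite relative CW complex has finitely many cells, each of whose attaching maps factors through a finite stage of the homotopy colimit), and similarly every morphism and every relation is detected at a finite stage. This exhibits the Waldhausen category of finite retractive spaces over $\hocolim_i X_i$ as, up to the relevant equivalences, the homotopy colimit of those over the $X_i$; applying the fact that algebraic $K$--theory commutes with filtered homotopy colimits of Waldhausen categories (Waldhausen's original observation, since $K$--theory is built from a simplicial construction and $\pi_*$ commutes with filtered colimits) gives the claim for connective $\bfA$, and once more the non-connective delooping commutes with filtered homotopy colimits of spectra. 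One should note that $\hocolim$ over a directed set agrees with the ordinary telescope, so there are no subtleties about the shape of $I$.

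The main obstacle is the bookkeeping around finiteness versus finite domination in the non-connective setting: the honest compactness arguments (that a finite object over a coproduct or a homotopy colimit is supported over a finite stage) apply to retractive spaces of \emph{finite} homotopy type, whereas $A$--theory is usually set up with finitely \emph{dominated} objects, so one must carefully invoke the cofinality theorem (and the fact that the extra $K_0$--component behaves correctly) to move between the two, and then check that the particular non-connective delooping model of~\cite{Ullmann-Winges(2015)} — which involves auxiliary controlled categories over $\IR^k$ — still commutes with the filtered colimits in question at each spectrum level. I expect this to be routine but notation-heavy, and it is the step where one actually has to open up the construction rather than argue formally.
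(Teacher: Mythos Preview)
Your outline is correct and would work, but the paper takes a genuinely different (and shorter) route for the non-connective part. For the connective spectrum, both you and the paper argue the same way: weak equivalences by Waldhausen~\cite[Proposition~2.1.7]{Waldhausen(1985)}, and disjoint unions plus directed homotopy colimits by the compactness argument for finite retractive CW--complexes. Where you diverge is in passing to the non-connective spectrum. You propose to open up the Ullmann--Winges delooping (controlled categories over $\RR^k$) and verify levelwise that it commutes with the relevant colimits --- you correctly flag this as the notation-heavy step. The paper instead sidesteps this entirely: it observes that the linearization map $\bfL \colon \bfA \to \bfK_{\IZ}$ induces isomorphisms on all non-positive homotopy groups (Vogell~\cite{Vogell(1991)}), and that the algebraic $K$--theory functor $X \mapsto \bfK(\IZ\Pi(X))$ is already known to have all three properties. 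Since the connective piece is handled and the negative homotopy groups agree with those of $\bfK_{\IZ}$, the result follows for $\bfA$ without ever inspecting the delooping construction directly (the paper does note that the comparison of the Ullmann--Winges and Vogell deloopings is deferred to~\cite{PieperPhD}).

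Your approach buys self-containment: you do not need to import Vogell's comparison result or the known properties of algebraic $K$--theory of group rings. The paper's approach buys brevity and avoids the bookkeeping you anticipate around the finite-versus-finitely-dominated issue and the interaction of the $\RR^k$--control with filtered colimits in the base. Both are valid; yours is closer to a from-scratch verification, theirs is a reduction to the linear case.
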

\begin{proof}
In the connective case, Waldhausen proved in~\cite[Proposition~2.1.7]{Waldhausen(1985)}) that $A$--theory preserves weak equivalences. The other two properties follow upon inspection of the explicit model as finite retractive CW--complexes.

Note that the algebraic $K$--theory functor which sends $X$ to $K(\IZ\Pi(X))$ enjoys the properties claimed for $\bfA$. Since Vogell showed that the linearization map $\bfL \colon \bfA \rightarrow \bfK$ induces an isomorphism on all non-positive homotopy groups \cite{Vogell(1991)}, the general case follows.

It will be shown in \cite{PieperPhD} that the non-connective deloopings described by Ullmann--Winges in~\cite{Ullmann-Winges(2015)} and Vogell in~\cite{Vogell(1990)} are equivalent.
\end{proof}

\begin{conjecture}[The Farrell--Jones Conjecture for $A$--theory]
  \label{con:FJC_for_A-theory}
  A group $G$ satisfies the Farrell--Jones Conjecture for $A$--theory if the
  Meta-Isomorphism
  Conjecture~\ref{con:Meta_Isomorphisms_Conjecture_for_functors_from_spaces_to_spectra}
  holds for $\bfA \colon \SPACES\to \SPECTRA$ and the family $\calvcyc$, i.e., for
  every $n \in \IZ$ the projection $\EGF{G}{\calvcyc} \to G/G$ induces an isomorphism
  \[
  H_n^G(\pr;\bfA^B) \colon H^G_n(\EGF{G}{\calvcyc};\bfA^B) \to H_n^G(G/G;\bfA^B) =  \pi_n(\bfA(BG)).
  \]
\end{conjecture}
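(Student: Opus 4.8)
The plan is to prove the conjecture --- in fact the stronger version with coefficients and finite wreath products, \Cref{con:FJC_for_A-theory_with_c_and_fwp}, which is the variant that enjoys inheritance properties --- for hyperbolic groups and $\mathrm{CAT}(0)$--groups, and to deduce all remaining classes of groups from these by formal arguments. Concretely, one first records that the class $\calfj_A$ is closed under passage to subgroups, finite overgroups, colimits of directed systems, finite products, finite free products, and the extensions described in \Cref{the:main_result}~\ref{the:main_result:inheritance}. These follow from the standard formalism for Meta-Isomorphism Conjectures, once \Cref{lem:bfA_respects_weak_equivalences_and_disjoint_unions} guarantees that $\bfA^B$ is a functor to which that formalism applies. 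Granting the inheritance properties, virtually poly-cyclic groups are handled in \cite{Ullmann-Winges(2015)}, and cocompact lattices in almost connected Lie groups as well as fundamental groups of manifolds of dimension $\le 3$ reduce to the hyperbolic, $\mathrm{CAT}(0)$ and virtually poly-cyclic cases by the argument of Bartels--Farrell--L\"uck \cite{Bartels-Farrell-Lueck(2014)}. So the core problem is hyperbolic and $\mathrm{CAT}(0)$--groups.

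For these I would follow the controlled-topology method of Bartels--L\"uck \cite{Bartels-Lueck(2012annals)}, in the variant used by Wegner \cite{Wegner(2012)} for the $K$--theoretic conjecture for $\mathrm{CAT}(0)$--groups, transported to the Waldhausen-categorical setup of \cite{Ullmann-Winges(2015)}. That setup realizes the equivariant homology theory $H^?_*(-;\bfA^B)$ through Waldhausen categories of $G$--equivariant controlled retractive CW--complexes over $G$--spaces, and comes with a vanishing (``squeezing'') theorem for the associated obstruction category. The upshot is an axiomatic criterion: to prove the conjecture for $G$ it suffices to show that $G$ is \emph{transfer reducible} over $\calvcyc$, i.e.\ that for a suitable family of compact $G$--spaces one has equivariant open covers of controlled dimension, ``long and thin'' in the relevant metric sense and with isotropy in $\calvcyc$, together with a compatible system of transfer functors on the controlled Waldhausen categories. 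The cover part of this criterion is pure geometry and is already available: hyperbolic groups and $\mathrm{CAT}(0)$--groups act on flow spaces --- the geodesic flow on an appropriate Rips-type complex, respectively on the $\mathrm{CAT}(0)$ space itself --- admitting exactly the required long thin covers, by \cite{Bartels-Lueck(2012annals),Wegner(2012)}; this input can be quoted essentially verbatim.

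The new, and I expect hardest, ingredient is the transfer itself: an $A$--theoretic analog of the Bartels--Reich geometric-module transfer that operates on Waldhausen categories of controlled retractive spaces rather than on additive categories of geometric modules. Where in $K$--theory one tensors a geometric module with the singular chains of the fiber of the flow space, here one must instead perform a fiberwise construction on retractive CW--complexes, namely a fiberwise join or smash with the (finite, CW) fibers of the flow space, staying inside finite retractive CW--complexes and keeping track of the supports of the resulting objects. I would construct this as an exact functor of Waldhausen categories, verify that it is compatible with the simplicial structure, with the $G$--action and with the relative assembly maps, check that the composite of the transfer with the forgetful map is a weak equivalence up to the relevant filtration (so that it detects $A$--theory classes), and, crucially, establish the metric estimate showing that the transfer improves control, so that the flow-space covers can be used to push any class into arbitrarily fine control, where it vanishes by the squeezing theorem. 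Making this functor simultaneously homotopy-coherent, equivariant and metrically controlled is the technical heart of the argument, carried out in \Cref{The Transfer: Final Part of the Proof}. Combining the squeezing theorem, the geometric covers and this transfer gives the conjecture for hyperbolic and $\mathrm{CAT}(0)$--groups, and together with the inheritance properties of \Cref{the:main_result}~\ref{the:main_result:inheritance} this establishes \Cref{the:main_result}.
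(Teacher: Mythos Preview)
Your broad outline matches the paper's strategy: inheritance properties reduce everything to hyperbolic and $\mathrm{CAT}(0)$ groups, and for those one works in the Ullmann--Winges framework of controlled retractive spaces, constructs a transfer, and invokes squeezing. However, your description of the transfer and of the geometric input diverges from the paper in ways that matter.

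First, the paper does not use flow spaces or long thin covers directly. It abstracts the geometric input into the condition \emph{homotopy transfer reducible} (\Cref{def:transfer-reducible}): for each $n$ one has a compact contractible metric space $X_n$ with a homotopy-coherent $G$--action $\Gamma_n$ and an $(S,n)$--equivariant map $f_n\colon X_n\to\Sigma_n$ into a $G$--simplicial complex with $\calf$--isotropy. The paper then shows (\Cref{thm:strongly-trans-reducible}) that Wegner's \emph{strongly transfer reducible} condition implies this, so the flow-space geometry is entirely encapsulated in a citation of \cite{Wegner(2012)}.

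Second, and more seriously, the transfer is not a fiberwise join or smash. The construction (\Cref{dfn:transfer}) is a reduced balanced product
\[
\trans^{\alpha,d}_X(Y)=\sing{\alpha,d}{X,Y}\rightthreetimes_{\cellsPlus Y}\cC_Y,
\]
where $\cC_Y$ is the tautological $\cellsPlus Y$--diagram of subcomplexes of $Y$ and $\sing{\alpha,d}{X,Y}(c)$ is the fat realization of $\delta_{\abs c}$--small singular simplices in the strictification $M\Gamma$ of the homotopy-coherent action, with the bound $\delta_{\abs c}$ depending on the dimension of $c$. The reason for this elaborate setup is a functoriality obstruction you do not anticipate: the transfer is \emph{not} functorial on arbitrary controlled maps, only on the subcategory of \emph{cellwise $0$--controlled} regular morphisms (\Cref{def:0-controlled-morphism}). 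The paper handles this by showing, via Waldhausen's Approximation Theorem and the cylinder functor, that the $K$--theory of this subcategory agrees with that of the full obstruction category (\Cref{prop:domain-of-transfer}), and then defines the transfer on a colimit of such subcategories filtered by control bound $\alpha$ and dimension $d$. Arbitrary morphisms can still be transferred, but only at the cost of enlarging the constants $(\alpha,d)$ in the target; this weaker construction is what is used to show that the transfer preserves weak equivalences (\Cref{prop:transfer-exact}), not to define the functor. This bookkeeping --- cell-dependent singular-simplex spaces, the cellwise-$0$-controlled restriction, and the comparison maps $\rho^{\alpha,\alpha',d,d'}$ between transfers for different parameters --- is the genuine technical heart, and a naive join/smash would not survive contact with it.
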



\subsection{The Meta--Isomorphism Conjecture for functors from spaces to spectra with coefficients}
\label{subsec:The_Meta_Isomorphism_Conjecture_for_functors_from_spaces_to_spectra_with_c}

Let $G$ be a group and $Z$ be a $G$--CW--complex.  Define a covariant $\Or(G)$--spectrum
\begin{equation}
  \bfS^G_Z \colon\Or(G) \to \SPECTRA, 
\quad G/H \mapsto \bfS(G/H \times_G Z),
  \label{bfS:upper_G_Z}
\end{equation}
where  $G/H \times_G Z$ is the orbit space of the diagonal $G$--action on $G/H \times Z$.
Notice that there is an obvious homeomorphism $G/H \times_G Z \xrightarrow{\cong} Z/H$.
Denote by $H_n^G(-;\bfS^G_Z)$ the $G$--homology theory  in the sense of~\cite[Section~1]{Lueck(2002b)} which is associated to  $\bfS^G_Z$ 
by the construction of~\cite[Proposition~156 on page~795]{Lueck-Reich(2005)} and satisfies 
$H_n^G(G/H;\bfS^G_Z) \cong \pi_n(\bfS^G_Z(G/H)) = \pi_n(\bfS(Z/H))$ for any homogeneous $G$--space $G/H$ and $n \in \IZ$.

\begin{conjecture}[Meta-Isomorphism Conjecture for functors from spaces to
  spectra with coefficients]
  \label{con:Meta_Isomorphisms_Conjecture_for_functors_from_spaces_to_spectra_with_c}
  Let $\bfS \colon \SPACES\to \SPECTRA$ be a covariant functor which respects
  weak equivalences and disjoint unions. The group $G$ satisfies the
  \emph{Meta-Isomorphism Conjecture for $\bfS$ with coefficients}
  \index{Conjecture!Meta-Isomorphism Conjecture for functors from spaces to
  spectra with coefficients} with respect to the family $\calf$ of subgroups
  of $G$ if for any free $G$--CW--complex $Z$ the assembly map
  \[
  H^G_n(\pr;\bfS^G_Z) \colon H^G_n(\EGF{G}{\calf};\bfS^G_Z) \to
  H_n^G(G/G;\bfS^G_Z) = \pi_n(\bfS(Z/G)),
  \]
  induced by the projection $\pr \colon \EGF{G}{\calf} \to G/G$, is bijective
  for all $n \in \IZ$.
\end{conjecture}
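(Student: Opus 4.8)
Since this conjecture is phrased as a property that a group $G$ may or may not have relative to the functor $\bfS$ and the family $\calf$, what one actually establishes in the body of the paper is that specific groups enjoy it for $\bfS=\bfA$ and $\calf=\calvcyc$; the plan for that, adapting the transfer method of Bartels--L\"uck from $K$--theory to $A$--theory, is as follows. First I would reformulate the assembly map through controlled topology: following Ullmann--Winges, for a free $G$--CW--complex $Z$ the homotopy (co)fibre of the map $H^G_n(\EGF{G}{\calvcyc};\bfA^G_Z)\to\pi_n(\bfA(Z/G))$ is the $A$--theory of a Waldhausen category $\calr$ of retractive spaces over $\EGF{G}{\calvcyc}\times[1,\infty)$ subject to suitable $G$--equivariance, finiteness, support and continuous-control conditions. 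It then suffices to prove that $\bfA(\calr)$ is weakly contractible, and for that it is enough to exhibit, after a controlled deformation of the identity functor of $\calr$ towards the $\infty$--end, an Eilenberg swindle.

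Second, I would supply the geometric input that powers the deformation. For $G$ hyperbolic or $\mathrm{CAT}(0)$ this is the associated flow space $FS$ together with, for every $N$, a $G$--invariant open cover of $G\times FS$ of dimension $\le N$ whose members have isotropy groups lying in $\calvcyc$ and which is long in the flow direction (Bartels--L\"uck, Wegner). The nerve of such a cover is a finite-dimensional $\calvcyc$--CW--complex, and combining a $G$--map to this nerve with a contracting map $G\times[1,\infty)\to FS$ produces, on each bounded piece, homotopies with arbitrarily good control near infinity.

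Third --- and this is the genuinely new ingredient relative to the $K$--theoretic proofs --- one must transport these homotopies through the category of retractive spaces rather than through geometric modules. I would define a transfer functor that sends a controlled retractive space $Y$ over $G\times[1,\infty)$ to the retractive space obtained by pulling $Y$ back, fibrewise and cellwise, along a thickened contracting map, using the cells of the nerve cover as ``coefficients''; one then checks that this functor is exact, respects all control and support conditions, and that its composite with the evident projection is controlled-homotopic to the identity. Because the $A$--theory of these categories of parametrized spaces behaves like a homology theory in the covering variable, the nerve decomposition reduces everything to pieces whose isotropy lies in $\calvcyc$, and those are annihilated by the support condition.

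I expect this third step to be the main obstacle. Finite retractive CW--complexes do not form an additive category, so the transfer cannot be a tensor product with a finitely generated module but must be an honest parametrized construction; one has to realize the flow-theoretic homotopies \emph{cellwise} and \emph{boundedly} in the space-level model and check that they cohere --- this is the role of the ``cellwise $0$-controlled morphisms'' of Bartels and Bubenzer. Non-connectivity adds a layer of bookkeeping: the whole argument has to be run on the multi-parameter controlled categories that model the Ullmann--Winges delooping, so that the swindle is available in every degree. Granting the conjecture for hyperbolic and $\mathrm{CAT}(0)$ groups, the remaining cases listed in the main theorem --- virtually poly-cyclic groups, cocompact lattices in almost connected Lie groups, and fundamental groups of manifolds of dimension $\le 3$ --- together with the inheritance properties, follow formally from the controlled-category description and standard manipulations of equivariant homology theories, exactly as in the $K$-- and $L$--theoretic Farrell--Jones conjectures.
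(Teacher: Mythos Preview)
The statement you were asked to address is a \emph{conjecture}, i.e.\ a definition of a property a group may or may not possess; it carries no proof in the paper. You correctly recognise this and instead sketch the strategy for the paper's main result (\Cref{the:main_result}), so let me compare your sketch with what the paper actually does.

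Your outline is on the right track but differs from the paper in one structural respect. You describe the geometric input in terms of flow spaces and long thin open covers, as in the original Bartels--L\"uck arguments. The paper instead axiomatises this step away entirely: it introduces the notion of \emph{homotopy transfer reducible} (\Cref{def:transfer-reducible}), whose data are a compact contractible metric space $X$ with a homotopy coherent $G$--action $\Gamma$, a $G$--simplicial complex $\Sigma$ with $\calf$--isotropy, and an $(S,n)$--equivariant map $f\colon X\to\Sigma$. The flow-space geometry is then invoked only indirectly, via the implication ``strongly transfer reducible $\Rightarrow$ homotopy transfer reducible'' (\Cref{thm:strongly-trans-reducible}) together with Wegner's results. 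This abstraction is not merely cosmetic: the transfer in \Cref{The Transfer: Final Part of the Proof} is built from $\Gamma$ and $X$, not from a flow space or a cover.

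Your description of the transfer itself is also somewhat off. It is not a fibrewise pullback along a contracting map with nerve-cells as coefficients. Rather, one first strictifies the homotopy coherent $G$--action on $X$ to an honest $G$--space $M\Gamma$, takes filtered subcomplexes of its singular complex with a diameter constraint, and then forms a \emph{reduced balanced product} $\sing{\alpha,d}{X,Y}\rightthreetimes_{\cellsPlus Y}\cC_Y$ over the poset of cells of $Y$. The map $f\colon X\to\Sigma$ only enters afterwards, via the functor $F$ in the diagram of \Cref{prop:main-diag-claims}, to push the transferred object into a category where squeezing applies. Your intuition that functoriality is the delicate point is exactly right, and the fix via cellwise $0$--controlled morphisms and the filtration $\cR^G_f(W,\JJ(E))_{\alpha,d}$ is indeed the key technical device, as you say.
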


\begin{example}[$Z = EG$] \label{exa:Z_is_EG} If we take $Z = EG$ in
  \Cref{con:Meta_Isomorphisms_Conjecture_for_functors_from_spaces_to_spectra_with_c},
  then
  \Cref{con:Meta_Isomorphisms_Conjecture_for_functors_from_spaces_to_spectra_with_c}
  reduces to
  \Cref{con:Meta_Isomorphisms_Conjecture_for_functors_from_spaces_to_spectra}.
  Namely,  for a $G$--set $S$ let $\calt^G(S)$ be its \emph{transport groupoid} whose set of objects is $S$,
  the set of morphisms from $s_1$ to $s_2$ is the set $\{g \in G \mid s_2 = gs_1\}$ and composition comes from the 
  multiplication in $G$. There is a homotopy equivalence $B\calt^G(G/H) \xrightarrow{\simeq} G/H \times_G EG$
  which is natural in $G/H$. Hence we get a weak homotopy equivalence of $\Or(G)$--spectra
  $\bfS^B(\calt^G(G/?)) \xrightarrow{\simeq} \bfS^G_{EG} $. It induces an isomorphism of $G$--homology theories,
  see~\cite[Lemma~4.6]{Davis-Lueck(1998)} 
  \[
  H_*^G(-;\bfS^B) \xrightarrow{\cong}   H_*^G(-;\bfS^G_{EG}).
  \]
\end{example}

\begin{remark}[Relation to the original formulation]
  \label{rem:relation_to_original}
  In~\cite[Section~1.7 on page~262]{Farrell-Jones(1993a)} Farrell and Jones formulate a
  fibered version of their conjectures for a covariant functor $\bfS \colon \SPACES \to \SPECTRA$ for
  every (Serre) fibration $\xi \colon Y \to X$ over a
  connected CW-complex $X$.  In our set-up this corresponds to choosing $Z$ to be the
  total space of the fibration obtained from $Y \to X$ by pulling back along the universal
  covering $\widetilde{X} \to X$. This space $Z$ is a free $G$--CW--complex  for $G= \pi_1 (X)$.
  Note that  every free $G$--CW--complex $Z$ can always be obtained in this fashion
  from the fiber bundle  $EG \times_G Z \to BG$ up to $G$--homotopy, 
  compare~\cite[Corollary~2.2.1 on page~263]{Farrell-Jones(1993a)}. 

  We sketch the proof of this identification. Let $A$ be a $G$--CW--complex.  Let 
  $f \colon \cale(X) \to X$ be the map obtained by taking the quotient of the 
  $G   = \pi_1(X)$--action on the $G$--map $A \times \widetilde{X} \to \widetilde{X}$ given by the
  projection. Denote by $\widehat{p} \colon \cale(\xi) \to \cale(X)$ the pullback of
  $\xi$ with $f$. Let $q\colon \cale(\xi) \to A/G$ be the composite of 
  $\widehat{p}$ with the map $\cale(X) \to A/G$ induced by the projection $A \times \widetilde{X} \to A$.
  This is a stratified fibration and one can consider the spectrum
  $\IH(A/G;\cals(q))$ in the sense of Quinn~\cite[Section~8]{Quinn(1979a)}. Put
  \[
  H_n^G(A;\xi) := \pi_n(\IH(A/G;\cals(q))).
  \]
  The projection $\pr \colon A \to G/G$ induces a map
  \begin{equation}
  a(A) \colon  \IH(A/G;\cals(q)) \to  \IH(G/G;\cals(Y\to G/G)) = \bfS(Y),
  \label{assembly_a_la_Quinn}
\end{equation} 
which is the assembly map in~\cite[Section~1.7 on page~262]{Farrell-Jones(1993a)} if we take $A = \EGF{G}{\calvcyc}$.  The construction of $H_n^G(A;\xi) := \IH(A/G;\cals(q))$ is very complicated, but,
fortunately, for us only two facts are relevant.  We obtain a $G$--homology
theory $H_n^G(-;\xi)$ and for every $H \subseteq G$ we get a natural
identification $H_n^G(G/H;\xi) = \bfS^G_Z(G/H)$. Hence the
functor $\GCWCOMPLEXES \to \SPECTRA$ given by $A \mapsto \IH(A/G;\cals(q))$ is
weakly excisive and its restriction to $\Or(G)$ is the functor $\bfS^G_Z$. We conclude 
from~\cite[Theorem~6.3]{Davis-Lueck(1998)} that the map~\eqref{assembly_a_la_Quinn} can
be identified with the map induced by the projection $A \to G/G$
\[
H_n^G(A;\bfS^G_Z) \to H_n^G(G/G;\bfS^G_Z) = \pi_n(\bfS(Z/G)) = \pi_n(\bfS(Y)),
\]
which appears in Meta-Isomorphism Conjecture~\ref{con:Meta_Isomorphisms_Conjecture_for_functors_from_spaces_to_spectra_with_c}
for functors from spaces to spectra with coefficients.
\end{remark}

\begin{remark}[The condition free is necessary in
  \Cref{con:Meta_Isomorphisms_Conjecture_for_functors_from_spaces_to_spectra_with_c}]
  The \label{rem:The_condition_free_is_necessary_in_MC_spaces_spectra_with_coeff}
  \Cref{con:Meta_Isomorphisms_Conjecture_for_functors_from_spaces_to_spectra_with_c}
  is only true very rarely if we drop the condition that $Z$ is free. Take for instance $Z = G/G$.
  Then
  \Cref{con:Meta_Isomorphisms_Conjecture_for_functors_from_spaces_to_spectra_with_c}
  predicts that the projection $\EGF{G}{\calf}/G \to G/G$ induces for all $n \in \IZ$ an
  isomorphism
  \[
  H_n(\pr;\bfS(\pt)) \colon H_n(\EGF{G}{\calf}/G;\bfS(\pt)) \to H_n(\pt,\bfS(\pt))
  \]
  where $H_*(-;\bfS(\pt))$ is the (non-equivariant) homology theory associated to the
  spectrum $\bfS(\pt)$.  This statement is in general wrong, except in extreme cases such
  as $\calf = \calall$.
\end{remark}

\begin{conjecture}[The Farrell--Jones Conjecture for $A$--theory with coefficients]
  \label{con:FJC_for_A-theory_with_c}
  A group $G$ satisfies the \emph{Farrell--Jones Conjecture for $A$--theory with coefficients} if the
  Meta-Isomorphism   Conjecture~\ref{con:Meta_Isomorphisms_Conjecture_for_functors_from_spaces_to_spectra_with_c}
  with coefficients 
  holds for $\bfA \colon \SPACES \to \SPECTRA $ and the family $\calvcyc$,
  i.e., for
  every $n \in \IZ$ and free $G$--CW--complex $Z$ the projection $\EGF{G}{\calvcyc} \to G/G$ induces an isomorphism
  \[
  H_n^G(\pr;\bfA^G_Z) \colon H^G_n(\EGF{G}{\calvcyc};\bfA^G_Z) 
  \to H_n^G(G/G;\bfA^G_Z) =  \pi_n(\bfA(Z/G)).
  \]
\end{conjecture}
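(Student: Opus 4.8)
The statement fixes what must be verified for an individual group $G$: that the displayed assembly map for the family $\calvcyc$ of virtually cyclic subgroups is bijective for every $n \in \IZ$ and every free $G$--CW--complex $Z$. The plan is to establish this for the groups in \Cref{the:main_result}, the hyperbolic and $\mathrm{CAT}(0)$ cases being decisive, by the controlled-algebra method of Bartels--L\"uck adapted to Waldhausen's $A$--theory. By \Cref{lem:bfA_respects_weak_equivalences_and_disjoint_unions} the functor $\bfA$ respects weak equivalences and disjoint unions, so the $\Or(G)$--homology theory $H^G_\ast(-;\bfA^G_Z)$ of \Cref{con:Meta_Isomorphisms_Conjecture_for_functors_from_spaces_to_spectra_with_c} exists and satisfies the disjoint union axiom, and its value at $G/G$ is $\pi_\ast(\bfA(Z/G))$ as asserted. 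The first step is the standard reduction: by the transitivity principle one may treat $\calvcyc$ directly, and a Davis--L\"uck style comparison shows that the assembly map is an isomorphism precisely when a relative \emph{obstruction category} --- a Waldhausen category of finite retractive CW--complexes over a suitable $G$--space, equipped with a control condition coming from a finite--dimensional geometric model of $G$ together with its cyclic subgroups --- has vanishing (non-connective) $A$--theory. Here one uses the model of $\bfA$ from~\cite{Ullmann-Winges(2015)}; the coefficient complex $Z$ enters only as the base over which the retractive complexes live, so the ``with coefficients'' version is no harder than the plain one once the framework is set up.

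Next I would import the geometric input. For hyperbolic groups~\cite{Bartels-Lueck(2012annals)} and $\mathrm{CAT}(0)$--groups~\cite{Wegner(2012)} there are finite--dimensional flow spaces together with long, thin $\calvcyc$--covers of finite multiplicity. Exactly as in those $K$--theoretic proofs, such covers produce self-maps of the obstruction category which are, up to controlled homotopy, ``pushes towards infinity'' in the $[1,\infty)$--direction of $G \times [1,\infty)$; the resulting flasqueness of the obstruction category forces its $A$--theory to vanish, which is what is needed. The single point where $A$--theory behaves differently from $K(\IZ G)$ is that these self-maps must now be realized at the level of retractive spaces, and this is where a transfer construction is required.

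The heart of the proof --- and the step I expect to be the main obstacle --- is the construction of an $A$--theoretic transfer replacing the transfer on geometric modules of the $K$--theoretic argument. One has to transfer \emph{spaces} rather than modules, fibrewise over the total space of a bundle with compact manifold fibres (a disc bundle, or the fibres of the flow space), so the operation is not additive on the nose and cannot be written down cell by cell in the naive way. The tasks are to define it as an exact functor of Waldhausen categories, to check its compatibility with the control conditions, and to show that the composite ``transfer then forget'' is connected, through a controlled homotopy, to a map which moves supports only by a controllably small amount; this last property, together with the almost-equivariance of the geometric model, is exactly what makes the contracting self-maps of the previous step act on the obstruction category. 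Establishing exactness and this homotopy coherence in the retractive-spaces setting is the substantial new work, and it is carried out in \Cref{The Transfer: Final Part of the Proof} using the categories of ``cellwise $0$-controlled morphisms''.

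Finally, granting the hyperbolic and $\mathrm{CAT}(0)$ cases --- and the virtually poly-cyclic case already settled in~\cite{Ullmann-Winges(2015)} --- the remaining classes in \Cref{the:main_result}~\ref{the:main_result:groups} follow formally. The inheritance properties of \Cref{the:main_result}~\ref{the:main_result:inheritance}, namely passage to subgroups, finite direct products and finite free products, directed colimits, finite--index overgroups, and the extension statement, combine with the argument of~\cite{Bartels-Farrell-Lueck(2014)} to handle cocompact lattices in almost connected Lie groups and, via three--dimensional geometrization, fundamental groups of manifolds of dimension $\le 3$; the strengthening to finite wreath products is obtained by running the same machinery for the groups $G \wr F$ with $F$ finite. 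This verifies the conjecture above for every group in \Cref{the:main_result}.
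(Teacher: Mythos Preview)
The statement you were asked to address is \Cref{con:FJC_for_A-theory_with_c}, which is a \emph{conjecture}, i.e., a definition of a property that a group $G$ may or may not satisfy. It is not a theorem and the paper does not (and cannot) give a proof of it as stated: there is nothing to prove. The displayed assembly map and its bijectivity constitute the \emph{condition} that defines when $G$ satisfies the conjecture; the statement itself asserts nothing.

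What you have written is instead a sketch of the proof of \Cref{the:main_result}, the paper's main theorem, which establishes that certain \emph{specific} classes of groups satisfy this conjecture (and its strengthening with finite wreath products). Your outline of that argument is broadly accurate and follows the paper's structure: reduction to vanishing of the obstruction category via \cite{Ullmann-Winges(2015)}, geometric input from homotopy transfer reducibility for hyperbolic and $\mathrm{CAT}(0)$--groups via \cite{Bartels-Lueck(2012annals),Wegner(2012)}, the new $A$--theoretic transfer on controlled retractive spaces restricted to cellwise $0$--controlled morphisms, and then the inheritance properties together with \cite{Bartels-Farrell-Lueck(2014)} for the remaining cases. But none of this belongs under \Cref{con:FJC_for_A-theory_with_c}; it is the content of Sections~5--7.
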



\subsection{The Meta--Isomorphism Conjecture for functors from spaces to spectra with coefficients and finite wreath products}
\label{subsec:The_Meta_Isomorphism_Conjecture_for_functors_from_spaces_to_spectra_with_c_and_fwp}

There are also versions with finite wreath products.
Recall that for groups $G$ and $F$ their \emph{wreath product $G\wr F$} is defined to be the semi-direct product
$\left(\prod_F G \right) \rtimes F$, where $F$ acts on $\prod_F G$ by permuting the factors.
Fix a class of groups $\calc$ which is closed under
isomorphisms, taking subgroups and taking quotients. Examples are the classes 
$\calfin$ and $\calvcyc$ of finite and of virtually cyclic groups.
For a group $G$ define the family of subgroups $\calc(G) :=   \{ K \subseteq G, K \in \calc\}$.

\begin{conjecture}[The Meta-Isomorphism Conjecture for functors from spaces to spectra
  with coefficients and finite wreath products]
  \label{con:The_Meta_Isomorphisms_Conjecture_for_functors_from_spaces_to_spectra_with_c_and_fwp}
  Let $\bfS \colon \SPACES\to \SPECTRA$ be a covariant functor which respects weak
  equivalences and disjoint unions. The group $G$ satisfies the \emph{Meta-Isomorphism
    Conjecture with coefficients and finite wreath products} for the functor
  $\bfS \colon \SPACES \to \SPECTRA$ with respect to the class $\calc$  of groups,
   if, for any finite group $F$, the wreath product $G \wr F$ satisfies the 
  Meta-Isomorphism   Conjecture~\ref{con:Meta_Isomorphisms_Conjecture_for_functors_from_spaces_to_spectra_with_c}
  with coefficients for the functor $\bfS \colon \SPACES \to \SPECTRA$ with respect to the family $\calc(G \wr F)$ of   subgroups of $G$.
\end{conjecture}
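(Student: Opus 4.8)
The statement is a definition rather than a theorem, so there is, strictly speaking, nothing to prove: \Cref{con:The_Meta_Isomorphisms_Conjecture_for_functors_from_spaces_to_spectra_with_c_and_fwp} merely introduces, for a fixed functor $\bfS$ and a fixed class $\calc$ of groups, the terminology ``$G$ satisfies the Meta-Isomorphism Conjecture with coefficients and finite wreath products''. The plan is therefore only to record the handful of elementary compatibility checks that make the clause on the right-hand side well posed and relate it to the conjectures stated before it.

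First I would note that \Cref{con:Meta_Isomorphisms_Conjecture_for_functors_from_spaces_to_spectra_with_c} is formulated for an \emph{arbitrary} family of subgroups, so it is legitimate to feed it the collection $\calc(G \wr F) = \{ K \subseteq G \wr F \mid K \in \calc \}$; because $\calc$ is assumed closed under isomorphisms and under passage to subgroups, $\calc(G \wr F)$ is closed under taking subgroups and under conjugation inside $G \wr F$, hence is a family of subgroups in the required technical sense, and the coefficient conjecture genuinely applies to it. Second, I would observe that specialising $F$ to the trivial group gives $G \wr F = G$ and $\calc(G \wr F) = \calc(G)$, so the notion just defined implies the plain coefficient version of \Cref{con:Meta_Isomorphisms_Conjecture_for_functors_from_spaces_to_spectra_with_c} for the family $\calc(G)$; thus the wreath-product version is an \emph{a priori} strengthening, which is the only substantive remark to be made. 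Third, I would check that the classes $\calfin$ and $\calvcyc$ do satisfy the standing hypotheses on $\calc$ (closure under isomorphisms, subgroups and quotients), so that the definition applies in the two cases used in this paper.

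The hard part is \emph{absent}: no obstacle can arise in a purely definitional statement. The reason for isolating this stronger property is structural, not logical --- the finite-wreath-product clause is what makes the inheritance arguments of \Cref{the:main_result}~\ref{the:main_result:inheritance}, in particular passage to subgroups and to finite-index overgroups, run exactly as in the $K$- and $L$-theoretic Farrell--Jones setting. Verifying that $\bfS = \bfA$ and $\calc = \calvcyc$ actually satisfy this conjecture for the groups listed in \Cref{the:main_result}~\ref{the:main_result:groups} is where the real work lies, and it is carried out in the subsequent sections via the controlled-algebra method of Bartels--L\"uck adapted to Waldhausen categories of controlled retractive spaces.
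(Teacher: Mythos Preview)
Your assessment is correct: the statement is a definition (packaged as a conjecture), and the paper gives no proof because none is required. Your ancillary remarks about well-posedness of $\calc(G \wr F)$ as a family and the specialisation $F = \{1\}$ recovering \Cref{con:Meta_Isomorphisms_Conjecture_for_functors_from_spaces_to_spectra_with_c} are accurate and appropriate context, though the paper itself does not spell them out.
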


\begin{conjecture}[The Farrell--Jones Conjecture for $A$--theory with coefficients and finite wreath products]
  \label{con:FJC_for_A-theory_with_c_and_fwp} A group $G$ satisfies
the \emph{Farrell--Jones Conjecture for $A$--theory with coefficients and finite
  wreath products} if the Meta-Isomorphism
Conjecture~\ref{con:The_Meta_Isomorphisms_Conjecture_for_functors_from_spaces_to_spectra_with_c_and_fwp}
with  coefficients and finite wreath products holds for $\bfA \colon \SPACES \to \SPECTRA $ and the class $\calvcyc$ of virtually cyclic groups.
\end{conjecture}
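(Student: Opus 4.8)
The statement in question is the one that introduces the class $\calfj_A$: a group $G$ lies in $\calfj_A$ precisely when the wreath‑product version (Meta‑Isomorphism Conjecture~\ref{con:The_Meta_Isomorphisms_Conjecture_for_functors_from_spaces_to_spectra_with_c_and_fwp}) holds for $\bfA\colon\SPACES\to\SPECTRA$ and the class $\calvcyc$. So to ``prove'' it for a concrete $G$ one must first unwind this definition into a workable assembly statement, and this is where I would begin. By \Cref{con:The_Meta_Isomorphisms_Conjecture_for_functors_from_spaces_to_spectra_with_c_and_fwp}, $G\in\calfj_A$ is equivalent to requiring that for every finite group $F$ the wreath product $G\wr F$ satisfies \Cref{con:Meta_Isomorphisms_Conjecture_for_functors_from_spaces_to_spectra_with_c} for $\bfA$ and the family $\calvcyc(G\wr F)$; unwinding that conjecture in turn, one must show that for every finite $F$ and every \emph{free} $(G\wr F)$--CW--complex $Z$ the projection $\EGF{G\wr F}{\calvcyc}\to (G\wr F)/(G\wr F)$ induces an isomorphism $H^{G\wr F}_n(\EGF{G\wr F}{\calvcyc};\bfA^{G\wr F}_Z)\xrightarrow{\cong}\pi_n(\bfA(Z/(G\wr F)))$ for all $n\in\IZ$. \Cref{lem:bfA_respects_weak_equivalences_and_disjoint_unions} is the input that makes this machinery applicable: it records that $\bfA$ respects weak equivalences, disjoint unions, and filtered homotopy colimits, so that the Davis--L\"uck formalism of \Cref{sec:The_Isomorphism_Conjecture} produces the required equivariant homology theory.

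The second step is to put this assembly statement into the shape demanded by the established Farrell--Jones machinery. I would invoke the transitivity principle — it suffices to prove the isomorphism relative to a larger family, provided the members of that family themselves satisfy the conjecture relative to $\calvcyc$ — and combine it with the inheritance results of \Cref{sec:Inheritance_properties_of_the_Isomorphism_Conjectures} (passage to subgroups, to finite overgroups, to directed colimits, and to extensions with $\calvcyc$--controlled preimages), which together reduce the family $\calvcyc(G\wr F)$ essentially to $\calfin(G\wr F)$ plus control over the cyclic directions. One then aims to verify, for each $G\wr F$, the abstract criterion of Bartels--L\"uck \cite{Bartels-Lueck(2012annals)} (transfer reducibility over a suitable family: a $\calvcyc$--cover, or a flow space, of bounded dimension admitting arbitrarily fine long and thin control), whose geometric input — for hyperbolic groups and $\mathrm{CAT}(0)$--groups — is furnished by the constructions in \Cref{Proof of the Farrell--Jones Conjecture for hyperbolic and CAT(0)--groups}, following Wegner~\cite{Wegner(2012)}.

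The genuinely new ingredient — and the step I expect to be the main obstacle — is that $A$--theory is assembled from Waldhausen categories of controlled retractive spaces rather than from categories of geometric modules, so the Bartels--L\"uck criterion cannot be applied verbatim. Following \cite{Ullmann-Winges(2015)} one must replace $\bfA^{G\wr F}_Z$ by an equivalent spectrum built from Waldhausen categories of $Z$--controlled retractive CW--complexes, and then construct an \emph{$A$--theoretic transfer}: given the pullback of the $\calvcyc$--cover (equivalently, a suitable fibration), an exact functor on these Waldhausen categories that expands control by a bounded amount, carries the contracting homotopy available on the source to an ``almost equivariant'' homotopy on the target, and induces the expected map on $A$--theory, so that the usual contradiction — a class that is at once controlled and contractible — can be run. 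Arranging this transfer homotopy‑coherently at the spectrum level, and checking that the relevant finiteness obstructions and Nil phenomena do not obstruct the argument, is the technical heart; this is what \Cref{The Transfer: Final Part of the Proof} is devoted to. Once the transfer is in hand the reductions above convert the abstract criterion into the desired assembly isomorphism, and the remaining classes of groups are reached by the inheritance properties together with the argument of \cite{Bartels-Farrell-Lueck(2014)}.
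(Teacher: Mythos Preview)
The statement you were given is not a theorem but a \emph{definition}: it merely says what it means for a group $G$ to satisfy the Farrell--Jones Conjecture for $A$--theory with coefficients and finite wreath products, by specialising \Cref{con:The_Meta_Isomorphisms_Conjecture_for_functors_from_spaces_to_spectra_with_c_and_fwp} to the functor $\bfA$ and the class $\calvcyc$. The paper gives no proof of it because there is nothing to prove; the statement introduces terminology and records a conjecture.

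Your proposal is instead a sketch of how to show that specific groups (hyperbolic, $\mathrm{CAT}(0)$, etc.) \emph{satisfy} this conjecture --- that is, you are outlining the proof of \Cref{the:main_result}, not of \Cref{con:FJC_for_A-theory_with_c_and_fwp}. As a sketch of the main theorem your outline is broadly accurate (inheritance reductions, transfer reducibility, the Ullmann--Winges controlled-retractive-space model, and the $A$--theoretic transfer of \Cref{The Transfer: Final Part of the Proof}), but it does not address the statement you were asked about. For that statement the correct response is simply: this is a definition, and no proof is required or given.
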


The next two lemmas will be needed later.

\begin{lemma}\label{lem:bfS_homology_theory}
Let $\bfE$ be a spectrum such that $\bfS \colon \SPACES \to \SPECTRA$ is given by $Y \mapsto Y_+ \wedge \bfE$. 

\begin{enumerate}

\item \label{lem:bfS_homology_theory:pr}
Then for any group $G$, any $G$--CW--complex $X$ which is contractible (after forgetting
the $G$--action), and any free $G$--CW--complex $Z$ the projection $X \to G/G$ induces for
all $n \in \IZ$ an isomorphism
\[
H_n^G(X;\bfS^G_Z) \xrightarrow{\cong} H_n^G(G/G;\bfS^G_Z).
\]

\item \label{lem:bfS_homology_theory:MCwc}
\Cref{con:Meta_Isomorphisms_Conjecture_for_functors_from_spaces_to_spectra},
\Cref{con:Meta_Isomorphisms_Conjecture_for_functors_from_spaces_to_spectra_with_c} 
and \Cref{con:The_Meta_Isomorphisms_Conjecture_for_functors_from_spaces_to_spectra_with_c_and_fwp}
hold for such an $\bfS$ for every group $G$ and every family $\calf$ of subgroups of $G$.
\end{enumerate}
\end{lemma}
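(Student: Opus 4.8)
The plan is to identify, for a functor of the special shape $\bfS(Y) = Y_+ \wedge \bfE$, the equivariant homology theory $H^?_*(-;\bfS^G_Z)$ with the (unreduced) $\bfE$--homology of an associated Borel construction, and then to deduce both assertions from an elementary statement about contractible--fibre maps. The key step is:

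\emph{Claim.} For every group $G$, every $G$--CW--complex $X$, and every free $G$--CW--complex $Z$ there is an isomorphism
\[
H_n^G(X;\bfS^G_Z) \;\cong\; \pi_n\bigl((X \times_G Z)_+ \wedge \bfE\bigr)
\]
natural in $X$, under which the map induced by the projection $X \to G/G$ becomes the map induced on $\bfE$--homology by $X \times_G Z \to Z/G$.

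To prove the Claim I would note that the $\Or(G)$--spectrum $\bfS^G_Z$ is the composite of the covariant $\Or(G)$--space $G/H \mapsto G/H \times_G Z$ with the functor $Y \mapsto Y_+ \wedge \bfE$. Since smashing with the fixed spectrum $\bfE$ commutes with the coend $\map_G(G/?,X)_+ \wedge_{\Or(G)} (-)$ computing $H^G_*(X;-)$, it suffices to identify $\map_G(G/?,X)_+ \wedge_{\Or(G)} (G/? \times_G Z)_+$ with $(X \times_G Z)_+$; this is immediate for homogeneous spaces $X = G/K$ (both sides are canonically $(Z/K)_+$ via $G/K \times_G Z \cong Z/K$) and extends to all $G$--CW--complexes because both sides send cells to cells and commute with pushouts and filtered colimits. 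Alternatively, one observes that $X \mapsto \pi_n\bigl((X \times_G Z)_+ \wedge \bfE\bigr)$ is a $G$--homology theory satisfying the disjoint union axiom whose associated $\Or(G)$--spectrum is precisely $\bfS^G_Z$, and appeals to uniqueness of the equivariant homology theory attached to a (weakly equivalent) $\Or(G)$--spectrum, \cite[Lemma~4.6]{Davis-Lueck(1998)}.

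Granting the Claim, \Cref{lem:bfS_homology_theory:pr} amounts to showing that $X \times_G Z \to Z/G$ is a weak homotopy equivalence whenever $X$ is contractible after forgetting the $G$--action. Since $Z$ is free, $X \times Z$ is again a free $G$--CW--complex, and the $G$--equivariant projection $X \times Z \to Z$ is a homotopy equivalence because $X$ is contractible; as passage to the quotient by a free $G$--action preserves weak homotopy equivalences between free $G$--CW--complexes, the induced map $X \times_G Z \to Z/G$ is a weak homotopy equivalence, hence an isomorphism on $\bfE$--homology. For \Cref{lem:bfS_homology_theory:MCwc}, observe first that $\bfS(Y) = Y_+ \wedge \bfE$ does respect weak equivalences and disjoint unions, so the three conjectures make sense. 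The classifying space $\EGF{G}{\calf}$ is a $G$--CW--complex which, after forgetting the action, is contractible: its fixed points for the trivial subgroup form all of $\EGF{G}{\calf}$, and this is contractible since the trivial subgroup lies in every family. Hence \Cref{lem:bfS_homology_theory:pr} applied to $X = \EGF{G}{\calf}$ proves \Cref{con:Meta_Isomorphisms_Conjecture_for_functors_from_spaces_to_spectra_with_c} for this $\bfS$, for every group $G$, every family $\calf$, and every free $G$--CW--complex $Z$. Then \Cref{con:Meta_Isomorphisms_Conjecture_for_functors_from_spaces_to_spectra} is the case $Z = EG$ via the equivalence $\bfS^B \simeq \bfS^G_{EG}$ of \Cref{exa:Z_is_EG}, and \Cref{con:The_Meta_Isomorphisms_Conjecture_for_functors_from_spaces_to_spectra_with_c_and_fwp} follows at once, since the case already established applies in particular to $G \wr F$ with the family $\calc(G \wr F)$.

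The only point that is not purely formal is the Claim. The care needed there is either to fix a point-set model for equivariant homology in which the coend identity $\map_G(G/?,X)_+ \wedge_{\Or(G)} (G/? \times_G Z)_+ \cong (X \times_G Z)_+$ holds on the nose, so that the reduction over cells and colimits is valid, or else to route the argument entirely through the uniqueness of the homology theory associated to the $\Or(G)$--spectrum $\bfS^G_Z$; once the Claim is in hand, the weak equivalence $X \times_G Z \to Z/G$ and the bookkeeping in \Cref{lem:bfS_homology_theory:MCwc} are straightforward.
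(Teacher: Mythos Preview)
Your proposal is correct and follows essentially the same approach as the paper: the paper establishes your Claim via the explicit chain $\map_G(G/?,X)_+ \wedge_{\Or(G)} \bigl((G/? \times_G Z)_+ \wedge \bfE\bigr) \cong \bigl((\map_G(G/?,X) \times_{\Or(G)} G/?) \times_G Z\bigr)_+ \wedge \bfE \cong (X \times_G Z)_+ \wedge \bfE$, invoking the $G$--homeomorphism $\map_G(G/?,X) \times_{\Or(G)} G/? \cong X$ of \cite[Theorem~7.4(1)]{Davis-Lueck(1998)} for the second step (which is the on-the-nose coend identity you sketch), and then argues exactly as you do that $X \times_G Z \to Z/G$ is a homotopy equivalence because $Z$ is free and $X$ is contractible. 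Part~\ref{lem:bfS_homology_theory:MCwc} is deduced from part~\ref{lem:bfS_homology_theory:pr} in one line, just as you indicate.
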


An $\bfS$ given by $Y \mapsto Y_+ \wedge \bfE$ is a homology theory, and thus
the lemma states that the conjectures hold for homology theories.

\begin{proof}~\ref{lem:bfS_homology_theory:pr}
There are   natural isomorphisms of spectra
\begin{eqnarray*}
\lefteqn{\map_G((G/?),X)_+ \wedge_{\Or(G)} \bigl((G/? \times_G Z)_+ \wedge \bfE\bigr)}
\\
& \xrightarrow{\cong} &
\bigl((\map_G((G/?),X) \times_{\Or(G)} G/?) \times_G Z\bigr)_+ \wedge \bfE
\\
& \xrightarrow{\cong} &
 (X \times_G Z)_+ \wedge \bfE,
\end{eqnarray*}
where the second isomorphism comes from the $G$--homeomorphism
\[
\map_G((G/?),X) \times_{\Or(G)} G/? \xrightarrow{\cong} X
\]
of~\cite[Theorem~7.4~(1)]{Davis-Lueck(1998)}.  Since
$Z$ is a free $G$--CW--complex and $X$ is contractible (after forgetting the group action),
the projection $X \times_G Z \to G/G \times_G Z$ is a homotopy equivalence and hence
induces a weak homotopy equivalence
\[
(X \times_G Z)_+ \wedge \bfE \xrightarrow{\simeq} (G/G \times_G Z)_+ \wedge \bfE,
\]
Thus we get a weak homotopy equivalence
\[
\map_G((G/?),X)_+ \wedge_{\Or(G)} \bigl((G/? \times_G Z)_+ \wedge \bfE\bigr)
\to (G/G \times_G Z)_+ \wedge \bfE.
\]
Under the identifications coming from the definitions
\begin{eqnarray*}
H_n^G(X;\bfS^G_Z)  
& = & 
\pi_n\left(\map_G((G/?),X)_+ \wedge_{\Or(G)} \bigl((G/? \times_G Z)_+ \wedge \bfE\bigr)\right),
\\
H_n^G(G/G;\bfS^G_Z) 
& = & 
\pi_n\left((G/G \times_G Z)_+ \wedge \bfE\right),
\end{eqnarray*}
this weak homotopy equivalence induces on homotopy groups the isomorphism
$H_n^G(X;\bfS^G_Z) \to H_n^G(G/G;\bfS^G_Z)$.
\\[1mm]~\ref{lem:bfS_homology_theory:MCwc} This follows from assertion~\ref{lem:bfS_homology_theory:pr}.
\end{proof}

\begin{lemma}
  \label{lem:cofibration_sequences_of_bfS}
  Let $\bfS, \bfT, \bfU \colon \SPACES\to \SPECTRA$ be covariant functors which respect
  weak equivalences and disjoint unions.  Let $\bfi \colon \bfS \to \bfT$ and $\bfp \colon
  \bfT \to \bfU$ be natural transformations such that for any space $Y$ the sequence of spectra
  $\bfS(Y) \xrightarrow{\bfi(Y)} \bfT(Y) \xrightarrow{\bfp(Y)} \bfU(Y)$ is up to weak
  homotopy equivalence a cofiber sequence of spectra.

  \begin{enumerate}

  \item \label{lem:cofibration_sequences_of_bfS:long_exact_sequence} 
  Then we obtain for every group $G$  and all $G$--CW--complexes $X$ and $Z$ a
  natural long exact sequence
\begin{multline*}
\cdots \to H_n^G(X;\bfS_Z^G) \to H_n^G(X;\bfT_Z^G) \to H_n^G(X;\bfU_Z^G) 
\\
\to H_{n-1}^G(X;\bfS_Z^G) \to H_{n-1}^G(X;\bfT_Z^G) \to H_{n-1}^G(X;\bfU_Z^G) \to \cdots \;.
\end{multline*}

\item \label{lem:cofibration_sequences_of_bfS:MCwc}  
Let $G$ be a group and  $\calf$ be a family of subgroups of $G$.
Then Meta-Isomorphism Conjecture~\ref{con:Meta_Isomorphisms_Conjecture_for_functors_from_spaces_to_spectra} 
for functors from spaces to spectra holds for all three functors $\bfS$, $\bfT$ and $\bfU$ for $(G,\calf)$ 
if  it holds  for two of the  functors $\bfS$, $\bfT$ and $\bfU$ for $(G,\calf)$.

The analogous statement is true for the Meta-Isomorphism 
Conjecture~\ref{con:Meta_Isomorphisms_Conjecture_for_functors_from_spaces_to_spectra_with_c}
for functors from spaces to spectra with coefficients and for the
Meta-Isomorphism Conjecture~\ref{con:The_Meta_Isomorphisms_Conjecture_for_functors_from_spaces_to_spectra_with_c_and_fwp} 
for functors from spaces to spectra with coefficients and finite wreath products.
\end{enumerate}
\end{lemma}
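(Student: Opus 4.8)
The plan is to lift the given pointwise cofiber sequence of spectra to a cofiber sequence of $\Or(G)$--spectra and then exploit that each of the equivariant homology theories $H_*^G(-;\bfS^G_Z)$, $H_*^G(-;\bfT^G_Z)$, $H_*^G(-;\bfU^G_Z)$ arises by applying one and the same homotopy-invariant, exact construction to these $\Or(G)$--spectra.

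For assertion~\ref{lem:cofibration_sequences_of_bfS:long_exact_sequence}, recall that $\bfS^G_Z$ is the covariant $\Or(G)$--spectrum $G/H \mapsto \bfS(G/H \times_G Z)$, and likewise for $\bfT^G_Z$ and $\bfU^G_Z$. Evaluating $\bfi$ and $\bfp$ at the space $G/H \times_G Z$ for every object $G/H$ of $\Or(G)$ exhibits $\bfS^G_Z \xrightarrow{\bfi} \bfT^G_Z \xrightarrow{\bfp} \bfU^G_Z$ as an objectwise, hence diagram-level, cofiber sequence of $\Or(G)$--spectra up to weak equivalence; since $H_*^G(X;-)$ depends only on the stable weak equivalence type of its $\Or(G)$--spectrum coefficient, we may treat it as a genuine cofiber sequence of $\Or(G)$--spectra. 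By the L\"uck--Reich construction, $H_n^G(X;\bfS^G_Z) = \pi_n\bigl(\map_G(G/?,X)_+ \wedge_{\Or(G)} \bfS^G_Z\bigr)$, and the derived balanced smash product $\map_G(G/?,X)_+ \wedge_{\Or(G)} (-)$, being built from homotopy colimits of spectra, sends a cofiber sequence of $\Or(G)$--spectra to a cofiber sequence of spectra. As a cofiber sequence of spectra is also a fiber sequence, its long exact homotopy sequence is precisely the claimed long exact sequence; naturality in $X$ and $Z$ is inherited from the functoriality of $X \mapsto \map_G(G/?,X)_+$ and of $Z \mapsto \bfS^G_Z,\bfT^G_Z,\bfU^G_Z$.

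For assertion~\ref{lem:cofibration_sequences_of_bfS:MCwc}, fix a free $G$--CW--complex $Z$ and apply part~\ref{lem:cofibration_sequences_of_bfS:long_exact_sequence} to $X = \EGF{G}{\calf}$ and to $X = G/G$. By naturality in $X$ for the projection $\pr \colon \EGF{G}{\calf} \to G/G$, the two long exact sequences fit into a commutative ladder whose vertical maps are the three assembly maps $H_*^G(\pr;\bfS^G_Z)$, $H_*^G(\pr;\bfT^G_Z)$, $H_*^G(\pr;\bfU^G_Z)$. If two of these are isomorphisms in all degrees, then the five lemma, applied to an appropriate five-term segment of the ladder, shows the third is an isomorphism in all degrees; this proves the statement for \Cref{con:Meta_Isomorphisms_Conjecture_for_functors_from_spaces_to_spectra_with_c}. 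The case of \Cref{con:Meta_Isomorphisms_Conjecture_for_functors_from_spaces_to_spectra} follows by the same argument applied to the cofiber sequence of $\GROUPOIDS$--spectra $\bfS^B \to \bfT^B \to \bfU^B$ obtained by evaluating $\bfi$ and $\bfp$ at the classifying spaces $B\calg$ (alternatively, by reducing to the case $Z = EG$ via \Cref{exa:Z_is_EG}). Finally, \Cref{con:The_Meta_Isomorphisms_Conjecture_for_functors_from_spaces_to_spectra_with_c_and_fwp} follows by applying the case of \Cref{con:Meta_Isomorphisms_Conjecture_for_functors_from_spaces_to_spectra_with_c} just established to the wreath product $G \wr F$ with the family $\calc(G \wr F)$, for every finite group $F$.

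The only step that requires genuine care is the exactness of the balanced smash product over $\Or(G)$ on cofiber sequences of $\Or(G)$--spectra; this is standard in the Davis--L\"uck/L\"uck--Reich framework, since the construction is a homotopy colimit and homotopy colimits of spectra preserve cofiber sequences. The remaining technicality --- upgrading ``cofiber sequence up to weak homotopy equivalence'' to a strict cofiber sequence --- is harmless, because $H_*^G(X;-)$ is insensitive to stable equivalences of $\Or(G)$--spectra, which are detected objectwise.
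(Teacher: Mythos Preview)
Your proof is correct and follows essentially the same approach as the paper: the paper invokes the spectrum version of \cite[Theorem~3.11]{Davis-Lueck(1998)} to see that $\map_G(G/?,X)_+ \wedge_{\Or(G)} (-)$ preserves cofiber sequences of $\Or(G)$--spectra up to weak equivalence, and then appeals to the Five-Lemma for part~\ref{lem:cofibration_sequences_of_bfS:MCwc}. Your version simply unpacks these steps in more detail, in particular spelling out the ladder of long exact sequences and the reduction of the three conjecture variants to one another.
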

\begin{proof}~\ref{lem:cofibration_sequences_of_bfS:long_exact_sequence} 
The version for spectra of~\cite[Theorem~3.11]{Davis-Lueck(1998)} implies that we obtain, up to weak homotopy equivalence, a cofiber sequence of spectra
\begin{multline*}
\map_G(G/?,X)_+ \wedge_{\Or(G)} \bfS(G/? \times_{G} Z) \to \map_G(G/?,X)_+ \wedge_{\Or(G)} \bfT(G/? \times_{G} Z) 
\\
\to \map_G(G/?,X)_+ \wedge_{\Or(G)} \bfU(G/? \times_{G} Z).
\end{multline*}%
and passing to its associated long exact sequence of homotopy groups yields the result.
\ref{lem:cofibration_sequences_of_bfS:MCwc}
This follows from assertion~\ref{lem:cofibration_sequences_of_bfS:long_exact_sequence} 
and the Five-Lemma.
\end{proof}


\section{Relations between the conjectures for various theories}
\label{sec:Relations_between_the_various_theories}

There are other prominent covariant functors $\SPACES \to \SPECTRA$ which
respect weak homotopy equivalences and disjoint unions. Notice in the sequel
that we are always considering the non-connective versions. We are thinking of
the stable pseudo-isotopy spectrum $\bfP^{\CAT}$ and the Whitehead spectrum
$\bfWh^{\CAT}$, where $\CAT$ can be the topological category $\TOP$, the
$PL$--category $\PL$ or the smooth category $\DIFF$. For the
definition of $\bfP^{\CAT}$
we refer to~\cite{EnkelmannPhD, PieperPhD, Weiss-Williams(1988)}. 

Usually, the Whitehead spectrum is defined as a connective spectrum,
see~\cite{Jahren-Rognes-Waldhausen(2013)}, and see \cite[Section~2.2]{Weiss-Williams(1993)} for a definition of the classical assembly
map. We make the obvious generalization.
\begin{dfn}\label{def: non-conn whspectrum}
Let $\CAT$ be $\TOP$ or $\PL$. The \emph{topological or piecewise-linear
non-connective Whitehead spectrum} $\bfWh^{\CAT}(X)$ is the homotopy cofiber
of the classical assembly map in non-connective $A$--theory:
\[
 X_+ \wedge \bfA(\pt) \to \bfA(X) \to \bfWh^{\CAT}(X).
\]
Further, we define the \emph{smooth non-connective Whitehead spectrum}
$\bfWh^{\DIFF}(X)$ as the homotopy cofiber of the sequence
\[
 \Sigma^{\infty} X_+ \to \bfA(X) \to \bfWh^{\DIFF}(X)
\]
where $\Sigma^{\infty} X_+ \to \bfA(X)$ factors as the unit map $\Sigma^\infty X_+ = X_+\wedge\Ss \to X_+ \wedge \bfA(\pt)$ and assembly.
\end{dfn}

\begin{theorem}[Relations between the various functors]
\label{the:relations_between_the_various_functors}
\
\begin{enumerate}

\item \label{the:relations_between_the_various_functors:calp_PL_and_wh_PL}
There is a zigzag of natural equivalences,
\[
\bfP^{\CAT} \stackrel{\simeq}{\longleftrightarrow} \Omega^2\bfWh^{\CAT},
\]
where $\CAT$ can be taken to be $\TOP$, $\PL$, or $\DIFF$.

\item  \label{the:relations_between_the_various_functors:wh_TOP_and_wh_PL}
The canonical map
\[
\bfP^{\PL} \xrightarrow{\simeq}  \bfP^{\TOP}
\]
is a natural equivalence.
\end{enumerate}
\end{theorem}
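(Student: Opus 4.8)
The plan is to reduce both assertions to the stable parametrized $h$-cobordism theorem of~\cite{Jahren-Rognes-Waldhausen(2013)}, together with the standard comparison between stabilized pseudo-isotopy spaces and stabilized $h$-cobordism spaces, and then to promote the resulting equivalences of spaces to equivalences of non-connective spectra using the deloopings constructed in~\cite{EnkelmannPhD, PieperPhD}.

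For part~\ref{the:relations_between_the_various_functors:calp_PL_and_wh_PL}, fix $\CAT \in \{\TOP, \PL, \DIFF\}$. The first step is to invoke the stable parametrized $h$-cobordism theorem, which gives a natural equivalence $\mathcal{H}^{\CAT}(X) \simeq \Omega\, \mathrm{Wh}^{\CAT}(X)$ between $\mathcal{H}^{\CAT}(X)$, the space of stable $h$-cobordisms on $X$ in the category $\CAT$, and the loop space of the connective Whitehead space $\mathrm{Wh}^{\CAT}(X)$; in the smooth category this is Waldhausen's theorem~\cite{Waldhausen(1985)}, and one uses the variant in which the Whitehead space is the cofiber of the unit map $\Sigma^{\infty} X_+ \to A(X)$ so that it is compatible with \Cref{def: non-conn whspectrum}. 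The second step is the identification $\mathcal{P}^{\CAT}(X) \simeq \Omega\, \mathcal{H}^{\CAT}(X)$, expressing that a stabilized $\CAT$-pseudo-isotopy of $X$ is a loop of $h$-cobordisms; this is built into the construction of $\bfP^{\CAT}$ recalled in~\cite{EnkelmannPhD, PieperPhD, Weiss-Williams(1988)}. Splicing these two natural equivalences gives a zigzag of natural equivalences of spaces $\mathcal{P}^{\CAT}(X) \simeq \Omega^2\, \mathrm{Wh}^{\CAT}(X)$. It then remains to lift this to the non-connective level: the non-connective deloopings of $\bfP^{\CAT}$ and of $\bfA$, and hence of $\bfWh^{\CAT}$, are produced by the same Bass--Heller--Swan procedure, and I would check that under the zigzag the structure maps of the two spectra correspond, so that it extends to a zigzag $\bfP^{\CAT} \stackrel{\simeq}{\longleftrightarrow} \Omega^2\, \bfWh^{\CAT}$ of non-connective spectra; on non-positive homotopy groups the two sides agree because in that range everything is computed from lower algebraic $K$-theory through the Bass--Heller--Swan splittings of~\cite{Huettemann-Klein-Waldhausen-Williams(2001), Huettemann-Klein-Vogell-Waldhausen-Williams(2002)}.

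Part~\ref{the:relations_between_the_various_functors:wh_TOP_and_wh_PL} is then essentially formal. By \Cref{def: non-conn whspectrum}, the non-connective Whitehead spectra $\bfWh^{\PL}(X)$ and $\bfWh^{\TOP}(X)$ are \emph{both} defined as the homotopy cofiber of the same classical assembly map $X_+ \wedge \bfA(\pt) \to \bfA(X)$, so $\bfWh^{\PL} = \bfWh^{\TOP}$ as functors $\SPACES \to \SPECTRA$. Combined with part~\ref{the:relations_between_the_various_functors:calp_PL_and_wh_PL} this produces a zigzag of natural equivalences $\bfP^{\PL} \simeq \Omega^2\, \bfWh^{\PL} = \Omega^2\, \bfWh^{\TOP} \simeq \bfP^{\TOP}$. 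To see that this zigzag realizes the canonical transformation $\bfP^{\PL} \to \bfP^{\TOP}$ obtained by forgetting the PL structure, I would appeal to the naturality of the stable parametrized $h$-cobordism theorem and of the pseudo-isotopy/$h$-cobordism comparison with respect to this forgetful map, together with the fact that the resulting map $\mathrm{Wh}^{\PL}(X) \to \mathrm{Wh}^{\TOP}(X)$ is the identity in the model of \Cref{def: non-conn whspectrum}; hence the forgetful map itself is an equivalence.

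The hard part will not be any of the geometry --- that is imported wholesale from~\cite{Jahren-Rognes-Waldhausen(2013), Waldhausen(1985)} and the thesis references --- but two pieces of bookkeeping: first, checking that the space-level equivalences of part~\ref{the:relations_between_the_various_functors:calp_PL_and_wh_PL} lift compatibly along the non-connective deloopings of $\bfP^{\CAT}$ and $\bfWh^{\CAT}$; and second, for part~\ref{the:relations_between_the_various_functors:wh_TOP_and_wh_PL}, verifying that the geometrically defined comparison map $\bfP^{\PL} \to \bfP^{\TOP}$ is carried to the identity on the Whitehead spectrum by these $h$-cobordism equivalences. Both are naturality checks which I expect to be routine given the explicit models of~\cite{EnkelmannPhD, PieperPhD, Ullmann-Winges(2015)}, but they are where the actual work lies.
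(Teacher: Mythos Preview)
Your approach to part~\ref{the:relations_between_the_various_functors:calp_PL_and_wh_PL} matches the paper's: both reduce to the stable parametrized $h$-cobordism theorem of~\cite{Jahren-Rognes-Waldhausen(2013)} for the connective, objectwise statement and defer the functoriality and non-connective delooping issues to~\cite{EnkelmannPhD,PieperPhD}. Your route through $\mathcal{H}^{\CAT}$ is the standard one and is exactly what the paper records later in Remark~\ref{rem:stable_space_of_h-cobordism}.

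For part~\ref{the:relations_between_the_various_functors:wh_TOP_and_wh_PL} you take a genuinely different route. The paper cites Burghelea--Lashof~\cite{Burghelea-Lashof(1974)} for the objectwise equivalence $\bfP^{\PL}(M)\simeq\bfP^{\TOP}(M)$ directly, and then defers the naturality upgrade to~\cite{EnkelmannPhD}. You instead observe that $\bfWh^{\PL}=\bfWh^{\TOP}$ holds tautologically from \Cref{def: non-conn whspectrum} and then feed this back through part~\ref{the:relations_between_the_various_functors:calp_PL_and_wh_PL}. This is slick, but be careful about circularity: the identification $\bfP^{\TOP}(M)\simeq\Omega^2\bfWh^{\TOP}(M)$ in the literature is typically obtained \emph{via} the $\PL$ case together with the Burghelea--Lashof comparison, not independently. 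Unless you can point to a proof of the $\TOP$ case of the stable parametrized $h$-cobordism theorem that does not already invoke $\bfP^{\PL}\simeq\bfP^{\TOP}$, your argument for~\ref{the:relations_between_the_various_functors:wh_TOP_and_wh_PL} does not actually avoid~\cite{Burghelea-Lashof(1974)}; it just hides it inside your proof of~\ref{the:relations_between_the_various_functors:calp_PL_and_wh_PL} for $\CAT=\TOP$. The paper's direct citation makes this logical dependence transparent.
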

\begin{proof}
 The connective, objectwise case of \ref{the:relations_between_the_various_functors:calp_PL_and_wh_PL} follows from the equivalence $\bfP(M)\simeq \Omega^2\bfWh^{PL}(M)$, which was originally stated in~\cite{Waldhausen(1978)} and fully proved in~\cite[Theorem~0.2]{Jahren-Rognes-Waldhausen(2013)}.
 
 There are some issues concerning the full functoriality of pseudo-isotopy, which will be clarified in \cite{EnkelmannPhD} and \cite{PieperPhD}. The full statement will be established in \cite{PieperPhD}.
 
 The objectwise version of \ref{the:relations_between_the_various_functors:wh_TOP_and_wh_PL} has been shown in \cite{Burghelea-Lashof(1974)}. An argument for the full statement will be given in \cite{EnkelmannPhD}.

\end{proof}

\begin{lemma} \label{lem:equivalence_of_FJ_for_the_various_functors} 
  If the Meta-Isomorphism
  Conjecture~\ref{con:Meta_Isomorphisms_Conjecture_for_functors_from_spaces_to_spectra}
  for functors from spaces to spectra holds for the group $G$ and the family $\calf$
  for one of the functors $ \bfA$, $\bfWh^{\TOP}$, $\bfWh^{\PL}$, $\bfWh^{\DIFF}$,
  $\bfP^{\TOP}$, $\bfP^{\PL}$, and $\bfP^{\DIFF}$, then it holds for all of them.

  The analogous statement holds for the Meta-Isomorphism
  Conjecture~\ref{con:Meta_Isomorphisms_Conjecture_for_functors_from_spaces_to_spectra_with_c}
  for functors from spaces to spectra with coefficients and for the Meta-Isomorphism
  Conjecture~\ref{con:The_Meta_Isomorphisms_Conjecture_for_functors_from_spaces_to_spectra_with_c_and_fwp}
  for functors from spaces to spectra with coefficients and finite wreath products.
\end{lemma}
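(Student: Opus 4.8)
The plan is to reduce everything to the three-functors-out-of-two principle of \Cref{lem:cofibration_sequences_of_bfS}~\ref{lem:cofibration_sequences_of_bfS:MCwc} together with the equivalences recorded in \Cref{the:relations_between_the_various_functors}. First I would observe that the maps appearing in \Cref{def: non-conn whspectrum} assemble into natural transformations of functors $\SPACES \to \SPECTRA$: for $\CAT = \TOP$ or $\PL$ we have the cofiber sequence of functors $(-)_+ \wedge \bfA(\pt) \to \bfA \to \bfWh^{\CAT}$, and for $\CAT = \DIFF$ the cofiber sequence $\Sigma^\infty(-)_+ \to \bfA \to \bfWh^{\DIFF}$, both objectwise cofiber sequences of spectra in the sense required by \Cref{lem:cofibration_sequences_of_bfS}. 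The left-hand functors $(-)_+ \wedge \bfA(\pt)$ and $\Sigma^\infty(-)_+ = (-)_+ \wedge \Ss$ are of the form $Y \mapsto Y_+ \wedge \bfE$, hence by \Cref{lem:bfS_homology_theory}~\ref{lem:bfS_homology_theory:MCwc} satisfy all three versions of the Meta-Isomorphism Conjecture for every $G$ and every family $\calf$. Consequently, by \Cref{lem:cofibration_sequences_of_bfS}~\ref{lem:cofibration_sequences_of_bfS:MCwc} applied to each of these two cofiber sequences, the conjecture holds for $\bfA$ if and only if it holds for $\bfWh^{\TOP}$, and similarly for $\bfWh^{\PL}$ and for $\bfWh^{\DIFF}$. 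This already shows that $\bfA$, $\bfWh^{\TOP}$, $\bfWh^{\PL}$, $\bfWh^{\DIFF}$ are all equivalent as far as the conjecture is concerned.

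Next I would bring in the pseudo-isotopy functors. By \Cref{the:relations_between_the_various_functors}~\ref{the:relations_between_the_various_functors:calp_PL_and_wh_PL} there is, for each $\CAT \in \{\TOP, \PL, \DIFF\}$, a zigzag of natural equivalences $\bfP^{\CAT} \simeq \Omega^2 \bfWh^{\CAT}$. A natural equivalence of functors induces an isomorphism of the associated equivariant homology theories compatible with assembly maps, so the conjecture holds for $\bfP^{\CAT}$ if and only if it holds for $\Omega^2 \bfWh^{\CAT}$; and $\Omega^2$ (a twofold desuspension of spectra, applied objectwise) only shifts homotopy groups by two and commutes with the relevant homotopy-colimit constructions, so the conjecture holds for $\Omega^2 \bfWh^{\CAT}$ if and only if it holds for $\bfWh^{\CAT}$. (One should note here that $\Omega^2 \bfWh^{\CAT}$ still respects weak equivalences and disjoint unions, which is immediate since $\Omega^2$ of a wedge maps to the product and the two agree up to weak equivalence on bounded-below spectra, or alternatively one simply works with the spectrum-level looping.) Combining with the previous paragraph, $\bfP^{\TOP}$, $\bfP^{\PL}$, $\bfP^{\DIFF}$ join the equivalence class, and \Cref{the:relations_between_the_various_functors}~\ref{the:relations_between_the_various_functors:wh_TOP_and_wh_PL} is then already subsumed (though it also gives $\bfP^{\PL} \simeq \bfP^{\TOP}$ directly).

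Finally I would note that the identical argument goes through verbatim with coefficients and with coefficients-and-finite-wreath-products: \Cref{lem:cofibration_sequences_of_bfS}~\ref{lem:cofibration_sequences_of_bfS:MCwc} and \Cref{lem:bfS_homology_theory}~\ref{lem:bfS_homology_theory:MCwc} are stated for all three versions of the conjecture, and passing to $\bfS^G_Z$ and to wreath products $G \wr F$ is functorial in $\bfS$, so natural transformations, objectwise cofiber sequences and natural equivalences of functors $\SPACES \to \SPECTRA$ are preserved. I do not expect a genuine obstacle here; the only point requiring a little care is the bookkeeping that the transformations in \Cref{def: non-conn whspectrum} and the equivalences of \Cref{the:relations_between_the_various_functors} are honestly \emph{natural in the space variable} (so that they induce maps of $\Or(G)$-spectra and of equivariant homology theories), which is exactly what \Cref{the:relations_between_the_various_functors} asserts. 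The argument is therefore a formal consequence of the two preceding lemmas and the structural theorem, and I would present it in precisely that order: wreath-product and coefficient cases reduce to the absolute case by functoriality, the absolute case for the Whitehead spectra reduces to $\bfA$ via the defining cofiber sequences and \Cref{lem:bfS_homology_theory}, and the pseudo-isotopy spectra are attached via the $\Omega^2$-equivalences of \Cref{the:relations_between_the_various_functors}.
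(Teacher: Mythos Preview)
Your proposal is correct and follows essentially the same approach as the paper: the paper's proof is the single line ``This follows from \Cref{lem:bfS_homology_theory} and \Cref{lem:cofibration_sequences_of_bfS},'' and your argument is precisely the unpacking of that line, together with the (implicitly assumed) input from \Cref{the:relations_between_the_various_functors} needed to link $\bfP^{\CAT}$ with $\bfWh^{\CAT}$. Your added remarks on the $\Omega^2$ shift and on naturality are the right sanity checks, and nothing more is required.
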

\begin{proof} 
This follows from \Cref{lem:bfS_homology_theory} and \Cref{lem:cofibration_sequences_of_bfS}.
\end{proof}

\begin{remark}[The non-connective spectrum of stable $h$--cobordisms]
  \label{rem:stable_space_of_h-cobordism}
  There is also the non-connective stable $h$--cobordism spectrum $\bfH^{\CAT}(M)$ of a compact manifold (possibly with boundary) $M$. Note that $h$--cobordisms are (usually) only defined as a functor in codimension zero embeddings. As such, they are related to the previous functors. For every compact manifold $M$ (possibly with boundary) there are natural weak homotopy equivalences
  \[
  \bfH^{\CAT}(M) \xrightarrow{\simeq} \Omega\bfWh^{\CAT}(M)
  \]
  and
  \[
  \bfP^{\CAT}(M)\xrightarrow{\simeq} \Omega\bfH^{\CAT}(M).
  \]
  For the proof and more information we refer to \cite{Jahren-Rognes-Waldhausen(2013)}.
\end{remark}

Finally, we explain the relationship between $A$--theory and algebraic $K$--theory of integral group rings.

For a space $X$ denote its fundamental groupoid by $\Pi(X)$. There is a so called \emph{linearization map}, natural in $X$,
\begin{eqnarray}
\bfL(X) \colon \bfA(X) & \to & \bfK(\IZ \Pi_1(X))
\label{bfl_bfA_to_bfK}
\end{eqnarray}
The next result follows combining~\cite[Section~4]{Vogell(1991)} 
and~\cite[Proposition~2.2 and Proposition~2.3]{Waldhausen(1978)}.

\begin{theorem}[Connectivity of the linearization map]
  \label{the:Connectivity_of_the_linearization_map}
  Let $X$ be a  CW--complex. Then:

\begin{enumerate}

\item \label{the:Connectivity_of_the_linearization_map:2_connected}
The linearization map $\bfL(X)$ of~\eqref{bfl_bfA_to_bfK} is $2$--connected, i.e., the map
\[
L_n(X)  := \pi_n(\bfL(X)) \colon A_n(X) \to K_n(\IZ \Pi(X))
\]
is bijective for $n \le 1$ and surjective for $n = 2$.

\item \label{the:Connectivity_of_the_linearization_map:rational} 
The map $L_n$ is rationally  bijective for all $n \in \IZ$,  provided that each component of $X$ is aspherical.
\end{enumerate}
\end{theorem}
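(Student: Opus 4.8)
The plan is to reduce to a connected space and then assemble the two cited ingredients. First I would dispose of the disconnected case: both $\bfA$ and $X \mapsto \bfK(\IZ\Pi(X))$ carry a disjoint union $\coprod_i X_i$ to the wedge $\bigvee_i$ of their values on the pieces --- the former by \Cref{lem:bfA_respects_weak_equivalences_and_disjoint_unions}, the latter since $\Pi(\coprod_i X_i)=\coprod_i\Pi(X_i)$ and $\bfK$ turns a disjoint union of groupoids into a wedge --- and both preserve weak equivalences, while $\bfL$ is natural. So it suffices to treat a connected CW--complex $X$; write $\pi:=\pi_1(X)$. Since $\Pi(X)$ is then equivalent as a groupoid to $\pi$, one has $\bfK(\IZ\Pi(X))\simeq\bfK(\IZ\pi)$, so that $L_n(X)$ becomes the ordinary linearization $A_n(X)\to K_n(\IZ\pi)$; and for assertion~\ref{the:Connectivity_of_the_linearization_map:rational}, connectedness together with asphericity of the components forces $X\simeq B\pi$.

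For assertion~\ref{the:Connectivity_of_the_linearization_map:2_connected} I would recall that $\bfL$ is induced by the exact functor of Waldhausen categories sending a finite retractive relative CW--complex $(X\to Y\to X)$ to the reduced cellular chain complex of $(\widetilde{Y},\widetilde{X})$, a bounded complex of finitely generated free $\IZ\pi$--modules, which in low degrees realizes the classical comparison invariants. Then \cite[Proposition~2.2 and Proposition~2.3]{Waldhausen(1978)} give that $L_0$ and $L_1$ are isomorphisms (the finiteness obstruction, respectively Whitehead torsion) and that $L_2$ is surjective; since a non-connective delooping changes homotopy groups only in negative degrees, these statements transfer from the connective $A$--theory space to $\bfA$ verbatim. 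For $n\le 0$ I would instead invoke \cite[Section~4]{Vogell(1991)}, which says precisely that the linearization of the non-connective deloopings is an isomorphism on $\pi_n$ for $n\le 0$ --- the fact already used in the proof of \Cref{lem:bfA_respects_weak_equivalences_and_disjoint_unions}. Combining the two ranges yields that $\bfL(X)$ is $2$--connected.

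For assertion~\ref{the:Connectivity_of_the_linearization_map:rational} the guiding point is that for aspherical $X$ the loop space $\Omega X$ is homotopy equivalent to the discrete group $\pi$. Under Waldhausen's identification $\bfA(X)\simeq\bfK(\IS[\Omega X])$ \cite{Waldhausen(1985)}, the linearization becomes the map on $K$--theory induced by the map of connective ring spectra $\IS[\Omega X]\to\IZ\pi$ (linearize $\IS\to H\IZ$, then apply $\pi_0$ to the loop space); for aspherical $X$ this is equivalent to $\IS[\pi]=\Sigma^{\infty}\pi_+\to H\IZ[\pi]=\IZ\pi$, a $\pi_0$--isomorphism of connective ring spectra whose homotopy fibre is the smash product of the $1$--connected cover of $\IS$ with $\pi_+$ and hence has torsion homotopy groups, so the ring map is a rational equivalence. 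That $\bfK$ sends it to a rational equivalence $\bfK(\IS[\pi])\to\bfK(\IZ\pi)$ is the content of \cite[Section~4]{Vogell(1991)}; this gives rational bijectivity of $L_n$ for all $n\in\IZ$. The one genuinely delicate point in all of this is bookkeeping rather than mathematics: Waldhausen's and Vogell's results are phrased for connective $A$--theory, so one must know they are compatible with the non-connective model of \cite{Ullmann-Winges(2015)} employed here, and that the linearization map is set up compatibly on the non-connective level --- which is part of what is carried out in \cite{PieperPhD}. Granting that, the proof is just the assembly described above, with Vogell's rational computation as the only substantive input.
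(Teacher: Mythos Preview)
Your proposal is correct and follows the paper's approach: the paper itself gives no detailed proof, only the one-line attribution ``The next result follows combining \cite[Section~4]{Vogell(1991)} and \cite[Proposition~2.2 and Proposition~2.3]{Waldhausen(1978)}'' preceding the statement, and you invoke exactly these two inputs with a reasonable sketch of how they combine. One small correction on bookkeeping: the fact that $\bfK$ carries the rational equivalence $\IS[\pi]\to H\IZ[\pi]$ to a rational equivalence is Waldhausen's contribution (this is essentially what \cite[Proposition~2.2]{Waldhausen(1978)} provides), not Vogell's; the role of \cite[Section~4]{Vogell(1991)} is to supply the isomorphism in non-positive degrees, which then covers the non-connective extension for both parts of the theorem.
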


This implies that the $K$--theoretic Farrell--Jones Conjecture for $\IZ G$ and the $A$--theoretic Farrell--Jones
Conjecture for $\bfA(BG)$ are equivalent in degree $\le 1$ and rationally equivalent
in all degrees. More precisely, we have

\begin{theorem}[Relating A-theory to algebraic $K$--theory]
\label{the:Relating_A-theory_to_algebraic_K_theory}
Consider a group $G$ and a family $\calf$ of subgroups of $G$.
The linearization map~\eqref{bfl_bfA_to_bfK} and the projection $\EGF{G}{\calf} \to G/G$ yield
a commutative diagram
\[
\xymatrix{H_n^G(\EGF{G}{\calf};\bfA^B) \ar[r] \ar[d]
&
H_n^G(G/G;\bfA^B) = A_n(BG) \ar[d]
\\
H_n^G(\EGF{G}{\calf};\bfK_{\IZ}) \ar[r] 
&
H_n^G(G/G;\bfK_{\IZ}) = K_n(\IZ G)
}
\]
where $\bfK_{\IZ} \colon \GROUPOIDS \to \SPECTRA$ has been recalled in
\Cref{exa:The_K-and_L-theoreticFJC}.  The vertical arrows are
bijective for $n \le 1$ and surjective for $n =2$. They are rationally
bijective for all $n \in \IZ$.
\end{theorem}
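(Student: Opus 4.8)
The plan is to deduce everything from the naturality of the linearization map and from \Cref{the:Connectivity_of_the_linearization_map}. First I would observe that the linearization map \eqref{bfl_bfA_to_bfK} is natural in $X$ and hence induces a natural transformation of covariant functors $\SPACES \to \SPECTRA$, which after composing with the fundamental-groupoid functor and the Davis--L\"uck machinery yields a natural transformation of $\Or(G)$-spectra from $\bfA^B$ to $\bfK_{\IZ}$ (using $\bfK_{\IZ}(\calg) = \bfK(\IZ\calg)$ and the fact that for a connected space $X$ the fundamental groupoid is equivalent to the group $\pi_1(X)$, so that $B\calg$ linearizes to $\IZ\pi_1(B\calg)$). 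This gives the commutative square in the statement: the horizontal maps are the assembly maps induced by $\EGF{G}{\calf} \to G/G$, and the vertical maps are induced on $G$-homology by the natural transformation $\bfL$. The identification of the right-hand terms with $A_n(BG)$ and $K_n(\IZ G)$ is the standard identification $H_n^G(G/G; \bfS^B) = \pi_n(\bfS(BG))$ recalled in \Cref{subsec:The_Meta_Isomorphism_Conjecture_for_functors_from_spaces_to_spectra}, applied to $\bfS = \bfA$ and to $\bfK_{\IZ}$.

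Next I would address the vertical map on the right, $H_n^G(G/G;\bfA^B) \to H_n^G(G/G;\bfK_{\IZ})$, i.e. $A_n(BG) \to K_n(\IZ G)$: this is precisely $L_n(BG)$, so \Cref{the:Connectivity_of_the_linearization_map}~\ref{the:Connectivity_of_the_linearization_map:2_connected} gives bijectivity for $n \le 1$ and surjectivity for $n = 2$, and part~\ref{the:Connectivity_of_the_linearization_map:rational} gives rational bijectivity for all $n$ because $BG$ is aspherical (each component, being $BG$ itself, is a $K(G,1)$). For the vertical map on the left, $H_n^G(\EGF{G}{\calf};\bfA^B) \to H_n^G(\EGF{G}{\calf};\bfK_{\IZ})$, I would argue cell by cell: both sides are $G$-homology theories, so by a Mayer--Vietoris / colimit argument it suffices to check the claim on orbits $G/H$ for $H$ in $\calf$. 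On $G/H$ the map is $A_n(BH) \to K_n(\IZ H)$, which is again $L_n(BH)$, and since $BH$ is aspherical \Cref{the:Connectivity_of_the_linearization_map} applies verbatim: bijective for $n \le 1$, surjective for $n = 2$, rationally bijective for all $n$.

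Then I would promote the orbit-wise statement to the statement for the whole $G$-CW-complex $\EGF{G}{\calf}$ by the usual spectral-sequence or skeletal-induction argument: the linearization induces a map of Atiyah--Hirzebruch-type spectral sequences (or one filters $\EGF{G}{\calf}$ by skeleta and uses the long exact sequences of pairs together with the Five Lemma and a $\colim$-argument for the infinite-dimensional case), and a natural transformation of $G$-homology theories which is an isomorphism (resp. surjection, resp. rational isomorphism) in the relevant range on all orbits $G/H$ with $H \in \calf$ is an isomorphism (resp. surjection, resp. rational isomorphism) in that range on all $G$-CW-complexes with isotropy in $\calf$. Applying this with the range "$n \le 1$ iso, $n = 2$ epi" and with "rational iso in all degrees" gives the claim about the two vertical arrows. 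Finally, commutativity of the square is immediate from naturality of $\bfL$, so no further work is needed there.

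The main obstacle I anticipate is bookkeeping rather than conceptual: one must be careful that the range in which $L_n$ is an isomorphism/surjection is stable under the skeletal induction. Surjectivity at $n=2$ combined with bijectivity below is exactly the input needed to run the Five Lemma at each stage and conclude the same range for the colimit; the rational statement is cleaner since rational bijectivity in all degrees is trivially preserved by the spectral-sequence comparison. A secondary point of care is the passage, inside the Davis--L\"uck framework, from the linearization map defined on spaces to a map of $\Or(G)$-spectra — one has to invoke that $\bfL$ is a natural transformation and that both $\bfA^B$ and $\bfK_{\IZ}$ arise from functors on $\GROUPOIDS$ in a compatible way, using the equivalence between the fundamental groupoid of $B\calg$ and $\calg$; this is routine but should be stated.
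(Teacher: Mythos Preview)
Your proposal is correct and is exactly the argument the paper has in mind. In fact the paper does not give a separate proof of this theorem at all: it simply writes ``This implies that \ldots'' after stating \Cref{the:Connectivity_of_the_linearization_map} and then records the theorem, so your write-up (naturality of $\bfL$ to get a map of $\Or(G)$-spectra, then orbit-wise application of \Cref{the:Connectivity_of_the_linearization_map} together with a skeletal/spectral-sequence comparison) is precisely the routine verification being left to the reader.
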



\section{Some applications to automorphism groups of aspherical closed manifolds}
\label{sec:Some_applications_to_automorphism_groups_of_and_bundles_over_aspherical_closed_manifolds}

Before we begin with the proof of \Cref{the:main_result}, we want to illustrate
the impact of the Farrell--Jones Conjecture by discussing  automorphism
groups of aspherical closed manifolds. For rational computations the Farrell--Jones
Conjecture for $K$--theory and $L$--theory suffices. For potential integral computations one
needs the Farrell--Jones Conjecture for $A$--theory and for $L$--theory.  More details
about automorphism groups of closed manifolds can be found in~\cite{Weiss-Williams(2001)}.


\subsection{Topological automorphism groups of aspherical closed manifolds}
\label{subsec:Topological_automorphism_groups_of_aspherical_closed_manifolds}

Let $\TOP(M)$ be the topological group of self-homeomorphisms of the closed manifold $M$.
Denote by $\G(M)$ the monoid of self homotopy equivalences $M \to M$. Let  $\widetilde{\TOP}(M)$ and
$\widetilde{\G}(M)$ be the block versions, see \cite[p.~168]{Weiss-Williams(2001)} for a survey and further references. There are natural maps making the diagram
\[
\xymatrix{\TOP(M) \ar[d] \ar[r] 
&
\widetilde{\TOP}(M) \ar[d] 
\\
\G(M) \ar[r]
&
\widetilde{G}(M)
}
\]
commute.

Define $\widetilde{\TOP}(M)/\TOP(M)$, $\widetilde{G}(M)/\TOP(M)$ and  $\G(M)/\TOP(M)$ to be the homotopy fibers of the maps
$ B\!\TOP(M) \to B\widetilde{\TOP}(M)$, $B\!\TOP(M) \to B\widetilde{\G}(M)$ and $B\!\TOP(M) \to B\!\G(M)$. 
We obtain a commutative diagram with horizontal fiber sequences
\[
\xymatrix{
\widetilde{\TOP}(M)/\TOP(M) \ar[r] \ar[d]
&
B\!\TOP(M) \ar[r] \ar[d]^{\id} &
\ar[d] B\widetilde{\TOP}(M)
\\
\widetilde{\G}(M)/\TOP(M) \ar[r]
&
B\!\TOP(M) \ar[r]
&
B\widetilde{\G}(M)
\\
\G(M)/\TOP(M) \ar[r] \ar[u]
&
B\!\TOP(M) \ar[r] \ar[u]_{\id} 
&
B\!\G(M). \ar[u]
}
\]

According to \cite[Theorem~5.8]{Rourke-Sanderson(1968)}, there is no real difference between self homotopy equivalences and their block version.

\begin{lemma}\label{lem:.widetildeG_andG}
The map $G(M) \to \widetilde{G}(M)$ and hence the map $BG(M) \to B\widetilde{G}(M)$ are weak homotopy equivalences.
\end{lemma}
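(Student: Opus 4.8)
The statement asserts that the natural map $G(M) \to \widetilde{G}(M)$ from the monoid of self-homotopy equivalences to its block analog is a weak homotopy equivalence. The guiding principle is that ``blocking up'' a homotopy-theoretic construction changes nothing, since homotopy equivalences are already maximally flexible — there is no rigidity (as with homeomorphisms or diffeomorphisms) that a block structure would relax. Concretely, I would invoke \cite[Theorem~5.8]{Rourke-Sanderson(1968)}, which is already cited immediately before the lemma: it states precisely that the semi-simplicial group of block homotopy equivalences has the same homotopy type as the singular complex of $G(M)$, i.e. that the inclusion $G(M) \hookrightarrow \widetilde G(M)$ (after taking singular simplicial sets, or working in a suitable simplicial model) is a weak homotopy equivalence.

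\textbf{Key steps.} First, I would recall the precise definitions: $\widetilde G(M)$ is the geometric realization of the semi-simplicial set whose $k$-simplices are block homotopy equivalences $M \times \Delta^k \to M \times \Delta^k$, i.e. maps that are homotopy equivalences respecting the face structure of $\Delta^k$ in the appropriate stratified sense, while $G(M)$ sits inside as the constant (degenerate) simplices. Second, I would apply \cite[Theorem~5.8]{Rourke-Sanderson(1968)} directly to conclude that $G(M) \to \widetilde G(M)$ is a weak homotopy equivalence; the content of that theorem is exactly that a block map $M \times \Delta^k \to M \times \Delta^k$ can be straightened, up to block homotopy, to one of the form $f \times \id_{\Delta^k}$, because the obstruction to doing so lives in homotopy-theoretic data that vanishes for homotopy equivalences (one builds the straightening cell-by-cell over the simplices of $\Delta^k$ using the homotopy extension property, with no incompatibility arising). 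Third, since $B(-)$ preserves weak homotopy equivalences of topological monoids (group-like up to the components in question, which suffices here as $B$ is applied to the nerve), the induced map $BG(M) \to B\widetilde G(M)$ is also a weak homotopy equivalence.

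\textbf{Main obstacle.} The only real subtlety is bookkeeping: matching the definition of $\widetilde G(M)$ used in this paper (following \cite{Weiss-Williams(2001)}) with the semi-simplicial group of block homotopy equivalences considered by Rourke--Sanderson, and checking that the comparison map $G(M) \to \widetilde G(M)$ in the paper agrees with the one to which \cite[Theorem~5.8]{Rourke-Sanderson(1968)} applies. This is purely a matter of unwinding definitions — there is no genuine geometric or homotopy-theoretic difficulty, which is why the authors can afford to dispatch it with a one-line citation. I would therefore keep the proof short: state the reduction to \cite[Theorem~5.8]{Rourke-Sanderson(1968)} for the first map, and note that applying the classifying space functor, which sends weak equivalences of topological monoids to weak equivalences, yields the statement for $BG(M) \to B\widetilde G(M)$.
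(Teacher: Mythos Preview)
Your proposal is correct and takes essentially the same approach as the paper: the paper does not give a proof beyond the sentence immediately preceding the lemma, which simply cites \cite[Theorem~5.8]{Rourke-Sanderson(1968)} for the fact that block homotopy equivalences agree with ordinary ones. Your additional remarks on unwinding the definitions and on why $B(-)$ preserves the weak equivalence are reasonable elaborations of what the paper leaves implicit.
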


The relative homotopy groups of the map $\widetilde{\TOP}(M) \to \widetilde{\G}(M)$
can be identified with the groups $\cals^s(M \times D^n,\partial)$ as
explained in~\cite[page~285]{Farrell(2002)}. The next lemma follows in
combination with \cite[Proposition~0.3]{Bartels-Lueck(2012annals)}.  Recall
that a space $X$ is \emph{aspherical} if $\pi_i(X) = 0$ for $i \neq 1$.

\begin{lemma}\label{lem:widetildeTOP_andG}
Suppose that $M$ is an aspherical closed manifold of dimension $\ge 5$ and both the $K$-- and $L$--theoretic Farrell--Jones Conjecture hold for $\IZ \pi_1(M)$. 

Then $\cals^s(M \times D^n,\partial)$ is trivial for $n \ge 0$ and the map
\[
\widetilde{\TOP}(M)  \to \widetilde{\G}(M)
\]
is a weak homotopy equivalence.
\end{lemma}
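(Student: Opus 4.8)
The plan is to deduce the statement from the hypothesised $K$- and $L$-theoretic Farrell--Jones Conjectures for $\IZ\pi$, $\pi:=\pi_1(M)$, via the surgery-theoretic dictionary. By the identification recalled just before the lemma (\cite[p.~285]{Farrell(2002)}, see also \cite{Weiss-Williams(2001)}), the homotopy fiber $F$ of $\widetilde{\TOP}(M)\to\widetilde{\G}(M)$ models the block structure space of $M$, so that $\pi_n(F)\cong\cals^s(M\times D^n,\partial)$ for $n\ge 0$ (with $\pi_0(F)=\cals^s(M)$). Hence it suffices to show $\cals^s(M\times D^n,\partial)=\ast$ for all $n\ge 0$: then $F$ is weakly contractible, and since $\cals^s(M)=\ast$ in particular makes every self-homotopy-equivalence of $M$ homotopic to a homeomorphism --- using $\widetilde{\G}(M)\simeq\G(M)$ from \Cref{lem:.widetildeG_andG} --- the map $\pi_0\widetilde{\TOP}(M)\to\pi_0\widetilde{\G}(M)$ is surjective, and therefore $\widetilde{\TOP}(M)\to\widetilde{\G}(M)$ is a weak homotopy equivalence.

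To prove the structure sets vanish, I would first observe that the universal cover of the closed aspherical manifold $M$ is a contractible finite-dimensional manifold, so $\pi$ is torsion-free; in particular all its virtually cyclic subgroups are trivial or infinite cyclic. The $K$-theoretic Farrell--Jones Conjecture for $\IZ\pi$ then yields $\Wh(\pi)=0$, $\widetilde{K}_0(\IZ\pi)=0$ and $K_{-i}(\IZ\pi)=0$ for $i\ge 1$ (using regularity of $\IZ$, so that the relevant Nil-terms vanish), whence all decorations $s,h,p,\langle -\infty\rangle$ on $L$-groups and on structure sets coincide. Now $M$ is a closed aspherical manifold of dimension $\ge 5$ whose fundamental group satisfies both Farrell--Jones Conjectures, so I would invoke \cite[Proposition~0.3]{Bartels-Lueck(2012annals)} --- applied, if necessary, after wrapping $M\times D^n$ up (rel $\partial$) to the closed aspherical manifold $M\times T^n$, whose fundamental group $\pi\times\IZ^n$ still satisfies the conjectures by inheritance --- to conclude $\cals^s(M\times D^n,\partial)=\ast$. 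The mechanism is the comparison of the geometric and algebraic surgery exact sequences: $\cals^s(M\times D^n,\partial)$ is a relative term of the assembly map $H_\ast(M;\bfL_\IZ^{\langle -\infty\rangle})\to L_\ast^s(\IZ\pi)$ in degree $\dim M+n$, and the $L$-theoretic conjecture (the $\calvcyc$-assembly being an isomorphism), together with the isomorphism $H_\ast^\pi(\EGF{\pi}{\calfin};\bfL_\IZ^{\langle -\infty\rangle})\xrightarrow{\cong}H_\ast^\pi(\EGF{\pi}{\calvcyc};\bfL_\IZ^{\langle -\infty\rangle})$ for torsion-free $\pi$ (the infinite cyclic subgroups contribute nothing by the Shaneson splitting $L_\ast(\IZ[\IZ])\cong L_\ast(\IZ)\oplus L_{\ast-1}(\IZ)$, and there are no amalgam-type virtually cyclic subgroups giving $\UNil$-terms) and $\EGF{\pi}{\calfin}=E\pi$, makes this assembly an isomorphism in the range $\ast\ge\dim M$ relevant here.

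The argument is conceptually a pure reduction to the assumed conjectures, so the only real difficulties are bookkeeping: keeping the $L$-theory decorations straight, identifying $\cals^s(M\times D^n,\partial)$ with the correct relative term of Ranicki's algebraic surgery exact sequence for the space $M$ (using homotopy invariance of the algebraic structure groups and the genuine manifold structure on $M$ to fix the assembly map), and --- the one genuinely delicate step --- reconciling the $1$-connective quadratic $L$-spectrum that governs the geometric structure set with the $4$-periodic $L$-spectrum that governs the Farrell--Jones assembly. These agree in degrees $>\dim M$, and in degree $\dim M$ the $1$-connective assembly still injects into the $4$-periodic one (the cofiber contributing only $H_{\dim M}(M;\IZ)$), which, since $\dim M\ge 5$ and $n\ge 0$, is exactly what is needed to force $\cals^s(M\times D^n,\partial)=\ast$. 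If one simply cites \cite[Proposition~0.3]{Bartels-Lueck(2012annals)} for the triviality of the structure sets, all that remains is the first paragraph together with the $K$-theoretic vanishing used for the decoration comparison.
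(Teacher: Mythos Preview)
Your proposal is correct and follows essentially the same route as the paper, which simply records that the relative homotopy groups of $\widetilde{\TOP}(M)\to\widetilde{\G}(M)$ are the structure sets $\cals^s(M\times D^n,\partial)$ (citing \cite[p.~285]{Farrell(2002)}) and then invokes \cite[Proposition~0.3]{Bartels-Lueck(2012annals)} for their vanishing. You have merely unpacked what that citation contains --- the torsion-freeness of $\pi$, the resulting identification of decorations, and the assembly-map argument --- so the extra paragraphs are elaboration rather than a different strategy.
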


For aspherical spaces $X$, the homotopy groups of $\G(X)$ can be computed from
the long exact sequence of homotopy groups associated to the evaluation map $\G(X) \xrightarrow{ev_{x_0}} X$ for some basepoint $x_0 \in X$:

\begin{lemma} \label{lem:pi_n(G(X)_X_aspherical}
Let $X$ be an aspherical CW--complex. Then
\[\pi_n(\G(X))
\cong
\begin{cases}
\Out(\pi_1(X)) & n = 0,
\\
\zentrum(\pi_1(X)) & n = 1,
\\
0 & n \ge 2.
\end{cases}
\]
\end{lemma}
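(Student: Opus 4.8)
The plan is to use the fibration sequence coming from the evaluation map at a basepoint. First I would set up the evaluation fibration $\ev_{x_0} \colon \G(X) \to X$, whose (homotopy) fiber over $x_0$ is the space $\G_\bullet(X)$ of based self-homotopy equivalences of $(X,x_0)$. Since $X$ is an aspherical CW--complex, it is a $K(\pi,1)$ with $\pi := \pi_1(X)$, and standard obstruction theory identifies the monoid of based self-homotopy equivalences with $\Aut(\pi)$: a based map $X \to X$ is determined up to based homotopy by the induced homomorphism on $\pi_1$, and it is a homotopy equivalence precisely when that homomorphism is an automorphism. Hence $\G_\bullet(X)$ is homotopy discrete with $\pi_0(\G_\bullet(X)) \cong \Aut(\pi)$ and $\pi_n(\G_\bullet(X)) = 0$ for $n \ge 1$.

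Next I would feed this into the long exact sequence of homotopy groups of the fibration $\G_\bullet(X) \to \G(X) \xrightarrow{\ev_{x_0}} X$. Using $\pi_n(X) = 0$ for $n \ge 2$ and $\pi_n(\G_\bullet(X)) = 0$ for $n \ge 1$, the sequence immediately gives $\pi_n(\G(X)) = 0$ for $n \ge 2$. For $n = 1$ we obtain an exact sequence
\[
0 \to \pi_1(\G(X)) \to \pi_1(X) \xrightarrow{\partial} \pi_0(\G_\bullet(X)) \to \pi_0(\G(X)) \to 0,
\]
where I am tacitly using that $\pi_0(X) = 0$ so the sequence ends as written. The connecting map $\partial \colon \pi_1(X) = \pi \to \pi_0(\G_\bullet(X)) = \Aut(\pi)$ sends a loop to the based self-equivalence obtained by dragging the basepoint around that loop; unwinding the definitions, this is exactly the map $g \mapsto c_g$ sending $g$ to conjugation by $g$, whose image is $\Inn(\pi)$ and whose kernel is the center $\zentrum(\pi)$. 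Therefore $\pi_1(\G(X)) \cong \ker(\partial) = \zentrum(\pi)$ and $\pi_0(\G(X)) \cong \coker(\partial) = \Aut(\pi)/\Inn(\pi) = \Out(\pi)$, which is the claim.

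I expect the main point requiring care — rather than a serious obstacle — to be the identification of the connecting homomorphism $\partial$ with the conjugation map $\pi \to \Aut(\pi)$. Concretely, one checks that the path-lifting description of $\partial$ in the evaluation fibration corresponds, after the obstruction-theoretic identification $\G_\bullet(X) \simeq \Aut(\pi)$, to the standard change-of-basepoint isomorphism on $\pi_1$, which is conjugation; this is where the group-theoretic structure (center and inner automorphisms) enters. Everything else is a formal consequence of asphericity and the long exact sequence, so the argument is short once this identification is in place.
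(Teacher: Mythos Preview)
Your proposal is correct and follows exactly the approach indicated in the paper, which simply remarks that the result follows from the long exact sequence of homotopy groups associated to the evaluation map $\G(X) \xrightarrow{\ev_{x_0}} X$. You have supplied the details the paper omits, including the identification of the fiber with $\Aut(\pi)$ via obstruction theory and of the connecting map with $g \mapsto c_g$.
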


We conclude from \Cref{lem:.widetildeG_andG}, \Cref{lem:widetildeTOP_andG}
and \Cref{lem:pi_n(G(X)_X_aspherical}:

\begin{corollary}\label{lem:pi_n(TOP(X)_X_aspherical}
If $M$ is an aspherical closed manifold of dimension $\ge 5$ with fundamental group $\pi$,
and both the $K$--theoretic and $L$--theoretic Farrell--Jones Conjecture hold for $\IZ \pi$,
then there are natural zig-zags of homotopy equivalences
\[
\widetilde{\TOP}(M)  \simeq \G(M)
\]
and
\[
B\!\widetilde{\TOP}(M)  \simeq B\!\G(M)
\]
and we get
\[
\pi_n(\widetilde{\TOP}(M)) 
\cong
\begin{cases}
\Out(\pi) & n = 0,
\\
\zentrum(\pi) & n = 1,
\\
0 & n \ge 2.
\end{cases}
\]
\end{corollary}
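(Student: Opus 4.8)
The plan is to assemble the three preceding lemmas; no new argument is required. First I would invoke \Cref{lem:widetildeTOP_andG}: since $M$ is an aspherical closed manifold of dimension $\ge 5$ and the $K$-- and $L$--theoretic Farrell--Jones Conjectures hold for $\IZ\pi$, the map $\widetilde{\TOP}(M) \to \widetilde{\G}(M)$ is a weak homotopy equivalence. Next I would use \Cref{lem:.widetildeG_andG}, which provides the weak homotopy equivalence $\G(M) \to \widetilde{\G}(M)$. Chaining these yields the natural zig-zag
\[
\widetilde{\TOP}(M) \xrightarrow{\simeq} \widetilde{\G}(M) \xleftarrow{\simeq} \G(M),
\]
naturality being inherited from that of the two constituent maps. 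Applying the classifying-space functor $B$ preserves these weak equivalences (for $\G(M) \to \widetilde{\G}(M)$ this is stated directly in \Cref{lem:.widetildeG_andG}, and for $\widetilde{\TOP}(M) \to \widetilde{\G}(M)$ it follows since $B$ takes weak homotopy equivalences of grouplike topological/semi-simplicial monoids to weak homotopy equivalences), giving
\[
B\!\widetilde{\TOP}(M) \xrightarrow{\simeq} B\!\widetilde{\G}(M) \xleftarrow{\simeq} B\!\G(M).
\]

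For the homotopy group computation I would observe that $M$, being an aspherical closed manifold, is (homotopy equivalent to) an aspherical CW--complex, so $X = M$ satisfies the hypothesis of \Cref{lem:pi_n(G(X)_X_aspherical}; using that $\G(-)$ is homotopy invariant, we get $\pi_n(\G(M)) \cong \Out(\pi)$ for $n = 0$, $\cong \zentrum(\pi)$ for $n = 1$, and $= 0$ for $n \ge 2$. Since weak homotopy equivalences induce isomorphisms on all homotopy groups, transporting this computation along the zig-zag $\widetilde{\TOP}(M) \simeq \G(M)$ established above yields the asserted values of $\pi_n(\widetilde{\TOP}(M))$.

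There is essentially no obstacle at this stage: all of the substance has been packaged into \Cref{lem:widetildeTOP_andG}, which is where the Farrell--Jones input enters (through surgery theory and the vanishing of the structure sets $\cals^s(M \times D^n,\partial)$), and into \Cref{lem:.widetildeG_andG} and \Cref{lem:pi_n(G(X)_X_aspherical}. The only point demanding mild care is bookkeeping: ensuring that $\TOP(M)$, $\G(M)$ and their block versions are treated as objects on which $B$ has the expected homotopical behaviour (replace by CW-models if desired), and that the zig-zags are genuinely natural in $M$ with respect to the maps in question.
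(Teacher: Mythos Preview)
Your proposal is correct and follows exactly the paper's approach: the paper simply states that the corollary follows from \Cref{lem:.widetildeG_andG}, \Cref{lem:widetildeTOP_andG} and \Cref{lem:pi_n(G(X)_X_aspherical}, and your write-up spells out precisely that assembly.
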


\begin{theorem}\label{the:widetilde(TOP)(M)/TOP(M)_and_bfH}
There is a map
\[
\widetilde{\TOP}(M)/\TOP(M) \to \Omega^{\infty} \bigl(E\IZ/2_+ \wedge_{\IZ/2} \Omega \bfWhs^{\TOP}(M)\bigr)
\]
which is $(k+1)$--connected if $k$ is in the topological concordance stable range for $M$. Here $ \bfWhs^{\TOP}(M)$ denotes the connective cover of the Whitehead spectrum $\bfWh^{\TOP}(M)$.
\end{theorem}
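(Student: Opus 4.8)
This is the topological, spectrum-level form of the fundamental theorem of Weiss--Williams on automorphisms of manifolds \cite{Weiss-Williams(1988), Weiss-Williams(2001)}, and the plan is to assemble it from three ingredients: a geometric description of $\widetilde{\TOP}(M)/\TOP(M)$ in terms of topological concordances, the parametrized $h$--cobordism theorem, and the Weiss--Williams duality producing the $\IZ/2$--action together with the homotopy-orbit form of the target.

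First I would pin down $\widetilde{\TOP}(M)/\TOP(M)$ near the basepoint. By construction it is the homotopy fiber over the trivial block bundle, so over a simplex $\Delta^k$ it measures the difference between genuine $M$--bundle structures and block $M$--bundle structures. By the topological isotopy extension theorem together with the product structure and handle straightening theorems of Kirby--Siebenmann (cf.\ also Burghelea--Lashof in the relevant range), this difference is governed by topological concordances on $M\times\Delta^k$, naturally in $k$; hence, once $\dim M$ is large relative to the homotopy degree in question, $\widetilde{\TOP}(M)/\TOP(M)$ agrees, up to the concordance stable range, with the relevant part of the stable topological $h$--cobordism space of $M$, which by the parametrized $h$--cobordism theorem (Waldhausen, fully proved in \cite{Jahren-Rognes-Waldhausen(2013)}; see \Cref{rem:stable_space_of_h-cobordism} for the equivalence $\bfH^{\TOP}(M)\simeq\Omega\bfWh^{\TOP}(M)$) is the infinite loop space $\Omega^{\infty}\Omega\bfWhs^{\TOP}(M)$ --- this is why it is the connective Whitehead spectrum $\bfWhs^{\TOP}$ that appears.

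The remaining and essential point is the $\IZ/2$--equivariance. Turning an $h$--cobordism around its two ends --- technically, the Weiss--Williams duality between the relevant Waldhausen category and its opposite, mediated by Poincar\'e duality of $M$ --- equips $\Omega\bfWhs^{\TOP}(M)$ with the involution of \cite{Weiss-Williams(1988)}, and their duality theorem shows that the natural comparison map refines to a map
\[
\widetilde{\TOP}(M)/\TOP(M)\longrightarrow \Omega^{\infty}\bigl(E\IZ/2_+\wedge_{\IZ/2}\Omega\bfWhs^{\TOP}(M)\bigr)
\]
which is $(k+1)$--connected whenever $k$ lies in the topological concordance stable range of $M$; by Igusa's stability theorem that range contains all $k\le\min\{(\dim M-7)/2,(\dim M-4)/3\}$, the bound used in \Cref{the:widetilde(TOP)(M)/TOP(M)_and_bfN(A)}. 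I expect this last ingredient to be the main obstacle: identifying the involution precisely, verifying that the target is the homotopy orbits rather than the homotopy fixed points or the Tate construction, and keeping track of the single loop shift are exactly the contents of the Weiss--Williams duality package, which one should invoke as a theorem rather than reprove; the remaining steps are routine combinations of standard results.
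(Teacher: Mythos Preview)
Your proposal is correct and, at bottom, follows the same strategy as the paper: both invoke \cite[Theorem~A]{Weiss-Williams(1988)} as the substantive input. The paper's proof, however, is considerably more economical and differently focused. Rather than rehearsing the internal architecture of the Weiss--Williams argument (concordances, block versus honest bundles, the duality involution), the paper simply takes their Theorem~A as a black box and isolates the one verification actually needed in this paper's setup: that the spectrum Weiss--Williams call $\bfWh^{\TOP}(M)$ coincides with the connective cover of the cofiber of the assembly map as defined here in \Cref{def: non-conn whspectrum}. That identification is obtained by chaining $\bfP^{\TOP}(M)\simeq\Omega\bfH^{\TOP}(M)$ with \cite[Theorem~0.2]{Jahren-Rognes-Waldhausen(2013)}. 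You cite the same ingredients, but you present them as context for the Weiss--Williams theorem rather than as the bridge between two \emph{a priori} different models of the Whitehead spectrum; making that identification explicit is really the only thing this proof has to do.
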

\begin{proof}
It suffices to show that the spectrum denoted by $\bfWh^{\TOP}(M)$ in \cite{Weiss-Williams(1988)} is a model for the homotopy cofiber of the assembly map, see \Cref{def: non-conn whspectrum}. This follows from combining~\cite[Theorem~A]{Weiss-Williams(1988)}, the equivalence $\bfP^{TOP}(M)\simeq \Omega \bfH^{TOP}(M)$ and \cite[Theorem~0.2]{Jahren-Rognes-Waldhausen(2013)}.
\end{proof}

We conclude from \Cref{the:relations_between_the_various_functors}~\ref{the:relations_between_the_various_functors:wh_TOP_and_wh_PL}, \Cref{lem:pi_n(TOP(X)_X_aspherical}, \Cref{the:widetilde(TOP)(M)/TOP(M)_and_bfH} and the lower bound on the topological concordance stable range given in ~\cite[Corollary~1.4.2]{Jahren-Rognes-Waldhausen(2013)}:

\begin{theorem}\label{the:widetilde(TOP)(M)/TOP(M)_and_bfWh}
Let $M$ be a smoothable aspherical closed manifold of dimension $\ge 10$  with fundamental group $\pi$. Suppose that 
the Farrell--Jones Conjecture for $A$--theory for $B\pi$  and the Farrell--Jones Conjecture for $L$--theory for $\IZ\pi$ hold.

Then we obtain for $2 \le n \le \min\{( \dim M -7 ) / 2, (\dim M - 4)/3\}$ isomorphisms
\[
\pi_{n}(\TOP(M)) \xrightarrow{\cong}  \pi_{n+2}\bigl(E\IZ/2_+ \wedge_{\IZ/2} \bfWh^{\TOP}(B\pi)\bigr)),
\]
and an exact sequence
\begin{multline*}
1 \to \pi_{3}\bigl(E\IZ/2_+ \wedge_{\IZ/2} \bfWh^{\TOP}(B\pi)\bigr)) \to\pi_1(\TOP(M)) \to \zentrum(\pi) 
\\
\to  \pi_{2}\bigl(E\IZ/2_+ \wedge_{\IZ/2} \bfWh^{\TOP}(B\pi)\bigr)) \to\pi_0(\TOP(M)) \to \Out(\pi) \to 1.
\end{multline*}
\end{theorem}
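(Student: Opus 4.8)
The plan is to read everything off the long exact homotopy sequence of the fibre sequence
\[
\widetilde{\TOP}(M)/\TOP(M) \longrightarrow B\TOP(M) \longrightarrow B\widetilde{\TOP}(M)
\]
(the upper row of the commutative diagram above), once the homotopy type of the base and, in a range, of the fibre have been identified. For the base, \Cref{lem:pi_n(TOP(X)_X_aspherical} supplies $\pi_0(\widetilde{\TOP}(M)) \cong \Out(\pi)$, $\pi_1(\widetilde{\TOP}(M)) \cong \zentrum(\pi)$ and $\pi_n(\widetilde{\TOP}(M)) = 0$ for $n \ge 2$. That corollary is phrased under the $K$-- and $L$--theoretic Farrell--Jones Conjecture for $\IZ\pi$; here only the $L$--theoretic one is assumed directly, but a look at its proof (which goes through \Cref{lem:widetildeTOP_andG}, i.e.\ triviality of $\cals^s(M \times D^n,\partial)$ for $n \ge 0$) shows that the $K$--theoretic input enters exclusively through the vanishing of $\Wh(\pi)$, $\widetilde{K}_0(\IZ\pi)$ and $K_n(\IZ\pi)$ for $n < 0$, that is, through the $K$--theoretic assembly map in degrees $\le 1$ only. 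By \Cref{the:Relating_A-theory_to_algebraic_K_theory} the linearization map identifies the $A$-- and $K$--theoretic assembly maps in degrees $\le 1$, so the Farrell--Jones Conjecture for $A$--theory for $B\pi$ supplies precisely these vanishing statements for the torsionfree group $\pi$ (the passage from $\calvcyc$ to the trivial family being harmless here, since $\IZ[\IZ]$ is regular).

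For the fibre, \Cref{the:widetilde(TOP)(M)/TOP(M)_and_bfH} provides a map
\[
\widetilde{\TOP}(M)/\TOP(M) \longrightarrow \Omega^{\infty}\bigl(E\IZ/2_+ \wedge_{\IZ/2} \Omega\bfWhs^{\TOP}(M)\bigr)
\]
which is $(k+1)$--connected whenever $k$ lies in the topological concordance stable range of $M$. First I would replace $\bfWhs^{\TOP}(M)$ by $\bfWh^{\TOP}(B\pi)$: since $M$ is aspherical, $M \simeq B\pi$, and $\bfWh^{\TOP}$ is a homotopy functor by \Cref{lem:bfA_respects_weak_equivalences_and_disjoint_unions} and \Cref{def: non-conn whspectrum}, so $\bfWh^{\TOP}(M) \simeq \bfWh^{\TOP}(B\pi)$; moreover $\bfWh^{\TOP}(B\pi)$ is $1$--connected, since by \Cref{the:Connectivity_of_the_linearization_map} together with the vanishing statements above its homotopy groups satisfy $\pi_n = 0$ for $n < 0$, $\pi_0 \cong \widetilde{K}_0(\IZ\pi) = 0$ and $\pi_1 \cong \Wh(\pi) = 0$. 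Hence $\bfWhs^{\TOP}(M) \simeq \bfWh^{\TOP}(B\pi)$, the target above becomes $\Omega^{\infty}\Omega\bigl(E\IZ/2_+ \wedge_{\IZ/2} \bfWh^{\TOP}(B\pi)\bigr)$, and its $j$-th homotopy group is $\pi_{j+1}\bigl(E\IZ/2_+ \wedge_{\IZ/2} \bfWh^{\TOP}(B\pi)\bigr)$. The concordance stable range bound $k \ge \min\{(\dim M - 7)/2,\,(\dim M - 4)/3\}$ is imported from \cite[Corollary~1.4.2]{Jahren-Rognes-Waldhausen(2013)}; as that estimate is phrased in the smooth/$PL$ setting, this is where smoothability of $M$ and the equivalence $\bfP^{\TOP} \simeq \bfP^{\PL}$ (hence $\bfWh^{\TOP} \simeq \bfWh^{\PL}$) of \Cref{the:relations_between_the_various_functors}~\ref{the:relations_between_the_various_functors:wh_TOP_and_wh_PL} enter, and $\dim M \ge 10$ ensures the range is non-empty.

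Now feed all of this into the long exact sequence. For $n \ge 2$ both $\pi_{n+1}(\widetilde{\TOP}(M))$ and $\pi_n(\widetilde{\TOP}(M))$ vanish, so the sequence yields $\pi_n(\TOP(M)) \cong \pi_{n+1}(\widetilde{\TOP}(M)/\TOP(M)) \cong \pi_{n+2}\bigl(E\IZ/2_+ \wedge_{\IZ/2} \bfWh^{\TOP}(B\pi)\bigr)$, valid for $2 \le n \le \min\{(\dim M - 7)/2,\,(\dim M - 4)/3\}$, where the connectivity estimate applies after the degree shift. The exact sequence in low degrees is the tail of the same long exact sequence: one has $\pi_1(\widetilde{\TOP}(M)/\TOP(M)) \cong \pi_2\bigl(E\IZ/2_+ \wedge_{\IZ/2} \bfWh^{\TOP}(B\pi)\bigr)$ and $\pi_0(\widetilde{\TOP}(M)/\TOP(M)) \cong \pi_1\bigl(E\IZ/2_+ \wedge_{\IZ/2} \bfWh^{\TOP}(B\pi)\bigr) = 0$ (the last equality by $1$--connectivity of $\bfWh^{\TOP}(B\pi)$); since $\pi_2(\widetilde{\TOP}(M)) = 0$ the map $\pi_3\bigl(E\IZ/2_+ \wedge_{\IZ/2} \bfWh^{\TOP}(B\pi)\bigr) \to \pi_1(\TOP(M))$ is injective, and the vanishing of $\pi_0(\widetilde{\TOP}(M)/\TOP(M))$ forces $\pi_0(\TOP(M)) \to \Out(\pi)$ to be surjective; substituting $\pi_1(\widetilde{\TOP}(M)) = \zentrum(\pi)$ and $\pi_0(\widetilde{\TOP}(M)) = \Out(\pi)$ produces exactly the stated six-term sequence.

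I expect the only genuine work to be bookkeeping: (i) checking that \Cref{lem:pi_n(TOP(X)_X_aspherical} really uses only the degree $\le 1$ part of the $K$--theoretic conjecture, so that \Cref{the:Relating_A-theory_to_algebraic_K_theory} suffices and the hypothesis ``Farrell--Jones for $A$--theory for $B\pi$'' is enough; and (ii) matching the degree shift $\pi_n(\TOP(M)) \cong \pi_{n+1}(\widetilde{\TOP}(M)/\TOP(M))$ against the $(k+1)$--connectivity of \Cref{the:widetilde(TOP)(M)/TOP(M)_and_bfH} and the precise form of the bound in \cite[Corollary~1.4.2]{Jahren-Rognes-Waldhausen(2013)}, so that the range comes out exactly as $2 \le n \le \min\{(\dim M - 7)/2,\,(\dim M - 4)/3\}$; the smoothability and the $\PL$--versus--$\TOP$ comparison also need to be threaded through carefully when transporting the concordance stable range estimate and the definition of $\bfWhs^{\TOP}(M)$.
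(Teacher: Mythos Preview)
Your proposal is correct and follows exactly the same route as the paper, whose proof is the single sentence ``We conclude from \Cref{the:relations_between_the_various_functors}~\ref{the:relations_between_the_various_functors:wh_TOP_and_wh_PL}, \Cref{lem:pi_n(TOP(X)_X_aspherical}, \Cref{the:widetilde(TOP)(M)/TOP(M)_and_bfH} and the lower bound on the topological concordance stable range given in~\cite[Corollary~1.4.2]{Jahren-Rognes-Waldhausen(2013)}.'' You have unpacked precisely these four ingredients: the fibre sequence with base controlled by \Cref{lem:pi_n(TOP(X)_X_aspherical}, the fibre identified in a range via \Cref{the:widetilde(TOP)(M)/TOP(M)_and_bfH}, the stable range estimate, and the $\PL$--$\TOP$ comparison. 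Your two supplementary observations---that the $K$--theoretic input to \Cref{lem:pi_n(TOP(X)_X_aspherical} is only needed in degrees $\le 1$ and hence is supplied by the $A$--theoretic hypothesis via \Cref{the:Relating_A-theory_to_algebraic_K_theory}, and that $\bfWh^{\TOP}(B\pi)$ is $1$--connected so that $\bfWhs^{\TOP}(M) \simeq \bfWh^{\TOP}(B\pi)$ and the sequence terminates at $\Out(\pi)$---are exactly the bookkeeping the paper suppresses.
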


Next we give the proof of \Cref{the:widetilde(TOP)(M)/TOP(M)_and_bfN(A)}.

\begin{proof}[Proof of \Cref{the:widetilde(TOP)(M)/TOP(M)_and_bfN(A)}]
 Let $G$ be a torsionfree hyperbolic group.
 Then the Farrell--Jones Conjecture for $A$--theory for $BG$ and the Farrell--Jones Conjectures
for algebraic $K$--theory and for $L$--theory for $\IZ G$  hold  by
\Cref{the:main_result} and \cite{Bartels-Lueck(2012annals),Bartels-Lueck-Reich(2008hyper)}.

Since $G$ is torsionfree, we have $K_{-i}(\IZ G) = 0$ for all $i \ge 1$ and
$\widetilde{K}_0(\IZ G) = 0$, and thus the spectra under consideration are connective by
\Cref{the:Connectivity_of_the_linearization_map}~\ref{the:Connectivity_of_the_linearization_map:2_connected}. It follows from \Cref{lem:equivalence_of_FJ_for_the_various_functors} that there is a weak homotopy equivalence
\[\cH^G(\EGF{G}{\calvcyc};(\bfWh^{\TOP})^B) \xrightarrow{\simeq} \bfWh^{\TOP}(BG).
\]
The arguments in~\cite[Section~10] {Lueck-Steimle(2015splitasmb)} based 
on~\cite[Corollary~2.8 and Example~3.6]{Lueck-Weiermann(2012)} for algebraic $K$--theory carry over to $\bfWh^{\TOP}$ and 
imply that there is a weak homotopy equivalence induced by the various inclusions $C \to G$ of representatives of the conjugacy classes of maximal cyclic subgroups of $G$
\[
\bigvee_C  \bfWh^{\TOP}(BC)  \xrightarrow{\simeq}  \bfWh^{\TOP}(BG).
\]
From the Bass--Heller--Swan decomposition~\eqref{Bass-Heller_Swan_for_A-theory}  we obtain a weak homotopy equivalence 
\[
\bfNA(\pt) \vee \bfNA(\pt)  \xrightarrow{\simeq}  \bfWh^{\TOP}(BC).
\]
This proves part \ref{the:widetilde(TOP)(M)/TOP(M)_and_bfN(A)-it1} of \Cref{the:widetilde(TOP)(M)/TOP(M)_and_bfN(A)}.

Part \ref{the:widetilde(TOP)(M)/TOP(M)_and_bfN(A)-it2} follows from part \ref{the:widetilde(TOP)(M)/TOP(M)_and_bfN(A)-it1} and \Cref{the:widetilde(TOP)(M)/TOP(M)_and_bfWh} together with the fact that the center of a hyperbolic group which is torsionfree and not cyclic is trivial.
\end{proof}

\Cref{the:Relating_A-theory_to_algebraic_K_theory} and \Cref{the:widetilde(TOP)(M)/TOP(M)_and_bfWh} imply

\begin{theorem}[Rational homotopy groups of $\TOP(M)$ for an aspherical  closed manifold]
\label{the:pi_i(TOP(M)_M_aspherical}
Let $M$ be a smoothable aspherical closed manifold of dimension $\ge 10$ with fundamental group
$\pi$.  Suppose that the Farrell--Jones Conjecture for $K$--theory and for $L$--theory
for $\IZ\pi$ hold. 

Then for $1 \leq n\leq \min\{( \dim M -7 ) / 2, (\dim M - 4)/3\}$ we have
\[
\pi_n (\TOP(M)) \otimes_{\IZ} \IQ 
= \begin{cases} 
\zentrum(\pi) \otimes_{\IZ} \IQ  & \text{if}\; n = 1,
\\
\{0\} & \text{if} \; n \ge 2.
\end{cases}
\]
\end{theorem}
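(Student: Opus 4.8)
The plan is to deduce \Cref{the:pi_i(TOP(M)_M_aspherical} from the two results cited just before its statement, namely \Cref{the:Relating_A-theory_to_algebraic_K_theory} and \Cref{the:widetilde(TOP)(M)/TOP(M)_and_bfWh}, by feeding rational information about the Whitehead spectrum into the isomorphisms and the exact sequence provided by the latter. First I would observe that the hypotheses of \Cref{the:widetilde(TOP)(M)/TOP(M)_and_bfWh} are met: $M$ is a smoothable aspherical closed manifold of dimension $\ge 10$, the Farrell--Jones Conjecture for $L$--theory for $\IZ\pi$ holds by assumption, and the Farrell--Jones Conjecture for $A$--theory for $B\pi$ holds as well — this is exactly where \Cref{the:Relating_A-theory_to_algebraic_K_theory} is used, since it says that the $A$--theoretic assembly map agrees rationally (in all degrees) with the $K$--theoretic one, and the latter is an isomorphism by the assumed $K$--theoretic Farrell--Jones Conjecture. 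One has to be a little careful here, because \Cref{the:widetilde(TOP)(M)/TOP(M)_and_bfWh} asks for the genuine (integral) $A$--theoretic conjecture; the honest statement is that the conclusion of that theorem is available \emph{rationally} once we only know the $A$--theoretic assembly rationally, which is all we need since the target statement is about $\pi_n(\TOP(M))\otimes_\IZ\IQ$.

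Next I would compute $\pi_{n+2}\bigl(E\IZ/2_+\wedge_{\IZ/2}\bfWh^{\TOP}(B\pi)\bigr)\otimes_\IZ\IQ$ for the relevant range of $n$. Rationally the homotopy orbit spectral sequence (or simply the fact that $E\IZ/2_+\wedge_{\IZ/2}(-)$ agrees rationally with the fixed points, equivalently with $\IZ/2$--invariants of rational homotopy) collapses, so $\pi_*\bigl(E\IZ/2_+\wedge_{\IZ/2}\bfWh^{\TOP}(B\pi)\bigr)\otimes_\IZ\IQ \cong \bigl(\pi_*(\bfWh^{\TOP}(B\pi))\otimes_\IZ\IQ\bigr)^{\IZ/2}$. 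Now $\bfWh^{\TOP}(B\pi)$ is by \Cref{def: non-conn whspectrum} the homotopy cofiber of the $A$--theoretic assembly map $B\pi_+\wedge\bfA(\pt)\to\bfA(B\pi)$; rationally, using \Cref{the:Relating_A-theory_to_algebraic_K_theory} again (or \Cref{the:Connectivity_of_the_linearization_map}~\ref{the:Connectivity_of_the_linearization_map:rational}), this assembly map is identified with the $K$--theoretic assembly map $B\pi_+\wedge\bfK_\IZ(\pt)\to\bfK_\IZ(\underline{\pi})$, which is a rational equivalence by the assumed $K$--theoretic Farrell--Jones Conjecture for $\IZ\pi$ (an aspherical group, so $\calvcyc$ may be replaced and the Bass--Heller--Swan / Nil contributions vanish rationally). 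Hence its homotopy cofiber is rationally trivial: $\pi_*(\bfWh^{\TOP}(B\pi))\otimes_\IZ\IQ = 0$ for all $*$. Therefore $\pi_{n+2}\bigl(E\IZ/2_+\wedge_{\IZ/2}\bfWh^{\TOP}(B\pi)\bigr)\otimes_\IZ\IQ = 0$ for all $n$.

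Finally I would plug this vanishing into \Cref{the:widetilde(TOP)(M)/TOP(M)_and_bfWh}. For $2\le n\le\min\{(\dim M-7)/2,(\dim M-4)/3\}$ the isomorphism $\pi_n(\TOP(M))\xrightarrow{\cong}\pi_{n+2}\bigl(E\IZ/2_+\wedge_{\IZ/2}\bfWh^{\TOP}(B\pi)\bigr)$ becomes, after $\otimes_\IZ\IQ$, an isomorphism onto the zero group, giving $\pi_n(\TOP(M))\otimes_\IZ\IQ=0$ in that range. For $n=1$ I would tensor the six-term exact sequence at the end of \Cref{the:widetilde(TOP)(M)/TOP(M)_and_bfWh} with $\IQ$ (exactness is preserved since $\IQ$ is flat over $\IZ$, after first abelianising $\pi_1(\TOP(M))\to\pi_1(\TOP(M))^{\ab}$ — or one works with the exact sequence of homotopy groups directly, noting the relevant terms are already abelian in this range): the outer Whitehead-orbit terms $\pi_3$ and $\pi_2$ of $E\IZ/2_+\wedge_{\IZ/2}\bfWh^{\TOP}(B\pi)$ vanish rationally, leaving $\pi_1(\TOP(M))\otimes_\IZ\IQ\xrightarrow{\cong}\zentrum(\pi)\otimes_\IZ\IQ$. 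This yields the stated formula. The main obstacle is the bookkeeping around integral-versus-rational $A$--theory: \Cref{the:widetilde(TOP)(M)/TOP(M)_and_bfWh} is phrased with the integral $A$--theoretic Farrell--Jones Conjecture as a hypothesis, so one must either check that its proof goes through with only the rational statement (which it does, since $\TOP(M)$ is what we ultimately rationalise) or reorganise the argument to rationalise before invoking that theorem; I would make this explicit, citing \Cref{the:Relating_A-theory_to_algebraic_K_theory} as the bridge.
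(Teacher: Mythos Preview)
Your proposal is correct and is precisely the elaboration the paper has in mind: the paper's own proof is the single sentence ``\Cref{the:Relating_A-theory_to_algebraic_K_theory} and \Cref{the:widetilde(TOP)(M)/TOP(M)_and_bfWh} imply'', and you have spelled out exactly how---namely that the rational comparison of the $A$-- and $K$--theoretic assembly maps forces $\pi_*(\bfWh^{\TOP}(B\pi))\otimes_{\IZ}\IQ=0$, hence the homotopy orbit terms in \Cref{the:widetilde(TOP)(M)/TOP(M)_and_bfWh} vanish rationally. One small simplification: your worry about invoking \Cref{the:widetilde(TOP)(M)/TOP(M)_and_bfWh} without the integral $A$--theoretic conjecture can be dissolved more directly than by rationalising its proof---inspection of that proof (via \Cref{lem:pi_n(TOP(X)_X_aspherical} and \Cref{the:widetilde(TOP)(M)/TOP(M)_and_bfH}) shows that the $A$--theoretic hypothesis is used only through its $K$--theoretic consequences in degrees $\le 1$ (for the vanishing of Whitehead and lower $K$--groups needed in the structure-set argument and for replacing $\bfWhs^{\TOP}$ by $\bfWh^{\TOP}$), all of which are supplied by the assumed $K$--theoretic Farrell--Jones Conjecture.
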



\subsection{Smooth automorphism groups of aspherical closed smooth manifolds}
\label{subsec:Smooth_automorphism_groups_of_aspherical_closed_smooth_manifolds}

Taking the computation of $K_i(\IZ) \otimes_{\IZ} \IQ$ of Borel~\cite{Borel(1972)} into account, we get
from \Cref{the:relations_between_the_various_functors},
\Cref{the:Relating_A-theory_to_algebraic_K_theory}
and~\cite[Theorem~0.3]{Lueck-Steimle(2015splitasmb)}

\begin{theorem} \label{the:computation_of_diff_pseudos} Let $M$ be an aspherical closed
  smooth manifold of dimension $\ge 10$ with fundamental group $\pi$. Suppose that the
  Farrell--Jones Conjecture for $K$--theory and for $L$--theory for $\IZ\pi$ hold.

Then we get for all $n \in \IZ$
  \begin{eqnarray*}
  \pi_n (\bfWh^{\DIFF} ( M ) )\otimes_{\IZ} \IQ  
  & \cong & 
\bigoplus_{k = 1}^{\infty}  H_{n-4k-1} (M ; \IQ ),
  \\
  \pi_n ( \bfP^{\DIFF} ( M ) )\otimes_{\IZ} \IQ  
  & \cong & 
\bigoplus_{k = 1}^{\infty}  H_{n-4k+1} (M ; \IQ ).
\end{eqnarray*}
\end{theorem}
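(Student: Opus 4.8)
The plan is to combine the three inputs cited just before the statement: \Cref{the:relations_between_the_various_functors}, \Cref{the:Relating_A-theory_to_algebraic_K_theory}, and~\cite[Theorem~0.3]{Lueck-Steimle(2015splitasmb)}, together with Borel's rational computation of $K_i(\IZ)$. First I would recall that for an aspherical closed smooth manifold $M$ with fundamental group $\pi$, the smooth non-connective Whitehead spectrum $\bfWh^{\DIFF}(M)$ sits in a cofiber sequence $\Sigma^{\infty} M_+ \to \bfA(M) \to \bfWh^{\DIFF}(M)$ by \Cref{def: non-conn whspectrum}, and $\bfP^{\DIFF}(M) \simeq \Omega^2 \bfWh^{\DIFF}(M)$ by \Cref{the:relations_between_the_various_functors}~\ref{the:relations_between_the_various_functors:calp_PL_and_wh_PL}. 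So it suffices to compute $\pi_n(\bfWh^{\DIFF}(M)) \otimes_{\IZ} \IQ$, and then the second formula follows from the first by a degree shift of $2$.

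Next I would pass to the rational homotopy of $\bfA(M)$. Since $M$ is aspherical, $\bfA(M) \simeq \bfA(B\pi)$, and by the Farrell--Jones Conjecture for $K$--theory for $\IZ\pi$ together with \Cref{the:Relating_A-theory_to_algebraic_K_theory} (the vertical maps are rationally bijective in all degrees), we get $\pi_n(\bfA(B\pi)) \otimes_{\IZ} \IQ \cong K_n(\IZ\pi) \otimes_{\IZ} \IQ \cong H_n^{\pi}(\EGF{\pi}{\calvcyc}; \bfK_{\IZ}) \otimes_{\IZ} \IQ$ using the $K$--theoretic Farrell--Jones isomorphism. Now invoke~\cite[Theorem~0.3]{Lueck-Steimle(2015splitasmb)}: assuming the $K$-- and $L$--theoretic Farrell--Jones Conjectures, the rationalized relative assembly map from $\EGF{\pi}{\calfin}$ to $\EGF{\pi}{\calvcyc}$ is an isomorphism, and $\pi_n(\bfWh^{\DIFF}(M)) \otimes \IQ$ — which is by definition the rationalized homotopy cofiber of $\Sigma^{\infty}M_+ \to \bfA(M)$ — is identified with the rationalized homotopy cofiber of the classical (i.e.\ trivial-family) assembly $\Sigma^{\infty}M_+ \to \bfA(M)$. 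For a torsionfree group this cofiber is detected by a homology spectral sequence computation: rationally $\bfA(\pt) \simeq \Sigma^{\infty}\Ss \vee \bigvee_{k\ge 1} \Sigma^{4k+1}\HH\IQ$, so that $\pi_\ast(\bfWh^{\DIFF}(\pt)) \otimes \IQ \cong \bigoplus_{k\ge1}\IQ$ concentrated in degrees $4k+1$, and the homology version of the assembly map gives $\pi_n(\bfWh^{\DIFF}(M)) \otimes \IQ \cong H_n(M; \pi_\ast(\bfWh^{\DIFF}(\pt)) \otimes \IQ) \cong \bigoplus_{k=1}^\infty H_{n-4k-1}(M;\IQ)$. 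Here one uses that $\bfWh^{\DIFF}$, being rationally a wedge of Eilenberg--MacLane spectra, satisfies the Farrell--Jones Conjecture trivially (\Cref{lem:bfS_homology_theory}), so the relevant assembly is a homology-theory assembly and hence an Atiyah--Hirzebruch type collapse applies rationally.

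The main obstacle I expect is pinning down the precise rational homotopy type of $\bfWh^{\DIFF}(\pt)$ and verifying that the linearization map interacts correctly with the two different ``base'' maps into $\bfA(M)$: the smooth Whitehead spectrum is the cofiber of $\Sigma^{\infty}M_+ \to \bfA(M)$ whereas the topological one is the cofiber of $M_+ \wedge \bfA(\pt) \to \bfA(M)$, and one must be careful that the rational computation of $\pi_\ast(\bfA(\pt)) \otimes \IQ$ via $K_\ast(\IZ) \otimes \IQ$ (Borel: $K_n(\IZ)\otimes\IQ = \IQ$ for $n = 0$ and $n = 4k+1$, $k\ge 1$, and $0$ otherwise) is transported correctly through \Cref{the:Connectivity_of_the_linearization_map} and \Cref{the:Relating_A-theory_to_algebraic_K_theory}, which only guarantee a \emph{rational} equivalence $\bfA(M) \to \bfK(\IZ\Pi(M))$. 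Once one knows $\pi_\ast(\bfA(\pt))\otimes\IQ$ and that the unit $\Ss \to \bfA(\pt)$ is rationally the inclusion of the degree-$0$ summand, the cofiber computation is formal; the bookkeeping of degree shifts (cofiber of $\Sigma^{\infty}M_+$, giving the shift by $4k+1$ for $\bfWh^{\DIFF}$, and then the further $\Omega^2$ for $\bfP^{\DIFF}$, giving $4k-1$) is routine. I would also remark that this recovers the known computation of Farrell--Hsiang and Igusa in the relevant range.
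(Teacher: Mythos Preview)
Your proposal is correct and follows exactly the route the paper indicates: combine Borel's computation of $K_*(\IZ) \otimes_{\IZ} \IQ$ with \Cref{the:relations_between_the_various_functors}, \Cref{the:Relating_A-theory_to_algebraic_K_theory}, and~\cite[Theorem~0.3]{Lueck-Steimle(2015splitasmb)}. One small correction to your write-up: the appeal to \Cref{lem:bfS_homology_theory} is misplaced, since the functor $\bfWh^{\DIFF}$ is not of the form $Y \mapsto Y_+ \wedge \bfE$ (that is precisely what you are trying to establish rationally); instead, note that $\pi$ is torsionfree because $M$ is a finite-dimensional $B\pi$, so the $K$--theoretic Farrell--Jones Conjecture together with rational Nil-vanishing makes the classical assembly $M_+ \wedge \bfA(\pt) \to \bfA(M)$ a rational equivalence, and then compare the two cofiber sequences over $\Sigma^{\infty}M_+$ to obtain $\bfWh^{\DIFF}(M) \simeq_{\IQ} M_+ \wedge \bfWh^{\DIFF}(\pt)$.
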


For the proof of the next result, which does involve the involutions on higher algebraic $K$--theory,
  we refer to~\cite[Lecture~5]{Farrell(2002)},~\cite{Farrell-Hsiang(1978)},  
or~\cite[Section~2]{Farrell-Jones(1990b)}. 

\begin{theorem}[Rational homotopy groups of $\DIFF(M)$ for an aspherical  closed smooth manifold]
\label{the:pi_i(DIFF(M)_M_aspherical}
Let $M$ be  an aspherical closed smooth manifold of dimension $\ge 10$ with fundamental group $\pi$. 
Suppose that the Farrell--Jones Conjecture for $K$--theory and for $L$--theory
for $\IZ\pi$ hold.

Then for $1 \leq n \leq \min\{( \dim M -7 ) / 2, (\dim M - 4)/3\}$ we have
\[
\pi_n (\DIFF(M)) \otimes_{\IZ} \IQ = 
\begin{cases} 
\zentrum(\pi) \otimes_{\IZ} \IQ  &  \mbox{if} \; n=1, 
\\
\bigoplus_{j=1}^{\infty} H_{(n +1) - 4j} ( M;\IQ ) &   \mbox{if} \; n \ge 2 , \; \dim M \; \mbox{odd}, 
\\
\{0\} & \mbox{if} \; n \ge 2, \; \dim M \mbox{even}.
\end{cases}
\]
\end{theorem}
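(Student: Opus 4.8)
The plan is to run the smooth analogue of the argument that produced \Cref{the:widetilde(TOP)(M)/TOP(M)_and_bfWh}, now with $\bfWh^{\DIFF}$ in place of $\bfWh^{\TOP}$ and with one extra input from smooth surgery theory; this is the circle of ideas of~\cite[Lecture~5]{Farrell(2002)},~\cite{Farrell-Hsiang(1978)} and~\cite[Section~2]{Farrell-Jones(1990b)}, which I will only sketch. As in \Cref{subsec:Topological_automorphism_groups_of_aspherical_closed_manifolds} one starts from the fibration sequence
\[
\widetilde{\DIFF}(M)/\DIFF(M) \longrightarrow B\!\DIFF(M) \longrightarrow B\widetilde{\DIFF}(M),
\]
computes the two outer terms rationally, and then chases the associated long exact sequence of homotopy groups. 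The concordance stable range hypothesis, which by~\cite[Corollary~1.4.2]{Jahren-Rognes-Waldhausen(2013)} is met for $n\le\min\{(\dim M-7)/2,(\dim M-4)/3\}$ exactly as in \Cref{the:widetilde(TOP)(M)/TOP(M)_and_bfWh}, makes all approximations below isomorphisms in the stated range. Note also that $\pi:=\pi_1(M)$ is torsionfree, being the fundamental group of a closed aspherical manifold, so the Farrell--Jones Conjecture for $K$--theory forces the Whitehead group, $\widetilde{K}_0$ and all negative $K$--groups of $\IZ\pi$ to vanish; thus decorations are irrelevant below.

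First I would dispose of the block term. The relative homotopy groups of $\widetilde{\DIFF}(M)\to\widetilde{\G}(M)$ are the smooth structure sets $\cals^{\DIFF}(M\times D^n,\partial)$, which smoothing theory relates to their topological analogues through the homotopy groups of $\TOP/O$; the latter are finite in every degree, so rationally $\cals^{\DIFF}(M\times D^n,\partial)\otimes_\IZ\IQ\cong\cals^{\TOP}(M\times D^n,\partial)\otimes_\IZ\IQ$, and this vanishes for all $n\ge0$ by the Farrell--Jones Conjecture for $L$--theory (\Cref{lem:widetildeTOP_andG}). Hence $\widetilde{\DIFF}(M)\to\widetilde{\G}(M)$ is a rational equivalence, and combining \Cref{lem:.widetildeG_andG} with \Cref{lem:pi_n(G(X)_X_aspherical} I obtain $\pi_1(\widetilde{\DIFF}(M))\otimes_\IZ\IQ\cong\zentrum(\pi)\otimes_\IZ\IQ$ and $\pi_n(\widetilde{\DIFF}(M))\otimes_\IZ\IQ=0$ for $n\ge2$, just as in \Cref{lem:pi_n(TOP(X)_X_aspherical}.

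Next I would treat the concordance term using the smooth counterpart of \Cref{the:widetilde(TOP)(M)/TOP(M)_and_bfH} from~\cite{Weiss-Williams(1988)}: in the (smooth) concordance stable range there is an $(k+1)$--connected map
\[
\widetilde{\DIFF}(M)/\DIFF(M)\longrightarrow\Omega^{\infty}\bigl(E\IZ/2_+\wedge_{\IZ/2}\Omega\bfWhs^{\DIFF}(M)\bigr).
\]
By \Cref{the:computation_of_diff_pseudos} we have $\pi_n(\bfWh^{\DIFF}(M))\otimes_\IZ\IQ\cong\bigoplus_{k\ge1}H_{n-4k-1}(M;\IQ)$, so rationally the homotopy of the right-hand side is the $\IZ/2$--coinvariants of $\pi_\ast(\Omega\bfWh^{\DIFF}(M))\otimes_\IZ\IQ$. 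The only substantial point is to identify the $\IZ/2$--action, which is induced by the canonical duality involution on $\bfA$ twisted by the stable tangent bundle of $M$. Feeding the linearization map of \Cref{the:Relating_A-theory_to_algebraic_K_theory} and Borel's computation of $K_\ast(\IZ)\otimes_\IZ\IQ$~\cite{Borel(1972)} into the Weiss--Williams description of this involution, one finds that on each rational summand the action is multiplication by a sign depending only on the parity of $\dim M$ — trivial for $\dim M$ odd, equal to $-1$ for $\dim M$ even — so that the homotopy orbit spectrum has rational homotopy $\bigoplus_{k\ge1}H_{\ast-4k}(M;\IQ)$ (suitably reindexed) when $\dim M$ is odd and rationally trivial homotopy when $\dim M$ is even. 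This computation of the involution is the hard part of the proof, and it is exactly where the involutions on higher algebraic $K$--theory enter.

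Finally, the long exact sequence of the displayed fibration yields, for $n\ge2$, an isomorphism $\pi_n(\DIFF(M))\otimes_\IZ\IQ\cong\pi_{n+1}(\widetilde{\DIFF}(M)/\DIFF(M))\otimes_\IZ\IQ$, since $\pi_n(\widetilde{\DIFF}(M))\otimes_\IZ\IQ$ vanishes in degrees $\ge2$; by the previous paragraph this is $\bigoplus_{j\ge1}H_{(n+1)-4j}(M;\IQ)$ for $\dim M$ odd and $0$ for $\dim M$ even. For $n=1$ the concordance term contributes nothing in the relevant degrees (the homology groups $H_{2-4k}(M;\IQ)$ and $H_{1-4k}(M;\IQ)$ vanish for $k\ge1$), so $\pi_1(\DIFF(M))\otimes_\IZ\IQ\cong\pi_1(\widetilde{\DIFF}(M))\otimes_\IZ\IQ\cong\zentrum(\pi)\otimes_\IZ\IQ$. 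Apart from the determination of the rational involution, everything reduces to bookkeeping with fibration sequences together with the inputs already available in \Cref{the:computation_of_diff_pseudos}, \Cref{lem:widetildeTOP_andG} and \Cref{the:Relating_A-theory_to_algebraic_K_theory}.
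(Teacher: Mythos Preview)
Your proposal is correct and follows exactly the approach the paper indicates: the paper does not give its own proof of this theorem but simply refers to~\cite[Lecture~5]{Farrell(2002)},~\cite{Farrell-Hsiang(1978)} and~\cite[Section~2]{Farrell-Jones(1990b)}, noting only that the argument ``does involve the involutions on higher algebraic $K$--theory.'' Your sketch unpacks precisely this circle of ideas---the block fibration, rational smoothing theory to reduce $\widetilde{\DIFF}(M)$ to $\widetilde{\G}(M)$, the smooth Weiss--Williams identification of the concordance quotient, \Cref{the:computation_of_diff_pseudos} for the rational Whitehead spectrum, and the parity-dependent involution computation as the hard core---so it matches the paper's (outsourced) argument in both structure and content.
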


\begin{remark}[Surfaces and simply connected manifolds] \label{rem:other_manifolds} There
  are very interesting computations of the cohomology of $B\!\DIFF(M)$ in a range and under
  stabilization with taking the connected sum with $S^n \times S^n$ for $2$--dimensional
  manifolds or simply connected high-dimensional manifolds by Berglund, Galatius, Madsen,
  Randal-Williams, Weiss and others, see for
  instance~\cite{Berglund-Madsen(2013),Berglund-Madsen(2014),Galatius-Randal-Williams(2014abelian),
   Galatius-Randal-Williams(2014Cont), Galatius-Randal-Williams(2014Acta), Galatius-Randal-Williams(2014stability_I),
    Galatius-Randal-Williams(2016stability_II), Madsen-Weiss(2005),Madsen-Weiss(2007)}.
  The methods used in these papers are quite different. Notice that taking the connected
  sum with $S^n \times S^n$ will destroy asphericity except for $n = 1$, so that it is not
  clear what stabilization could mean in the context of aspherical manifolds in high
  dimensions.
\end{remark}


\section{Inheritance properties of the  Isomorphism Conjectures}
\label{sec:Inheritance_properties_of_the_Isomorphism_Conjectures}

The main result of this section is

\begin{theorem}[Inheritance properties of the Meta-Conjecture with coefficients]
\label{the:Inheritance_properties_of_the_Meta-Isomorphism_Conjecture_for_bfS_with_c}
Let $\bfS \colon \SPACES\to \SPECTRA$ be a covariant functor which respects weak
equivalences and disjoint unions. Let $\calc$ be  a class of groups which is closed under
isomorphisms, taking subgroups and taking quotients.

\begin{enumerate} 

\item \label{the:Inheritance_properties_of_the_Meta-Isomorphism_Conjecture_for_bfS_with_c:subgroups}
  Suppose that the Meta-Isomorphism
 Conjecture~\ref{con:Meta_Isomorphisms_Conjecture_for_functors_from_spaces_to_spectra_with_c}
 with  coefficients holds for $(G,\calc(G))$, i.e., it holds for $G$ with respect to the family of subgroups 
   $\calc(G) = \{H \subseteq G \mid H \in \calc\}$ of $G$.  Let $H \subseteq G$ be a subgroup.

Then \Cref{con:Meta_Isomorphisms_Conjecture_for_functors_from_spaces_to_spectra_with_c}
 holds for $(H,\calc(H))$.

\item \label{the:Inheritance_properties_of_the_Meta-Isomorphism_Conjecture_for_bfS_with_c:extensions}
Let $1 \to K \to   G \xrightarrow{p} Q \to 1$ be an extension of
groups. Suppose that $(Q,\calc(Q))$ and $(p^{-1}(H), \calc(p^{-1}(H))$ for every $H \in \calc(Q)$ satisfy 
\Cref{con:Meta_Isomorphisms_Conjecture_for_functors_from_spaces_to_spectra_with_c}.
Then $(G,\calc(G))$ satisfies 
\Cref{con:Meta_Isomorphisms_Conjecture_for_functors_from_spaces_to_spectra_with_c}.

\item \label{the:Inheritance_properties_of_the_Meta-Isomorphism_Conjecture_for_bfS_with_c:direct_products}
Suppose that \Cref{con:Meta_Isomorphisms_Conjecture_for_functors_from_spaces_to_spectra_with_c}
is true for $(H_1 \times H_2,\calc(H_1 \times H_2))$ for every $H_1, H_2 \in \calc$.

Then for two groups $G_1$ and $G_2$
\Cref{con:Meta_Isomorphisms_Conjecture_for_functors_from_spaces_to_spectra_with_c}
is true for the direct product $G_1 \times G_2$ and the family $\calc(G_1 \times G_2)$,
if and only if is true for $(G_k,\calc(G_k))$ for $k = 1,2$.

\item \label{the:Inheritance_properties_of_the_Meta-Isomorphism_Conjecture_for_bfS_with_c:directed_colimits}
Suppose that for any directed systems of spaces $\{X_i \mid i \in I\}$ indexed over an arbitrary
  directed set $I$ the canonical map
  \[
  \hocolim_{i \in I} \bfS(X_i) \to \bfS\bigl(\hocolim_{i \in I} X_i\bigr)
  \]
  is a weak homotopy equivalence. 
Let $\{G_i \mid i  \in I\}$ be a directed system of groups over a directed set $I$
  (with arbitrary structure maps). Put $G = \colim_{i \in I} G_i$.
Suppose that  \Cref{con:Meta_Isomorphisms_Conjecture_for_functors_from_spaces_to_spectra_with_c}
holds for $(G_i,\calc(G_i))$  for every $i \in I$. Then \Cref{con:Meta_Isomorphisms_Conjecture_for_functors_from_spaces_to_spectra_with_c}
holds for $(G,\calc(G))$.

\item \label{the:Inheritance_properties_of_the_Meta-Isomorphism_Conjecture_for_bfS_with_c:overgroups_of_finite_index}
The analogs of assertions~\ref{the:Inheritance_properties_of_the_Meta-Isomorphism_Conjecture_for_bfS_with_c:subgroups},~%
\ref{the:Inheritance_properties_of_the_Meta-Isomorphism_Conjecture_for_bfS_with_c:extensions},~%
\ref{the:Inheritance_properties_of_the_Meta-Isomorphism_Conjecture_for_bfS_with_c:direct_products}, 
and~\ref{the:Inheritance_properties_of_the_Meta-Isomorphism_Conjecture_for_bfS_with_c:directed_colimits}
hold for the Meta-Isomorphism 
Conjecture~\ref{con:The_Meta_Isomorphisms_Conjecture_for_functors_from_spaces_to_spectra_with_c_and_fwp}
with coefficients and finite wreath products.

Moreover, if $G$ is  a group and $H \subseteq G$ is  a subgroup of finite index, then 
\Cref{con:The_Meta_Isomorphisms_Conjecture_for_functors_from_spaces_to_spectra_with_c_and_fwp}
holds for $(G,\calc(G))$, if and only if 
\Cref{con:The_Meta_Isomorphisms_Conjecture_for_functors_from_spaces_to_spectra_with_c_and_fwp}
holds for $(H,\calc(H))$.
\end{enumerate}
\end{theorem}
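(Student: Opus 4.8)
The plan is to deduce all five assertions, just as in the $K$-- and $L$--theoretic case, from two general properties of the equivariant homology theory associated with $\bfS$ in \Cref{subsec:The_Meta_Isomorphism_Conjecture_for_functors_from_spaces_to_spectra_with_c}; by \Cref{rem:relation_to_original} this is the Davis--L\"uck incarnation of the fibered Farrell--Jones setup, so both properties are at our disposal. The first is the \emph{pull--back principle}: if $\alpha\colon H\to G$ is a group homomorphism and \Cref{con:Meta_Isomorphisms_Conjecture_for_functors_from_spaces_to_spectra_with_c} holds for $G$ with respect to a family $\calf$, then it holds for $H$ with respect to $\alpha^{-1}\calf:=\{V\subseteq H\mid\alpha(V)\in\calf\}$; for injective $\alpha$ this is passage to a subgroup, and for surjective $\alpha$ one divides the free coefficient complex by $\ker\alpha$. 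The second is the \emph{Transitivity Principle}: for nested families $\calf\subseteq\calg$ of subgroups of $G$, if \Cref{con:Meta_Isomorphisms_Conjecture_for_functors_from_spaces_to_spectra_with_c} holds for $(V,\calf\cap V)$ for every $V\in\calg$, then the relative assembly $H^G_*(\EGF{G}{\calf};\bfS^G_Z)\to H^G_*(\EGF{G}{\calg};\bfS^G_Z)$ is an isomorphism, so that the conjecture holds for $(G,\calf)$ if and only if it holds for $(G,\calg)$. Both are formal consequences of the axioms of an equivariant homology theory, the induction structure and the Atiyah--Hirzebruch spectral sequence (compare~\cite{Davis-Lueck(1998)}), and their proofs use nothing about $\bfS$ beyond respecting weak equivalences and disjoint unions. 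Throughout one uses that closure of $\calc$ under subgroups and quotients gives $\calc(G)\cap H=\calc(H)$ for $H\subseteq G$ and $\calc(G)\subseteq p^{-1}(\calc(Q))$ for a surjection $p\colon G\twoheadrightarrow Q$.

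Granting this, assertions~\ref{the:Inheritance_properties_of_the_Meta-Isomorphism_Conjecture_for_bfS_with_c:subgroups}--\ref{the:Inheritance_properties_of_the_Meta-Isomorphism_Conjecture_for_bfS_with_c:direct_products} are formal. Assertion~\ref{the:Inheritance_properties_of_the_Meta-Isomorphism_Conjecture_for_bfS_with_c:subgroups} is the pull--back principle for the inclusion $H\hookrightarrow G$, using $\alpha^{-1}\calc(G)=\calc(H)$. For assertion~\ref{the:Inheritance_properties_of_the_Meta-Isomorphism_Conjecture_for_bfS_with_c:extensions}, pulling back along $p$ gives the conjecture for $(G,p^{-1}(\calc(Q)))$; since $\calc(G)\subseteq p^{-1}(\calc(Q))$, the Transitivity Principle reduces the claim for $(G,\calc(G))$ to the conjecture for $(V,\calc(V))$ for each $V\in p^{-1}(\calc(Q))$, and each such $V$ lies in $p^{-1}(p(V))$ with $p(V)\in\calc(Q)$, so this follows from the hypothesis together with assertion~\ref{the:Inheritance_properties_of_the_Meta-Isomorphism_Conjecture_for_bfS_with_c:subgroups}. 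In assertion~\ref{the:Inheritance_properties_of_the_Meta-Isomorphism_Conjecture_for_bfS_with_c:direct_products}, the ``only if'' direction is assertion~\ref{the:Inheritance_properties_of_the_Meta-Isomorphism_Conjecture_for_bfS_with_c:subgroups} for the inclusions $G_k\hookrightarrow G_1\times G_2$, while for the ``if'' direction one applies assertion~\ref{the:Inheritance_properties_of_the_Meta-Isomorphism_Conjecture_for_bfS_with_c:extensions} first to $1\to G_1\to G_1\times G_2\to G_2\to 1$, and then, to treat the preimages $G_1\times H$ with $H\in\calc$, once more to $1\to H\to G_1\times H\to G_1\to 1$, arriving exactly at the hypothesis about products of groups from $\calc$.

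For the colimit statement~\ref{the:Inheritance_properties_of_the_Meta-Isomorphism_Conjecture_for_bfS_with_c:directed_colimits}, write $G=\colim_{i\in I}G_i$ with structure maps $\psi_i\colon G_i\to G$. Combining assertion~\ref{the:Inheritance_properties_of_the_Meta-Isomorphism_Conjecture_for_bfS_with_c:subgroups} with the Transitivity Principle upgrades the conjecture for $(G_i,\calc(G_i))$ to the conjecture for $(G_i,\calg)$ for \emph{any} family $\calg$ of subgroups of $G_i$ containing $\calc(G_i)$ --- the Transitivity hypothesis, namely the conjecture for $(V,\calc(V))$ with $V\in\calg$, being assertion~\ref{the:Inheritance_properties_of_the_Meta-Isomorphism_Conjecture_for_bfS_with_c:subgroups} again --- and in particular for $\calg=\psi_i^{-1}(\calc(G))$, the family that is relevant precisely because the structure maps need not be injective. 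The standing extra hypothesis that $\bfS$ commutes with directed homotopy colimits propagates to make the homology theory continuous, after which the usual colimit argument --- realizing a model of $\EGF{G}{\calc(G)}$ as a directed homotopy colimit of induced $G$--CW--complexes $\psi_i^*\EGF{G_i}{\psi_i^{-1}(\calc(G))}$ and comparing assembly maps --- goes through verbatim. This is the single step where a property of $\bfS$ beyond the standing ones is used, and it is precisely the hypothesis imposed in~\ref{the:Inheritance_properties_of_the_Meta-Isomorphism_Conjecture_for_bfS_with_c:directed_colimits}.

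Finally, assertion~\ref{the:Inheritance_properties_of_the_Meta-Isomorphism_Conjecture_for_bfS_with_c:overgroups_of_finite_index}. Its crux, and the reason for introducing the variant \Cref{con:The_Meta_Isomorphisms_Conjecture_for_functors_from_spaces_to_spectra_with_c_and_fwp} with wreath products, is stability under finite--index overgroups. Let $H\subseteq G$ with $[G:H]=n<\infty$ and assume \Cref{con:The_Meta_Isomorphisms_Conjecture_for_functors_from_spaces_to_spectra_with_c_and_fwp} for $(H,\calc(H))$; fix a finite group $F$, so that it suffices to verify \Cref{con:Meta_Isomorphisms_Conjecture_for_functors_from_spaces_to_spectra_with_c} for $(G\wr F,\calc(G\wr F))$. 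The imprimitive action of $G$ on $G/H$ gives the Kaloujnine--Krasner embedding $G\hookrightarrow H\wr\Sigma_n$, hence $G\wr F\hookrightarrow(H\wr\Sigma_n)\wr F\cong(\prod_{\Sigma_n\times F}H)\rtimes(\Sigma_n\wr F)$; since $\Sigma_n\times F$, with its imprimitive $(\Sigma_n\wr F)$--action, is a transitive $(\Sigma_n\wr F)$--set, a surjection of $(\Sigma_n\wr F)$--sets from the regular one onto it induces an equivariant embedding $\prod_{\Sigma_n\times F}H\hookrightarrow\prod_{\Sigma_n\wr F}H$, and so $(H\wr\Sigma_n)\wr F\hookrightarrow H\wr(\Sigma_n\wr F)$ with $\Sigma_n\wr F$ \emph{finite}. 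By hypothesis \Cref{con:Meta_Isomorphisms_Conjecture_for_functors_from_spaces_to_spectra_with_c} holds for $(H\wr(\Sigma_n\wr F),\calc(H\wr(\Sigma_n\wr F)))$, and assertion~\ref{the:Inheritance_properties_of_the_Meta-Isomorphism_Conjecture_for_bfS_with_c:subgroups} applied to the composite embedding yields it for $(G\wr F,\calc(G\wr F))$; as $F$ was arbitrary, \Cref{con:The_Meta_Isomorphisms_Conjecture_for_functors_from_spaces_to_spectra_with_c_and_fwp} holds for $G$, the converse being the wreath version of assertion~\ref{the:Inheritance_properties_of_the_Meta-Isomorphism_Conjecture_for_bfS_with_c:subgroups}. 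The wreath versions of assertions~\ref{the:Inheritance_properties_of_the_Meta-Isomorphism_Conjecture_for_bfS_with_c:subgroups}--\ref{the:Inheritance_properties_of_the_Meta-Isomorphism_Conjecture_for_bfS_with_c:directed_colimits} are then obtained by re-running the earlier arguments with a wreath product $(-)\wr F$ inserted throughout, using that $F$ is finite (so that finite products commute with filtered colimits) and that the preimages occurring in the extension argument are finite--index overgroups of groups already known to satisfy the wreath conjecture and hence are covered by the finite--index case just treated. I expect the main difficulty to sit precisely here, in this wreath--product bookkeeping: the finite--index and extension statements have to be bootstrapped in the right order, and one must take care \emph{never} to invoke the conjecture for finite groups (which is false in general for a class $\calc$ not containing all finite groups) --- which is exactly what the embedding $G\wr F\hookrightarrow H\wr(\Sigma_n\wr F)$ is designed to circumvent. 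Apart from this, the only genuinely non-formal ingredient in the whole argument is the continuity of $\bfS$ entering~\ref{the:Inheritance_properties_of_the_Meta-Isomorphism_Conjecture_for_bfS_with_c:directed_colimits}; everything else rests on the Davis--L\"uck machinery already set up in \Cref{sec:The_Isomorphism_Conjecture}.
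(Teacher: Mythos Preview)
Your overall strategy matches the paper's: pull-back along homomorphisms, the Transitivity Principle, continuity for directed colimits, and the Kaloujnine--Krasner embedding for finite-index overgroups. For parts~\ref{the:Inheritance_properties_of_the_Meta-Isomorphism_Conjecture_for_bfS_with_c:subgroups}--\ref{the:Inheritance_properties_of_the_Meta-Isomorphism_Conjecture_for_bfS_with_c:direct_products} and~\ref{the:Inheritance_properties_of_the_Meta-Isomorphism_Conjecture_for_bfS_with_c:overgroups_of_finite_index} your direct argument is correct and in fact somewhat leaner than the paper's, which routes part~\ref{the:Inheritance_properties_of_the_Meta-Isomorphism_Conjecture_for_bfS_with_c:extensions} through the Fibered Meta-Isomorphism Conjecture~\ref{con:Fibered_Meta_Conjecture_for_a_functor_from_spaces_to_spectra_with_c} (via \Cref{lem:MC_with_coeff_and_FMC_with_coeff}) so as to cite the Transitivity Principle of~\cite{Bartels-Echterhoff-Lueck(2008colim)} in its fibered form.

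There is however a genuine gap in your treatment of part~\ref{the:Inheritance_properties_of_the_Meta-Isomorphism_Conjecture_for_bfS_with_c:directed_colimits}. You correctly upgrade the conjecture for $(G_i,\calc(G_i))$ to $(G_i,\psi_i^{-1}\calc(G))$ and then invoke ``the usual colimit argument''. But when you compare the assembly map for $(G,Z)$ with those for the $G_i$ --- say via $H^G_n\bigl((\psi_i)_* E_{\psi_i^{-1}\calc(G)}G_i;\bfS^G_Z\bigr) \cong H^{G_i}_n\bigl(E_{\psi_i^{-1}\calc(G)}G_i;\bfS^{G_i}_{\psi_i^* Z}\bigr)$ from \Cref{lem:adjunctions_for_homology_associated_toS_upper_K}~\ref{lem:adjunctions_for_homology_associated_toS_upper_K:induction_of_X} --- the coefficient space $\psi_i^* Z$ is \emph{not free} when $\psi_i$ fails to be injective, so the conjecture for $G_i$ does not apply to it. This is precisely why the paper introduces the construction $\bfS^{\downarrow G}_Z$ and the fibered formulation: one blows up $\psi_i^*Z$ to $EG_i \times \psi_i^*Z$ (see the discussion preceding Conjecture~\ref{con:Fibered_Meta_Conjecture_for_a_functor_from_spaces_to_spectra_with_c} and \Cref{rem:The_condition_free_is_necessary_in_MC_spaces_spectra_with_coeff}), proves strong continuity for the resulting equivariant homology theory over $G$ (\Cref{lem:bfS_and_homotopy_colimits}), and then appeals to~\cite[Theorem~5.2]{Bartels-Echterhoff-Lueck(2008colim)}. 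Your sketch resolves the non-injectivity at the level of \emph{families} but not at the level of \emph{coefficients}; the latter is where the real content sits, and the fibered machinery is not optional there.
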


Let us remark that the case of free products is missing in
\Cref{the:Inheritance_properties_of_the_Meta-Isomorphism_Conjecture_for_bfS_with_c}.
It will be treated in
\Cref{subsec:Proof_of_assertion_ref(the:main_result:inheritance)_of_Theorem_ref(the:main_result)}
below.


\subsection{The Fibered Meta-Isomorphism Conjecture for equivariant homology theories}
\label{subsec:The_Fibered_Meta_Isomorphism_Conjecture_for_equivariant_homology_theories}

Next we introduce the Meta-Conjecture and its fibered version in terms
of $G$--homology theories.  In this setting the analog of
\Cref{the:Inheritance_properties_of_the_Meta-Isomorphism_Conjecture_for_bfS_with_c}
has already been proved and we want to reduce the case coming from a
functor from spaces to spectra to this situation.

\begin{conjecture}[Meta-Isomorphism Conjecture] \label{con:Meta_Isomorphisms_Conjecture}
The group $G$ satisfies the \emph{Meta-Isomorphism Conjecture}
\index{Conjecture!Meta-Isomorphism Conjecture}
with respect to the $G$--homology theory $\calh^G_*$ and the family $\calf$ of subgroups of $G$
if the assembly map 
\[
\calh_n^G(\pr) \colon \calh^G_n(\EGF{G}{\calf}) \to \calh_n^G(G/G)
\]
induced by the projection $\pr \colon \EGF{G}{\calf} \to G/G$ is bijective for all $n \in \IZ$.
\end{conjecture}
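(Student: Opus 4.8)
Strictly speaking the statement above is a conjecture rather than a theorem, so there is nothing to prove about it as such; the reason for introducing it here is to provide the common abstract target to which \Cref{the:Inheritance_properties_of_the_Meta-Isomorphism_Conjecture_for_bfS_with_c} and the verifications for hyperbolic and $\mathrm{CAT}(0)$--groups will be reduced. Accordingly, what I would do at this point in the paper is set up that reduction. First I would also state the \emph{fibered} version of \Cref{con:Meta_Isomorphisms_Conjecture}: that for every group homomorphism $\phi \colon K \to G$ the assembly map $\calh^K_n(\res_\phi \EGF{G}{\calf}) \to \calh^K_n(K/K)$ is bijective, where $\res_\phi$ denotes restriction along $\phi$ and one uses the pulled-back family $\phi^*\calf = \{ H \subseteq K \mid \phi(H) \in \calf \}$. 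Then I would recall --- rather than reprove --- the inheritance properties of the fibered Meta-Isomorphism Conjecture for an equivariant homology theory (passage to subgroups, to overgroups of finite index, to group extensions, to directed colimits, to finite products and to finite free products), which are already available in the literature in precisely this generality, as the text just before the statement indicates.

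The bridge back to \Cref{sec:The_Isomorphism_Conjecture} is then purely formal: the Davis--L\"uck construction attaches to $\bfS^B$, and more generally to $\bfS^G_Z$ and to the wreath-product families, an equivariant homology theory, and by the identifications recorded after~\eqref{bfs_upper_B} and after~\eqref{bfS:upper_G_Z} the assembly map of \Cref{con:Meta_Isomorphisms_Conjecture} coincides with the assembly maps appearing in \Cref{con:Meta_Isomorphisms_Conjecture_for_functors_from_spaces_to_spectra}, \Cref{con:Meta_Isomorphisms_Conjecture_for_functors_from_spaces_to_spectra_with_c} and \Cref{con:The_Meta_Isomorphisms_Conjecture_for_functors_from_spaces_to_spectra_with_c_and_fwp}. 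The one subtlety is that the ``with coefficients'' version quantifies over \emph{all} free $G$--CW--complexes $Z$ at once, so I would phrase the comparison as: the Meta-Isomorphism Conjecture with coefficients for $(G,\calf)$ holds if and only if \Cref{con:Meta_Isomorphisms_Conjecture} holds for $(G,\calf)$ and the homology theory $H^G_*(-;\bfS^G_Z)$ for every free $G$--CW--complex $Z$, and likewise with $G$ replaced by $G \wr F$ for every finite group $F$ in the wreath-product variant.

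The part I expect to require genuine care --- as opposed to formal manipulation --- is checking that each group-theoretic operation transports the coefficient data correctly, so that the abstract inheritance theorem for the fibered version feeds exactly into \Cref{the:Inheritance_properties_of_the_Meta-Isomorphism_Conjecture_for_bfS_with_c}: restriction of a free $G$--CW--complex to a subgroup $H$ is again free, and conversely induction produces every free $H$--CW--complex up to $H$--homotopy; for an extension $1 \to K \to G \to Q \to 1$ one must match the groups $p^{-1}(H)$ with restrictions of the coefficient system; and for a colimit $G = \colim_i G_i$ one needs the extra hypothesis that $\bfS$ commutes with directed homotopy colimits, which \Cref{lem:bfA_respects_weak_equivalences_and_disjoint_unions} supplies for $\bfA$. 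Once this dictionary is in place, everything in \Cref{sec:Inheritance_properties_of_the_Isomorphism_Conjectures} follows by transport of structure, and the remaining substantive work --- verifying \Cref{con:Meta_Isomorphisms_Conjecture} for $(G,\calvcyc)$ with the $A$--theoretic coefficients when $G$ is hyperbolic or a $\mathrm{CAT}(0)$--group --- is what is carried out by the controlled-algebra and transfer arguments of the later sections.
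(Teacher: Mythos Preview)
You are correct that this is a conjecture (really a definition of when a group ``satisfies'' the conjecture), not a theorem, so there is no proof in the paper to compare against. Your subsequent sketch of how the paper uses this statement---introducing the fibered version, citing the known inheritance properties for equivariant homology theories, and building the dictionary back to the functor-from-spaces setting via the $\bfS^{\downarrow G}_Z$ construction---accurately reflects the structure of the surrounding section.
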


Let $X$ be a $G$--CW--complex. Let $\alpha \colon H \to G$ be a group homomorphism.
Denote by $\alpha^*X$ the $H$--CW--complex obtained from $X$ by
\emph{restriction with $\alpha$}. Given an $H$--CW--complex $Y$, we denote the
$G$--CW--complex given by \emph{induction} by $\alpha_*Y$.

Fix a group $\Gamma$. An equivariant homology theory $H_*^{?}$ over $\Gamma$ in the sense of
\cite[Definition~2.3]{Bartels-Echterhoff-Lueck(2008colim)} assigns to a group $(G,\psi)$
over $\Gamma$, i.e., a group $G$ together with a homomorphism $\psi \colon G \to \Gamma $,
a $G$--homology theory $H_n^{G,\psi}$, sometimes denoted just by $H_*^G$. For two groups
$(G,\psi)$ and $(G',\psi')$ over $\Gamma$ and a morphism $\phi$ between them,
i.e., a
group homomorphism $\phi\colon G \to G'$ with $\psi' \circ \alpha = \psi$, one obtains
homomorphisms $\ind_{\alpha} \colon H^G_*(X,A) \to H^{G'}_*(\alpha_*(X,A))$ for every
$G$--CW--pair $(X,A)$,  which are bijective, if the kernel of $\alpha$ acts freely on
$(X,A)$, and compatible with the boundary homomorphisms associated to pairs. If $\Gamma$ is
trivial, this is just an equivariant homology theory.

\begin{conjecture}[Fibered Meta-Isomorphism Conjecture]
  \label{con:Fibered_Meta-Isomorphism_Conjectures_for_calh?_ast}
   A group $(G,\psi)$ over $\Gamma$
  satisfies the \emph{Fibered Meta-Isomorphism Conjecture with respect to $\calh^?_*$ and the
    family $\calf$ of subgroups of $G$} if for each group homomorphism 
  $\phi \colon K \to  G$ the group $K$ satisfies the Meta-Isomorphism
  Conjecture~\ref{con:Meta_Isomorphisms_Conjecture} with respect to the $K$--homology
  theory $\calh^{K,\psi \circ \phi}_*$ and the family $\phi^*\calf = \{H \subseteq G \mid \phi(H) \in \calf\}$ of subgroups of $K$.
\end{conjecture}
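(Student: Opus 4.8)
The plan is to extract from \Cref{con:Fibered_Meta-Isomorphism_Conjectures_for_calh?_ast} the two elementary structural facts that make the fibered formulation the useful one and that get used repeatedly afterwards: first, that the fibered conjecture implies the plain Meta-Isomorphism Conjecture~\ref{con:Meta_Isomorphisms_Conjecture}; and second, that it is inherited not merely under inclusions of subgroups but under \emph{arbitrary} group homomorphisms into $G$. Precisely, I would prove: if $(G,\psi)$ over $\Gamma$ satisfies the Fibered Meta-Isomorphism Conjecture with respect to $\calh^?_*$ and the family $\calf$, then (i) $G$ satisfies the Meta-Isomorphism Conjecture~\ref{con:Meta_Isomorphisms_Conjecture} for the $G$--homology theory $\calh^{G,\psi}_*$ and $\calf$; and (ii) for every group homomorphism $\phi \colon K \to G$ the group $(K,\psi\circ\phi)$ over $\Gamma$ again satisfies the Fibered Meta-Isomorphism Conjecture with respect to $\calh^?_*$, now for the pulled-back family $\phi^*\calf = \{H \subseteq K \mid \phi(H) \in \calf\}$.

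For (i) one instantiates the defining condition of \Cref{con:Fibered_Meta-Isomorphism_Conjectures_for_calh?_ast} at $\phi = \id_G \colon G \to G$: since $(\id_G)^*\calf = \calf$ and $\psi \circ \id_G = \psi$, the hypothesis is precisely the bijectivity of the assembly map $\calh_n^G(\EGF{G}{\calf}) \to \calh_n^G(G/G)$ for all $n \in \IZ$. For (ii) one must verify that $(K,\psi\circ\phi)$ satisfies the defining condition, i.e., that for every homomorphism $\rho \colon L \to K$ the group $L$ satisfies the Meta-Isomorphism Conjecture~\ref{con:Meta_Isomorphisms_Conjecture} with respect to $\calh^{L,\psi\circ\phi\circ\rho}_*$ and $\rho^*(\phi^*\calf)$. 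But $\phi\circ\rho \colon L \to G$ is itself a homomorphism into $G$, so this is exactly an instance of the hypothesis on $(G,\psi)$ once the data are matched, and the matching rests on two identities: $\psi \circ (\phi\circ\rho) = (\psi\circ\phi)\circ\rho$, so the two $L$--homology theories agree; and $(\phi\circ\rho)^*\calf = \rho^*(\phi^*\calf)$, since for $H \subseteq L$ one has $\phi(\rho(H)) \in \calf \iff \rho(H) \in \phi^*\calf \iff H \in \rho^*(\phi^*\calf)$. One checks along the way that $\phi^*\calf$ is genuinely a family of subgroups of $K$ --- closed under conjugation and passage to subgroups --- because $\calf$ is and $\phi$ is a homomorphism, so all the invoked conjectures make sense.

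I do not expect either (i) or (ii) to be a real obstacle; both are formal, and (ii) is the fibered counterpart of the fact that an equivariant homology theory over $\Gamma$ carries well-behaved induction homomorphisms $\ind_\phi$. The genuine difficulty lies in the reduction announced at the beginning of this subsection, which these facts are meant to feed into: one must exhibit the functors $G/H \mapsto \bfS(G/H \times_G Z)$ of \eqref{bfS:upper_G_Z} --- for $\bfS$ with coefficients, and with coefficients and finite wreath products --- as genuine equivariant homology theories over a suitably chosen group $\Gamma$, so that the already-known inheritance results for the fibered conjecture, such as those of \cite{Bartels-Echterhoff-Lueck(2008colim)}, become available and yield \Cref{the:Inheritance_properties_of_the_Meta-Isomorphism_Conjecture_for_bfS_with_c}. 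Pinning down the correct $\Gamma$ and verifying the induction structure is where the work is; the two properties above, by contrast, are the free structural content of the definition, and the still deeper geometric input --- the control estimates for hyperbolic and $\mathrm{CAT}(0)$--groups needed to verify the conjecture for those groups in the first place --- will be carried out in the later sections.
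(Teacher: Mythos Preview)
The statement is a \emph{definition} (a conjecture), not a theorem, so there is nothing to prove about it as such; you have correctly recognized this and instead established the two immediate structural consequences. Your item (ii) is precisely \Cref{lem:basic_inheritance_property_of_fibered_Isomorphism_conjecture} in the paper, and your proof is the same as the paper's one-line argument: both rest on the identity $\theta^*(\phi^*\calf) = (\phi\circ\theta)^*\calf$ (your $\rho$ is the paper's $\theta$). Your item (i) is not stated as a separate lemma in the paper but is used implicitly; your observation that it follows by taking $\phi = \id_G$ is correct. The additional commentary about what remains to be done --- realizing $\bfS^G_Z$ as part of an equivariant homology theory over a suitable $\Gamma$ --- accurately anticipates the content of \Cref{subsec:The_Fibered_Meta_Isomorphism_Conjecture_with_coefficients_for_functors_from_spaces_to_spectra}.
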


\begin{lemma} \label{lem:basic_inheritance_property_of_fibered_Isomorphism_conjecture}
Let $(G,\psi)$ be a group over $\Gamma$ and  $\phi \colon K \to G$ be a group
homomorphism.  If  $(G,\psi)$ satisfies
the Fibered Meta-Isomorphism Conjecture~\ref{con:Fibered_Meta-Isomorphism_Conjectures_for_calh?_ast}
with respect to the family $\calf$ of subgroups of $G$, then  the group  $(K,\psi \circ \phi)$ over $\Gamma$ satisfies
the Fibered Meta-Isomorphism Conjecture~\ref{con:Fibered_Meta-Isomorphism_Conjectures_for_calh?_ast}
with respect to the family $\phi^*\calf$.
\end{lemma}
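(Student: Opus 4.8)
The statement is purely formal: it asserts that the Fibered Meta-Isomorphism Conjecture is inherited under precomposition with group homomorphisms, and the key observation is that pulling back a family of subgroups is functorial in the obvious way. So the plan is to unwind both instances of \Cref{con:Fibered_Meta-Isomorphism_Conjectures_for_calh?_ast} and check that the hypothesis on $(G,\psi)$ produces exactly what is needed for $(K,\psi\circ\phi)$.

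First I would fix an arbitrary group homomorphism $\varphi' \colon L \to K$; by definition, to verify that $(K,\psi\circ\phi)$ satisfies the Fibered Meta-Isomorphism Conjecture with respect to $\phi^*\calf$, I must show that $L$ satisfies the (unfibered) Meta-Isomorphism Conjecture \ref{con:Meta_Isomorphisms_Conjecture} with respect to the $L$--homology theory $\calh^{L,\psi\circ\phi\circ\varphi'}_*$ and the family $(\varphi')^*(\phi^*\calf)$ of subgroups of $L$. Next I would observe that the composite $\phi\circ\varphi' \colon L \to G$ is itself a group homomorphism into $G$, so the hypothesis that $(G,\psi)$ satisfies the Fibered Meta-Isomorphism Conjecture with respect to $\calf$, applied to this composite, says precisely that $L$ satisfies \Cref{con:Meta_Isomorphisms_Conjecture} with respect to $\calh^{L,\psi\circ(\phi\circ\varphi')}_*$ and the family $(\phi\circ\varphi')^*\calf$.

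It then remains to match up the two conclusions. The homology theories agree on the nose, since $\psi\circ(\phi\circ\varphi') = (\psi\circ\phi)\circ\varphi'$ by associativity of composition, so both are $\calh^{L,\psi\circ\phi\circ\varphi'}_*$. For the families, I would simply compute: an element $H\subseteq L$ lies in $(\varphi')^*(\phi^*\calf)$ iff $\varphi'(H)\in\phi^*\calf$ iff $\phi(\varphi'(H))\in\calf$ iff $(\phi\circ\varphi')(H)\in\calf$ iff $H\in(\phi\circ\varphi')^*\calf$; that is, $(\varphi')^*\circ\phi^* = (\phi\circ\varphi')^*$ as operations on families of subgroups. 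Hence the two Meta-Isomorphism Conjecture statements for $L$ coincide, and we are done.

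There is essentially no obstacle here: the only thing to be careful about is keeping the bookkeeping of composites straight (which homomorphism goes into which family pullback, and that the structure map to $\Gamma$ is the correct composite), and noting that the class of all group homomorphisms $L \to K$ is ``cofinal'' in the relevant sense because any such map composes with the fixed $\phi$ to give a homomorphism into $G$. No properties of the equivariant homology theory $\calh^?_*$ beyond its definition over $\Gamma$ are used, and the lemma holds for any family $\calf$.
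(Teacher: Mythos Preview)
Your proof is correct and follows exactly the same approach as the paper: the paper's entire proof is the one-line observation that for any homomorphism $\theta \colon L \to K$ one has $\theta^*(\phi^*\calf) = (\phi\circ\theta)^*\calf$, which is precisely the functoriality-of-pullback computation you spell out in your third paragraph. You have simply written out the unwinding of the definitions that the paper leaves implicit.
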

\begin{proof}
If $\theta \colon L \to K$ is a group homomorphism, then
$\theta^*(\phi^*\calf) = (\phi \circ \theta)^*\calf$.
\end{proof}


\subsection{Some adjunctions}
\label{subsec:Some_adjunctions}

Let $\bfS \colon \SPACES\to \SPECTRA$ be a covariant functor. Throughout this section we
will assume that it respects weak equivalences and disjoint unions.

\begin{lemma} \label{lem:adjunctions_for_homology_associated_toS_upper_K}
Let $\psi \colon K_1 \to K_2$ be a group homomorphism.

\begin{enumerate}

\item \label{lem:adjunctions_for_homology_associated_toS_upper_K:restriction_of_X}
If $Z$ is a $K_1$--CW--complex and $X$ is a $K_2$--CW--complex, then there is a natural isomorphism
\[
H_n^{K_1}(\psi^*X;\bfS^{K_1}_Z) \xrightarrow{\cong} H_n^{K_2}(X;\bfS^{K_2}_{\psi_*Z}).
\]

\item \label{lem:adjunctions_for_homology_associated_toS_upper_K:induction_of_X}

If $Z$ is a $K_2$--CW--complex and $X$ is a $K_1$--CW--complex, then there is a natural isomorphism
\[
H_n^{K_1}(X;\bfS^{K_1}_{\psi^*Z}) \xrightarrow{\cong} H_n^{K_2}(\psi_*X;\bfS^{K_2}_{Z}).
\]

\end{enumerate}
\end{lemma}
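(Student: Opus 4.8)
The plan is to unravel both identities to the coend (two-sided bar-construction) definition of the homology theories and then transport the homomorphism $\psi$ through the Davis--L\"uck formalism of spaces and spectra over a category \cite{Davis-Lueck(1998)}. Recall $H_n^G(X;\bfS^G_Z)=\pi_n\bigl(\map_G(G/?,X)_+\wedge_{\Or(G)}\bfS^G_Z\bigr)$, where $\bfS^G_Z$ is the covariant $\Or(G)$--spectrum $G/H\mapsto\bfS(G/H\times_G Z)$ and $\map_G(G/?,X)$ the contravariant $\Or(G)$--space $G/H\mapsto X^H$. Let $\Phi:=\Or(\psi)\colon\Or(K_1)\to\Or(K_2)$, $K_1/H\mapsto K_2\times_{K_1}(K_1/H)=K_2/\psi(H)$, with restriction $\Phi^*$ and left-Kan induction $\Phi_*$ along $\Phi$. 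I will use the two coend--Kan adjunctions: $A\wedge_{\Or(K_1)}\Phi^*\bfE\cong\Phi_*A\wedge_{\Or(K_2)}\bfE$ for a contravariant $\Or(K_1)$--space $A$ and a covariant $\Or(K_2)$--spectrum $\bfE$, and dually $A'\wedge_{\Or(K_2)}\Phi_*\bfE'\cong\Phi^*A'\wedge_{\Or(K_1)}\bfE'$ for a contravariant $\Or(K_2)$--space $A'$ and a covariant $\Or(K_1)$--spectrum $\bfE'$; both follow from the co-Yoneda/density formulas.

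Two pairs of identifications feed the argument. On the space side, for a $K_2$--CW--complex $X$ one has $\Phi^*\map_{K_2}(K_2/?,X)=\map_{K_1}(K_1/?,\psi^*X)$ (both send $K_1/H$ to $X^{\psi(H)}$), while for a $K_1$--CW--complex $X'$ one has $\Phi_*\map_{K_1}(K_1/?,X')\cong\map_{K_2}(K_2/?,\psi_*X')$; the second is the Davis--L\"uck fact that inducing a represented $\Or$--space yields the represented $\Or$--space of the induced complex, which one checks on a single orbit (where both sides are representable) and extends by cellular induction, both sides carrying attaching-cell pushouts to pushouts. On the coefficient side, for a $K_2$--CW--complex $Z$ one has the strict equality $\Phi^*\bfS^{K_2}_Z=\bfS^{K_1}_{\psi^*Z}$ (objectwise $\bfS(Z/\psi(H))=\bfS((\psi^*Z)/H)$), while for a $K_1$--CW--complex $Z$ there is a natural comparison map $\Phi_*\bfS^{K_1}_Z\to\bfS^{K_2}_{\psi_*Z}$, adjoint to the map $\bfS^{K_1}_Z\to\Phi^*\bfS^{K_2}_{\psi_*Z}$ induced by the $K_1$--map $Z\to K_2\times_{K_1}Z$, $z\mapsto[e,z]$, which I claim is an objectwise weak equivalence.

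Granting these, assertion~\ref{lem:adjunctions_for_homology_associated_toS_upper_K:induction_of_X} is the chain
\begin{align*}
H_n^{K_1}(X;\bfS^{K_1}_{\psi^*Z})
&=\pi_n\bigl(\map_{K_1}(K_1/?,X)_+\wedge_{\Or(K_1)}\Phi^*\bfS^{K_2}_Z\bigr)\\
&\cong\pi_n\bigl(\Phi_*\map_{K_1}(K_1/?,X)_+\wedge_{\Or(K_2)}\bfS^{K_2}_Z\bigr)\\
&\cong\pi_n\bigl(\map_{K_2}(K_2/?,\psi_*X)_+\wedge_{\Or(K_2)}\bfS^{K_2}_Z\bigr)=H_n^{K_2}(\psi_*X;\bfS^{K_2}_Z),
\end{align*}
where the first equality uses $\bfS^{K_1}_{\psi^*Z}=\Phi^*\bfS^{K_2}_Z$, the first isomorphism the coend--Kan adjunction, and the second isomorphism $\Phi_*\map_{K_1}(K_1/?,X)\cong\map_{K_2}(K_2/?,\psi_*X)$. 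Running the analogous chain from the other end — starting at $H_n^{K_2}(X;\bfS^{K_2}_{\psi_*Z})$, replacing $\bfS^{K_2}_{\psi_*Z}$ by $\Phi_*\bfS^{K_1}_Z$ through the comparison equivalence, applying the dual adjunction, and using $\Phi^*\map_{K_2}(K_2/?,X)=\map_{K_1}(K_1/?,\psi^*X)$ — yields assertion~\ref{lem:adjunctions_for_homology_associated_toS_upper_K:restriction_of_X}. Naturality of every map involved is clear from the construction.

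The one genuine obstacle is to see that $\Phi_*\bfS^{K_1}_Z\to\bfS^{K_2}_{\psi_*Z}$ is an objectwise weak equivalence, i.e.\ that $\bfS$ may be pulled through this left Kan extension. My plan is to reduce along the cells of $Z$ — both functors of $Z$ preserve cellular pushouts and filtered colimits, and $\bfS$ respects weak equivalences — to the case of a single orbit $Z=K_1/K$. Factoring $\psi$ as $K_1\twoheadrightarrow\psi(K_1)\hookrightarrow K_2$, the surjective part is harmless since there all constructions factor through the orbit category of the quotient; for the injective (subgroup-inclusion) part, the value $(K_2/L)\times_{K_2}\psi_*(K_1/K)$ decomposes, via the Mackey double-coset formula, as a disjoint union indexed by $L\backslash K_2/\psi(K)$ whose summands match, summand by summand, the orbit-type decomposition occurring on the other side. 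Since $\bfS$ also respects disjoint unions — and $\pi_n$ of a wedge of spectra is the direct sum of the homotopy groups, as it is for the homology theories in play — the two sides agree. I expect this Mackey bookkeeping to be the most laborious but entirely routine part of the argument.
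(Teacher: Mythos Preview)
Your overall strategy coincides with the paper's: both parts are reduced, via the coend/Kan adjunctions of \cite[Lemma~1.9]{Davis-Lueck(1998)}, to the identification $\Phi^*\bfS^{K_2}_Z\cong\bfS^{K_1}_{\psi^*Z}$ (a strict isomorphism, as you note) and to showing that the comparison map $\Phi_*\bfS^{K_1}_Z\to\bfS^{K_2}_{\psi_*Z}$ is an objectwise weak equivalence. Part~\ref{lem:adjunctions_for_homology_associated_toS_upper_K:induction_of_X} is fine as you wrote it.

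The gap is in your verification of $\Phi_*\bfS^{K_1}_Z\simeq\bfS^{K_2}_{\psi_*Z}$. You propose cellular induction on $Z$, asserting that ``both functors of $Z$ preserve cellular pushouts''. They do not: neither $Z\mapsto\bfS(Z/H)$ nor $Z\mapsto\bfS((\psi_*Z)/L)$ sends a pushout of $K_1$--CW--complexes to a homotopy pushout of spectra, because $\bfS$ is only assumed to respect weak equivalences and disjoint unions, not cofibre squares. For $\bfS=\bfA$ this already fails non-equivariantly. So the reduction to a single orbit $Z=K_1/K$ cannot be carried out this way.

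The paper's fix is to leave $Z$ untouched and instead decompose the $K_1$--\emph{set} $T:=\psi^*(K_2/H)$. One rewrites the value of $\Phi_*\bfS^{K_1}_Z$ at $K_2/H$, using the $(\psi_*,\psi^*)$--adjunction on mapping sets, as
\[
\map_{K_1}(K_1/?,T)_+\wedge_{\Or(K_1)}\bfS(K_1/?\times_{K_1}Z),
\]
and compares it to $\bfS(T\times_{K_1}Z)\cong\bfS(K_2/H\times_{K_2}\psi_*Z)$ via the tautological evaluation map. For transitive $T$ this is the Yoneda lemma; a general $K_1$--set is a disjoint union of transitives, and here the hypothesis that $\bfS$ respects \emph{disjoint unions} does exactly the work that excision cannot. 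Your Mackey/double-coset bookkeeping is in effect this same orbit decomposition of $T$, so once you redirect the induction from the cells of $Z$ to the orbits of $T$ the argument goes through without further change.
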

\begin{proof}~\ref{lem:adjunctions_for_homology_associated_toS_upper_K:restriction_of_X}
The fourth isomorphism appearing in~\cite[Lemma~1.9]{Davis-Lueck(1998)}
together 
with~\cite[Lemma~4.6]{Davis-Lueck(1998)} applied levelwise implies that it suffices to construct a
natural weak homotopy equivalence of $\Or(K_2)$--spectra
\[
u(\psi,Z) \colon \psi_* \bfS^{K_1}_Z \xrightarrow{\simeq} \bfS^{K_2}_{\psi_* Z},
\]
where  $\psi_* \bfS^{K_1}_Z$ is the $\Or(K_2)$--spectrum obtained by induction
in the sense of~\cite[Definition~1.8]{Davis-Lueck(1998)} with the functor
$\Or(\psi) \colon \Or(K_1) \to \Or(K_2),\; K_1/H_1 \mapsto \psi_* (K_1/H_1)$
applied to the $\Or(K_1)$--spectrum $\bfS^{K_1}_Z$.  For a homogeneous space
$K_2/H$ we define $u(\psi,Z)(K_2/H)$ to be the composite
\begin{eqnarray*}
\psi_* \bfS^{K_1}_Z(K_2/H) 
& = & 
\map_{K_2}(\psi_*(K_1/?),K_2/H)_+ \wedge_{\Or(K_1)} \bfS\left(K_1/? \times_{K_1} Z\right)
\\
& \xrightarrow{\cong} &
\map_{K_1}((K_1/?),\psi^*(K_2/H))_+ \wedge_{\Or(K_1)} \bfS(K_1/? \times_{K_1} Z)
\\
& \xrightarrow{\simeq} &
\bfS(\psi^*(K_2/H) \times_{K_1} Z)
\\
& \xrightarrow{\cong} &
\bfS(K_2/H\times_{K_2} \psi_*Z)
\\
& =: &
\bfS^{K_2}_{\psi_* Z}(K_2/H).
\end{eqnarray*}
Here the first map comes from the adjunction isomorphism
\[
\map_{K_2}(\psi_*(K_1/?),K_2/H) \xrightarrow{\cong} \map_{K_1}(K_1/?),\psi^*(K_2/H)),
\]
and the third map comes from the canonical homeomorphism
\[
\psi^*(K_2/H) \times_{K_1} Z \xrightarrow{\cong} K_2/H\times_{K_2} \psi_*Z.
\]
The second map is the special case $T = \psi^*K_2/{?}$ of the natural weak homotopy equivalence defined for
any $K_1$--set $T$
\[
\kappa(T) \colon \map_{K_1}((K_1/?),T)_+ \wedge_{\Or(K_1)} \bfS\left(K_1/? \times_{K_1} Z\right)
\xrightarrow{\simeq}
\bfS(T\times_{K_1} Z),
\]
which is given by $(u \colon K_1/? \to T) \times s \mapsto \bfS(u \times_{K_1}
\id_Z)(s)$.  If $T$ is a transitive $K_1$--set, then $\kappa(T)$ is even an
isomorphism by the Yoneda Lemma.  The
left-hand side is compatible with disjoint unions in $T$, the right-hand side
is compatible with disjoint unions in $T$ up to homotopy, where we use that
$\bfS$ respects disjoint unions.
As every $K_1$--set is the disjoint union of
homogeneous $K_1$--sets, $\kappa(T)$ is a weak homotopy equivalence for every  $K_1$--set $T$.
\\[1mm]~\ref{lem:adjunctions_for_homology_associated_toS_upper_K:induction_of_X}
The third isomorphism appearing in~\cite[Lemma~1.9]{Davis-Lueck(1998)} together 
with~\cite[Lemma~4.6]{Davis-Lueck(1998)} implies that it suffices to construct a
natural weak homotopy equivalence of $\Or(K_1)$--spectra
\[
v(\psi,Z) \colon \psi^* \bfS^{K_2}_Z \xrightarrow{\simeq} \bfS^{K_1}_{\psi^* Z},
\]
where  $ \psi^* \bfS^{K_2}_Z$ is the $\Or(K_1)$--spectrum obtained by restriction in the
sense of~\cite[Definition~1.8]{Davis-Lueck(1998)} with the functor $\Or(\psi) \colon
\Or(K_1) \to \Or(K_2),\; K_1/H \mapsto \psi_* (K_1/H)$ applied to the $\Or(K_2)$--spectrum
$\bfS^{K_2}_Z$. Actually, we obtain even an isomorphism $v(\psi,Z)$ using the adjunction
\[
\psi_*(K_1/H) \times_{K_2} Z \cong K_1/H \times_{K_1} \psi^* Z
\] 
for any subgroup $H \subseteq K_1$.
\end{proof}


\subsection{The Fibered Meta--Isomorphism Conjecture with coefficients for functors from spaces to spectra}
\label{subsec:The_Fibered_Meta_Isomorphism_Conjecture_with_coefficients_for_functors_from_spaces_to_spectra}

Notice that for a homomorphism $\phi \colon H \to G$ the restriction $\phi^* Z$ of a free
$G$--CW--complex $Z$ is free again if and only if $\phi$ is injective. We have already
explained in
Remark~\ref{rem:The_condition_free_is_necessary_in_MC_spaces_spectra_with_coeff} that the
assumption that $Z$ is free is needed in
\Cref{con:Meta_Isomorphisms_Conjecture_for_functors_from_spaces_to_spectra_with_c}.
In the Fibered Meta-Isomorphism Conjecture~\ref{con:Fibered_Meta-Isomorphism_Conjectures_for_calh?_ast} 
 it is crucial not to require that $\phi \colon H \to G$ is injective since we want to have good inheritance properties.
Therefore we have to blow up $Z$ everywhere by passing to $EG \times Z$ as explained below.

Let $G$ be a group and $Z$ be a $G$--CW--complex.  Recall that $\underline{G}$
denotes the groupoid with precisely one object which has $G$ as its
automorphism group. Let $\GROUPOIDS \downarrow G$ be the category of groupoids
over $\underline{G}$. Objects are groupoids $\calg$ together with a functor $P
\colon \calg \to \underline{G}$.  A morphism from $P \colon \calg \to
\underline{G}$ to $P'\colon \calg' \to \underline{G}$ is a covariant functor
$F \colon \calg \to \calg'$ satisfying $P' \circ F = P$. Given a groupoid
$\calg$, we obtain a contravariant functor $E(? \downarrow \calg) \colon \calg
\to \SPACES$ by sending an object $x$ to the classifying space of the category
$x \downarrow \calg$ of objects in $\calg$ under $x$. We get from $Z$, by
restriction along $P$, a covariant functor $P^*Z \colon \calg \to \SPACES$
where we think of the left $G$--space  $Z$ as a covariant functor
$\underline{G} \to \SPACES$. The tensor product over $\calg$,
see~\cite[Section~1]{Davis-Lueck(1998)}, yields a space $E(? \downarrow \calg)
\times_{\calg}P^*Z(?)$.  Thus we obtain a covariant functor
\begin{equation}
  \bfS^{\downarrow G}_Z \colon \GROUPOIDS \downarrow G \to \SPECTRA, 
  \quad P \colon (\calg \to \underline{G}) \mapsto \bfS(E(? \downarrow \calg)
  \times_{\calg} P^*Z(?)).
  \label{bfS(G,B)_Z}
\end{equation}
It yields an equivariant homology theory $H_n^?(-;\bfS_Z^{\downarrow G})$ over
$G$, see~\cite[Lemma~7.1]{Bartels-Echterhoff-Lueck(2008colim)}. 
Given a homomorphism $\psi \colon K \to G$ we get an identification of $K$--homology theories
\begin{equation}
H^{K,\psi}_*(-;\bfS^{\downarrow G}_Z) \cong H_*^K(-;\bfS^K_{EK \times \psi^* Z}),
\label{identification_of_K-homology_theories}
\end{equation}
which is induced by a homotopy equivalence, natural in
$K/H$,
\begin{equation*}
E(? \downarrow \calt^K(K/H)) \times_{\calt^K(K/H)}
\psi^*Z(?)\xrightarrow{\simeq} K/H
\times_K(EK \times  \psi^*Z)
\end{equation*}
and~\cite[Lemma~4.6]{Davis-Lueck(1998)}, where $\calt$ denotes the transport
groupoid from \Cref{exa:Z_is_EG} and $\psi$ also denotes its
induced map $\calt^K(K/H) \to \underline{G}$. For any group $\psi \colon K \to G$
over $G$, inclusion $i \colon H \to K$ of a subgroup $H$ of $K$, and $n \in
\IZ$ we have canonical identifications
\begin{equation*}
H_n^{K,\psi}(K/H;\bfS_Z^{\downarrow G})) \xrightarrow{\cong} H_n^{H, \psi
\circ i}(H/H;\bfS_Z^{\downarrow G})) 
\cong \pi_n(\bfS(EH \times_H (\psi \circ i)^*Z)).
\end{equation*}

\begin{lemma} \label{lem:HG_ast(phiastY;bfSdownarrow_G_Z)} 
Let $\phi \colon H \to K$ and $\psi \colon K \to G$ be group homomorphisms.

\begin{enumerate}
\item \label{lem:HG_ast(phiastY;bfSdownarrow_G_Z):(1)} 
Let $X$ be a $G$--CW--complex and  let $Z$ be a $K$--CW--complex. Then we obtain a natural isomorphism
\[
H_n^{H,\phi}(\phi^*\psi^*X;\bfS^{\downarrow K}_{Z}) 
\xrightarrow{\cong}
H_n^{G}(X;\bfS^G_{(\psi \circ \phi)_*(EH \times \phi^*Z)}).
\]

\item \label{lem:HG_ast(phiastY;bfSdownarrow_G_Z):(2)} 
Let $X$ be an $H$--CW--complex and  let $Z$ be a $G$--CW--complex. 
Then we obtain a natural isomorphism
\[
H_n^{H,\phi}(X;\bfS^{\downarrow K}_{\psi^* Z}) 
\xrightarrow{\cong}
H_n^{H,\psi \circ \phi}\bigl(X;\bfS^{\downarrow G}_Z\bigr).
\]

\end{enumerate}
\end{lemma}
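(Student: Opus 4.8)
The plan is to reduce both statements to formal manipulations with restriction, induction and the blow-up by a contractible free complex, using the identification~\eqref{identification_of_K-homology_theories} together with \Cref{lem:adjunctions_for_homology_associated_toS_upper_K}; no new geometric input is required. Assertion~\ref{lem:HG_ast(phiastY;bfSdownarrow_G_Z):(2)} should follow directly from the construction: the $\Or(H)$--spectrum underlying $H_n^{H,\phi}(-;\bfS^{\downarrow K}_{\psi^*Z})$ sends $H/L$ to $\bfS\bigl(E(?\downarrow\calt^H(H/L))\times_{\calt^H(H/L)}\phi^*\psi^*Z(?)\bigr)$, and the one underlying $H_n^{H,\psi\circ\phi}(-;\bfS^{\downarrow G}_Z)$ sends $H/L$ to $\bfS\bigl(E(?\downarrow\calt^H(H/L))\times_{\calt^H(H/L)}(\psi\circ\phi)^*Z(?)\bigr)$; since restriction is functorial one has $\phi^*\psi^*Z=(\psi\circ\phi)^*Z$, so the two $\Or(H)$--spectra coincide, hence so do the two $H$--homology theories, and the asserted natural isomorphism is the identity under this identification. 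Equivalently, applying~\eqref{identification_of_K-homology_theories} to $\phi\colon H\to K$ with the $K$--CW--complex $\psi^*Z$ and to $\psi\circ\phi\colon H\to G$ with the $G$--CW--complex $Z$ produces two natural isomorphisms whose right-hand sides $H_*^H(-;\bfS^H_{EH\times\phi^*\psi^*Z})$ and $H_*^H(-;\bfS^H_{EH\times(\psi\circ\phi)^*Z})$ are literally equal, and composing one with the inverse of the other gives the claim.

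For assertion~\ref{lem:HG_ast(phiastY;bfSdownarrow_G_Z):(1)} I would proceed in two steps. First, apply~\eqref{identification_of_K-homology_theories} to the homomorphism $\phi\colon H\to K$ and the $K$--CW--complex $Z$; evaluating the resulting natural isomorphism of $H$--homology theories at the $H$--CW--complex $\phi^*\psi^*X$ yields $H_n^{H,\phi}(\phi^*\psi^*X;\bfS^{\downarrow K}_Z)\cong H_n^H(\phi^*\psi^*X;\bfS^H_{EH\times\phi^*Z})$. Second, write $\alpha:=\psi\circ\phi\colon H\to G$ and $W:=EH\times\phi^*Z$, note that $\phi^*\psi^*X=\alpha^*X$, and invoke \Cref{lem:adjunctions_for_homology_associated_toS_upper_K}~\ref{lem:adjunctions_for_homology_associated_toS_upper_K:restriction_of_X} with $K_1=H$, $K_2=G$, the homomorphism $\alpha$, the coefficient complex $W$, and the $G$--CW--complex $X$: this gives $H_n^H(\alpha^*X;\bfS^H_W)\cong H_n^G(X;\bfS^G_{\alpha_*W})$, and $\alpha_*W=(\psi\circ\phi)_*(EH\times\phi^*Z)$ is precisely the coefficient complex appearing in the statement. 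Composing the isomorphisms from the two steps gives the desired natural isomorphism.

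The one point that requires care — and the closest thing to an obstacle here — is naturality: one must check that the composite isomorphisms are natural in $X$ and, where applicable, compatible with the induction homomorphisms $\ind$ of the equivariant homology theories over $K$ and over $G$. This is inherited from the corresponding properties of~\eqref{identification_of_K-homology_theories} (an isomorphism of $K$--homology theories, constructed levelwise from the natural homotopy equivalence recorded there together with \cite[Lemma~4.6]{Davis-Lueck(1998)}) and of \Cref{lem:adjunctions_for_homology_associated_toS_upper_K} (whose isomorphisms arise from natural weak equivalences of $\Or$--spectra), so the verification amounts to a routine diagram chase.
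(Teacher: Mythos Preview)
Your proposal is correct and matches the paper's proof essentially line for line: for~\ref{lem:HG_ast(phiastY;bfSdownarrow_G_Z):(1)} the paper applies~\eqref{identification_of_K-homology_theories} followed by \Cref{lem:adjunctions_for_homology_associated_toS_upper_K}~\ref{lem:adjunctions_for_homology_associated_toS_upper_K:restriction_of_X}, and for~\ref{lem:HG_ast(phiastY;bfSdownarrow_G_Z):(2)} it applies~\eqref{identification_of_K-homology_theories} on both sides and observes that $\phi^*\psi^*Z=(\psi\circ\phi)^*Z$. Your remark on naturality is a reasonable addendum but is not made explicit in the paper.
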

\begin{proof}~\ref{lem:HG_ast(phiastY;bfSdownarrow_G_Z):(1)} 
We get from~\eqref{identification_of_K-homology_theories}
\[
H_n^{H,\phi}(\phi^*\psi^*X;\bfS^{\downarrow K}_{Z}) := H_n^H(\phi^*\psi^*X;\bfS^H_{EH \times \phi^*Z}).
\]
Now apply \Cref{lem:adjunctions_for_homology_associated_toS_upper_K}~%
\ref{lem:adjunctions_for_homology_associated_toS_upper_K:restriction_of_X}.
\\[1mm]~\ref{lem:HG_ast(phiastY;bfSdownarrow_G_Z):(2)} 
We get from~\eqref{identification_of_K-homology_theories}
\begin{multline*}
H_n^{H,\phi}(X;\bfS^{\downarrow K}_{\psi^* Z}) 
:= H_n^H(X;\bfS^H_{EH \times \phi^* \psi^* Z}) 
\\
= H_n^H(X;\bfS^H_{EH \times (\psi \circ \phi)^* Z}) 
=: H_n^{H,\psi \circ \phi}\bigl(X;\bfS^{\downarrow G}_Z\bigr). 
\end{multline*}
\end{proof}

\begin{conjecture}[Fibered Meta-Isomorphism Conjecture for a functor from spaces to spectra with
  coefficients]
  \label{con:Fibered_Meta_Conjecture_for_a_functor_from_spaces_to_spectra_with_c}
  \index{Conjecture!Fibered Meta Isomorphisms Conjecture for a functor from spaces to spectra with
    coefficients}  Let $\bfS \colon \SPACES\to \SPECTRA$, as before, respect
    weak equivalences and disjoint unions. We say that $\bfS$ satisfies the Fibered Meta-Isomorphism Conjecture for a functor
  from spaces to spectra with coefficients for the group $G$ and the family of subgroups
  $\calf$ of $G$ if the following holds:  For any $G$--CW--complex $Z$ the equivariant homology theory
  $H_*^?(-;\bfS_Z^{\downarrow G})$ over $G$ satisfies the Fibered Meta-Isomorphism
  Conjecture~\ref{con:Fibered_Meta-Isomorphism_Conjectures_for_calh?_ast} for the group
  $(G,\id_G)$ over $G$ and the family $\calf$.
\end{conjecture}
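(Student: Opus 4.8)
The point of \Cref{con:Fibered_Meta_Conjecture_for_a_functor_from_spaces_to_spectra_with_c} is that, by construction, it is a statement only about the equivariant homology theory $H^?_*(-;\bfS^{\downarrow G}_Z)$ over $\Gamma = G$, so that the inheritance machinery for the Fibered Meta-Isomorphism Conjecture of equivariant homology theories over a group established in~\cite{Bartels-Echterhoff-Lueck(2008colim)} becomes available. My plan is (a) to deduce from it the unfibered Meta-Isomorphism Conjecture with coefficients, i.e.\ \Cref{con:Meta_Isomorphisms_Conjecture_for_functors_from_spaces_to_spectra_with_c}, hence also \Cref{con:Meta_Isomorphisms_Conjecture_for_functors_from_spaces_to_spectra}, and (b) to show that \Cref{con:Fibered_Meta_Conjecture_for_a_functor_from_spaces_to_spectra_with_c} is inherited by passage to subgroups, group extensions, directed colimits of groups and finite overgroups; in conjunction these give \Cref{the:Inheritance_properties_of_the_Meta-Isomorphism_Conjecture_for_bfS_with_c}.

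For (a), I would restrict the quantifier over group homomorphisms into $G$ appearing in \Cref{con:Fibered_Meta-Isomorphism_Conjectures_for_calh?_ast} to $\id_G$. This yields that for every $G$--CW--complex $Z$ the $G$--homology theory $H^{G,\id_G}_*(-;\bfS^{\downarrow G}_Z)$ satisfies \Cref{con:Meta_Isomorphisms_Conjecture} with respect to $\calf$. By the identification~\eqref{identification_of_K-homology_theories} with $\psi = \id_G$ this $G$--homology theory equals $H^G_*(-;\bfS^G_{EG \times Z})$. If $Z$ is a \emph{free} $G$--CW--complex, the projection $EG \times Z \to Z$ is a $G$--map between free $G$--CW--complexes that is a weak equivalence after forgetting the $G$--action, hence a $G$--homotopy equivalence; restricting it to an arbitrary subgroup $H \subseteq G$ and passing to orbit spaces produces homotopy equivalences $(EG \times Z)/H \xrightarrow{\simeq} Z/H$, and, since $\bfS$ respects weak equivalences, a weak equivalence of $\Or(G)$--spectra $\bfS^G_{EG \times Z} \xrightarrow{\simeq} \bfS^G_Z$, hence an isomorphism of $G$--homology theories $H^G_*(-;\bfS^G_{EG \times Z}) \xrightarrow{\cong} H^G_*(-;\bfS^G_Z)$. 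Therefore the projection $\EGF{G}{\calf} \to G/G$ induces an isomorphism on $H^G_*(-;\bfS^G_Z)$ for every free $Z$, which is precisely \Cref{con:Meta_Isomorphisms_Conjecture_for_functors_from_spaces_to_spectra_with_c}; specialising to $Z = EG$ and invoking \Cref{exa:Z_is_EG} recovers the plain \Cref{con:Meta_Isomorphisms_Conjecture_for_functors_from_spaces_to_spectra}, and hence the Farrell--Jones Conjecture for $A$--theory in the case $\bfS = \bfA$, $\calf = \calvcyc$.

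For (b), each inheritance statement is reduced to its counterpart in~\cite{Bartels-Echterhoff-Lueck(2008colim)} through the dictionary furnished by \Cref{lem:basic_inheritance_property_of_fibered_Isomorphism_conjecture}, \Cref{lem:adjunctions_for_homology_associated_toS_upper_K} and \Cref{lem:HG_ast(phiastY;bfSdownarrow_G_Z)}. For a subgroup $i \colon H \hookrightarrow G$, \Cref{lem:HG_ast(phiastY;bfSdownarrow_G_Z)}~\ref{lem:HG_ast(phiastY;bfSdownarrow_G_Z):(2)} identifies the restriction along $i$ of $H^?_*(-;\bfS^{\downarrow G}_Z)$ with $H^?_*(-;\bfS^{\downarrow H}_{i^*Z})$; together with the observation — again from~\eqref{identification_of_K-homology_theories} — that $H^?_*(-;\bfS^{\downarrow H}_{Z'})$ depends on $Z'$ only through the free $H$--CW--complex $EH \times Z'$ up to $H$--homotopy, and that $EH \times W \simeq W$ for free $W$, this lets one transport \Cref{con:Fibered_Meta_Conjecture_for_a_functor_from_spaces_to_spectra_with_c} for $(G,\calf)$ to $(H,i^*\calf)$. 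The extension case $1 \to K \to G \xrightarrow{p} Q \to 1$ is handled by combining this with the induction identity \Cref{lem:HG_ast(phiastY;bfSdownarrow_G_Z)}~\ref{lem:HG_ast(phiastY;bfSdownarrow_G_Z):(1)} and the transitivity principle for families; the directed-colimit case $G = \colim_{i \in I} G_i$ uses in addition that $\bfS$ commutes with directed homotopy colimits — which holds for $\bfA$ by \Cref{lem:bfA_respects_weak_equivalences_and_disjoint_unions} — so that $Z \mapsto H^?_*(-;\bfS^{\downarrow G}_Z)$ is continuous in the sense required by~\cite{Bartels-Echterhoff-Lueck(2008colim)}; and the finite-overgroup statement is treated by an induction over the index.

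I expect the main obstacle to be the bookkeeping underlying (b): one has to verify that forming $\bfS^{\downarrow G}_Z$, restricting and inducing along group homomorphisms, and the identification~\eqref{identification_of_K-homology_theories} are mutually compatible in exactly the way the Bartels--Echterhoff--L\"uck inheritance theorems demand — namely, that one is dealing with a genuine, continuous equivariant homology theory over $G$ and that a change of coefficients is modelled by induction and restriction of the associated $\Or$--spectra, as in \Cref{lem:adjunctions_for_homology_associated_toS_upper_K}. The delicate point is that an arbitrary ``coefficient'' $G$--CW--complex $Z$ need not restrict along $H \hookrightarrow G$ to an arbitrary $H$--CW--complex; this is precisely why the $EG \times (-)$--construction is built into \Cref{con:Fibered_Meta_Conjecture_for_a_functor_from_spaces_to_spectra_with_c}, for it forces every coefficient object in sight to become free, and it is freeness that makes the homotopy-theoretic comparisons above valid. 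Once this compatibility has been checked, all the implications in (a) and (b) follow formally.
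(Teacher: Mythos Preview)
The statement you were asked to address is a \emph{Conjecture}, i.e.\ a definition: it specifies what it means for $\bfS$ to satisfy the Fibered Meta-Isomorphism Conjecture with coefficients for $(G,\calf)$. There is nothing to prove, and the paper gives no proof of it. What you have written is not a proof of this statement but rather a programme for the results that \emph{use} this definition --- essentially \Cref{lem:MC_with_coeff_and_FMC_with_coeff} and the proof of \Cref{the:Inheritance_properties_of_the_Meta-Isomorphism_Conjecture_for_bfS_with_c}.

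Viewed as such a programme, your outline is close to what the paper does. Your part~(a) is exactly the content of \Cref{lem:MC_with_coeff_and_FMC_with_coeff}\ref{lem:MC_with_coeff_and_FMC_with_coeff:FC_implies_MC}, and your reasoning (restrict to $\id_G$, invoke \eqref{identification_of_K-homology_theories}, use that $EG\times Z\to Z$ is a $G$--homotopy equivalence for free $Z$) matches the paper's argument. Your part~(b) for extensions and directed colimits also tracks the paper's proof, which passes through \Cref{lem:MC_with_coeff_and_FMC_with_coeff}, \Cref{lem:HG_ast(phiastY;bfSdownarrow_G_Z)}, strong continuity (\Cref{lem:bfS_and_homotopy_colimits}), and the transitivity principle from \cite{Bartels-Echterhoff-Lueck(2008colim)}. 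Two discrepancies are worth flagging. First, the paper proves the subgroup case of \Cref{the:Inheritance_properties_of_the_Meta-Isomorphism_Conjecture_for_bfS_with_c} \emph{directly} via the adjunction of \Cref{lem:adjunctions_for_homology_associated_toS_upper_K}\ref{lem:adjunctions_for_homology_associated_toS_upper_K:restriction_of_X}, without invoking the fibered conjecture at all; your route through \Cref{lem:HG_ast(phiastY;bfSdownarrow_G_Z)}\ref{lem:HG_ast(phiastY;bfSdownarrow_G_Z):(2)} also works but is less direct. Second, and more seriously, your ``induction over the index'' for finite-index overgroups is not how this is done and is unlikely to work as stated: the paper (following \cite{Kuehl(2009)} and \cite{Wegner-solvable}) uses the wreath-product embedding $G\hookrightarrow H\wr F$ for a suitable finite $F$, which is precisely why the finite-wreath-product version of the conjecture is introduced. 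There is no evident inductive step from index $n$ to index $n+1$ in this setting.
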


Note that
\Cref{con:Meta_Isomorphisms_Conjecture_for_functors_from_spaces_to_spectra_with_c}
deals with the $G$--homology theory $H^G_*(-;\bfS^G)$, whereas
\Cref{con:Fibered_Meta_Conjecture_for_a_functor_from_spaces_to_spectra_with_c}
deals with the the equivariant homology theory $H^?_*(-;\bfS^{\downarrow G})$ over
$G$.  Moreover, 
\Cref{con:Fibered_Meta_Conjecture_for_a_functor_from_spaces_to_spectra_with_c}
is unchanged if we would additionally require that the $G$--CW--complex $Z$ is
free. Namely, for any $G$--CW--complex $Z$ the $G$--CW--complex $EG \times Z$ is free and
the projection $EG \times Z \to Z$ induces an isomorphism $H_*^?(-;\bfS_{EG \times
  Z}^{\downarrow G}) \xrightarrow{\cong} H_*^?(-;\bfS_Z^{\downarrow G})$ of equivariant
homology theories over $G$ because of~\eqref{identification_of_K-homology_theories}
and~\cite[Lemma~4.6]{Davis-Lueck(1998)}.

\newcommand{\ConjMICforS}
{C\ref{con:Meta_Isomorphisms_Conjecture_for_functors_from_spaces_to_spectra_with_c}\xspace}
\newcommand{\ConjMIC}{MIC\ref{con:Meta_Isomorphisms_Conjecture}\xspace}
\newcommand{\ConjFMIC}{FMIC\ref{con:Fibered_Meta-Isomorphism_Conjectures_for_calh?_ast}\xspace}
\newcommand{\ConjFMICforS}{S\ref{con:Fibered_Meta_Conjecture_for_a_functor_from_spaces_to_spectra_with_c}\xspace}

For the rest of this section, we abbreviate the different conjectures as
follows:
\begin{itemize}
  \item
    \ConjMICforS is the
    Meta-Isomorphism~\Cref{con:Meta_Isomorphisms_Conjecture_for_functors_from_spaces_to_spectra_with_c}
    for functors from spaces to spectra with coefficients.  This is the
    conjecture we want to know about in the end.
  \item \ConjMIC and
  \ConjFMIC denote the
    Meta-Isomorphism \Cref{con:Meta_Isomorphisms_Conjecture}, and
    the Fibered Meta-Isomophism
    \Cref{con:Fibered_Meta-Isomorphism_Conjectures_for_calh?_ast}.  These are
    statements about a ($G$-)equivariant homology theory.
  \item \ConjFMICforS
    denotes the Fibered Meta-Isomorphism
    \Cref{con:Fibered_Meta_Conjecture_for_a_functor_from_spaces_to_spectra_with_c}
    for a functor from spaces to spectra with coefficients. This takes as
    input a functor $\bfS$ and is the most general version of a conjecture we
    are interested it.
\end{itemize}

\begin{lemma} \label{lem:MC_with_coeff_and_FMC_with_coeff} 
  Let $\psi \colon K \to G$ be a group homomorphism.
  \begin{enumerate}
    \item \label{lem:MC_with_coeff_and_FMC_with_coeff:MC_implies_FC}
    Suppose that \ConjMICforS 
    holds for the group $G$ and the family $\calf$. Then \ConjFMICforS 
    holds for the group $K$ and the family $\psi^* \calf$.
  \item \label{lem:MC_with_coeff_and_FMC_with_coeff:FC_implies_MC} If 
    \ConjFMICforS 
    holds for the group $G$ and the family $\calf$, then 
    \ConjMICforS
    holds for the group $G$ and the family $\calf$.
  \item \label{lem:MC_with_coeff_and_FMC_with_coeff_up_from_G_to_K} 
    Suppose that 
    \ConjFMICforS 
    holds for the group $K$ and the family $\calf$. Then for every $G$--CW--complex
    $Z$ \ConjFMIC
   holds for the equivariant homology theory $H_n(-;\bfS^{\downarrow G}_Z)$ over $G$ for the group
    $(K,\psi)$ over $G$ and the family $\calf$ of subgroups of $K$.
  \end{enumerate}

\end{lemma}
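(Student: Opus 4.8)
The plan is to derive all three assertions formally from the translation formula~\eqref{identification_of_K-homology_theories} and from the adjunction \Cref{lem:adjunctions_for_homology_associated_toS_upper_K} together with its ``downstairs'' counterpart \Cref{lem:HG_ast(phiastY;bfSdownarrow_G_Z)}, which between them rewrite every homology group occurring in a fibered assembly map for a group lying over $G$ as an honest coefficient homology group of $G$ with a \emph{free} coefficient complex. Two elementary observations will be used repeatedly: first, for any homomorphism $\alpha \colon L \to G$ the restriction $\alpha^{*}\EGF{G}{\calf}$ is a model for $\EGF{L}{\alpha^{*}\calf}$, since $(\alpha^{*}X)^{J} = X^{\alpha(J)}$ for $J \subseteq L$ and $\calf$ is closed under passage to subgroups; and second, induction $\alpha_{*}$ along an arbitrary (not necessarily injective) homomorphism carries free CW--complexes to free CW--complexes. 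Since every natural isomorphism used below is an isomorphism of ($G$-- or equivariant) homology theories, it is automatically compatible with the maps induced by $\pr \colon \EGF{G}{\calf} \to G/G$, so bijectivity of an assembly map transports along it.

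For~(i), I would assume \ConjMICforS for $(G,\calf)$ and, given $\psi \colon K \to G$, unwind \ConjFMICforS and \ConjFMIC to reduce to the following: for every $K$--CW--complex $Z$ and every homomorphism $\phi \colon L \to K$, setting $\alpha := \psi \circ \phi$, the assembly map of the $L$--homology theory $H^{L,\phi}_{*}(-;\bfS^{\downarrow K}_{Z})$ for the family $\alpha^{*}\calf = \phi^{*}(\psi^{*}\calf)$ is bijective. By~\eqref{identification_of_K-homology_theories} this homology theory equals $H^{L}_{*}(-;\bfS^{L}_{EL \times \phi^{*}Z})$, and by the first observation its classifying space is $\alpha^{*}\EGF{G}{\calf}$; so applying part~(i) of \Cref{lem:HG_ast(phiastY;bfSdownarrow_G_Z)} with $X = \EGF{G}{\calf}$ and with $X = G/G$, and naturality in $X$ along $\pr$, identifies the assembly map in question with
\[
H^{G}_{n}\bigl(\EGF{G}{\calf};\bfS^{G}_{\alpha_{*}(EL \times \phi^{*}Z)}\bigr) \longrightarrow H^{G}_{n}\bigl(G/G;\bfS^{G}_{\alpha_{*}(EL \times \phi^{*}Z)}\bigr).
\]
Because $EL \times \phi^{*}Z$ is free and induction preserves freeness, $\alpha_{*}(EL \times \phi^{*}Z)$ is a free $G$--CW--complex, and \ConjMICforS for $(G,\calf)$ then finishes this case.

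For~(ii), I would assume \ConjFMICforS for $(G,\calf)$ and specialize the homomorphism appearing in \ConjFMIC to $\id_{G}$, obtaining that for every $G$--CW--complex $Z$ the group $G$ satisfies \ConjMIC for $H^{G,\id_{G}}_{*}(-;\bfS^{\downarrow G}_{Z}) \cong H^{G}_{*}(-;\bfS^{G}_{EG \times Z})$ and the family $\calf$, again via~\eqref{identification_of_K-homology_theories}. For a \emph{free} $G$--CW--complex $W$ the projection $EG \times W \to W$ is a $G$--homotopy equivalence (a $G$--map between free $G$--CW--complexes which is a non-equivariant equivalence is a $G$--homotopy equivalence), so $\bfS^{G}_{EG \times W} \to \bfS^{G}_{W}$ is a levelwise weak equivalence of $\Or(G)$--spectra and induces an isomorphism of $G$--homology theories compatible with assembly, by~\cite[Lemma~4.6]{Davis-Lueck(1998)}; hence the assembly map for $\bfS^{G}_{W}$ is bijective, i.e.\ \ConjMICforS holds for $(G,\calf)$. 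For~(iii), which is the most routine of the three, I would fix $\phi \colon L \to K$ and use part~(ii) of \Cref{lem:HG_ast(phiastY;bfSdownarrow_G_Z)} to obtain a natural isomorphism $H^{L,\psi \circ \phi}_{*}(-;\bfS^{\downarrow G}_{Z}) \cong H^{L,\phi}_{*}(-;\bfS^{\downarrow K}_{\psi^{*}Z})$; applying \ConjFMICforS for $(K,\calf)$ to the $K$--CW--complex $\psi^{*}Z$ shows $L$ satisfies \ConjMIC for $H^{L,\phi}_{*}(-;\bfS^{\downarrow K}_{\psi^{*}Z})$ and $\phi^{*}\calf$, and transporting along the isomorphism yields \ConjMIC for $H^{L,\psi \circ \phi}_{*}(-;\bfS^{\downarrow G}_{Z})$ and $\phi^{*}\calf$, which for varying $\phi$ is exactly \ConjFMIC for $H^{?}_{*}(-;\bfS^{\downarrow G}_{Z})$, the group $(K,\psi)$ over $G$, and the family $\calf$.

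The thing to be careful about — rather than a genuine obstacle — is the bookkeeping with families and classifying spaces under non-injective homomorphisms in the first observation, together with checking that each coefficient complex fed into \ConjMICforS over $G$ really is free; the latter is precisely what forces the appearance of $EL$ and $EG$ in the definition of $\bfS^{\downarrow G}_{Z}$, and is the reason the fibered conjecture, unlike its non-fibered counterpart, need not impose freeness on its coefficient complexes.
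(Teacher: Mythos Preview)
Your proof is correct and follows essentially the same route as the paper: part~(i) via \Cref{lem:HG_ast(phiastY;bfSdownarrow_G_Z)}\ref{lem:HG_ast(phiastY;bfSdownarrow_G_Z):(1)} together with the observation that $(\psi\circ\phi)_*(EL\times\phi^*Z)$ is free, part~(ii) by specializing to $\id_G$ and using that $EG\times Z\to Z$ is a $G$--homotopy equivalence for free $Z$, and part~(iii) via \Cref{lem:HG_ast(phiastY;bfSdownarrow_G_Z)}\ref{lem:HG_ast(phiastY;bfSdownarrow_G_Z):(2)}. Your write-up simply unpacks the bookkeeping (families under pullback, classifying spaces, freeness) that the paper leaves implicit.
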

\begin{proof}~\ref{lem:MC_with_coeff_and_FMC_with_coeff:MC_implies_FC} This follows from
  \Cref{lem:HG_ast(phiastY;bfSdownarrow_G_Z)}~\ref{lem:HG_ast(phiastY;bfSdownarrow_G_Z):(1)},
  since in the notation used there we have $\phi^* \psi^*\EGF{G}{\calf} =   \phi^*\EGF{K}{\psi^*\calf}$ and 
   $\phi^* \psi^* G/G = H/H$, and $(\psi \circ \phi)_*(EH \times   \phi^*Z)$ is a free $G$--CW--complex.
  \\[1mm]~\ref{lem:MC_with_coeff_and_FMC_with_coeff:FC_implies_MC} This follows from 
  applying \Cref{con:Fibered_Meta_Conjecture_for_a_functor_from_spaces_to_spectra_with_c} to the special case
  $\psi= \id_G$ and the fact that for a free $G$--CW--complex $Z$ the projection $EG \times Z \to Z$ is a
  $G$--homotopy equivalence and hence we get from~\eqref{identification_of_K-homology_theories}
  and~\cite[Lemma~4.6]{Davis-Lueck(1998)} natural isomorphisms
   \[
  H^{G,\id_G}_n(X;\bfS^{\downarrow G}_Z) \cong H^G_n(X;\bfS^G_{EG \times Z})   \cong H^G_n(X;\bfS^G_Z)
  \]
  for every $G$--CW--complex $X$ and $n \in \IZ$.
  \\[1mm]~\ref{lem:MC_with_coeff_and_FMC_with_coeff_up_from_G_to_K} This follows from
  \Cref{lem:HG_ast(phiastY;bfSdownarrow_G_Z)}~\ref{lem:HG_ast(phiastY;bfSdownarrow_G_Z):(2)}.
\end{proof}


\subsection{Strongly continuous equivariant homology theories over a group}
\label{subsec:Strongly_continuous_equivariant_homology_theories_over_a_group}

Fix a group $\Gamma$ and an equivariant homology theory $\calh^?_*$ over
$\Gamma$.

Let $X$ be a $G$--CW--complex and let $\alpha \colon H \to G$ be a group homomorphism.
The functors ${\alpha}_*\colon H{-}CW \rightleftarrows G{-}CW \colon\alpha^*$ are adjoint to one another.  In particular,
the adjoint of the identity on $\alpha^* X$ is a natural $G$--map
\begin{equation}
f(X,\alpha) \colon {\alpha}_*\alpha^*X \to X, \quad (g,x) \mapsto gx.
\end{equation}
Consider a map $\alpha \colon (H,\xi) \to (G,\mu)$ of groups over $\Gamma$.  Define the
$\Lambda$--map
\begin{eqnarray*}
& a_n = a_n(X,\alpha)\colon \calh_n^{H}(\alpha^*X)
\xrightarrow{\ind_{\alpha}} \calh_n^G({\alpha}_*\alpha^*X)
\xrightarrow{\calh_n^G(f(X,\alpha))}
\calh_n^G(X).&
\end{eqnarray*}
If $\beta \colon (G,\mu) \to (K,\nu)$ is another morphism of groups over $\Gamma$, then by
the axioms of an induction structure, see~\cite{Lueck(2002b)}, the composite $\calh_n^{H}(\alpha^*\beta^*X)
\xrightarrow{a_n(\beta^*X,\alpha)} \calh_n^{G}(\beta^*X) \xrightarrow{a_n(X,\beta)}
\calh_n^{K}(X)$ agrees with $a_n(X,\beta \circ \alpha) \colon
\calh_n^{H}(\alpha^*\beta^*X) = \calh_n^{H}((\beta\circ \alpha)^*X) \to \calh_n^{K}(X)$
for a $K$--CW--complex $X$.

Consider a directed system of groups $\{G_i \mid i \in I\}$ with $G = \colim_{i \in I}
G_i$ and structure maps $\psi_i \colon G_i \to G$ for $i \in I$ and $\phi_{i,j} \colon G_i
\to G_j$ for $i,j \in I, i \le j$.  We obtain for every $G$--CW--complex $X$ a
system 
$a_n(\psi_j^*X,\phi_{i,j}) \colon \calh^{G_i}(\psi_i^*X) \to \calh^{G_j}(\psi_j^*X)$.  We
get a map 
\begin{equation}
t_n^G(X)~:=~\colim_{i \in I} a_n(X,\psi_i) \colon \colim_{i \in  I} 
\calh_n^{G_i}(\psi_i^*(X))~\to~\calh_n^G(X).
\label{t_nG(X)}
\end{equation}

The next definition is taken from~\cite[Definition~3.3]{Bartels-Echterhoff-Lueck(2008colim)}.

\begin{definition}[Strongly continuous equivariant homology theory over a group]
\label{def:strongly_continuous_equivariant_homology_theory_over_a-group}
An equivariant homology theory $\calh^?_*$ over the group $\Gamma$ is called \emph{strongly
  continuous} if for every group $(G,\xi)$ over $\Gamma$ and every directed system of
groups $\{G_i \mid i \in I\}$ with $G = \colim_{i \in I} G_i$ and structure maps $\psi_i
\colon G_i \to G$ for $i \in I$ the 
map
\[
t^G_n(\pt) \colon \colim_{i \in I} \calh^{G_i}_n(\pt) \to
\calh^G_n(\pt)
\]
is an isomorphism for every $n \in \IZ$.
\end{definition}

\begin{lemma} \label{lem:bfS_and_homotopy_colimits}  Suppose
  that for any directed system of spaces $\{X_i \mid i \in I\}$ indexed over an arbitrary
  directed set $I$ the canonical map
  \[
  \hocolim_{i \in I} \bfS(X_i) \to \bfS\bigl(\hocolim_{i \in I} X_i\bigr)
  \]
  is a weak homotopy equivalence. 

  Then for every group $\Gamma$ and $\Gamma$--CW--complex $Z$ the equivariant homology theory over
  $\Gamma$ given by $H^?_*(-;\bfS^{\downarrow \Gamma}_Z)$ is strongly continuous.
\end{lemma}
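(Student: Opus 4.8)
**Proof plan for Lemma (strong continuity of $H^?_*(-;\bfS^{\downarrow \Gamma}_Z)$).**

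The plan is to unwind the definition of strong continuity and reduce it, via the identification~\eqref{identification_of_K-homology_theories}, to a statement about the values of $\bfS$ on homotopy colimits of spaces, which is exactly the hypothesis. First I would fix a group $(G,\xi)$ over $\Gamma$ together with a directed system $\{G_i \mid i \in I\}$ with $G = \colim_{i \in I} G_i$ and structure maps $\psi_i \colon G_i \to G$, and I must show that $t^G_n(\pt)$ of~\eqref{t_nG(X)} is an isomorphism. Applying~\eqref{identification_of_K-homology_theories} to each $\psi_i$ and to $\id_G$, the groups in question become $\pi_n\bigl(\bfS(EG_i \times_{G_i} \psi_i^* Z)\bigr)$ and $\pi_n\bigl(\bfS(EG \times_G Z)\bigr)$. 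Since homotopy groups commute with filtered colimits, the source $\colim_{i \in I} \pi_n(\bfS(EG_i \times_{G_i} \psi_i^* Z))$ is $\pi_n\bigl(\hocolim_{i \in I} \bfS(EG_i \times_{G_i} \psi_i^* Z)\bigr)$; by the hypothesis on $\bfS$ this is $\pi_n\bigl(\bfS(\hocolim_{i \in I}(EG_i \times_{G_i} \psi_i^* Z))\bigr)$.

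So the remaining point is purely space-level: the canonical map
\[
\hocolim_{i \in I}\bigl(EG_i \times_{G_i} \psi_i^* Z\bigr) \to EG \times_G Z
\]
is a weak homotopy equivalence, and this map is compatible with the maps $a_n(\pt,\psi_i)$ defining $t^G_n(\pt)$. The cleanest way I would argue this is to note that $EG_i \times_{G_i} \psi_i^* Z$ is a model for the Borel construction of the $G_i$--space $\psi_i^* Z$, equivalently the quotient of the free $G_i$--CW--complex $EG_i \times \psi_i^* Z$, and that the homotopy colimit over $I$ of these Borel constructions computes the Borel construction $EG \times_G Z$ of the colimit group acting on $Z$. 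This can be seen by passing to universal covers / classifying spaces: $\hocolim_i BG_i \simeq BG$ since $\pi_1$ commutes with the filtered colimit and the higher homotopy is trivial, and the twisted coefficient system given by $Z$ is compatible under the structure maps; alternatively one can build it directly from the skeletal filtration of $Z$ and the fact that $\hocolim_i (EG_i \times_{G_i} G_i/H_i) \simeq \hocolim_i BH_i \simeq B(\colim_i H_i)$ for the relevant isotropy, applied cell by cell and then assembled via the disjoint-union and pushout properties that $\bfS$ respects. In fact the paper's framework already records the needed naturality: the map displayed right after~\eqref{identification_of_K-homology_theories} exhibiting $E(?\downarrow\calt^K(K/H))\times_{\calt^K(K/H)}\psi^*Z(?) \xrightarrow{\simeq} K/H\times_K(EK\times\psi^*Z)$ is natural in $K$, so the comparison map above is induced by the system of these equivalences and the colimit of the $a_n$'s is by construction the induced map on homotopy groups.

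The main obstacle I anticipate is the bookkeeping in identifying $\hocolim_{i}(EG_i \times_{G_i} \psi_i^* Z)$ with $EG \times_G Z$ in a way that is genuinely \emph{natural} in $Z$ and compatible with the transport-groupoid description of~\eqref{identification_of_K-homology_theories}, rather than just producing some abstract weak equivalence; one wants the square relating $a_n(\pt,\psi_i)$ to the inclusion of the $i$-th term of the homotopy colimit to commute on the nose at the space level, so that passing to $\pi_n$ and then to $\colim_i$ genuinely yields $t^G_n(\pt)$ and not merely an abstractly isomorphic map. I would handle this by working throughout with the explicit bar/Borel models $EG_\bullet \times_{G_\bullet}(-)$ used in the source (the $\calt$-construction), for which the structure maps and the comparison to $B(\colim G_i)$ are strictly functorial, so that the only homotopy-theoretic input is the single hypothesis on $\bfS$ and the standard fact $\pi_n \hocolim = \colim \pi_n$. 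Once that is in place the conclusion follows for all $n \in \IZ$, which is what Definition~\ref{def:strongly_continuous_equivariant_homology_theory_over_a-group} requires.
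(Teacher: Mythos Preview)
Your proposal is correct and follows essentially the same route as the paper: reduce via~\eqref{identification_of_K-homology_theories} to showing that $\hocolim_i(EG_i \times_{G_i} \psi_i^* Z) \to EG \times_G Z$ is a weak equivalence, then invoke the hypothesis on $\bfS$ together with $\pi_n$ commuting with filtered homotopy colimits. The one place where the paper is cleaner is precisely the space-level step you flagged as the obstacle: instead of arguing via $\hocolim_i BG_i \simeq BG$ with twisted coefficients or cell-by-cell on $Z$, the paper uses the natural homeomorphism $EG_i \times_{G_i} \psi_i^* Z \cong (\psi_i)_* EG_i \times_G Z$ to pull $Z$ out of the colimit entirely, reducing everything to the single $G$-equivariant fact $\hocolim_i (\psi_i)_* EG_i \xrightarrow{\simeq} EG$ (a $G$-homotopy equivalence, for which they cite~\cite[Theorem~4.3]{Lueck-Weiermann(2012)}). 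This dissolves your naturality worries, since all intermediate identifications are on-the-nose homeomorphisms and the only homotopical input is this classifying-space statement; the compatibility with $t^G_n(\pt)$ then follows automatically.
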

\begin{proof} We only treat the case $\Gamma = G$ and $\psi = \id_G$, the general case of a group 
$\psi \colon G \to \Gamma$ over $\Gamma$ is completely analogous.
Consider a directed system of
groups $\{G_i \mid i \in I\}$ with $G = \colim_{i \in I} G_i$. Let $\psi_i \colon G_i \to G$ be the structure map for $i \in I$.

As $I$ is directed, the canonical map
\begin{equation}
\hocolim_{i \in I} \bfS(EG_i \times_{G_i}  \psi_i^* Z) \to  \bfS\bigl(\hocolim_{i \in I}  (EG_i \times_{G_i}  \psi_i^* Z)\bigr)
\label{lem:bfS_and_homotopy_colimits:map_1}
\end{equation}
is by assumption a weak homotopy equivalence.
We have the homeomorphisms 
\begin{eqnarray*}
EG_i \times_{G_i}  \psi_i^* Z 
& \xrightarrow{\cong} & 
(\psi_i)_* EG_i \times_G Z,
\\
\bigl(\hocolim_{i \in I} (\psi_i)_* EG_i\bigr) \times_GZ
&\xrightarrow{\cong} &
\hocolim_{i \in I}  \bigl((\psi_i)_* EG_i \times_G Z\bigr).
\end{eqnarray*}
They induce a homeomorphism
\begin{equation}
\bfS(\hocolim_{i \in I}  (EG_i \times_{G_i}  \psi_i^* Z)\bigr) \xrightarrow{\cong}
\bfS\bigl((\hocolim_{i \in I}  (\psi_i)_*EG_i) \times_G Z\bigr).
\label{lem:bfS_and_homotopy_colimits:map_2}
\end{equation}
The canonical map
\[
\hocolim_{i \in I}  (\psi_i)_*EG_i \to EG
\]
is a $G$--homotopy equivalence. The proof of this fact is a special case of the argument
appearing in the proof of~\cite[Theorem~4.3 on page~516]{Lueck-Weiermann(2012)}. It induces a weak homotopy equivalence
\begin{equation}
\bfS\bigl((\hocolim_{i \in I}  (\psi_i)_*EG_i) \times_G Z)  \to \bfS(EG \times_G Z).
\label{lem:bfS_and_homotopy_colimits:map_3}
\end{equation}
Hence we get by taking the composite of the
maps~\eqref{lem:bfS_and_homotopy_colimits:map_1},~\eqref{lem:bfS_and_homotopy_colimits:map_2}
and~\eqref{lem:bfS_and_homotopy_colimits:map_3}  a weak homotopy equivalence
\[
\hocolim_{i \in I} \bfS(EG_i \times_{G_i}  \psi_i^* Z)\to  \bfS(EG \times_G Z).
\]
As $I$ is directed, it induces after taking homotopy groups for every $n \in \IZ$ an isomorphism
\[
\colim_{i \in I} \pi_n\bigl(\bfS(EG_i \times_{G_i}  \psi_i^* Z)\bigr) \to  \pi_n\bigl(\bfS(EG \times_G Z)\bigr),
\]
which can be identified using~\eqref{identification_of_K-homology_theories} with the canonical map
\[
t_n^G(\pt) \colon \colim_{i \in I} H_n^{G_i}(\pt;\bfS^{\downarrow G}_Z) \to H_n^G(\pt;\bfS^{\downarrow G}_Z).
\]
This finishes the proof of \Cref{lem:bfS_and_homotopy_colimits}.
\end{proof}


\subsection{Proof of Theorem~\ref{the:Inheritance_properties_of_the_Meta-Isomorphism_Conjecture_for_bfS_with_c}}
\label{subsec:proof_of_Theorem_ref(the:Inheritance_properties_of_the_Meta-Isomorphism_Conjecture_for_bfS_with_c)}

In this section we give the proof of
\Cref{the:Inheritance_properties_of_the_Meta-Isomorphism_Conjecture_for_bfS_with_c}.
We use the notation from there.

\begin{proof}
\ref{the:Inheritance_properties_of_the_Meta-Isomorphism_Conjecture_for_bfS_with_c:subgroups}
  Consider a free $H$--CW--complex $Z$. Let $i \colon H \to G$ be the inclusion.
Then $i_*Z$ is a free $G$--CW--complex, $i^*\EGF{G}{\calc(G)}$ is a model for
$\EGF{H}{\calc(H)}$ and $i^*G/G = H/H$. From \Cref{lem:adjunctions_for_homology_associated_toS_upper_K}~%
\ref{lem:adjunctions_for_homology_associated_toS_upper_K:restriction_of_X},
we obtain a commutative diagram with isomorphisms as vertical maps
\[\xymatrix{
H_n^H(\EGF{H}{\calc(H)};\bfS^H_Z) \ar[r] \ar[d]_{\cong}
&
H_n^H(H/H;\bfS^G_Z) \ar[d]^{\cong}
\\
H_n^G(\EGF{G}{\calc(G)};\bfS^G_{i_*Z}) \ar[r]
&
H_n^G(G/G;\bfS^G_{i_*Z}) 
}
\]
where the horizontal maps are induced by the projections. The lower map is bijective by assumption.
Hence the upper map is bijective as well.
\\[1mm]~\ref{the:Inheritance_properties_of_the_Meta-Isomorphism_Conjecture_for_bfS_with_c:extensions}
As \ConjMICforS holds for $(Q, \mathcal{Q})$, by
\Cref{lem:MC_with_coeff_and_FMC_with_coeff}\ref{lem:MC_with_coeff_and_FMC_with_coeff:MC_implies_FC},
\ConjFMICforS holds for $(G, p^{*}\mathcal{C}(Q))$.  By the same
\Cref{lem:MC_with_coeff_and_FMC_with_coeff}\ref{lem:MC_with_coeff_and_FMC_with_coeff:MC_implies_FC},
for every $H\in\mathcal{C}(Q)$, \ConjMICforS holds for $(p^{-1}(H),
\mathcal{C}(p^{-1}(H)))$.  Naturally, $p^{-1}(H) \subseteq G$ is a group over
$G$ for which by
\Cref{lem:MC_with_coeff_and_FMC_with_coeff}\ref{lem:MC_with_coeff_and_FMC_with_coeff_up_from_G_to_K}
\ConjFMIC holds for $H^{?}_n(-; \mathbf{S}^{\downarrow G}_{Z})$ for any $G$-CW
complex $Z$ and the family $\mathcal{C}(p^{-1}(H)) =
\mathcal{C}(G)|_{p^{-1}(H)}$.  Let $L \in p^{*}\mathcal{C}(Q)$. Then, using
\Cref{lem:basic_inheritance_property_of_fibered_Isomorphism_conjecture} for
the map $L \to p^{-1}(p(L))$, \ConjFMIC holds for $(L, \mathcal{C}|_L)$ and
$H^{?}_n(-; \mathbf{S}^{\downarrow G}_{Z})$.  As \ConjFMIC holds for $(G,
p^{*}{C}(Q))$ and for every $L \in p^{*}{C}(Q)$ for $(L, \mathcal{C}|_L)$, the
Transitivity Principle,
see~\cite[Theorem~4.3]{Bartels-Echterhoff-Lueck(2008colim)}, implies that \ConjFMIC
holds for $(G, \mathcal{C})$.  By Lemma
\Cref{lem:MC_with_coeff_and_FMC_with_coeff}\ref{lem:MC_with_coeff_and_FMC_with_coeff:FC_implies_MC},
then also \ConjMICforS holds for $(G, \mathcal{C})$.
\\[1mm]~\ref{the:Inheritance_properties_of_the_Meta-Isomorphism_Conjecture_for_bfS_with_c:direct_products}
If \ConjMICforS 
holds for $(G_1 \times G_2,\calc(G_1 \times G_2))$, it holds for
$G_k$ and the family $\calc(G_k) = \calc(G_1 \times G_2)|_{G_k}$ for $k = 1,2
$ by
assertion~\ref{the:Inheritance_properties_of_the_Meta-Isomorphism_Conjecture_for_bfS_with_c:subgroups}.

Suppose that \ConjMICforS
holds for $(G_k,\calc(G_k))$ for $k = 1,2$. 
By assertion~\ref{the:Inheritance_properties_of_the_Meta-Isomorphism_Conjecture_for_bfS_with_c:extensions}
applied to the split exact sequence $1 \to H_2 \to G_1 \times H_2 \to G_1 \to 1$,
\ConjMICforS
holds
for $(G_1 \times H_2,\calc(G_1 \times H_2))$ for every $H_2 \in \calc(G_2)$. 
By assertion~\ref{the:Inheritance_properties_of_the_Meta-Isomorphism_Conjecture_for_bfS_with_c:extensions}
applied to the split exact sequence $1 \to G_1 \to G_1 \times G_2 \to G_2 \to 1$
\ConjMICforS 
holds for $(G_1 \times G_2,\calc(G_1 \times G_2))$.
\\[1mm]~\ref{the:Inheritance_properties_of_the_Meta-Isomorphism_Conjecture_for_bfS_with_c:directed_colimits}
Since  the \ConjMICforS
holds for $G_i$ and $\calc(G_i)$ for every $i \in I$ by assumption, we conclude from \Cref{lem:MC_with_coeff_and_FMC_with_coeff}~%
\ref{lem:MC_with_coeff_and_FMC_with_coeff:MC_implies_FC} that 
\ConjFMICforS
holds for the group $G_i$ and the family $\calc(G_i)$ for every $i \in I$.
\Cref{lem:MC_with_coeff_and_FMC_with_coeff}~\ref{lem:MC_with_coeff_and_FMC_with_coeff_up_from_G_to_K}
implies that for every  $i \in  I$ and $G$--CW--complex $Z$ 
\ConjFMIC
holds for the equivariant homology theory $H_n(-;\bfS^{\downarrow G}_Z)$ over $G$ for the group
$\psi_i \colon G_i \to G$  over $G$ and the family $\calc(G_i)$.   We conclude 
from~\cite[Theorem~5.2]{Bartels-Echterhoff-Lueck(2008colim)} 
and \Cref{lem:bfS_and_homotopy_colimits} that for every $G$--CW--complex $Z$  
\ConjFMIC
holds for the equivariant homology theory $H^?_*(-;\bfS^{\downarrow G}_{Z})$ over $G$ for the group
$(G,\id_G) $ over $G$ and the family $\calc(G)$. In other words,
\ConjFMICforS
holds for the group $G$ and the family $\calc(G)$.
\Cref{lem:MC_with_coeff_and_FMC_with_coeff}~\ref{lem:MC_with_coeff_and_FMC_with_coeff:FC_implies_MC} implies 
that \ConjMICforS
%
holds for the group $G$ and the family $\calc(G)$.
\\[1mm]~\ref{the:Inheritance_properties_of_the_Meta-Isomorphism_Conjecture_for_bfS_with_c:overgroups_of_finite_index}
The analogs of \ref{the:Inheritance_properties_of_the_Meta-Isomorphism_Conjecture_for_bfS_with_c:subgroups},~%
\ref{the:Inheritance_properties_of_the_Meta-Isomorphism_Conjecture_for_bfS_with_c:extensions},~%
\ref{the:Inheritance_properties_of_the_Meta-Isomorphism_Conjecture_for_bfS_with_c:direct_products}, 
and~\ref{the:Inheritance_properties_of_the_Meta-Isomorphism_Conjecture_for_bfS_with_c:directed_colimits}
hold for the Meta-Isomorphism Conjecture~\ref{con:The_Meta_Isomorphisms_Conjecture_for_functors_from_spaces_to_spectra_with_c_and_fwp}
with coefficients and finite wreath products by~\cite[Lemma~3.2, 3.15, 3.16, Satz~3.5]{Kuehl(2009)}.

For a group $G$ and two finite groups $F_1$ and $F_2$ we have $(H \wr F_1 )\wr F_2 \subset
H \wr (F_1 \wr F_2)$ and $F_1 \wr F_2$ is finite.  In particular, if $G$
satisfies
\Cref{con:The_Meta_Isomorphisms_Conjecture_for_functors_from_spaces_to_spectra_with_c_and_fwp} with wreath products, then the same is true for any wreath
product $G \wr F$ with $F$
finite.  If $H \subseteq G$ is a subgroup of finite index, then $G$ can be embedded in $H \wr F$ for some
finite group $F$, see \cite[Proof of Proposition 2.17]{Wegner-solvable}. Hence Meta-Isomorphism~\Cref{con:The_Meta_Isomorphisms_Conjecture_for_functors_from_spaces_to_spectra_with_c_and_fwp}
with coefficients passes to supergroups of finite index. This finishes the proof of \Cref{the:Inheritance_properties_of_the_Meta-Isomorphism_Conjecture_for_bfS_with_c}.
\end{proof}


\subsection{Proof of assertion~\ref{the:main_result:inheritance} of Theorem~\ref{the:main_result}}
\label{subsec:Proof_of_assertion_ref(the:main_result:inheritance)_of_Theorem_ref(the:main_result)}

By 
\Cref{lem:bfA_respects_weak_equivalences_and_disjoint_unions}, the functor
$\bfA$ satisfies all assumptions of
\Cref{the:Inheritance_properties_of_the_Meta-Isomorphism_Conjecture_for_bfS_with_c}.
The claim of the inheritance properties appearing in
\cref{the:main_result:inheritance} of \Cref{the:main_result} follows
immediately from
\Cref{the:Inheritance_properties_of_the_Meta-Isomorphism_Conjecture_for_bfS_with_c}
except for the statements about extensions, direct products and free
products.  For extensions, it follows from the inheritance under finite index
supergroups.  For direct products, note that the product of two virtually
cyclic groups is virtually abelian, hence by \cite{Ullmann-Winges(2015)} it
satisfies the conjecture.

For free products, note that 
due to the inheritance under filtered colimits, we can assume our groups are
finitely generated, in particular countable.
  For $G_1$,
  $G_2 \in \calfj_A$ consider the canonical map $p \colon G_1 * G_2 \to G_1 \times G_2$.  We
  already know that $G_1 \times G_2 \in \calfj_A$  and hence that it suffices to prove 
   $p^{-1}(C) \in \calfj_A$, where $C$ is the trivial or any infinite cyclic subgroup of $G_1   \times G_2$.  
  By~\cite[Lemma~5.2]{Roushon(2008FJJ3)} all such $p^{-1}(C)$ are 
  free and hence hyperbolic, as $G_1 \times G_2$ is countable.


\section{Proof of the Farrell--Jones Conjecture for hyperbolic and CAT(0)--groups}\label{Proof of the Farrell--Jones Conjecture for hyperbolic and CAT(0)--groups}
 Thanks to the framework established in \cite{Ullmann-Winges(2015)}, we can
 proceed similarly to the linear case as in \cite{Wegner(2012)} and reduce the proof to the construction of a transfer map. This reduction is carried out in this section, while the construction of the transfer occupies \Cref{The Transfer: Final Part of the Proof}.
 

 
\subsection{Homotopy coherent actions and homotopy transfer reducibility}
\label{subsec:proof-fj:homotopy-coherent}
  
  The geometric criterion we use to prove the conjecture relies on the notion
  of a homotopy coherent diagram, which goes back to Vogt \cite{Vogt(1973)}.
  For applications to the Farrell--Jones conjecture, it is enough to consider
  the case of a homotopy coherent diagram of shape $G$, regarding $G$ as a
  one-object groupoid. In this special case, Vogt's definition was
  rediscovered by Wegner \cite[Definition~2.1]{Wegner(2012)}, who called it
  ``strong homotopy action''. 
  \begin{dfn}\label{def:hptycohG-action}
  A \emph{homotopy coherent $G$--action} of a group $G$ on a topological space $X$ is a continuous map
  \[
  \Gamma: \coprod_{j=0}^\infty((G \times [0,1])^j\times G\times X)\to X
  \]
  with the following properties:
  \begin{equation*}
      \Gamma(\gamma_k,t_k,\dots,\gamma_1,t_1,\gamma_0,x) = \begin{cases}
       \Gamma(\dots, \gamma_j, \Gamma(\gamma_{j-1},\dots,x)) & t_j = 0 \\
       \Gamma(\dots,\gamma_j\gamma_{j-1},\dots,x) & t_j = 1 \\
       \Gamma(\gamma_k,\dots,\gamma_2,t_2,\gamma_1,x) & \gamma_0 = e, 0 < k \\
       \Gamma(\gamma_k,\dots,t_{j+1}t_j,\dots,\gamma_0,x) & \gamma_j = e, 1 \leq j < k \\
       \Gamma(\gamma_{k-1},t_{k-1},\dots,t_1,\gamma_0,x) & \gamma_k = e, 0 < k
       \\
       x & \gamma_0 = e, k=0
      \end{cases}
  \end{equation*}
  \end{dfn}
  
  The following definition is adapted from the conditions given in
  \cite[Theorem~B]{Bartels(2012)}, which does not use coherence conditions.
  We explain some notation below.
  
  \begin{dfn}\label{def:transfer-reducible}
   Let $G$ be a discrete group. Let $\cF$ be a family of subgroups of $G$.  

   Then $G$ is \emph{homotopy transfer reducible over $\cF$} if there exists a
   finite, symmetric generating set $S \subset G$ of $G$ which contains the
   trivial element, as well as $N \in \NN$ such that there are for every $n
   \in \NN$
    \begin{enumerate}
     \item a compact, contractible metric space $(X,d_X)$ such that for every $\epsilon > 0$ there is an $\epsilon$--controlled domination of $X$ by an at most $N$--dimensional, finite simplicial complex.
     \item a homotopy coherent $G$--action $\Gamma$ on $X$.
     \item a $G$--simplicial complex $\Sigma$ of dimension at most $N$ whose isotropy is contained in $\cF$.
     \item a continuous map $f \colon X \to \Sigma$ which is \emph{$(S,n)$--equivariant} in the sense that  
      \begin{itemize}
        \item for all $x \in X$ and $s \in S^n$,
          \begin{equation*}
           d^{\ell^1}(f(\Gamma(s,x)),s \cdot f(x)) \leq \frac{1}{n}.
          \end{equation*}
        \item for all $x \in X$ and $s_0,\dots,s_n \in S^n$,
          \begin{equation*}
           \diam \{ f(\Gamma(s_n,t_n,\dots,s_0,x)) \mid (t_1,\dots,t_n) \in [0,1]^n \} \leq \frac{2}{n}.
          \end{equation*}
      \end{itemize}
    \end{enumerate}
  \end{dfn}

\begin{notation} 
  Let us briefly recall some notation used in \Cref{def:transfer-reducible}.
  \begin{enumerate}
    \item Recall from~\cite[Definition 1.5]{Bartels-Lueck(2012annals)} that an
      $\epsilon$--controlled domination of a metric space $(X,d)$ by a finite
      simplicial complex $K$ consists of maps $i\colon X \to K$, $p\colon K\to
      X$ together with a homotopy $H$ from $p \circ i$ to $\id_X$ such that for
      every $x\in X$ the diameter of $\{H(x,t) \mid t\in [0,1]\}$ is at most
      $\epsilon$.
    \item The $\ell^1$--metric $d^{\ell^1}$ on a simplicial complex is defined
      in~\cite[4.2]{Bartels-Lueck-Reich(2008hyper)}.
    \item If $S$ is a finite generating set of $G$, we denote by $S^n
      \subseteq G$ the set $\{ s_1 s_2 \dots s_n \in G \mid s_i \in S\}$.  We
      always equip $G$ with the word metric $d_G$ with respect to $S$. 
      Equivalently, $S^n$ is the $n$--ball around the trivial element with
      respect to $d_G$.
  \end{enumerate}
\end{notation}

We will show in \Cref{subsec:proof-fj:main-theorem} that a group
satisfying \Cref{def:transfer-reducible} satisfies the Farrell-Jones
Conjecture with coefficients in $A$--theory with respect to the family $\cF$,
and we show in
\Cref{subsec:proof-fj:homotopy-transfer-follows-strongly-transfer} that
hyperbolic and $\mathrm{CAT}(0)$--groups satisfy
\Cref{def:transfer-reducible}, thus proving
\Cref{the:main_result}\ref{the:main_result:groups}.



\subsection{Controlled CW--complexes}
\label{subsec:proof-fj:controlled-cw}
  Let $G$ be a discrete group and let $\cF$ be a family of subgroups of $G$.
  In \cite{Ullmann-Winges(2015)} it was shown that \Cref{thm:afjc-transfer-reducible} holds for $G$ if and only if a certain spectrum $\mathbb{F}(G, W, \EGF{G}{\calf})$ is weakly contractible for every free $G$--CW--complex $W$. The spectrum $\mathbb{F}(G, W, \EGF{G}{\calf})$ is the algebraic $K$--theory of a Waldhausen category of controlled retractive $G$--CW--complexes, similar in spirit to the obstruction category for the Isomorphism Conjecture in algebraic $K$--theory, cf.~\cite{Bartels-Farrell-Jones-Reich(2004)}, \cite[Section~3]{Bartels-Lueck-Reich(2008hyper)}.
  Let us recall the relevant definitions from \cite{Ullmann-Winges(2015)} in
  this and the next section.
  
  A \emph{coarse structure} is a triple $\fZ=(Z, \mathfrak{C}, \mathfrak{S})$
  such that $Z$ is a Hausdorff $G$--space, $\fC$ is a collection of reflexive,
  symmetric and $G$--invariant relations on $Z$ which is closed under taking
  finite unions and compositions,
  see \cite[Definition~2.1]{Ullmann-Winges(2015)}, and $\fS$ is a collection of
  $G$--invariant subsets of $Z$ which is closed under taking finite unions.
  See \cite[Definition~3.23]{Ullmann-Winges(2015)} for the notion of a
  \emph{morphism of coarse structures}.
  
  Fix a coarse structure $\fZ$.
  
  A \emph{labeled $G$--CW--complex relative W}, see \cite[Definition~2.3]{Ullmann-Winges(2015)}, is a pair $(Y, \kappa)$, where $Y$ is a free $G$--CW--complex relative $W$ together with a $G$--equivariant function $\kappa \colon \cells Y \rightarrow Z$. Here, $\diamond Y$ denotes the (discrete) set of relative cells of $Y$.
  
  A \emph{$\fZ$--controlled map} $f \colon (Y_1,\kappa_1) \rightarrow (Y_2, \kappa_2)$ is a $G$--equivariant, cellular map $f \colon Y_1 \rightarrow Y_2$ relative $W$ such that for all $k \in \mathbb{N}$ there is some $C \in \mathfrak{C}$ for which
  \[
  (\kappa_2,\kappa_1)(\{(e_2,e_1) \mid e_1 \in \cells_k Y_1, e_2\in \cells Y_2, \langle f(e_1) \rangle \cap e_2 \neq \varnothing\}) \subseteq C
  \]
  holds, where $\langle f(e_1)\rangle$ denotes the smallest non-equivariant
  subcomplex of $Y_2$ which contains $f(e_1)$.
  
  A \emph{$\fZ$--controlled $G$--CW--complex relative W} is a labeled $G$--CW--complex $(Y,\kappa)$ relative $W$, such that the identity is a $\fZ$--controlled map and for all $k \in \mathbb{N}$ there is some $S \in \mathfrak{S}$ such that
  \[
    \kappa(\diamond_k Y)\subseteq S.
  \]
  
  A \emph{$\fZ$--controlled retractive space relative $W$} is a
  $\fZ$--controlled $G$--CW--complex  $(Y,\kappa)$ relative $W$ together with
  a $G$--equivariant retraction $r \colon Y \to W$, i.e., a left inverse to
  the structural inclusion $W \hookrightarrow Y$. The $\fZ$--controlled
  retractive spaces relative $W$ form a category $\cR^G(W,\fZ)$ in which
  \emph{morphisms} are $\fZ$--controlled maps which additionally respect the chosen retractions.
  
  The category of controlled $G$--CW--complexes (relative $W$) and controlled
  maps admits a notion of \textit{controlled homotopies}, see
  \cite[Definition~2.5]{Ullmann-Winges(2015)} via the objects $(Y
  \leftthreetimes [0,1], \kappa \circ pr_Y)$, where $Y \leftthreetimes [0,1]$
  denotes the reduced product which identifies $W \times [0,1] \subseteq Y
  \times [0,1]$ to a single copy of $W$ and $pr_Y: \cells (Y \leftthreetimes
  [0,1]) \rightarrow \diamond Y$ is the canonical projection.
  In particular, we obtain a notion of \emph{controlled homotopy equivalence} (or \emph{$h$--equivalence}).
  
  A $\fZ$--controlled retractive space $(Y, \kappa)$ is called \textit{finite} if it is finite-dimensional, the image of $Y \backslash W$ under the retraction meets the orbits of only finitely many path components of $W$ and for each $z \in Z$ there is some open neighborhood $U$ of $z$ such that $\kappa^{-1}(U)$ is finite, see \cite[Definition~3.3]{Ullmann-Winges(2015)}.
  
  A $\fZ$--controlled retractive space $(Y, \kappa)$ is called
  \textit{finitely dominated}, if there are a finite $\fZ$--controlled,
  retractive space $D$, a morphism $p \colon D \rightarrow Y$ and a
  $\fZ$--controlled map $i \colon Y \rightarrow D$ such that $p \circ i$ is
  controlled homotopic to $\id_Y$.  

  The finite, respectively finitely dominated, $\fZ$--controlled retractive spaces form full subcategories $\cR^G_f(W,\fZ) \subset \cR^G_{fd}(W,\fZ) \subset \cR^G(W,\fZ)$.
  All three of these categories support a Waldhausen category structure in
  which inclusions of $G$--invariant subcomplexes up to isomorphism are the
  cofibrations and controlled homotopy equivalences are the weak equivalences,
  see \cite[Corollary~3.22]{Ullmann-Winges(2015)}.  We denote this class of
  weak equivalences by $h$.
  
  Note that a controlled homotopy equivalence is a \emph{morphism}, but only
  admits a controlled homotopy inverse \emph{map}, which does not need to be
  compatible with the retractions to $W$.  This is similar to the classical
  situation  \cite[Section~2.1]{Waldhausen(1985)}.
  
\subsection{The obstruction category}
\label{subsec:proof-fj:the-obstruction-category}
  Let $M$ be a metric space with free, isometric $G$--action. Define the
  \emph{bounded morphism control condition on $M$},
  $\fC_{bdd}(M)$, to be the collection of all subsets $C \subset M \times M$
  which are of the form
  \begin{equation*}
      C = \{ (m,m') \in M \times M \mid d(m,m') \leq \alpha \}
  \end{equation*}
  for some $\alpha \geq 0$. 
  
  Let $X$ be a $G$--CW--complex. Define further the \emph{$G$--continuous
  control condition} $\fC_{Gcc}(X)$ to be the collection of all $C \subset (X \times [1,\infty[) \times (X \times [1,\infty[)$ which satisfy the following:
  \begin{enumerate}
      \item For every $x \in X$ and every $G_x$--invariant open neighborhood
        $U$ of $(x,\infty)$ in $X \times [1,\infty]$, there exists a
        $G_x$--invariant open neighborhood $V \subset U$ of $(x,\infty)$ such
        that $(((X \times [1,\infty[) \smallsetminus U) \times V) \cap C = \varnothing$.
      \item Let $p_{[1,\infty[} \colon X \times [1,\infty[ \to [1,\infty[$ be the projection map. Equip $[1,\infty[$ with the Euclidean metric. Then there exists some $B \in \fC_{bdd}([1,\infty[)$ such that $C \subset p^{-1}_{[1,\infty[}(B)$.
      \item $C$ is symmetric, $G$--invariant and contains the diagonal.
  \end{enumerate}
  We can combine the two morphims control conditions into one set of
  conditions on $M \times X \times [1, \infty[$: Let $p_M \colon M \times X
  \times [1,\infty[ \to M$ and $p_{X \times [1,\infty[} \colon M \times X
  \times [1,\infty[ \to X \times [1,\infty[$ denote the projection maps.
  Then $\fC(M,X)$ is the collection of all subsets $C \subset (M \times
  X \times [1,\infty[)^2$ which are of the form
  \begin{equation*}
   C = p_M^{-1}(B) \cap p_{X \times [1,\infty[}^{-1}(C')    
  \end{equation*}
  for some $B \in \fC_{bdd}(M)$ and $C' \in \fC_{Gcc}(X)$.
  
  Finally, define $\fS(M,X)$ to be the collection of all subsets $S \subset M
  \times X \times [1,\infty[$ which are of the form $S = K \times [1,\infty[$
  for some $G$--compact subset $K \subset M \times X$.  Recall that
  $\EGF{G}{\calf}$ denotes the classifying space of $G$ with respect to
  $\calf$.  We also consider $G$ as a metric space with the word metric
  induced by a generating set $S$.
\begin{definition}
  With the above definitions we obtain a coarse structure
  \begin{equation*}
      \JJ(M,X) := (M \times X \times [1,\infty[, \fC(M,X), \fS(M,X)).
  \end{equation*}
  Define the  ``obstruction category'' as the category of finite controlled
  CW-complexes relative $W$, i.e., as
  \begin{equation*}
    \cR^G_f(W,\JJ(G,\EGF{G}{\calf})),h),
  \end{equation*}
  cf.~\cite[Example~2.2 and Definition~6.1]{Ullmann-Winges(2015)}.  The
  spectrum $\FF(G,W,\EGF{G}{\calf})$ alluded to before is the non-connective
  $K$--theory
  spectrum of $\cR^G_f(W,\JJ(G,\EGF{G}{\calf}))$ with respect to the
  $h$--equivalences, cf.~\cite[Section~5]{Ullmann-Winges(2015)} and
  \Cref{def:delooping-coarse-structure} below.  If $M = G$, we often
  abbreviate $\JJ(G,X)$ as $\JJ(X)$.
\end{definition}
  By \cite[Corollary~6.11]{Ullmann-Winges(2015)}, a group $G$ satisfies the
  Farrell-Jones Conjecture \ref{con:FJC_for_A-theory_with_c} with coefficients in $A$--theory with respect to
  $\cF$ if and only if $\FF(G,W,\EGF{G}{\calf})$ is weakly contractible for
  every free $G$--CW--complex $W$.
  
\subsection{The target of the transfer}
\label{subsec:proof-fj:target-of-the-transfer}
  Suppose that $G$ is homotopy transfer reducible in the sense of \Cref{def:transfer-reducible}.
  The key step in proving the weak contractibility of
  $\FF(G,W,\EGF{G}{\calf})$ will be the construction of a ``transfer map". We
  need a generalization of the coarse structure $\JJ(M,X)$ to define the
  target of the transfer.
  
  Suppose that $(M_n)_n$ is a sequence of metric spaces with a free, isometric $G$--action. Let $X$ be a $G$--CW--complex. Following \cite[Section~7]{Ullmann-Winges(2015)}, define the coarse structure
  \begin{equation*}
      \JJ((M_n)_n,X) := \big( \coprod_n M_n \times X \times [1,\infty[, \fC((M_n)_n,X), \fS((M_n)_n,X) \big)
  \end{equation*}
  as follows: Members of $\fC((M_n)_n,X)$ are of the form $C = \coprod_n C_n$
  with $C_n \in \fC(M_n,X)$, and we additionally require that $C$ satisfies
  the \emph{uniform metric control conditon}: There is some $\alpha > 0$,
  independent of $n$, such that for all $((m,x,t)$, $(m',x',t')) \in C$ we
  have $d(m,m') < \alpha$. Members of $\fS((M_n)_n,X)$ are sets of the form $S
  = \coprod_n S_n$ with $S_n \in \fS(M_n,X)$. The resulting category
  $\cR^G(W,\JJ((M_n)_n,X))$ has a canonical faithful functor into the product category $\prod_n \cR^G(W,\JJ(M_n,X))$. 
  
  Fix a symmetric, finite generating set $S$ of $G$. Let $d_G$ denote the word metric on $G$ with respect to $S$. Since $G$ is homotopy transfer reducible by assumption, there exists a natural number $N\in \NN$ such that we can choose for each $n\in \NN$
   \begin{enumerate}
     \item a compact, contractible metric space $(X_n,d_{X_n})$ such that for every $\epsilon > 0$ there is an $\epsilon$--controlled domination of $X_n$ by an at most $N$--dimensional, finite simplicial complex;
     \item a homotopy coherent $G$--action $\Gamma_n$ on $X_n$;
     \item a $G$--simplicial complex $\Sigma_n$ of dimension at most $N$ whose isotropy is contained in $\cF$;
     \item a map $f_n \colon X \to \Sigma_n$ which is $(S,n)$--equivariant,
       i.e.,
      \begin{enumerate}
        \item for all $x \in X_n$ and $s \in S^n$,
          \begin{equation}\label{eq:homotopy-trans-reduc:almost-equivariant}
           d^{\ell^1}(f(\Gamma_n(s,x)),s \cdot f_n(x)) \leq \frac{1}{n};
          \end{equation}
        \item for all $x \in X_n$ and $s_0,\dots,s_n \in S^n$,
          \begin{equation}\label{eq:homotopy-trans-reduc:diameter-bounded}
           \diam \{ f_n(\Gamma_n(s_n,t_n,\dots,s_0,x)) \mid (t_1,\dots,t_n) \in [0,1]^n \} \leq \frac{2}{n}.
          \end{equation}
      \end{enumerate}
   \end{enumerate}
   \begin{definition}\label{def:metric-on-Sigma-G}
     We equip $\Sigma_n \times G$ with the metric $ n \cdot d^{\ell^1}(x,y) +
     d_G(g,h) $.
   \end{definition}
   Recall that an extended metric satisfies the usual axioms of a metric, but
   it is allowed to take the value $\infty$.  The following definition will be
   used to produce a metric on $X_n \times G$ for each $n \in \NN$.  
     
   \begin{dfn}\label{def-metric}
   Let $(X,d_X)$ be a metric space, $\Gamma$ a homotopy coherent $G$--action
   on $X$, and $S \subset G$ a finite subset containing the trivial element.
   Let $k \in \NN$ and $\Lambda > 0$. Define on $X \times G$ the extended metric
    \begin{equation*}
     d_{S,k,\Lambda}( (x,g), (y,h) ) \in [0,\infty]
    \end{equation*}
   to be the infimum over the numbers
    \begin{equation*}
      l + \sum_{i=0}^l \Lambda \cdot d_X(x_i,z_i), 
    \end{equation*}
   where the infimum is taken over all $l \in \NN$, $x_0,\dots,x_l$, $z_0,\dots,z_l \in X$ and $a_1,\dots,a_l$, $b_1,\dots,b_l \in S$ such that
    \begin{enumerate}
     \item $x_0 = x$ and $z_l = y$;
     \item $ga_1^{-1}b_1\dots a_l^{-1}b_l = h$;
     \item for each $1 \leq i \leq l$ there are elements $r_0,\dots,r_k,s_0,\dots,s_k \in S$ such that $a_i = r_k\dots r_0$, $b_i = s_k \dots s_0$ and $\Gamma(r_k,t_k,\dots,r_0,z_{i-1}) = \Gamma(s_k,u_k,\dots,s_0,x_i)$ for some $t_1, \dots,t_k,u_1,\dots,u_k \in [0,1]$.
    \end{enumerate}
    If no such data exist, take the infimum to be $\infty$.
   \end{dfn}
  
  This definition is analogous to
  \cite[Definition~3.4]{Bartels-Lueck(2012annals)} and
  \cite[Definition~2.3]{Wegner(2012)}. Since we only consider the coherent
  $G$--action $\Gamma_n$ on $X_n$, we drop $\Gamma_n$ from the notation of
  \cite{Wegner(2012)}. The proof of the next lemma is analogous to the one
  given in \cite[Lemma~3.5]{Bartels-Lueck(2012annals)}.

  \begin{lem}\label{lem:metricisnice}
   Let $k \in \NN$.
    \begin{enumerate}
     \item For all $\Lambda > 0$, the function $d_{S,k,\Lambda}$ is an extended metric on $X \times G$ which is $G$--invariant if we let $G$ act on $X \times G$ by $\gamma \cdot (x,g) = (x,\gamma g)$. It is a metric if and only if $S$ generates $G$.
     \item We have $d_{S,k,\Lambda}((x,g),(y,h)) < 1$ if and only if $g = h$ and $\Lambda \cdot d_X(x,y) < 1$ holds, in which case we have $d_{S,k,\Lambda}((x,g),(y,h)) = \Lambda \cdot d_X(x,y)$. In particular, the topology induced by $d_{S,k,\Lambda}$ is the product topology. \qed
    \end{enumerate} 
  \end{lem}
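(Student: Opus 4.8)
The plan is to mimic the proof of \cite[Lemma~3.5]{Bartels-Lueck(2012annals)}, thinking of $d_{S,k,\Lambda}((x,g),(y,h))$ as the infimal cost of a finite ``zig-zag path'' in $X\times G$: one alternates between \emph{$d_X$-moves} from $x_i$ to $z_i$ of cost $\Lambda\,d_X(x_i,z_i)$ and \emph{$\Gamma$-moves} from $z_{i-1}$ to $x_i$ of cost $1$ that multiply the $G$-coordinate by $a_i^{-1}b_i$, where a $\Gamma$-move is admissible only when the compatibility constraint $\Gamma(r_k,\dots,r_0,z_{i-1})=\Gamma(s_k,\dots,s_0,x_i)$ of \Cref{def-metric} can be met. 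I would check the metric axioms one move at a time. Reflexivity $d_{S,k,\Lambda}((x,g),(x,g))=0$ is witnessed by the empty path ($l=0$, $x_0=z_0=x$). Symmetry follows by running a path backwards: take as new $j$-th $x$-point $z_{l-j}$, as new $j$-th $z$-point $x_{l-j}$, as new $j$-th letters $b_{l+1-j}$ and $a_{l+1-j}$, and in each compatibility constraint swap the auxiliary elements $r_\bullet\leftrightarrow s_\bullet$ and parameters $t_\bullet\leftrightarrow u_\bullet$; this has the same cost because $d_X$ is symmetric. $G$-invariance is immediate, since among the defining constraints only the group-coordinate equation $ga_1^{-1}b_1\cdots a_l^{-1}b_l=h$ mentions $g$ and $h$, and it is preserved under left translation by any $\gamma\in G$.

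The triangle inequality needs one small idea. Given approximating data for $(x,g)\to(y,h)$ of cost $V_1$ and for $(y,h)\to(w,f)$ of cost $V_2$, I would concatenate the two zig-zag paths and then \emph{merge} the terminal pair $(x_{l_1},z_{l_1})=(x_{l_1},y)$ of the first with the initial pair $(x'_0,z'_0)=(y,z'_0)$ of the second into a single pair $(x_{l_1},z'_0)$. This is legitimate because the group-coordinate and $\Gamma$-constraints only couple consecutive indices, and it does not increase the total cost, since $\Lambda\,d_X(x_{l_1},z'_0)\le\Lambda\,d_X(x_{l_1},y)+\Lambda\,d_X(y,z'_0)$ by the triangle inequality in $X$. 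The point is that merging eliminates one $\Gamma$-move, so the concatenated path has $l=l_1+l_2$ and total cost $\le V_1+V_2$; letting $V_1,V_2$ approach the respective distances yields the inequality.

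Part~(ii) and the remaining positivity both rest on the numerical fact that any datum with $l\ge 1$ has cost at least $l\ge 1$. Hence a datum of cost $<1$ must have $l=0$, which forces $x_0=x$, $z_0=y$, hence $g=h$ by the group-coordinate equation, and has cost exactly $\Lambda\,d_X(x,y)$. This gives both implications of ``$d_{S,k,\Lambda}((x,g),(y,h))<1$ iff $g=h$ and $\Lambda\,d_X(x,y)<1$'', together with the asserted equality in that range; in particular $d_{S,k,\Lambda}((x,g),(y,h))=0$ forces $g=h$ and $d_X(x,y)=0$, i.e.\ $(x,g)=(y,h)$, so $d_{S,k,\Lambda}$ is an extended metric which is $G$-invariant. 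Since for $r<1$ the ball of radius $r$ around $(x,g)$ is exactly $B^{d_X}_{r/\Lambda}(x)\times\{g\}$ and $d_X$ induces the topology of $X$, the neighborhood filter at each point coincides with that of the product topology on $X\times G$ (with $G$ discrete), which is the last claim.

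Finally, I would settle finiteness. If $S$ does not generate $G$, the group-coordinate equation forces $h\in g\langle S\rangle$, so $d_{S,k,\Lambda}((x,g),(y,h))=\infty$ whenever $h\notin g\langle S\rangle$, and $d_{S,k,\Lambda}$ is only an extended metric. Conversely, if $S$ generates $G$, I would exhibit a finite-cost path: write $h^{-1}g=c_l c_{l-1}\cdots c_1$ with $c_i\in S$, put $a_i:=c_i$, $b_i:=1$, and use $1\in S$ to take $r_0=c_i$, $r_1=\cdots=r_k=1$, $s_0=\cdots=s_k=1$. I expect \textbf{this step to be the main obstacle}: one must unwind the normalization axioms of a homotopy coherent $G$-action (\Cref{def:hptycohG-action}) to check that these choices make $\Gamma(r_k,t_k,\dots,r_0,z_{i-1})=\Gamma(c_i,z_{i-1})$ and $\Gamma(s_k,u_k,\dots,s_0,x_i)=x_i$ hold for \emph{arbitrary} parameters, so that the compatibility constraint collapses to the single equation $x_i=\Gamma(c_i,z_{i-1})$. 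One may then simply define $x_0:=x$, $z_{i-1}:=x_{i-1}$ and $x_i:=\Gamma(c_i,z_{i-1})$ for $1\le i\le l$, and set $z_l:=y$; every $d_X$-cost vanishes except the final $\Lambda\,d_X(x_l,y)<\infty$, and the group-coordinate equation reads $g(c_l\cdots c_1)^{-1}=h$, which holds by construction. Thus $d_{S,k,\Lambda}$ is a genuine (finite) metric exactly when $S$ generates $G$. Besides this last construction, the only point requiring real care is keeping $l$ additive under concatenation in the triangle inequality; the parameter $k$ plays no essential role beyond making the compatibility constraint available.
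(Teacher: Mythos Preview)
Your proposal is correct and follows precisely the route the paper indicates: the paper gives no proof of its own, stating only that ``the proof of the next lemma is analogous to the one given in \cite[Lemma~3.5]{Bartels-Lueck(2012annals)}'', and your argument is exactly that adaptation, carried out in full detail.

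One small point in the finiteness direction: writing $h^{-1}g = c_l\cdots c_1$ with $c_i \in S$ assumes $S$ generates $G$ as a \emph{monoid}, whereas the lemma only assumes group generation. Since $e \in S$, the set $\{a^{-1}b : a,b \in S\}$ contains $S \cup S^{-1}$, so you should instead write $g^{-1}h$ as a word in $S \cup S^{-1}$ and take $(a_i,b_i) = (c_i,e)$ when $c_i \in S$ and $(a_i,b_i) = (e,c_i)$ when $c_i \in S$ is to contribute $c_i$ rather than $c_i^{-1}$; the $\Gamma$-constraint in the second case reads $z_{i-1} = \Gamma(c_i,x_i)$, which you can satisfy by choosing $x_i$ freely and defining $z_{i-1}$ accordingly (compactness of $X$ keeps the $d_X$-cost finite). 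In the paper's applications $S$ is always taken symmetric (\Cref{def:transfer-reducible}), so your original construction already suffices there.
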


\subsection{The actual target of the transfer}
\label{subsec:proof-fj:actual-target-of-the-transfer}
  We now specialize the construction of
  \Cref{subsec:proof-fj:target-of-the-transfer} to our needs.
  Assume that $G$ is homotopy transfer reducible, i.e., it satisfies
  \Cref{def:transfer-reducible}.  That definition provides us for every $n$
  with a metric space $X_n$, as well as $\Gamma_n$, $f_n$ and $\Sigma_n$.
  From \Cref{def-metric} and \Cref{lem:metricisnice} we obtain for any
  sequence $(\Lambda_n)_n$ a sequence of metric spaces $(X_n \times G, d_{S^n,
  n, \Lambda_n})_n$.
  Although we do not need to restrict to a specific choice of
  $(\Lambda_n)_n$ until a little later, we wish to avoid spreading our choices
  throughout the whole proof. Therefore, we will now fix a specific sequence
  $(\Lambda_n)_n$.
  
  Since each $X_n$ is compact, $f_n$ is uniformly continuous. Hence, there
  exists for each $n \in \NN$ some $\delta_n > 0$ such that for all $x,y \in
  X_n$ with $d_X(x,y) < \delta_n$ we have $d^{\ell^1}(f_n(x),f_n(y)) <
  \frac{1}{n}$. 
  \begin{definition}\label{def:lambda-delta}
    Choose such $\delta_n$ for all $n$ and set
    \begin{equation*}
      \Lambda_n := \frac{n+1}{\delta_n}.
    \end{equation*}
    Define a metric $d_n$ on $X_n \times G$ by
    \begin{equation*}
      d_n((x,g),(y,h)) := d_{S^n,n,\Lambda_n}((x,g),(y,h)) + d_G(g,h).
    \end{equation*}
  \end{definition}

  Then $X_n \times G$ carries a free and isometric $G$--action if we let $G$
  act on the right factor. If we make no explicit mention of a metric, we will
  view $X_n \times G$ as a metric space with respect to $d_n$ in what follows.
  Similarly, $\Sigma_n \times G$ carries a diagonal $G$--action and will
  always be understood as a metric space with respect to the metric $n\cdot
  d^{\ell^1} + d_G$ from \Cref{def:metric-on-Sigma-G}.  Abbreviate $E :=
  E_\cF(G)$.  
  
  The category
  $\cR^G(W,\JJ((X_n \times G)_n,E))$ will be the target of the ``transfer''.  However,
  we need to equip it with another class of weak equivalences.
  These $h^{fin}$--equivalences were introduced in the proof of
  \cite[Theorem~10.1]{Ullmann-Winges(2015)}. Basically, they ignore the
  behavior of an object on finitely many factors and behave like
  $h$--equivalences otherwise. 
  \begin{dfn}\label{def:truncated-object}
   Let $(M_n)_n$ be a sequence of metric spaces with free, isometric $G$--action (e.g.~$M_n = X_n \times G$).
   
   Let $(Y_n)_n$ be an object of $\cR^G(W, \JJ((M_n)_n,E))$. For $\nu \in \NN$, we denote by $(-)_{n > \nu}$ the endofunctor which sends $(Y_n)_n$ to the sequence $(\widetilde{Y}_n)_n$ with $\widetilde{Y}_n = \ast$ for $n \leq \nu$ and $\widetilde{Y}_n = Y_n$ for $n > \nu$.
   
   A morphism $(f_n)_n \colon (Y_n)_n \rightarrow (Y_n')_n$ is an \emph{$h^{fin}$--equivalence} if there is some $\nu \in \NN$, such that $(f_n)_{n>\nu}\colon (Y_n)_{n>\nu} \rightarrow (Y_n')_{n>\nu}$ is an $h$--equivalence.
  \end{dfn}

\begin{lem}\label{lem:functoriality-of-obstruction-category}
  Let $(M_n)_n$, $(N_n)_n$ be sequences of metric spaces with free, isometric
  $G$-action. Let $(g_n)_n\colon (M_n)_n \to (N_n)_n$ be a uniformly expanding
  sequence of $G$--equivariant maps, i.e., for every $\alpha > 0$ there is
  some $\beta > 0$ such that for all $n \in \NN$ and $x,y \in M_n$ we have
  $d(g_n(x),g_n(y)) < \beta$ whenever $d(x,y) < \alpha$.

  Then $(g_n)_n$ induces a map $\cR^G(W, \JJ((M_n)_n,E)) \to \cR^G(W,
  \JJ((N_n)_n,E))$ which also respects $h$-- and $h^{fin}$--equivalences, as
  well as finiteness conditions.
\end{lem}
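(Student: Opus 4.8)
The plan is to exhibit the asserted map as the ``relabeling'' functor induced by a morphism of coarse structures. Concretely, I would set $\bar g := \coprod_n (g_n \times \id_{E \times [1,\infty[})$, a $G$--equivariant continuous map $\coprod_n M_n \times E \times [1,\infty[ \,\to\, \coprod_n N_n \times E \times [1,\infty[$, and first check that $\bar g$ is a morphism of coarse structures $\JJ((M_n)_n,E) \to \JJ((N_n)_n,E)$ in the sense of \cite[Definition~3.23]{Ullmann-Winges(2015)}. Granting this, the functoriality of $\fZ \mapsto \cR^G(W,\fZ)$ along morphisms of coarse structures from \cite[Section~3]{Ullmann-Winges(2015)} yields a functor $\bar g_*$ which leaves the underlying $G$--CW--complex $Y$, its retraction $r$ to $W$, and every morphism unchanged, and merely replaces a labeling $\kappa \colon \cells Y \to \coprod_n M_n \times E \times [1,\infty[$ by $\bar g \circ \kappa$. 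It then remains to verify that $\bar g_*$ respects $h$--equivalences, $h^{fin}$--equivalences, and the finiteness conditions.

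For the first step, $G$--equivariance and continuity of $\bar g$ are immediate, so the real content is that $\bar g$ sends $\fC((M_n)_n,E)$ into $\fC((N_n)_n,E)$. Here I would take $C = \coprod_n C_n$ with each $C_n = p_{M_n}^{-1}(B_n) \cap p_{E\times[1,\infty[}^{-1}(C_n')$, $B_n\in\fC_{bdd}(M_n)$, $C_n'\in\fC_{Gcc}(E)$, and with a \emph{single} $\alpha>0$ witnessing the uniform metric control condition. The hypothesis that $(g_n)_n$ is uniformly expanding provides $\beta>0$, \emph{independent of $n$}, with $d(g_n(m),g_n(m'))<\beta$ whenever $d(m,m')<\alpha$. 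Since $\bar g$ does not touch the $E\times[1,\infty[$--coordinates, $(\bar g\times\bar g)(C)$ is then contained in the member of $\fC((N_n)_n,E)$ whose $n$--th component is $p_{N_n}^{-1}(\{(y,y'):d(y,y')<\beta\})\cap p_{E\times[1,\infty[}^{-1}(C_n')$, the required uniformity being precisely the $n$--independence of $\beta$. I would also check that $\bar g$ maps $\fS((M_n)_n,E)$ into $\fS((N_n)_n,E)$; this holds because each $g_n$ is a $G$--equivariant continuous map, so $g_n\times\id_E$ carries $G$--compact sets to $G$--compact sets.

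The remaining verifications are essentially formal, precisely because $\bar g_*$ only changes the labeling. For $h$--equivalences: a controlled homotopy on $(Y,\kappa)$ is a controlled map out of the reduced product $(Y\leftthreetimes[0,1],\kappa\circ pr_Y)$, and $\bar g_*$ commutes with forming this reduced product while Step~1 already shows relabeling preserves controlledness; hence controlled homotopy equivalences are sent to controlled homotopy equivalences. For the finiteness conditions: finite--dimensionality of $Y$ and the condition that the retraction of $Y\smallsetminus W$ meets the orbits of only finitely many path components of $W$ do not see the labeling, and together with the $\fS$--condition from Step~1 and $G$--equivariance of the labels they show a finite object stays finite; for finitely dominated objects one relabels the dominating data $D\xrightarrow{p}Y$, $Y\xrightarrow{i}D$ and the controlled homotopy $p\circ i\simeq\id_Y$ along $\bar g$. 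For $h^{fin}$--equivalences: the truncation endofunctor $(-)_{n>\nu}$ of \Cref{def:truncated-object} acts componentwise in $n$ and therefore commutes with $\bar g_*$, so if $(f_n)_{n>\nu}$ is an $h$--equivalence then so is $(\bar g_*(f_n))_{n>\nu}$ by the previous point.

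The main obstacle is exactly the verification in Step~1 that the uniform metric control condition is preserved: this is the one clause for which ``uniformly expanding'' (with a bound $\beta$ that does not depend on $n$) was designed, and without it one would only land in the product category $\prod_n\cR^G(W,\JJ(N_n,E))$ rather than in $\cR^G(W,\JJ((N_n)_n,E))$. A secondary, milder point is making sure the local finiteness clause in the definition of a finite object survives the change of target coarse structure; here one uses that, in the presence of the $\fS$--condition, finiteness amounts to $G$--finiteness of the set of cells together with the condition on $W$, neither of which is affected by relabeling.
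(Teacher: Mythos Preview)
Your proposal is correct and follows exactly the approach of the paper: show that $(g_n)_n$ induces a morphism of coarse structures $\JJ((M_n)_n,E)\to\JJ((N_n)_n,E)$, whence the induced relabeling functor respects $h$--equivalences, and then observe that componentwise behavior (each $g_n$ sends $M_n$ to $N_n$) guarantees compatibility with truncation and hence with $h^{fin}$--equivalences. The paper's proof is a terse two--sentence version of precisely this argument; your write--up simply unpacks the details, including the point that the uniform--expanding hypothesis is exactly what is needed to preserve the uniform metric control condition, and your remark on local finiteness (via $G$--finiteness of cells, using that both source and target have free $G$--action) is the standard reduction from \cite{Ullmann-Winges(2015)}.
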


\begin{proof}
  As $(g_n)_n$ induces a map on $\JJ((M_n)_n, E)$ which respects the control
  conditions, it also respects the $h$--equivalences.  As it maps $M_n$ to
  $N_n$, it also respects the $h^{fin}$--equivalences.
\end{proof}

We will discuss the difference between the $h$-- and the
$h^{fin}$--equivalences in \Cref{subsec:proof-fj:squeezing}.
%

\subsection{Non-connective algebraic $K$--theory of controlled CW--complexes}
\label{subsec:proof-fj:negative-A-groups}
Before we turn to the main theorem, we need to briefly recall the definition
of algebraic $K$--theory in our setting.  Let $\fZ = (Z, \fC, \fS)$ be a
coarse structure.  Then $\cR^G_f(W,\fZ)$ and its variants are Waldhausen
categories, hence their algebraic $K$--theory is defined
by~\cite{Waldhausen(1985)}.  However, we need the non-connective delooping from
\cite[Section~5]{Ullmann-Winges(2015)}, which we briefly recall for
completeness.

\begin{dfn}\label{def:delooping-coarse-structure}
   Let $\fZ = (Z, \fC, \fS)$ be a coarse structure. For $n \in \NN$ define the coarse structure $\fZ(n) = (\RR^n \times Z, \fC(n), \fS(n))$ as follows:
   A set $C \subset (\RR^n \times Z)^2$ is in $\fC(n)$ if and only if:
   \begin{enumerate}
    \item $C$ is symmetric, $G$--invariant and contains the diagonal.
    \item $C \subseteq p_n^{-1}(C')$ for some $C' \in \fC_{bdd}(\RR^n)$, where $p_n \colon \RR^n \times Z \to \RR^n$ is the projection map.
    \item For all $K \subset \RR^n$ compact, there is a $C' \in \fC$ such that
     \begin{equation*}
      C \cap ((K \times Z) \times (K \times Z)) \subset p_Z^{-1}(C'),
     \end{equation*}
      where $p_Z \colon \RR^n \times Z \to Z$ is the projection map.
   \end{enumerate}
   Let $\fS(n)$ be the collection of all $S \subset \RR^n \times Z$ such that $S = p_Z^{-1}(S')$ for some $S' \in \fS(Z)$.
  \end{dfn}

  Consider for all $n$ also the restricted coarse structures
  \begin{equation*}
  \begin{split}
   \fZ(n+1)^+ &:= \fZ(n+1) \cap (\RR^n \times \RR_{\geq 0} \times Z), \\
   \fZ(n+1)^- &:= \fZ(n+1) \cap (\RR^n \times \RR_{\leq 0} \times Z). \\
  \end{split}
  \end{equation*}
  Note that $\fZ(n+1) \cap (\RR^n \times \{ 0 \} \times Z) = \fZ(n)$. The inclusion maps give rise to a commutative square
  \begin{equation*}\label{diag:deloopingsquare}
   \begin{tikzpicture}
    \matrix (m) [matrix of math nodes, column sep=2em, row sep=2em, text depth=.5em, text height=1em]
    {h S_\bullet \cR^G_f(W,\fZ(n)) & h S_\bullet \cR^G_f(W,\fZ(n+1)^+) \\ h S_\bullet \cR^G_f(W,\fZ(n+1)^-) & h S_\bullet \cR^G_f(W,\fZ(n+1)). \\};
    \path[->]
    (m-1-1) edge (m-1-2)
    (m-1-1) edge (m-2-1)
    (m-1-2) edge (m-2-2)
    (m-2-1) edge (m-2-2);
   \end{tikzpicture}
  \end{equation*}
  By an Eilenberg swindle, the top right and bottom left corners of this square are contractible. This provides us with structure maps for a spectrum
  \begin{equation*}
   \KK^{-\infty}(\cR^G_f(W,\fZ),h)_n := K(\cR^G_f(W,\fZ(n)), h)
  \end{equation*}
  which we call the \emph{non-connective algebraic $K$--theory spectrum} of $\cR^G_f(W,\fZ)$. The construction is functorial in $\fZ$.

  Non-connective algebraic $K$--theory is a functor on coarse structures and
  morphisms of coarse structures by \cite[Section~5]{Ullmann-Winges(2015)}.
  All the arguments which will follow do not interact with a possible
  $\RR^n$--coordinate, hence can also be carried out for $n > 0$, similar
  to \cite[Section~9]{Ullmann-Winges(2015)}.
  From the next section onwards, our proofs will only treat the case $n=0$. 
\subsection{The main theorem}
\label{subsec:proof-fj:main-theorem}
  In this part we show the following result. 
  
  \begin{thm}\label{thm:afjc-transfer-reducible}
   Let $G$ be a discrete group and let $\cF$ be a family of subgroups of $G$.
   If $G$ is homotopy transfer reducible over $\cF$, then $G$ satisfies the
   Farrell-Jones \Cref{con:FJC_for_A-theory_with_c} with coefficients in $A$--theory with respect to $\cF$.
 \end{thm}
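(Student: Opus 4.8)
The plan is to verify the criterion of \cite[Corollary~6.11]{Ullmann-Winges(2015)}: the group $G$ satisfies \Cref{con:FJC_for_A-theory_with_c} with respect to $\cF$ if and only if the spectrum $\FF(G,W,\EGF{G}{\calf})$ is weakly contractible for every free $G$--CW--complex $W$. So I would fix such a $W$, abbreviate $E := \EGF{G}{\calf}$, and reduce to showing that the non-connective $K$--theory of $\cR^G_f(W,\JJ(E))$ with respect to the $h$--equivalences is weakly contractible. As observed in \Cref{subsec:proof-fj:negative-A-groups}, the possible $\RR^n$--coordinate never interacts with the argument, so it is enough to treat the level $n=0$, i.e.\ to show $K(\cR^G_f(W,\JJ(E)),h)\simeq\ast$.

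The engine of the proof is the transfer functor. Using the homotopy transfer reducibility of $G$ — the data $(X_n,\Gamma_n,\Sigma_n,f_n)$ together with the metrics $d_n$ on $X_n\times G$ fixed in \Cref{def:lambda-delta} — I would invoke the exact functor
\[
T\colon \cR^G_f(W,\JJ(E))\longrightarrow \cR^G_{fd}(W,\JJ((X_n\times G)_n,E)),
\]
into the finitely dominated objects of the target of \Cref{subsec:proof-fj:actual-target-of-the-transfer} equipped with the $h^{fin}$--equivalences; its construction is the content of \Cref{The Transfer: Final Part of the Proof}. Informally, $T$ sends a controlled retractive space $Y$ to the sequence of controlled $X_n$--parametrized families built over $Y$, and the full coherence data of $\Gamma_n$ (rather than a strict action) is precisely what makes each term into a $G$--equivariant, cellular, sufficiently controlled object. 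The formal property I would extract is a splitting on $K$--theory: the projections $\pr_G\colon X_n\times G\to G$ form a uniformly expanding sequence, so by \Cref{lem:functoriality-of-obstruction-category} they induce a functor, and since each $X_n$ is contractible and finitely dominated one has $\chi(X_n)=1$; combining this with a squeezing/cofinality comparison between the relevant control conditions (\Cref{subsec:proof-fj:squeezing}) yields a $K$--theory retraction $R$ of $T_\ast$, i.e.\ $R\circ T_\ast\simeq\id$ on $K(\cR^G_f(W,\JJ(E)),h)$.

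It then remains to prove that the transfer target $K(\cR^G_{fd}(W,\JJ((X_n\times G)_n,E)),h^{fin})$ is weakly contractible; this is the technical heart, carried out via the squeezing discussion. The mechanism I would follow: use $\Gamma_n$ to flow an object of the target along a controlled homotopy so that its control in the $X_n$--factor shrinks at a rate proportional to $1/n$ — the estimates \eqref{eq:homotopy-trans-reduc:almost-equivariant} and \eqref{eq:homotopy-trans-reduc:diameter-bounded}, the uniform continuity feeding $\delta_n$, and the choice of $\Lambda_n$ are exactly what bound this control — while the map $f_n$ into the at most $N$--dimensional complex $\Sigma_n$, whose isotropy lies in $\cF$ (which is also how the family $\cF$ and the reference to $E=\EGF{G}{\calf}$ enter), keeps the control over $E$ uniformly bounded. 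An object with arbitrarily small control over a finitely dominated, bounded-dimensional factor admits an Eilenberg swindle in the direction opened up by $X_n$ once one passes to the colimit over $n$, which is precisely what the $h^{fin}$--equivalences permit; hence it is trivial in $h^{fin}$--$K$--theory. Since $\id$ on $K(\cR^G_f(W,\JJ(E)),h)$ factors through this contractible spectrum (via $T_\ast$ followed by $R$), we conclude $\FF(G,W,E)\simeq\ast$, proving the theorem. The main obstacles are, first, the construction of $T$ with all its control bookkeeping (deferred to the next section), and second, within this section, the squeezing estimates — verifying that the homotopy coherent action genuinely contracts transferred objects to vanishing control at the controlled rate, and that the flowed objects remain finitely dominated so the swindle applies.
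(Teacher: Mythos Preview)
Your overall architecture is right---reduce to weak contractibility of $\FF(G,W,E)$ and build a transfer into the $h^{fin}$--category over $(X_n\times G)_n$---but the heart of your argument has a genuine gap. You assert that $K\bigl(\cR^G_{fd}(W,\JJ((X_n\times G)_n,E)),h^{fin}\bigr)$ itself is weakly contractible, via an ``Eilenberg swindle in the direction opened up by $X_n$''. This does not work: each $X_n$ is \emph{compact}, so there is no swindle direction, and the Squeezing Theorem of \cite{Ullmann-Winges(2015)} does not apply to this sequence. What the paper actually shows to vanish is the $K$--theory of the category over $(\Sigma_n\times G)_n$, and this uses precisely the two properties of $\Sigma_n$ you have not exploited: uniform bound $N$ on the dimension and $\cF$--isotropy of the $G$--action. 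The maps $f_n$ are not there to ``keep control over $E$ bounded''; they induce an exact functor $F$ from the $(X_n\times G)$--category to the $(\Sigma_n\times G)$--category, and the almost-equivariance estimates are exactly what make $F$ respect the control conditions.

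The correct diagram chase is therefore not ``$\id$ factors through a contractible spectrum'', but rather: $F\circ\trans$ lands in a vanishing $K$--theory, so $p_{\Sigma_n\times G\to G}\circ F\circ\trans=0$; the projections satisfy $p_{\Sigma_n\times G\to G}\circ F=p_{X_n\times G\to G}$; and $p_{X_n\times G\to G}\circ\trans\simeq\Delta\circ\incl$ by the section property you correctly identified. One then needs a separate argument (\Cref{prop:main-diag-claims}\ref{item:main-diag-claims-4}) that $\Delta\circ\incl$ is \emph{injective} on $K$--groups, which is a short exact-sequence chase using the fiber sequence~\eqref{eq:homotopy-fiber-sequence}. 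Finally, a smaller but real point: the transfer is \emph{not} an exact functor on all of $\cR^G_f(W,\JJ(E))$; it is only defined on the subcategories $\cR^G_f(W,\JJ(E))_{\alpha,d}$ of cellwise $0$--controlled morphisms, and one uses \Cref{prop:domain-of-transfer} to assemble these into a map on the homotopy colimit.
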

  \Cref{the:main_result}~\ref{the:main_result:groups} follows from
  \Cref{thm:afjc-transfer-reducible} in conjunction with the inheritance properties established in \Cref{sec:Inheritance_properties_of_the_Isomorphism_Conjectures}, cf.~the introduction. We derive the validity of \Cref{the:main_result}~\ref{the:main_result:groups} for hyperbolic and $\mathrm{CAT}(0)$--groups in \Cref{cor:hypcatfollowproof} below.

  We follow the strategy of \cite[Section~5]{Wegner(2012)}.  We construct a
  commutative diagram of Waldhausen categories and exact functors
  \[
   \begin{tikzpicture}
    \matrix (m) [matrix of math nodes, column sep=2em, row sep=2em, text depth=.5em, text height=1em, ampersand replacement=\&]
    {(\cR_{fd}^G(W, \JJ((X_n \times G)_n, E)), h^{fin}) \& 
          (\cR_{fd}^G(W, \JJ((\Sigma_n \times G)_n, E)), h^{fin}) \\ 
     \&
          (\cR_{fd}^G(W, \JJ((G)_n, E)), h^{fin})\\
     (\cR^G_f(W, \JJ(E)),h) \& 
          (\cR^G_{fd}(W, \JJ(E)),h).\\
        };
    \path[->]
    (m-1-1) edge node[above]{$F$} (m-1-2)
    (m-1-1) edge node[below]{$p_{X_n \times G\rightarrow G}\;\;\;\;\;$} (m-2-2)
    (m-3-2) edge node[right]{$\Delta$} (m-2-2)
    (m-1-2) edge node[right]{$p_{\Sigma_n \times G\rightarrow G}$} (m-2-2)
    (m-3-1) edge node[above]{$\incl$} (m-3-2);
    \path[dashed][->]
    (m-3-1) edge node[left]{$\trans$}  (m-1-1);
   \end{tikzpicture}
  \]
  We define the maps $\trans$, $\Delta$ and $F$ below
  and show the following.
  
  \begin{prop}\label{prop:main-diag-claims}
  \mbox{}
  
   \begin{enumerate}
    \item\label{item:main-diag-claims-1} The arrow $\trans$ exists after
      applying non-connective algebraic $K$--theory. It will be induced by a
      map of spectra whose domain is weakly equivalent to
      $\KK^{-\infty}(\cR^G_f(W, \JJ(E)),h)$. The square formed by $p_{X_n
      \times G\rightarrow G} \circ \trans$ and $\Delta \circ \incl$ commutes up
      to levelwise weak equivalence of spectra.
    \item\label{item:main-diag-claims-2} The functor $F$, defined below,
    is well-defined.
    \item\label{item:main-diag-claims-3} The algebraic $K$--theory of $\cR_{fd}^G(W, \JJ((\Sigma_n \times G)_n, E), h^{fin})$ vanishes.
    \item\label{item:main-diag-claims-4} The map $\Delta\circ \incl$ is
      injective on non-connective algebraic $K$--theory groups.
   \end{enumerate}
  \end{prop}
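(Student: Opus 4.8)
The plan is to establish the four claims of \Cref{prop:main-diag-claims} and then to deduce \Cref{thm:afjc-transfer-reducible} by a diagram chase. The lower triangle of the diagram commutes strictly by construction, since $p_{\Sigma_n\times G\to G}\circ F=p_{X_n\times G\to G}$ (both send $(x,g)$ to $g$, and all three functors are induced by maps of coarse structures, so composition of maps gives composition of functors); hence by claim~\ref{item:main-diag-claims-1} the map induced on non-connective algebraic $K$--theory by $\Delta\circ\incl$ agrees, up to weak equivalence, with the one induced by $p_{\Sigma_n\times G\to G}\circ F\circ\trans$. The latter factors through the non-connective $K$--theory of the category in claim~\ref{item:main-diag-claims-3}, which vanishes, so $\Delta\circ\incl$ induces the zero map on $K$--groups; combined with the injectivity of claim~\ref{item:main-diag-claims-4}, this forces $\KK^{-\infty}(\cR^G_f(W,\JJ(E)),h)$, i.e.\ $\FF(G,W,E)$, to be weakly contractible, which by \cite[Corollary~6.11]{Ullmann-Winges(2015)} is exactly the Farrell--Jones Conjecture with coefficients in $A$--theory for $G$ with respect to $\cF$. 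Among the four claims, claim~\ref{item:main-diag-claims-1} is by far the deepest; I postpone it to \Cref{The Transfer: Final Part of the Proof} and treat the others here.

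For claim~\ref{item:main-diag-claims-2}, the functor $F$ is the one furnished by \Cref{lem:functoriality-of-obstruction-category} for the sequence of $G$--equivariant maps $\phi_n\colon X_n\times G\to\Sigma_n\times G$, $\phi_n(x,g)=(g\cdot f_n(x),g)$ (here $G$ acts by right translation on the $G$--factor of $X_n\times G$ and diagonally on $\Sigma_n\times G$); once $(\phi_n)_n$ is known to be uniformly expanding, \Cref{lem:functoriality-of-obstruction-category} automatically yields compatibility with the finiteness conditions and with $h$-- and $h^{fin}$--equivalences, so that $F$ is well-defined. The uniform-expansion estimate is the $A$--theoretic counterpart of the metric bookkeeping in \cite[Section~3]{Bartels-Lueck(2012annals)} and \cite[Section~2]{Wegner(2012)}: the $d_G$--summand of the metric $d_n$ from \Cref{def:lambda-delta} is preserved verbatim, and for the $X_n$--part one tracks a near-optimal chain in \Cref{def-metric}, using that a displacement realizing cost $\Lambda_n\cdot d_X(x_i,z_i)$ has $d_X(x_i,z_i)$ well below $\delta_n$ because of the calibration $\Lambda_n=(n+1)/\delta_n$ in \Cref{def:lambda-delta}, hence is carried by $f_n$ to an $\ell^1$--displacement less than $1/n$, while a coherent--homotopy step contributes a bounded $\ell^1$--amount thanks to the $(S,n)$--equivariance estimates \eqref{eq:homotopy-trans-reduc:almost-equivariant} and \eqref{eq:homotopy-trans-reduc:diameter-bounded}. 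After the rescaling by $n$ built into the metric on $\Sigma_n\times G$ in \Cref{def:metric-on-Sigma-G}, these contributions combine to a bound independent of $n$, which is exactly uniform expansion.

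For claim~\ref{item:main-diag-claims-3}, the non-connective $K$--theory of $(\cR^G_{fd}(W,\JJ((\Sigma_n\times G)_n,E)),h^{fin})$ vanishes by an Eilenberg swindle. Since each $\Sigma_n$ has isotropy in $\cF$ there is a $G$--map $\Sigma_n\to E=\EGF{G}{\cF}$, and, just as for the $K$--theoretic obstruction category in \cite[Section~3]{Bartels-Lueck-Reich(2008hyper)} and \cite{Wegner(2012)}, this lets one push every controlled retractive space off to infinity along the ray coordinate $[1,\infty[$ in a way compatible with the $G$--continuous control condition over $E$; the $A$--theoretic implementation of the swindle is provided by \cite[Sections~5 and~7]{Ullmann-Winges(2015)}, and passing to $h^{fin}$--equivalences is what allows it to be carried out uniformly in the sequence variable. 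Claim~\ref{item:main-diag-claims-4} splits into two steps. First, $\incl$ is injective on non-connective $K$--groups by a cofinality argument: every finitely dominated controlled retractive space is a retract of a finite one. Second, $\Delta$ is split injective on non-connective $K$--theory; a retraction is obtained as in the proof of \cite[Theorem~10.1]{Ullmann-Winges(2015)}, where the $h^{fin}$--equivalences were introduced precisely so that comparison with cofinally many sequence factors descends to $K$--theory. Composing the two, $\Delta\circ\incl$ is injective on non-connective $K$--groups.

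The main obstacle is claim~\ref{item:main-diag-claims-1}, the construction of $\trans$, which I carry out in \Cref{The Transfer: Final Part of the Proof} following \cite{Wegner(2012)} and \cite[Section~3]{Bartels-Lueck(2012annals)} but on controlled retractive spaces rather than geometric modules. The idea is to ``thicken'' a finite controlled retractive $G$--CW--complex $Y$ over $G\times E\times[1,\infty[$ by the contractible spaces $X_n$: cell by cell one forms a twisted product of $Y$ with $X_n$ whose attaching data are deformed using the homotopy coherent $G$--action $\Gamma_n$, producing a finitely dominated controlled retractive $G$--CW--complex over $(X_n\times G)_n\times E\times[1,\infty[$. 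To turn this cell-by-cell recipe into an exact functor one must first replace $\cR^G_f(W,\JJ(E))$ by the weakly equivalent subcategory of morphisms that are ``cellwise $0$--controlled'', the device due to Bartels and Bubenzer; this is why the domain in claim~\ref{item:main-diag-claims-1} is only required to have non-connective $K$--theory weakly equivalent to $\KK^{-\infty}(\cR^G_f(W,\JJ(E)),h)$ rather than to be that category on the nose. The hard part will be the coarse-geometry estimates showing that the thickened object genuinely satisfies the uniform metric and support conditions defining $\JJ((X_n\times G)_n,E)$; these rely on the $\epsilon$--controlled domination of each $X_n$ by a finite $N$--dimensional complex and on the structural relations of $\Gamma_n$ from \Cref{def:hptycohG-action}. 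The asserted commutativity of the square will then follow because projecting $\trans(Y)$ from $X_n\times G$ down to $G$ and forgetting the contractible, finitely dominated $X_n$--direction yields a controlled retractive space that is $h$--equivalent to $Y$, uniformly in $n$, hence $h^{fin}$--equivalent to $\Delta(\incl(Y))$.
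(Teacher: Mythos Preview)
Your overall architecture---deferring \ref{item:main-diag-claims-1} to \Cref{The Transfer: Final Part of the Proof}, verifying uniform expansion for \ref{item:main-diag-claims-2} via the calibration in \Cref{def:lambda-delta} and the $(S,n)$--equivariance estimates, and then running the diagram chase to deduce \Cref{thm:afjc-transfer-reducible}---matches the paper. The discrepancies lie in the mechanisms you propose for \ref{item:main-diag-claims-3} and \ref{item:main-diag-claims-4}.

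For \ref{item:main-diag-claims-3}, the explanation you give is not the correct one. The vanishing is not obtained by an Eilenberg swindle that uses a $G$--map $\Sigma_n \to E$ to push along the ray coordinate; the $G$--continuous control condition on $E \times [1,\infty[$ is precisely what obstructs such a shift, and the map $\Sigma_n \to E$ plays no role in this step. The paper instead invokes the Squeezing Theorem \cite[Theorem~10.1]{Ullmann-Winges(2015)} (together with \cite[Remark~5.5]{Ullmann-Winges(2015)} to pass between $\cR_f$ and $\cR_{fd}$). The essential inputs to squeezing are the uniform dimension bound $\dim \Sigma_n \leq N$ and the metric scaling by $n$ built into \Cref{def:metric-on-Sigma-G}: uniform bounded control over the sequence $(\Sigma_n \times G)_n$ forces the $\Sigma_n$--component of the control, measured in the unscaled $\ell^1$--metric, to tend to zero as $n \to \infty$, and the bounded dimension then permits the squeezing contraction.

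For \ref{item:main-diag-claims-4}, the assertion that ``every finitely dominated controlled retractive space is a retract of a finite one'' is false; a domination $p \circ i \simeq \id$ is not a retraction, and a retract of a finite object would itself be finite. The paper cites \cite[Remark~5.5]{Ullmann-Winges(2015)} for the stronger fact that $\incl$ induces an \emph{isomorphism} on non-connective $K$--theory. For $\Delta$, the paper does not produce a splitting either: it runs a diagram chase using exactness of the homotopy fiber sequence \eqref{eq:homotopy-fiber-sequence} at the middle term, together with compatibility of $\Delta$ with the projections onto the individual factors $K_m(\cR^G_{fd}(W,\JJ(E)),h)$.
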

  
  Given all of this, the proof can be finished as in
  \cite[Section~5]{Wegner(2012)}. It is a diagram chase on the level of
  homotopy groups.

  Before proving \Cref{prop:main-diag-claims}, let us define the maps of the
  diagram.  The maps $p$ are induced by the indicated projections on control
  spaces, ``$\incl$'' is the inclusion of the finite into the finitely dominated
  objects.  The functor $\Delta$ is induced by the diagonal map into $\prod_n
  \cR^G_{fd}(W,\JJ(G_n,X))$, which factors over $(\cR_{fd}^G(W, \JJ((G)_n, E)),
  h^{fin})$ because its image consists of uniformly controlled objects and
  maps.  Also note that every $h$--equivalence is an $h^{fin}$--equivalence.

  The functor $F$ is defined using the maps $f_n$ from
  \Cref{subsec:proof-fj:target-of-the-transfer}.  It is induced by the maps $F_n
  \colon X_n \times G \to \Sigma_n \times G$, $(x,g) \mapsto (g f_n(x), g)$.
  We show in \Cref{subsec:proof-fj:squeezing} that with our choices these are
  uniformly bounded.
  The map ``$\trans$'' is constructed in \Cref{The Transfer: Final Part of
  the Proof}.
 
\subsection{Squeezing}
\label{subsec:proof-fj:squeezing}
  All claims made in \Cref{prop:main-diag-claims} except part
  \ref{item:main-diag-claims-1} admit fairly short proofs which we give in
  this section.  Part \ref{item:main-diag-claims-1} will be shown in
  \Cref{subsec:transfer-map}


  \Cref{prop:main-diag-claims}~\ref{item:main-diag-claims-3} follows from the
  ``Squeezing Theorem'' \cite[Theorem~10.1]{Ullmann-Winges(2015)} and the fact
  that non-connective $K$--theory does not distinguish between finite and
  finitely dominated objects \cite[Remark~5.5]{Ullmann-Winges(2015)}.
  Indeed, 
  in the proof of Theorem 10.1 of \cite{Ullmann-Winges(2015)}, in equation (21), a
  homotopy fiber sequence
  \begin{multline}\label{eq:homotopy-fiber-sequence}
    \KK^{-\infty}\left(\colim_n \prod_{k=1}^n \cR_f^G(W, \JJ(M_k,E)), h \right)
    \longrightarrow \\
    \KK^{-\infty}(\cR_f^G(W, \JJ((M_n)_n,E)), h) \longrightarrow 
    \KK^{-\infty}(\cR_f^G(W, \JJ((M_n)_n,E)), h^{fin})
  \end{multline}
  is established for any sequence of metric spaces $(M_n)_n$.  Then it is
  shown there that under the assumptions from
  \Cref{subsec:proof-fj:target-of-the-transfer} on $(\Sigma_n \times G)_n$,
  the first map is a weak equivalence by proving that the last object is
  weakly contractible.
  
  
  Let us discuss \Cref{prop:main-diag-claims}~\ref{item:main-diag-claims-2}
  next. It suffices to show the following.
  
  \begin{lem}
   The map $(F_n)_n \colon \JJ((X_n \times G)_n, E)\rightarrow \JJ((\Sigma_n \times G)_n, E)$ is a morphism of coarse structures.
  \end{lem}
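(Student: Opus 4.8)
The plan is to check the three requirements on $(F_n)_n$ to be a morphism of coarse structures in the sense of \cite[Definition~3.23]{Ullmann-Winges(2015)}. First, the underlying map $\coprod_n (X_n\times G)\times E\times[1,\infty[\ \to\ \coprod_n(\Sigma_n\times G)\times E\times[1,\infty[$ is, componentwise, $((x,g),e,t)\mapsto((g\cdot f_n(x),g),e,t)$; it is continuous because each $f_n$ is and $G$ acts continuously on $\Sigma_n$, and it is $G$--equivariant by the one--line computation $F_n(x,\gamma g)=((\gamma g)f_n(x),\gamma g)=\gamma\cdot(gf_n(x),g)$, using that $G$ acts on $X_n\times G$ by right translation on the $G$--factor and diagonally on $\Sigma_n\times G$. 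Second, since $F$ is the identity on the $E\times[1,\infty[$--coordinate, it preserves the collections $\fC_{Gcc}(E)$ and the $\fC_{bdd}([1,\infty[)$--part verbatim, and since each $F_n$ is continuous it carries $G$--compact subsets of $(X_n\times G)\times E$ to $G$--compact subsets of $(\Sigma_n\times G)\times E$; as no uniformity in $n$ is demanded of $\fS$, this shows $F$ respects the support collections.

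The substantive point is that $F$ sends $\fC((X_n\times G)_n,E)$ into $\fC((\Sigma_n\times G)_n,E)$. Unravelling how these collections are built from $\fC_{bdd}$, $\fC_{Gcc}(E)$ and the uniform metric control condition, and using that $F$ does not touch the $E\times[1,\infty[$--coordinate, this reduces to showing that $(F_n)_n$ is a \emph{uniformly expanding} family of $G$--maps $(X_n\times G,d_n)\to(\Sigma_n\times G,\ n\cdot d^{\ell^1}+d_G)$ in the sense of \Cref{lem:functoriality-of-obstruction-category}: for every $\alpha>0$ there should be $\beta=\beta(\alpha)$, independent of $n$, with $n\cdot d^{\ell^1}(gf_n(x),hf_n(y))+d_G(g,h)<\beta$ whenever $d_n((x,g),(y,h))<\alpha$. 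The summand $d_G(g,h)<\alpha$ is free since $d_n\geq d_G$. For the rest, unwind \Cref{def-metric} with $S=S^n$, $k=n$, $\Lambda=\Lambda_n$: a realising path has at most $l<\alpha$ steps (an $n$--independent bound), joining $x=x_0\rightsquigarrow z_0\rightsquigarrow x_1\rightsquigarrow\dots\rightsquigarrow z_l=y$ by $l+1$ ``moves'' $x_i\to z_i$ inside $X_n$ and $l$ ``jumps'' carrying elements $a_i,b_i\in S^n$ with $g(a_1^{-1}b_1)\cdots(a_l^{-1}b_l)=h$ and coherence relations $\Gamma_n(r_n,t_n,\dots,r_0,z_{i-1})=\Gamma_n(s_n,u_n,\dots,s_0,x_i)$ where $a_i=r_n\cdots r_0$ and $b_i=s_n\cdots s_0$.

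Two uniform estimates finish it. For a move, $\Lambda_n\,d_{X_n}(x_i,z_i)<\alpha$ gives $d_{X_n}(x_i,z_i)<\alpha\delta_n/(n+1)\leq\delta_n$ as soon as $n+1\geq\alpha$, hence $d^{\ell^1}(f_n(x_i),f_n(z_i))<1/n$ by the choice of $\delta_n$ in \Cref{def:lambda-delta}. For a jump, setting all homotopy parameters to $1$ and iterating the group--axiom of \Cref{def:hptycohG-action} identifies $\Gamma_n(r_n,1,\dots,1,r_0,z_{i-1})$ with $\Gamma_n(a_i,z_{i-1})$, so the diameter bound~\eqref{eq:homotopy-trans-reduc:diameter-bounded} and the almost--equivariance~\eqref{eq:homotopy-trans-reduc:almost-equivariant} applied to the \emph{single} element $a_i\in S^n$ yield $d^{\ell^1}(f_n(\Gamma_n(r_n,t_n,\dots,r_0,z_{i-1})),a_if_n(z_{i-1}))\leq 3/n$, and symmetrically for the $s_j$'s; since the two sides of the coherence relation agree after applying $f_n$, we get $d^{\ell^1}(f_n(z_{i-1}),a_i^{-1}b_if_n(x_i))\leq 6/n$. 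Telescoping along the path, using that $G$ acts on $\Sigma_n$ by $d^{\ell^1}$--isometries, gives $d^{\ell^1}(f_n(x),g^{-1}h\,f_n(y))\leq(7l+1)/n$, hence $n\cdot d^{\ell^1}(gf_n(x),hf_n(y))\leq 7l+1<7\alpha+1$ for all $n$ with $n+1\geq\alpha$; the finitely many remaining $n$ are absorbed into $\beta$ by running the same telescoping with the crude replacement $d^{\ell^1}(f_n(x_i),f_n(z_i))\leq\diam_{\ell^1}f_n(X_n)<\infty$, which is finite because $X_n$ is compact and contractible, so $f_n(X_n)$ lies in a finite connected subcomplex of $\Sigma_n$. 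The one genuinely delicate point is the \emph{uniformity in $n$} of the jump estimate: the coherence relation runs through $n+1$ slots, and it is precisely the $1/n$ and $2/n$ in~\eqref{eq:homotopy-trans-reduc:almost-equivariant} and~\eqref{eq:homotopy-trans-reduc:diameter-bounded}, weighed against the scaling factor $n$ in the metric of \Cref{def:metric-on-Sigma-G}, that keeps the accumulated error bounded; this is the $A$--theoretic analogue of the estimates in \cite[Lemma~3.5]{Bartels-Lueck(2012annals)} and \cite[Section~2]{Wegner(2012)}.
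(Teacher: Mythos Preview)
Your proof is correct and follows essentially the same route as the paper's: reduce via \Cref{lem:functoriality-of-obstruction-category} to showing that $(F_n)_n$ is uniformly expanding, then unwind the chain in \Cref{def-metric} and bound each ``move'' by the choice of $\delta_n$ and each ``jump'' by the estimates~\eqref{eq:homotopy-trans-reduc:almost-equivariant} and~\eqref{eq:homotopy-trans-reduc:diameter-bounded}, telescoping to a bound linear in $l<\alpha$. The paper presents the telescoping as an induction on $l$ and arrives at $(8l+1)/n$ rather than your $(7l+1)/n$, but the organisation and ingredients are the same; your explicit treatment of the finitely many small $n$ (via $\diam_{\ell^1} f_n(X_n)<\infty$, using that $X_n$ is compact and contractible) fills in the step the paper merely asserts with ``it suffices to show this for $n\geq\alpha$''.
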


  \begin{proof}
    By \Cref{lem:functoriality-of-obstruction-category}, it suffices to check
    that $(F_n)_n$ is a uniformly expanding sequence.  Since the the proof is
    fairly lengthy (though still straightforward), we give the details.

    Let us recall the definitions.  The metric $d_n$ on $X_n \times G$ was
    defined in \Cref{def:lambda-delta}, the metric $n\cdot d^{\ell^1} + d_G$ on
    $\Sigma_n \times G$ was chosen in \Cref{def:metric-on-Sigma-G}.  Let
    $\alpha > 0$.

    Let $n \in \NN$ and $(x,g)$, $(y,h) \in X_n \times G$. Suppose that
    $d_n((x,g),(y,h)) < \alpha$. To prove that $F_n$ is uniformly expanding we
    have to show that $n \cdot d^{\ell^1}(gf_n(x),hf_n(y)) + d_G(g,h) \leq
    \beta$ for some $\beta > 0$ which is independent of $n$.  
    
    In fact, it suffices to show this for $n \geq \alpha$.  Then we have by
    \Cref{def:lambda-delta}
    \[
      \frac{\alpha}{\Lambda_n} = \frac{\alpha \delta_n}{n+1} \leq
      \frac{n\delta_n}{n+1} < \delta_n.
    \]

   By definition of $d_n$, we have
   $d_G(g,h) < \alpha$  and $d_{S^n,n,\Lambda_n}((x,g),(y,h)) < \alpha$.
   Hence, there exist 
   \begin{itemize}
    \item $l \in \NN$
    \item $x_0,\dots,x_l$, $z_0,\dots,z_l \in X_n$
    \item $a_1,\dots,a_l$, $b_1,\dots,b_l \in S^n$
   \end{itemize}
   such that
   \begin{enumerate}
    \item $x_0 = x$, $z_l = y$
    \item $g a_1^{-1} b_1 \dots a_l^{-1} b_l = h$
    \item for each $1 \leq i \leq l$ there are elements
      $r_0,\dots,r_n,s_0,\dots,s_n \in S^n$ such that $a_i = r_n\dots r_0$,
      $b_i = s_n \dots s_0$ and, for some $t_1, \dots,t_n,u_1,\dots,u_n \in
      [0,1]$, $\Gamma_n(r_n,t_n,\dots,t_0,z_{i-1}) =
      \Gamma_n(s_n,u_n,\dots,s_0,x_i)$ holds
    \item $l + \sum_{i=0}^l \Lambda_n \cdot d_{X_n}(x_i,z_i) < \alpha$
   \end{enumerate}
   This implies $l < \alpha$, $d_{X_n}(x_i, z_i) < \frac{\alpha}{\Lambda_n} <
   \delta_n$.  By \Cref{def:lambda-delta} of $\delta_n$, this implies
   $d^{\ell^1}(f_n(x_i),f_n(z_i)) < \frac{1}{n}$.

   We proceed by induction on $l$. For $l = 0$, we have $g=h$ and
   $d^{\ell^1}(f_n(x),f_n(y)) < \frac{1}{n}$.
   For the induction step, with $a_1 = r_n\dots r_0$ and $b_1 = s_n \dots s_0$
   we have
   \begin{equation*}
   \begin{split}
    d^{\ell^1}(g f_n(x), h f_n(y))
    &= d^{\ell^1}(g f_n(x), g a_1^{-1} b_1 \dots a_l^{-1} b_l f_n(y) ) \\
    &= d^{\ell^1}(f_n(x), a_1^{-1} b_1 \dots a_l^{-1} b_l f_n(y) ) \\
    &\leq d^{\ell^1}(f_n(x_0),f_n(z_0)) \\
    &\quad + d^{\ell^1}(f_n(z_0), a_1^{-1} f_n(\Gamma_n(r_n,1,\dots,1,r_0,z_0)) ) \\
    &\quad + d^{\ell^1}( f_n(\Gamma_n(r_n,1,\dots,1,r_0,z_0)) , f_n(\Gamma_n(r_n,t_n,\dots,t_1,r_0,z_0)) ) \\
    &\quad + d^{\ell^1}( f_n(\Gamma_n(s_n,u_n,\dots,u_1,s_0,x_1)), f_n(\Gamma_n(s_n,1,\dots,1,s_0,x_1)) ) \\
    &\quad + d^{\ell^1}( f_n(\Gamma_n(s_n,1,\dots,1,s_0,x_1)), b_1 f_n(x_1) ) \\
    &\quad + d^{\ell^1}( f_n(x_1), f_n(z_1) ) \\
    &\quad + d^{\ell^1}( f_n(z_1), a_2^{-1} b_2 \dots a_l^{-1} b_l f_n(y) )
   \end{split}
   \end{equation*}
   We give an estimate for each summand. We already know
   \begin{equation*}
    d^{\ell^1}(f_n(x_0),f_n(z_0)) < \frac{1}{n}, \qquad
    d^{\ell^1}( f_n(x_1), f_n(z_1) )< \frac{1}{n}.
   \end{equation*}
   For the second summand, we have by
   \Cref{eq:homotopy-trans-reduc:almost-equivariant}
   \begin{equation*}
   \begin{split}
    d^{\ell^1}(f_n(z_0), a_1^{-1} f_n(\Gamma_n(r_n,1,\dots,1,r_0,z_0)) ) 
    &= d^{\ell^1}(a_1 \cdot f_n(z_0), f_n(\Gamma_n(a_1,z_0))) 
    \leq \frac{1}{n},
   \end{split}
   \end{equation*}
   similarly for $d^{\ell^1}( f(\Gamma_n(s_n,1,\dots,1,s_0,x_1)), b_1 f(x_1) )$.
   Furthermore, we have
   \begin{equation*}
   \begin{split}
    d^{\ell^1}( f_n(\Gamma_n(r_n,1,\dots,1,r_0,z_0)) , f_n(\Gamma_n(r_n,t_n,\dots,t_1,r_0,z_0)) ) &\leq \frac{2}{n} \\
    d^{\ell^1}( f_n(\Gamma_n(s_n,u_n,\dots,u_1,s_0,x_1)), f_n(\Gamma_n(s_n,1,\dots,1,s_0,x_1)) ) &\leq \frac{2}{n} 
   \end{split}  
   \end{equation*}
   by \Cref{eq:homotopy-trans-reduc:diameter-bounded}. Finally, we choose the
   induction hypothesis to be
   \[
   d^{\ell^1}( f_n(z_1), a_2^{-1} b_2 \dots a_l^{-1} b_l f_n(y) ) <
   \frac{8(l-1) + 1}{n}.
   \]
   Thus we obtain 
   \[
   d^{\ell^1}(g f_n(x), h f_n(y))<\frac{8l + 1}{n}.
   \]
   Since we also have $d_G(g,h) < \alpha$, we conclude that
   \begin{equation*}
    n \cdot d^{\ell^1}(gf_n(x),hf_n(y)) + d_G(g,h) < 9\alpha + 1.\qedhere
   \end{equation*}
  \end{proof}
  
\subsection{Injectivity of the $\Delta$-map}
\label{subsec:proof-fj:injectivity-delta}

  Now we show \Cref{prop:main-diag-claims}~\ref{item:main-diag-claims-4}. Namely, we have to show that $\Delta$ induces an injective map on algebraic $K$--theory. Our argument is a straightforward adaptation of the argument used in \cite[Section~5]{Wegner(2012)}.
  As usual, we abbreviate $\pi_m(\KK^{-\infty}(\dots))$ by $K_m(\dots)$.
  \begin{lem}
   The map 
   \[
    K_m(\Delta)\circ K_m(incl)\colon K_m(\cR^G_f(W, \JJ(E)),h)\rightarrow K_m(\cR_{fd}^G(W, \JJ((G)_n, E)), h^{fin})
   \]
   is injective for each $m \geq 0$.
  \end{lem}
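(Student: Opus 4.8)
Our plan is to reduce to finite controlled retractive spaces and then, using the projection functors onto the individual factors of the sequence category together with the homotopy fibre sequence~\eqref{eq:homotopy-fiber-sequence}, to show that $K_m(\Delta)$ is injective. First, since non-connective algebraic $K$--theory does not distinguish between finite and finitely dominated objects \cite[Remark~5.5]{Ullmann-Winges(2015)}, the functor $\incl$ induces a weak equivalence of non-connective $K$--theory spectra; hence $K_m(\incl)$ is an isomorphism and, replacing throughout the finitely dominated categories occurring in the diagram by the corresponding finite ones, it suffices to prove that
\[
K_m(\Delta)\colon K_m(\cR^G_f(W,\JJ(E)),h)\longrightarrow K_m(\cR^G_f(W,\JJ((G)_n,E)),h^{fin})
\]
is injective. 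We factor $\Delta$ as $\cR^G_f(W,\JJ(E))\xrightarrow{\Delta_h}(\cR^G_f(W,\JJ((G)_n,E)),h)\xrightarrow{q}(\cR^G_f(W,\JJ((G)_n,E)),h^{fin})$, where $\Delta_h$ is the constant-sequence functor $Y\mapsto (Y)_n$ (its value is trivially uniformly controlled and inherits finiteness from $Y$) and $q$ is the localisation at the $h^{fin}$--equivalences, recalling that every $h$--equivalence is an $h^{fin}$--equivalence.

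Next we invoke~\eqref{eq:homotopy-fiber-sequence} for the constant sequence $M_n=G$, so that $\JJ(M_k,E)=\JJ(E)$ for all $k$:
\[
\KK^{-\infty}\Big(\colim_n\prod_{k=1}^n\cR^G_f(W,\JJ(E)),h\Big)\xrightarrow{\ J\ }\KK^{-\infty}(\cR^G_f(W,\JJ((G)_n,E)),h)\xrightarrow{\ Q\ }\KK^{-\infty}(\cR^G_f(W,\JJ((G)_n,E)),h^{fin}).
\]
The associated long exact sequence of homotopy groups gives $\ker K_m(Q)=\img K_m(J)$. As non-connective $K$--theory commutes with filtered colimits and sends finite products to products, $K_m$ of the fibre term is $\bigoplus_{k\ge 1}K_m(\cR^G_f(W,\JJ(E)),h)$, and under this identification the restriction of $J$ to the $k$--th summand is induced by the functor $\sigma_k$ which places a given object in the $k$--th slot of the sequence and the basepoint in every other slot.

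The key point is that for each $k\ge 1$ the projection functor $\pr_k\colon\cR^G_f(W,\JJ((G)_n,E))\to\cR^G_f(W,\JJ(E))$, $(Y_n)_n\mapsto Y_k$ --- one component of the canonical faithful functor into $\prod_n\cR^G(W,\JJ(E))$ --- is exact and respects $h$--equivalences, and satisfies $\pr_k\circ\Delta_h=\id$ together with $\pr_k\circ\sigma_j=\id$ for $j=k$ and $\pr_k\circ\sigma_j=$ the constant functor onto the basepoint for $j\neq k$. Consequently $K_m(\pr_k)\circ K_m(\Delta_h)=\id$, while $K_m(\pr_k)\circ K_m(J)$ is the projection onto the $k$--th summand. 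Now let $x\in K_m(\cR^G_f(W,\JJ(E)),h)$ with $K_m(\Delta)(x)=K_m(Q)(K_m(\Delta_h)(x))=0$. Then $K_m(\Delta_h)(x)\in\ker K_m(Q)=\img K_m(J)$, say $K_m(\Delta_h)(x)=K_m(J)(y)$ with $y=(y_k)_k\in\bigoplus_{k\ge1}K_m(\cR^G_f(W,\JJ(E)),h)$. Applying $K_m(\pr_k)$ yields $x=y_k$ for every $k$; since only finitely many $y_k$ are nonzero, $x=0$. Therefore $K_m(\Delta)$, and hence $K_m(\Delta)\circ K_m(\incl)$, is injective.

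The only substantial input is the homotopy fibre sequence~\eqref{eq:homotopy-fiber-sequence}, which we are granted; given it, the argument is formal apart from one point that requires care, namely checking that $\Delta_h$ and the projections $\pr_k$ are compatible with the finiteness conditions of \cite[Definition~3.3]{Ullmann-Winges(2015)}, so that they define functors between the finite categories. This is straightforward from the local nature of those conditions: a sufficiently small neighbourhood of a point lying in the $k$--th summand of the control space $\coprod_n G\times E\times[1,\infty[$ is disjoint from every other summand.
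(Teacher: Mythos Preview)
Your proof is correct and follows essentially the same approach as the paper's: both reduce to showing $K_m(\Delta)$ is injective via the homotopy fibre sequence~\eqref{eq:homotopy-fiber-sequence}, identify the fibre term as a direct sum using that $K$--theory commutes with filtered colimits and finite products, and then use the projections $\pr_k$ to conclude. You have simply made the diagram chase explicit (and worked with $\cR_f$ rather than $\cR_{fd}$, which is harmless by the cited cofinality result), whereas the paper leaves it as ``a diagram chase finishes the proof.''
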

  \begin{proof}
   The map $K_m(incl)\colon K_m(\cR^G_f(W, \JJ(E)),h)\rightarrow
   K_m(\cR^G_{fd}(W, \JJ(E)),h)$ is an isomorphism by
   \cite[Remark~5.5]{Ullmann-Winges(2015)}. Hence, we only have to show that
   $K_m(\Delta)$ is injective. To increase readability, we shorten
   $\mathcal{R}_{fd}^G(W,...)$ to $\mathcal{R}(...)$ in the following commutative diagram:
   \[
    \begin{tikzpicture}
    \matrix (m) [matrix of math nodes, column sep=2em, row sep=2em, text depth=.5em, text height=1em, ampersand replacement=\&]
    {\& K_m(\cR(\JJ(E)),h)\\K_m(\prod^{fin}\cR( \JJ(E)),h) \& K_m(\cR(\JJ((G)_n, E)), h) \& K_m(\cR(\JJ((G)_n, E)), h^{fin}) \\ \bigoplus_{n\in\NN}K_m(\cR(\JJ(E)),h) \& \prod_{n\in\NN}K_m(\cR(\JJ(E)),h) \\};
    \path[->]
    (m-1-2) edge node[right]{$\Delta_\ast$} (m-2-2)
    (m-1-2) edge node[above]{$\Delta_\ast$} (m-2-3)
    (m-2-1) edge node[right]{$\cong$} (m-3-1)
    (m-2-2) edge node[right]{$\prod_{n\in\NN}p_n$} (m-3-2)
    (m-2-2) edge node[below]{$\id_\ast$} (m-2-3)
    (m-3-1) edge node[above]{$\incl$} (m-3-2)
    (m-2-1) edge node[above]{$\incl$} (m-2-2);
    \end{tikzpicture}
   \]
   The middle row is exact due to the homotopy fiber sequence
   \eqref{eq:homotopy-fiber-sequence}, where we abbreviated $ \colim_n
   \prod_{k=1}^n $ as $\prod^{fin}$. The left vertical map is an isomorphism,
   because algebraic $K$--theory commutes with directed colimits and is
   compatible with finite products.  The map is defined using the projections
   onto the factors of the product.  Note that after projection on any $n$, the
   middle column is the identity.  A diagram chase finishes the proof.
  \end{proof}
  
\subsection{Homotopy transfer reducible follows from strongly transfer reducible}
\label{subsec:proof-fj:homotopy-transfer-follows-strongly-transfer}
  We can now prove \Cref{the:main_result}~\ref{the:main_result:groups} for hyperbolic and $\mathrm{CAT}(0)$--groups.
  
  In \cite[Definition~3.1]{Wegner(2012)}, Wegner defined when a group $G$ is
  \emph{strongly transfer reducible over a family $\cF$}.  As we will not need
  the precise definition here, we refer to \emph{loc.~cit.} for the
  definition.  We will use the definitions from
  \Cref{subsec:proof-fj:homotopy-coherent}.
  \begin{thm}\label{thm:strongly-trans-reducible}
   Let $G$ be strongly transfer reducible over $\cF$. 
   Then $G$ is homotopy transfer reducible over $\cF$ and 
   the Farrell-Jones \Cref{con:FJC_for_A-theory_with_c} for $A$--theory
   with coefficients holds for $G$ relative to $\cF$.
 \end{thm}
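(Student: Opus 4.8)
The plan is to prove the two assertions in turn, with essentially all of the work going into the first one: once $G$ is known to be homotopy transfer reducible over $\cF$, the validity of the Farrell--Jones Conjecture~\ref{con:FJC_for_A-theory_with_c} with coefficients in $A$--theory for $G$ relative to $\cF$ is immediate from \Cref{thm:afjc-transfer-reducible}. So I would begin by recalling Wegner's \cite[Definition~3.1]{Wegner(2012)} of \emph{strongly transfer reducible over $\cF$} and compare it term by term with \Cref{def:transfer-reducible}. Wegner's definition fixes a finite symmetric generating set $S$ of $G$ containing the trivial element and a natural number $N$, and provides for every $n \in \NN$: a compact, contractible metric space $X$ admitting $\epsilon$--controlled dominations by at most $N$--dimensional finite simplicial complexes; a \emph{strong homotopy action} $\Gamma$ of $G$ on $X$; an at most $N$--dimensional $G$--simplicial complex $\Sigma$ with isotropy in $\cF$; and a map $f \colon X \to \Sigma$ subject to control conditions relating $f$, $\Gamma$ and the $G$--action on $\Sigma$. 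As already observed in the text preceding \Cref{def:hptycohG-action}, Wegner's \emph{strong homotopy action} is precisely a homotopy coherent $G$--action in the sense of \Cref{def:hptycohG-action} (both are Vogt's homotopy coherent diagrams of shape $G$). Hence the first three pieces of data are literally conditions~(i), (ii) and (iii) of \Cref{def:transfer-reducible}.

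It therefore remains to verify condition~(iv): that the map $f$ supplied by strong transfer reducibility is $(S,n)$--equivariant, i.e., satisfies the $\ell^1$--estimates \eqref{eq:homotopy-trans-reduc:almost-equivariant} and \eqref{eq:homotopy-trans-reduc:diameter-bounded}. Wegner's control conditions on $f$ — themselves the coherent refinement of the conditions in \cite[Theorem~B]{Bartels(2012)}, from which \Cref{def:transfer-reducible} is adapted — yield, for each $n$, an approximate equivariance $d^{\ell^1}(f(\Gamma(s,x)), s \cdot f(x)) \le C_1/n$ for $s$ in a word--ball about the identity, together with a uniform bound $C_2/n$ on the diameter of the image under $f$ of the tracks $\Gamma(s_n, t_n, \dots, s_0, x)$, with constants $C_1, C_2$ independent of $n$. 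Since \Cref{def:transfer-reducible} only demands the \emph{existence} of such data for every $n$, while $S$ and $N$ are chosen once and for all, one absorbs the constants $C_1, C_2$ and any discrepancy between the word--balls $S$ and $S^n$ appearing in the two formulations by reindexing: replace the $n$--th datum by the $m(n)$--th one for a suitable nondecreasing $m \colon \NN \to \NN$ with $m(n) \ge n$, chosen so that $C_1/m(n) \le 1/n$ and $C_2/m(n) \le 2/n$ and so that $S^n \subseteq S^{m(n)}$ is contained in the range over which the original estimates hold. This produces data satisfying \Cref{def:transfer-reducible} verbatim, so $G$ is homotopy transfer reducible over $\cF$; applying \Cref{thm:afjc-transfer-reducible} then gives the Farrell--Jones Conjecture~\ref{con:FJC_for_A-theory_with_c} with coefficients for $G$ relative to $\cF$.

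The only genuine subtlety I anticipate is the careful translation of Wegner's control conditions on $f$ — phrased in his setting in terms of maps into $\Sigma$ with its $\ell^1$--metric, equivalently in terms of suitable open covers of $X \times G$ — into the explicit numerical inequalities \eqref{eq:homotopy-trans-reduc:almost-equivariant}--\eqref{eq:homotopy-trans-reduc:diameter-bounded}, in particular the bookkeeping of how the coherence data $\Gamma(s_n, t_n, \dots, s_0, x)$ enters the diameter bound. This is routine once both definitions are unwound, since both are modelled on Bartels' criterion \cite[Theorem~B]{Bartels(2012)}; no new geometric input is required, only a comparison of conventions.
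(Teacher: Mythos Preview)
Your overall strategy is correct and matches the paper's: verify that strongly transfer reducible implies homotopy transfer reducible, then invoke \Cref{thm:afjc-transfer-reducible}. However, your execution of the first step differs from the paper's and rests on a mischaracterization of Wegner's \cite[Definition~3.1]{Wegner(2012)}. That definition does \emph{not} furnish a map $f \colon X \to \Sigma$; it is phrased in terms of open covers of $G \times X$ with suitable $S$--longness and dimension bounds. The passage from covers to a map into a simplicial complex, and the accompanying metric estimate, is the content of \cite[Proposition~3.6]{Wegner(2012)}, which the paper cites explicitly. That proposition produces a $G$--equivariant map $\phi \colon G \times X \to \Sigma$ satisfying $n \cdot d^{\ell^1}(\phi(g,x),\phi(h,y)) \le d_{S^n,n,\Lambda}((g,x),(h,y))$ for the metric of \Cref{def-metric}. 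Setting $f := \phi\vert_{\{e\} \times X}$, the paper then verifies both inequalities of \Cref{def:transfer-reducible} by a direct two--line computation in $d_{S^n,n,\Lambda}$: for \eqref{eq:homotopy-trans-reduc:almost-equivariant} one bounds $d_{S^n,n,\Lambda}((e,\Gamma(s,x)),(s,x))$ by $1$, and for \eqref{eq:homotopy-trans-reduc:diameter-bounded} one bounds $d_{S^n,n,\Lambda}((e,\Gamma(s_n,t_n,\dots,s_0,x)),(e,\Gamma(s_n,u_n,\dots,s_0,x)))$ by $2$, both directly from the definition of the metric. No constants $C_1,C_2$ arise and no reindexing is needed. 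Your reindexing argument would work in principle, but it obscures the point: the metric $d_{S^n,n,\Lambda}$ is precisely designed so that the required estimates hold on the nose, and Wegner's Proposition~3.6 is what bridges his cover--based definition to that metric. You should cite that proposition rather than attempt to unwind Definition~3.1 directly.
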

  \begin{proof}
    Assume that $G$ is strongly transfer reducible.  We show it is homotopy
    transfer reducible and apply \Cref{thm:afjc-transfer-reducible}.

   According to \cite[Proposition~3.6]{Wegner(2012)}, there exists $N \in \NN$ such that there are for every $n \in \NN$
    \begin{enumerate}
     \item a compact, contractible metric space $(X,d_X)$ such that for every $\epsilon > 0$ there is an $\epsilon$--controlled domination of $X$ by an at most $N$--dimensional, finite simplicial complex;
     \item a homotopy coherent $G$--action $\Gamma$ on $X$;
     \item a $G$--simplicial complex $\Sigma$ of dimension at most $N$ whose isotropy is contained in $\cF$;
     \item a positive real number $\Lambda$;
     \item a $G$--equivariant map $\phi \colon G \times X \to \Sigma$ such that
     \[
     n \cdot d^{\ell^1}(\phi(g,x), \phi(h,y)) \leq d_{S^n,n,\Lambda}((g,x),(h,y))
     \]
     holds for all $(g,x)$, $(h,y) \in G \times X$, where $G$ acts on the
     $G$--factor. 
    \end{enumerate}
   Fix $n \in \NN$ and choose $X$, $\Gamma$, $\Sigma$, $\Lambda$ and $\phi$ as
   above. Define $f := \phi|_{\{e\} \times X} \colon X \to \Sigma$. Then we
   have for all $x \in X$ and $s \in S^n$ \begin{equation*}
     \begin{split}
      n \cdot d^{\ell^1}(f(\Gamma(s,x)), s \cdot f(x))
      &= n \cdot d^{\ell^1}(\phi(e,\Gamma(s,x)),\phi(s,x)) \\
      &\leq d_{S^n,n,\Lambda}((e,\Gamma(s,x)),(s,x)) \\
      &\leq 1.
     \end{split}
    \end{equation*}
   Similarly, we find for all $x \in X$, $s_0,\dots,s_n \in S^n$ and $t_1,\dots,t_n,u_1,\dots,u_n \in I^n$
    \begin{equation*}
     \begin{split}
      n \cdot d^{\ell^1}&(f(\Gamma(s_n,t_n,\dots,s_0,x)), f(\Gamma(s_n,u_n,\dots,s_0,x)) ) \\
      &= n \cdot d^{\ell^1}(\phi(e,\Gamma(s_n,t_n,\dots,s_0,x)),\phi(e,\Gamma(s_n,u_n,\dots,s_0,x)) ) \\
      &\leq d_{S^n,n,\Lambda}((e,\Gamma(s_n,t_n,\dots,s_0,x)),(e,\Gamma(s_n,u_n,\dots,s_0,x)) ) \\
      &\leq 2.
     \end{split}
    \end{equation*}
   Hence, $G$ is homotopy transfer reducible over $\cF$ and we can apply \Cref{thm:afjc-transfer-reducible}.
 \end{proof}

   \begin{cor}\label{cor:hypcatfollowproof}
     The Farrell-Jones \Cref{con:FJC_for_A-theory_with_c_and_fwp} for
     $A$--theory with coefficients and finite wreath products is true for
     hyperbolic and $\mathrm{CAT}(0)$--groups.
   \end{cor}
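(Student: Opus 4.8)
The plan is to deduce the corollary directly from the machinery of this section, feeding it only the additional, purely geometric input that the relevant transfer reducibility condition is available for finite wreath products. Unwinding \Cref{con:FJC_for_A-theory_with_c_and_fwp} and \Cref{con:The_Meta_Isomorphisms_Conjecture_for_functors_from_spaces_to_spectra_with_c_and_fwp}, what has to be proved is: for a hyperbolic or $\mathrm{CAT}(0)$--group $G$ and \emph{every} finite group $F$, the wreath product $G \wr F = \bigl( \prod_F G \bigr) \rtimes F$ satisfies the Farrell--Jones \Cref{con:FJC_for_A-theory_with_c} for $A$--theory with coefficients relative to the family $\calvcyc(G \wr F)$. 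By \Cref{thm:strongly-trans-reducible} it therefore suffices to show that $G \wr F$ is strongly transfer reducible over $\calvcyc$ for every finite group $F$.

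For a $\mathrm{CAT}(0)$--group $G$ this is immediate: if $G$ acts geometrically on a finite--dimensional complete $\mathrm{CAT}(0)$--space $X$, then $\prod_F G$ acts geometrically and coordinatewise on the finite--dimensional complete $\mathrm{CAT}(0)$--space $\prod_F X$, the finite group $F$ acts isometrically on $\prod_F X$ by permuting coordinates, and these two actions assemble to a proper cocompact isometric action of $G \wr F$. Hence $G \wr F$ is again a $\mathrm{CAT}(0)$--group, and $\mathrm{CAT}(0)$--groups are strongly transfer reducible over $\calvcyc$ by Wegner~\cite{Wegner(2012)} (following the methods of Bartels--L\"uck~\cite{Bartels-Lueck(2012annals)}). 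For a hyperbolic $G$ one proceeds exactly as in the $K$--theoretic case: using $\abs{F}$ copies of a compactified Rips complex of $G$, equipped with the evident action of $G \wr F$, together with the associated product flow space to build the target simplicial complex with virtually cyclic isotropy, one obtains that $G \wr F$ is strongly transfer reducible over $\calvcyc$. This is precisely the geometric content underlying the $K$--theoretic Farrell--Jones Conjecture with finite wreath products for hyperbolic groups in~\cite{Bartels-Lueck-Reich(2008hyper)}, and it transfers verbatim here since it makes no reference to $K$-- versus $A$--theory. In either case \Cref{thm:strongly-trans-reducible} now yields \Cref{con:FJC_for_A-theory_with_c} for $G \wr F$ relative to $\calvcyc(G \wr F)$ for all finite $F$, which is exactly the claim of \Cref{cor:hypcatfollowproof}.

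I do not expect a genuine obstacle here: the corollary is bookkeeping on top of \Cref{thm:afjc-transfer-reducible} and \Cref{thm:strongly-trans-reducible}, where the real work of this paper — notably the construction of the transfer in \Cref{The Transfer: Final Part of the Proof} — has already been carried out. The one point demanding care is that the strong transfer reducibility of $G \wr F$ must be invoked with respect to the family $\calvcyc$ of the wreath product itself, and not merely that of $G$, so that the family produced by \Cref{thm:strongly-trans-reducible} matches the one required in the definition of \Cref{con:FJC_for_A-theory_with_c_and_fwp}; in the hyperbolic case this is the reason one cannot simply bootstrap from the inheritance properties of \Cref{sec:Inheritance_properties_of_the_Isomorphism_Conjectures} and genuinely needs the wreath--product version of the geometric construction.
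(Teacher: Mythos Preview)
Your hyperbolic argument has a genuine gap. The product construction you describe---$|F|$ copies of the compactified Rips complex together with the product flow space---does \emph{not} yield a target simplicial complex with virtually cyclic isotropy for $G \wr F$. The isotropy groups one obtains lie in the family $\calvcyc(G) \wr F$, and a typical member of this family (say $\IZ^{|F|} \rtimes F$) is virtually abelian but not virtually cyclic. This is precisely the content of \cite[Section~5]{Bartels-Lueck-Reich-Rueping(2014)}, which is the relevant reference rather than \cite{Bartels-Lueck-Reich(2008hyper)} (the latter does not treat wreath products). The paper's proof accordingly does not claim strong transfer reducibility of $G \wr F$ over $\calvcyc$; instead it shows that $G \wr F$ is homotopy transfer reducible over $\calvcyc(G) \wr F$, then feeds in two further ingredients you are missing: the $A$--theoretic Farrell--Jones Conjecture with coefficients for virtually finitely generated abelian groups \cite[Proposition~11.9]{Ullmann-Winges(2015)}, and the Transitivity Principle \cite[Proposition~11.2]{Ullmann-Winges(2015)}, to pass from $\calvcyc(G) \wr F$ down to $\calvcyc(G \wr F)$.

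Your $\mathrm{CAT}(0)$ argument, on the other hand, is correct and in fact slightly slicker than the paper's uniform treatment: observing that $G \wr F$ is again a $\mathrm{CAT}(0)$--group via the permutation action on $\prod_F X$ lets you invoke Wegner's strong transfer reducibility directly for the family $\calvcyc(G \wr F)$, bypassing both the $\calvcyc \wr F$ detour and the Transitivity Principle. The paper chooses instead to handle both classes uniformly through the larger family, which costs the two extra inputs above but avoids a case distinction. What your approach cannot avoid is that no analogous shortcut is available for hyperbolic $G$, since $G \wr F$ is then generally neither hyperbolic nor known to be $\mathrm{CAT}(0)$.
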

   \begin{proof}
   By \cite[Example~3.2 and Theorem~3.4]{Wegner(2012)}, finitely generated
   hyperbolic groups as well as $\mathrm{CAT}(0)$--groups are strongly
   transfer reducible with respect to the family of virtually cyclic
   subgroups. Thus these groups are homotopy
   transfer reducible over the same family by
   \Cref{thm:strongly-trans-reducible}.  If a group is homotopy transfer
   reducible with respect to $\cF$, then the wreath product $G \wr F$ with a
   finite group $F$ is homotopy transfer reducible over $\cF \wr F$.  This
   follows as in \cite[Section 5]{Bartels-Lueck-Reich-Rueping(2014)};
   basically, one takes the $F$--fold product of $X$ and $\Sigma$ and uses e.g.
   \cite[Lemma 11.14]{Ullmann-Winges(2015)} for the estimate.  As the
   $A$--theoretic Farrell--Jones Conjecture with coefficients holds for
   virtually finitely generated abelian groups
   \cite[Proposition~11.9]{Ullmann-Winges(2015)}, the $A$--theoretic
   Farrell--Jones Conjecture with coefficients and finite wreath products
   holds for hyperbolic and $\mathrm{CAT}(0)$--groups by the
   Transitivity Principle, \cite[Proposition~11.2]{Ullmann-Winges(2015)}.
%
  \end{proof}


\section{The transfer: Final part of the proof}\label{The Transfer: Final Part of the Proof} 
  
  We turn now to the construction of the transfer map whose existence was claimed in the first part of \Cref{prop:main-diag-claims}~\ref{item:main-diag-claims-1}. This map will be induced by a ``transfer'' construction on controlled retractive spaces. Here we employ an analog of the classical construction of the transfer, see \cite{Bartels-Lueck(2012annals)}, \cite{Bartels-Lueck-Reich(2008hyper)} and \cite{Wegner(2012)}, on each cell 
  and glue these together according to the CW--structure.  Unfortunately, this construction is not fully functorial on $\cR^G_f(W,\JJ(E))$. To avoid this problem, we restrict our attention to subcategories of ``cellwise $0$--controlled morphisms'' (defined below) as a domain for the transfer. The idea to use these categories is due to Arthur Bartels and Paul Bubenzer.
  
  The transfer is defined using the ideas of the linear counterpart.
  However, they only work well for one cell at a time.  To extend, we need to
  refine the idea of crossing a controlled CW-complex with the singular
  complex of a metric space, by allowing different, but compatible singular
  complexes for each cell.  While this provides us with a transfer on objects,
  only ``cellwise $0$-controlled morphisms'' behave well with respect to
  this construction.  We could also transfer any map, but then the target
  gets a more lax control condition, and this makes the construction
  non-functorial.

  We show that transferring only ``cellwise $0$-controlled morphisms'' is
  enough to construct the transfer, but transferring the other morphisms is
  needed to show that it preserves weak equivalences.
  

\subsection{The domain of the transfer}

  We now define the appropriate subcategories of cellwise $0$--controlled morphisms, which will serve as the source of the transfer.

Let $M$ be a metric space with a free, isometric $G$--action, and consider the category $\cR^G(W,\JJ(M,E))$. For $(Y,\kappa) \in \cR^G(W,\JJ(M,E))$, let $\kappa_M$ denote the composition of the control map $\kappa$ with the projection map $M \times E \times [1,\infty[ \to M$.

\begin{dfn}\label{def:0-controlled-morphism}
 Let $f \colon (Y_1,\kappa_1) \to (Y_2,\kappa_2)$ be a morphism in
 $\cR^G_f(W,\JJ(M,E))$.  We say that $f$ is \emph{regular}, if the image
 of each open cell in $Y_1$ is either equal to an open cell in $Y_2$ or completely
 contained in $W$.  That is, either $f(\mathrm{int}\ e)= \mathrm{int}\ e'$ or
 $f(\mathrm{int}\ e) \subseteq W$.
 
 We say that $f$ is \emph{cellwise $0$--controlled over $M$} if $f$ is regular and satisfies the property that $\kappa_{1,M}(e) = \kappa_{2,M}(f(e))$ for all cells $e \in \cells Y_1$.
\end{dfn}

The composition of two morphisms which are cellwise $0$--controlled over $M$ is again cellwise $0$--controlled over $M$, so we can consider the subcategory
\begin{equation*}
 \cR^G_f(W,\JJ(M,E))_0 \subset \cR^G_f(W,\JJ(M,E)) 
\end{equation*}
which has the same objects as $\cR^G_f(W,\JJ(M,E))$, but contains only those
morphisms which are cellwise $0$--controlled over $M$. The category
$\cR^G_f(W,\JJ(M,E))_0$ inherits cofibrations and weak equivalences from $\cR^G_f(W,\JJ(M,E))$. It is a Waldhausen subcategory of $\cR^G_f(W,\JJ(M,E))$. 

For $\alpha > 0$, we may further restrict to the full subcategory
\begin{equation*}
 \cR^G_f(W,\JJ(M,E))_\alpha \subset \cR^G_f(W,\JJ(M,E))_0 
\end{equation*}
consisting only of those objects which are \emph{$\alpha$--controlled over
$M$}, i.e., those $(Y,\kappa)$ such that $\kappa_M(\cells \gen{e}) \subset
B_\alpha(\kappa_M(e))$ for every cell $e \in \cells Y$.  (Recall that
$\gen{e}$ denotes the smallest subcomplex of $Y$ containing $e$.)  The
category $\cR^G_f(W,\JJ(M,E))_\alpha$ also inherits the structure of a
Waldhausen category, as the pushout of $\alpha$--controlled
complexes along cellwise $0$--controlled morphisms is again
$\alpha$--controlled.

Finally, we can filter $\cR^G_f(W,\JJ(M,E))_\alpha$ by
\begin{equation*}
 \cR^G_f(W,\JJ(M,E))_{\alpha,0} \subset \cR^G_f(W,\JJ(M,E))_{\alpha,1} \subset \dots \subset \cR^G_f(W,\JJ(M,E))_\alpha, 
\end{equation*}
where $\cR^G_f(W,\JJ(M,E))_{\alpha,d}$ denotes the full subcategory of $\cR^G_f(W,\JJ(M,E))_\alpha$ containing those objects whose dimension is at most $d$. Note that
\begin{equation*}
 \cR^G_f(W,\JJ(M,E))_\alpha = \colim_d \cR^G_f(W,\JJ(M,E))_{\alpha,d},
\end{equation*}
as each object in $\cR^G_f(W,\JJ(M,E))$ is finite-dimensional.

\begin{prop}\label{prop:domain-of-transfer}
 There is a natural weak equivalence
 \begin{equation*}
  \hocolim_{\alpha,d} K(\cR^G_f(W,\JJ(M,E))_{\alpha,d}) \xrightarrow{\sim} K(\cR^G_f(W,\JJ(M,E))).
 \end{equation*}
\end{prop}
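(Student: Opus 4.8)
The plan is to show that the two obstructions—restricting morphisms to be cellwise $0$--controlled over $M$, and imposing the $\alpha$--control condition on objects—do not affect algebraic $K$--theory, and that the dimension filtration is exhausting. I would factor the claimed equivalence as a zig-zag through the intermediate categories, treating each inclusion separately: first $\cR^G_f(W,\JJ(M,E))_{\alpha,d} \hookrightarrow \cR^G_f(W,\JJ(M,E))_\alpha$ (the dimension filtration), then $\cR^G_f(W,\JJ(M,E))_\alpha \hookrightarrow \cR^G_f(W,\JJ(M,E))_0$ (removing the $\alpha$--bound, i.e.\ taking a colimit over $\alpha \to \infty$), then $\cR^G_f(W,\JJ(M,E))_0 \hookrightarrow \cR^G_f(W,\JJ(M,E))$ (allowing all controlled morphisms, not just cellwise $0$--controlled ones).

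First I would handle the dimension filtration: since every finite controlled retractive space is finite-dimensional, we have $\cR^G_f(W,\JJ(M,E))_\alpha = \colim_d \cR^G_f(W,\JJ(M,E))_{\alpha,d}$ as noted in the text, and moreover $\cR^G_f(W,\JJ(M,E)) = \colim_d \cR^G_f(W,\JJ(M,E))_{\alpha,d}$ after also letting $\alpha\to\infty$; because algebraic $K$--theory (in the non-connective model of \Cref{def:delooping-coarse-structure}, which is built levelwise from Waldhausen's construction) commutes with filtered colimits of Waldhausen categories and exact functors, $\hocolim_{\alpha,d} K(\cR^G_f(W,\JJ(M,E))_{\alpha,d}) \simeq K\bigl(\colim_{\alpha,d}\cR^G_f(W,\JJ(M,E))_{\alpha,d}\bigr)$. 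So the content reduces to identifying the colimit category, which amounts to showing that the inclusion $\cR^G_f(W,\JJ(M,E))_0 \hookrightarrow \cR^G_f(W,\JJ(M,E))$ induces a $K$--theory equivalence, and separately that enlarging $\alpha$ already captures all objects with cellwise $0$--controlled morphisms.

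The comparison of $\cR^G_f(W,\JJ(M,E))_0$ with $\cR^G_f(W,\JJ(M,E))$ is the heart of the matter, and I expect it to be the main obstacle. The two categories have the \emph{same objects}; they differ only in morphisms. An Approximation-Theorem argument in the sense of Waldhausen~\cite[Section~1.6]{Waldhausen(1985)} is the natural tool: one wants to show the inclusion satisfies the approximation property, i.e.\ that any controlled morphism $f\colon X \to Y$ in the big category factors, up to weak equivalence in the big category, through a cellwise $0$--controlled morphism. The strategy would be a mapping-cylinder/subdivision construction: given a controlled map $f$, one subdivides the cells of $X$ finely enough (using that $f$ is controlled, so the displacement in the $M$--coordinate is bounded by some $C\in\fC_{bdd}(M)$, hence bounded by some $\alpha$) and relabels so that each cell of the subdivided source maps to a cell of $Y$ with \emph{no} change in the $M$--coordinate, absorbing the bounded $M$--displacement into the control data of an intermediate $\alpha$--controlled object; the factorization $X \to \mathrm{cyl}(f) \to Y$ with a suitable controlled cellular model of the cylinder then realizes $f$ up to controlled homotopy equivalence through cellwise $0$--controlled maps. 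One must check that this intermediate object is still finite, that the subdivision does not destroy the $G$--action or finiteness, and that the maps involved are genuine morphisms (compatible with retractions); these are the routine but technical verifications. Combining the Approximation Theorem output with the colimit identification and the observation that every object of $\cR^G_f(W,\JJ(M,E))_0$ is $\alpha$--controlled for some $\alpha$ (so lies in some $\cR^G_f(W,\JJ(M,E))_{\alpha,d}$) then assembles into the asserted natural weak equivalence.
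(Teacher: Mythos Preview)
Your overall architecture is exactly the paper's: identify $\colim_{\alpha,d}\cR^G_f(W,\JJ(M,E))_{\alpha,d}$ with $\cR^G_f(W,\JJ(M,E))_0$, use that $K$--theory commutes with directed colimits, and then apply Waldhausen's Approximation Theorem to the inclusion $\cR^G_f(W,\JJ(M,E))_0 \hookrightarrow \cR^G_f(W,\JJ(M,E))$.

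The one place you overcomplicate matters is the verification of the approximation property. No subdivision or relabeling is needed. The cylinder functor on $\cR^G_f(W,\JJ(M,E))$ from \cite[Lemma~3.14]{Ullmann-Winges(2015)} already restricts to a cylinder functor on $\cR^G_f(W,\JJ(M,E))_0$: the cells of $Mf$ coming from $Y_1 \leftthreetimes [0,1]$ inherit the $M$--labels of the corresponding cells of $Y_1$, so the front inclusion $Y_1 \rightarrowtail Mf$ is regular and cellwise $0$--controlled over $M$ by construction. Hence the standard cylinder factorization $Y_1 \rightarrowtail Mf \xrightarrow{\sim} Y_2$ of an arbitrary controlled morphism $f$ already exhibits $f$ as a cellwise $0$--controlled cofibration followed by a weak equivalence, which is precisely the second approximation hypothesis. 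Your subdivision-and-relabel sketch is not obviously workable (making a map regular by subdividing the source alone is delicate, and relabeling the source by the target's $M$--coordinates risks destroying the identification with the original object), and in any case it is unnecessary.
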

\begin{proof}
 We have $\cR^G_f(W,\JJ(M,E))_0 = \colim_{\alpha,d} \cR^G_f(W,\JJ(M,E))_{\alpha,d}$. Since $K$--theory commutes with directed colimits, we obtain a natural weak equivalence
 \begin{equation*}
  \hocolim_{\alpha,d} K(\cR^G_f(W,\JJ(M,E))_{\alpha,d}) \xrightarrow{\sim} K(\cR^G_f(W,\JJ(M,E))_0).
 \end{equation*}
 Now consider the inclusion functor $\cR^G_f(W,\JJ(M,E))_0 \hookrightarrow
 \cR^G_f(W,\JJ(M,E))$. We show that Waldhausen's Approximation Theorem
 \cite[Theorem~1.6.7]{Waldhausen(1985)} applies. 
 
 The cylinder functor on $\cR^G_f(W,\JJ(M,E))$ constructed in 
 \cite[Lemma~3.14]{Ullmann-Winges(2015)} restricts to a cylinder
 functor on $\cR^G_f(W,\JJ(M,E))_0$, in particular the inclusion of the
 source is always cellwise $0$-controlled. By definition, the inclusion functor
 satisfies the first part of the approximation property. To verify the second
 part of the approximation property, let $f \colon Y_1 \to Y_2$ be an
 arbitrary morphism in $\cR^G_f(W,\JJ(M,E))$. Then the factorization of $f$
 via the cylinder functor $Y_1 \rightarrowtail Mf \xrightarrow{\sim} Y_2$
 decomposes $f$ into
 a cellwise $0$--controlled morphism and a weak equivalence. So the
 Approximation Theorem implies that the inclusion functor induces an
 equivalence on algebraic $K$--theory.
\end{proof}

\begin{rem}
 The upshot of \Cref{prop:domain-of-transfer} is that we do not have to define a ``global'' transfer functor on $\cR^G_f(W,\JJ(E))$. Instead, it suffices to define a transfer functor $\trans^{\alpha, d}\colon \cR^G_f(W,\JJ(E))_{\alpha,d}\rightarrow (\cR^G_{fd}(W, \JJ((X_n\times G)_n, E)), h^{fin})$  on each subcategory, such that the induced diagrams on $K$--theory
 \begin{equation*}\label{eq:hocomm}
  \begin{tikzpicture}
  \matrix (m) [matrix of math nodes, column sep=4.5em, row sep=2em, text depth=.5em, text height=1em]
  {K(\cR^G_f(W,\JJ(E))_{\alpha,d}, h)& K(\cR^G_{fd}(W, \JJ((X_n\times G)_n, E)), h^{fin}) \\ K(\cR^G_f(W,\JJ(E))_{\alpha+1,d+1}, h)\\};
  \path[->]
  (m-1-1) edge node[above]{$K(\trans^{\alpha,d})$} (m-1-2)
  (m-1-1) edge (m-2-1)
  (m-2-1) edge node[below right]{$K(\trans^{\alpha+1,d+1})$} (m-1-2);
 \end{tikzpicture}
 \end{equation*}
 are homotopy commutative.
\end{rem}


\subsection{Balanced products of CW--complexes}
\label{sec:balanced-products}

We introduce a slight generalization of the balanced products discussed in
\cite{Davis-Lueck(1998)} as a means to define the transfer in
\Cref{subsec:transfer-on-objects}. 

Let $W$ be a topological space and $\cC$ a small category. 
A \emph{$\cC$--CW--complex relative $W$} is a functor $Y$ from $\cC$ to
topological spaces such that $Y(c)$ is a CW--complex relative $W$ and the
morphisms in $\cC$ are mapped to cellular maps relative $W$.  A (relative)
free $\cC$--$n$--cell based at $c$, $c \in \cC$, is a pair $(\eta, \partial
\eta)$ of $\cC$--CW--complexes relative $W$, where $\eta = \cC(c,-) \times
D^{n} \amalg W$, $\partial \eta =\cC(c,-) \times S^{n-1} \amalg W$.  Attaching
a free $\cC$--cell $\eta$ to $Y$ means taking the pushout along a map
$\partial\eta \to Y$.  Note that $W$ itself defines a (constant) covariant
$\cC$--CW--complex relative $W$.

We say that $Y$ is a \emph{free} $\cC$--CW--complex relative $W$ if it comes
equipped with a filtration $W = \skel{-1}{Y} \subset \skel{0}{Y} \subset
\skel{1}{Y} \subset \dots$ such that $Y = \colim_n \skel{n}{Y}$ and for every
$n \geq 0$ 
there exists a pushout in the category of $\cC$--CW--complexes relative $W$
\begin{equation*}
 \begin{tikzpicture}
  \matrix (m) [matrix of math nodes, column sep=3em, row sep=2em, text depth=.5em, text height=1em]
  {\left(\coprod_{i \in I_n} \cC(c_i,-) \times S^{n-1}\right)  \amalg W &
  \skel{n-1}{Y}(-) \\
  \left( \coprod_{i \in I_n} \cC(c_i,-) \times D^n \right)  
  \amalg W& \skel{n}{Y}(-). \\};
  \path[->]
  (m-1-1) edge (m-1-2)
  (m-1-1) edge (m-2-1)
  (m-1-2) edge (m-2-2)
  (m-2-1) edge (m-2-2);
 \end{tikzpicture}
\end{equation*}
Hence, a free $\cC$--CW--complex arises by attaching free $\cC$--cells. The set of free
$\cC$--$n$--cells is in bijection with $I_n$.  Note that the attaching map of a
$\cC$--$n$--cell based at $c$ is the same as a map $S^{n-1} \to
\skel{n-1}{Y}(c)$, hence we can consider $\eta$ as a map $D^n \to Y(c)$.

Let $Y$ be a covariant $\cC$--CW--complex relative $W$ and $X \colon \cC^{op}
\to \CWCOMPLEXES$ be a contravariant $\cC$--CW--complex. Define the
\emph{reduced balanced product} $X  \rightthreetimes_\cC Y$ as
the pushout
\begin{equation*}
 \begin{tikzpicture}
  \matrix (m) [matrix of math nodes, column sep=2em, row sep=2em, text depth=.5em, text height=1em]
  {X \times_\cC W & X \times_\cC Y \\ * \times_\cC W \cong W & X
    \rightthreetimes_\cC Y. \\};
  \path[->]
  (m-1-1) edge (m-1-2)
  (m-1-1) edge (m-2-1)
  (m-1-2) edge (m-2-2)
  (m-2-1) edge (m-2-2);
 \end{tikzpicture} 
\end{equation*}
\begin{prop}
  Let $X$ be a contravariant $\cC$--space, $Y$ a covariant $\cC$--CW--complex
  relative $W$ and $Z$ a space relative $W$.  There is a natural homeomorphism
  \begin{equation*}
    \hom^W(X \rightthreetimes_{\cC} Y, Z) \cong
    \hom_{\cC}^W(Y, \hom(X,Z)).
  \end{equation*}
  Here, $\hom(X,Z)$ is a covariant $\cC$--space  relative $W$ via the inclusion that sends a
  point $w \in W(c)$ to the constant map $X(c) \to \{w\} \subseteq Z$, 
  $\hom^W_{\cC}$ denotes the natural transformations which are
  relative $W$, 
  and $\hom^W$ denotes just the set
  of maps relative $W$.
\end{prop}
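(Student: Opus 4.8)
The plan is to bootstrap from the standard tensor--hom adjunction for diagrams of spaces and then keep careful track of the relative-$W$ conditions. First I would record the unreduced version: for a contravariant $\cC$--space $X$, a covariant $\cC$--space $Y$ (no CW hypothesis is needed at this point), and a space $Z$, the balanced product $X\times_\cC Y=\int^{c\in\cC}X(c)\times Y(c)$ is a coend, and cartesian closedness of the convenient category of compactly generated spaces yields a natural homeomorphism $\hom(X\times_\cC Y,Z)\cong\hom_\cC(Y,\hom(X,Z))$, where $\hom(X,Z)$ is the covariant $\cC$--space $c\mapsto\hom(X(c),Z)$ and $\hom_\cC$ denotes the space of natural transformations, topologised as a subspace of $\prod_c\hom(Y(c),\hom(X(c),Z))$; cf.~\cite[Section~1]{Davis-Lueck(1998)}. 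Concretely, $g\colon X\times_\cC Y\to Z$ corresponds to the natural transformation $\widehat g$ given by $\widehat g(c)(y)(x)=g([x,y])$.

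Second, I would identify both sides of the reduced statement as equalizer subspaces. Write $j\colon W\hookrightarrow Y$ for the structural inclusion (the $(-1)$--skeleton), regarded as a morphism of $\cC$--CW--complexes relative $W$ out of the constant complex $W$, and let $q\colon X\times_\cC W\to\ast\times_\cC W\cong W$ be the projection. By the pushout defining $X\rightthreetimes_\cC Y$, a continuous map $X\rightthreetimes_\cC Y\to Z$ is the same datum as a map $g\colon X\times_\cC Y\to Z$ together with a map $W\to Z$ agreeing after restriction along $X\times_\cC j$; requiring the total map to be relative $W$ forces the second component to be the structural inclusion $\iota_Z\colon W\hookrightarrow Z$. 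Hence $\hom^W(X\rightthreetimes_\cC Y,Z)$ is the subspace $\{\,g\in\hom(X\times_\cC Y,Z)\mid g\circ(X\times_\cC j)=\iota_Z\circ q\,\}$. Dually, by the very definition of the relative-$W$ structure on $\hom(X,Z)$, a natural transformation $\tau\colon Y\Rightarrow\hom(X,Z)$ is relative $W$ exactly when $\tau\circ j$ equals the structural map $s$ sending $w\in W(c)=W$ to the constant map $X(c)\to\{w\}\subseteq Z$; so $\hom_{\cC}^W(Y,\hom(X,Z))$ is the subspace of $\hom_\cC(Y,\hom(X,Z))$ cut out by $\tau\circ j=s$.

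Third, I would check that the homeomorphism of the first step restricts to these subspaces. Since the adjunction is natural in $Y$, one has $\widehat g\circ j=\widehat{g\circ(X\times_\cC j)}$, so it suffices to compute the adjoint of the particular map $\iota_Z\circ q\colon X\times_\cC W\to Z$: unwinding definitions, $(\widehat{\iota_Z\circ q})(c)(w)(x)=(\iota_Z\circ q)([x,w])=w$, which is the constant map at $w$, i.e.\ exactly $s$. Therefore $g\circ(X\times_\cC j)=\iota_Z\circ q$ if and only if $\widehat g\circ j=s$, that is, $g$ lies in the first equalizer if and only if $\widehat g$ lies in the second; and since this is a restriction of a homeomorphism between the ambient mapping spaces, it is again a homeomorphism. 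Naturality in $X$, $Y$ and $Z$ is inherited from the unreduced case, the relative-$W$ conditions being preserved by all structure maps.

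The main obstacle I expect is not conceptual but point-set in nature: justifying precisely that "relative $W$" pins down the $W\to Z$ component uniquely (so that $\hom^W(X\rightthreetimes_\cC Y,Z)$ really is the displayed equalizer rather than merely surjecting onto it), and that forming equalizers interacts correctly with the internal hom and with the coend in the convenient category of compactly generated spaces, so that the subspace topologies on both sides genuinely match. This is standard but should be spelled out, since the whole construction of the transfer in \Cref{subsec:transfer-on-objects} will rely on this adjunction being a homeomorphism and not just a bijection.
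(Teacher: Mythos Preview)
Your proposal is correct and follows essentially the same route as the paper: use the pushout description of $X\rightthreetimes_\cC Y$ together with the unreduced tensor--hom adjunction $\hom(X\times_\cC Y,Z)\cong\hom_\cC(Y,\hom(X,Z))$, and then check that the relative-$W$ conditions on both sides correspond under this bijection. The paper's proof is a two-line sketch of exactly this; your version spells out carefully why the subspaces match and worries about the topology, which is a welcome elaboration rather than a different argument.
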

\begin{proof}
  By definition, a map $X \rightthreetimes_{\cC} Y \to Z$ is the same as
  three compatible maps from $W \leftarrow X \times_{\cC} W \to X \times_{\cC}
  Y$ to $Z$.  Using that $\hom(X\times_{\cC}Y, Z)$ is isomorphic to
  $\hom_{\cC}(Y, \hom(X,Z))$, the result is easy to deduce.
\end{proof}

It follows, that $X \rightthreetimes_{\cC} Y$ commutes with
colimits in the ``$Y$''-variable.  We can therefore determine the cell
structure of $X \rightthreetimes_{\cC} Y$.  The attachment of a free
$\cC$--$n$--cell $\eta$ to $Y$ gives a pushout $X \rightthreetimes_{\cC}
\eta(-) \cup_{X \rightthreetimes_{\cC} \partial \eta} Y$.  Now
\begin{equation*}
  X \rightthreetimes_{\cC} \eta \cong 
  ((X \times_{\cC} \cC(c,-) )\times D^n) \amalg W \cong
  (X(c) \times D^n) \amalg W
\end{equation*}
and similarly for $\partial \eta$.  First, this gives a filtration on $X
\rightthreetimes_{\cC} Y$, namely 
\begin{equation}
  \ldots \subseteq X \rightthreetimes_{\cC} \skel{n-1}{Y} \subseteq X
  \rightthreetimes_{\cC} \skel{n}{Y} \subseteq \ldots
  \label{eq:balance_product_skeletal_filtration}
\end{equation}
Second, as $X(c)$ is a CW--complex, we can now read off the cell structure of $X
\rightthreetimes_{\cC} Y$:

\begin{prop}[{cf.~\cite[Lemma~3.19(2)]{Davis-Lueck(1998)}}]\label{prop:balanced-product-cells}
 Let $Y$ be a free covariant $\cC$--CW--complex relative $W$ and $X$ a contravariant $\cC$--CW--complex.
 
 Then $X \rightthreetimes_\cC Y$ is a CW--complex relative $W$, and there is a canonical identification
 \begin{equation*}
  \cells (X \rightthreetimes_\cC Y) \cong \{ (\xi,\eta) \mid \eta \text{ is a free $\cC$--cell based at $c$}, \xi \in \cells X(c) \}.
 \end{equation*}
 Let $(\xi,\eta) \in \cells (X \rightthreetimes_\cC Y)$. If $\Phi \colon D^p \to X(c)$ and $\Psi \colon \cC(c,-) \times D^q \to Y(-)$ are characteristic maps for $\xi$ and $\eta$, respectively, then
 \begin{equation*}
  D^p \times D^q \to X \rightthreetimes_\cC Y, \quad (a,b) \mapsto [\Phi(a),\Psi(\id_c,b)]
 \end{equation*}
 is a characteristic map for $(\xi,\eta)$.
 
 Let $(\xi,\eta)$, $(\xi',\eta') \in \cells (X \rightthreetimes_\cC Y)$ be
 two cells, with $\eta$ based at $c$ and $\eta'$ based at $c'$. Then
 $(\xi,\eta) \subset \gen{(\xi',\eta')}$ if and only if there exists a
 morphism $\gamma \colon c \to c'$ such that $\gamma_* \eta \subset
 \gen{\eta'} \subset Y(c')$ and $\xi \subset \gen{\gamma^*\xi'}
 \subset X(c)$.
\end{prop}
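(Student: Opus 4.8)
The plan is to verify the three assertions of \Cref{prop:balanced-product-cells} in turn, leveraging the skeletal filtration~\eqref{eq:balance_product_skeletal_filtration} and the fact that $X \rightthreetimes_\cC (-)$ preserves colimits in the covariant variable, which was established immediately before the statement. First I would prove that $X \rightthreetimes_\cC Y$ is a relative CW--complex by induction over the skeleta of $Y$. The base case $\skel{-1}{Y} = W$ gives $X \rightthreetimes_\cC W \cong W$ by definition of the reduced balanced product. For the inductive step, attaching a free $\cC$--$n$--cell $\eta$ based at $c$ contributes, via the displayed computation $X \rightthreetimes_\cC \eta \cong (X(c) \times D^n) \amalg W$ and the analogous one for $\partial\eta$, a pushout
\[
 X \rightthreetimes_\cC Y' \cup_{X(c) \times S^{n-1}} (X(c) \times D^n),
\]
where $Y'$ is obtained from $Y$ by removing that cell. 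Since $X(c)$ is itself a CW--complex with characteristic maps $\Phi \colon D^p \to X(c)$, the product $X(c) \times D^n$ carries the product CW--structure with cells $\xi \times D^n$ for $\xi \in \cells X(c)$, and the attaching of these $(p+n)$--cells along $(X(c) \times S^{n-1}) \cup (\skel{p-1}{X(c)} \times D^n)$ is standard. Carrying out this attachment one free $\cC$--cell at a time yields both the relative CW--structure and the bijection
\[
 \cells (X \rightthreetimes_\cC Y) \cong \{ (\xi,\eta) \mid \eta \text{ a free $\cC$--cell based at some $c$}, \xi \in \cells X(c) \}.
\]

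Next I would identify the characteristic maps. Given $\xi$ with characteristic map $\Phi \colon D^p \to X(c)$ and $\eta$ with characteristic map $\Psi \colon \cC(c,-) \times D^q \to Y(-)$, the composite $(a,b) \mapsto [\Phi(a), \Psi(\id_c,b)]$ is, by the above identification $X \rightthreetimes_\cC \eta \cong (X(c) \times D^q) \amalg W$ together with the product CW--structure on $X(c) \times D^q$, exactly the characteristic map of the cell $(\xi,\eta)$; this is essentially unwinding the definitions and is the same computation as in \cite[Lemma~3.19]{Davis-Lueck(1998)}, now with an extra CW--factor $X(c)$ in place of a point. Finally, for the closure relation, I would argue that $(\xi,\eta) \subset \gen{(\xi',\eta')}$ precisely when the image of the characteristic map of $(\xi,\eta)$ lands in the subcomplex generated by $(\xi',\eta')$. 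Using the explicit characteristic maps and the fact that the balanced product is built from $X \times_\cC Y$, a point $[\Phi(a), \Psi(\id_c, b)]$ lies in $\gen{(\xi',\eta')}$ iff it can be written as $[\Phi'(a'), \Psi'(\gamma, b')]$ for some $\gamma \colon c \to c'$; chasing this through the defining coequalizer of $\times_\cC$ shows this forces $\gamma_*\eta \subset \gen{\eta'} \subset Y(c')$ and $\xi \subset \gen{\gamma^*\xi'} \subset X(c)$, and conversely such a $\gamma$ clearly produces the inclusion of cells.

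I expect the main obstacle to be the closure relation: bookkeeping the interaction between the ``$\cC$--direction'' (morphisms $\gamma \colon c \to c'$, where one must be careful that cells based at different objects can coincide in $Y$ only up to the action of morphisms) and the ``$X$--direction'' (the ordinary face relation in the CW--complex $X(c)$, pulled back along $\gamma^*$). Getting the quantifier over $\gamma$ placed correctly — a single $\gamma$ must simultaneously witness both $\gamma_*\eta \subset \gen{\eta'}$ and $\xi \subset \gen{\gamma^*\xi'}$ — requires tracking the coequalizer description of $\times_\cC$ carefully; the freeness hypothesis on $Y$ is what guarantees that the relevant identifications are controlled by morphisms of $\cC$ alone and no further collapsing occurs. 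The first two assertions are routine inductions over skeleta modeled directly on \cite[Lemma~3.19]{Davis-Lueck(1998)}.
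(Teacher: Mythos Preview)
Your proposal is correct and matches the paper's approach: the paper does not give a formal proof of this proposition but instead records the ingredients you use (the adjunction, colimit preservation in the $Y$-variable, the computation $X \rightthreetimes_\cC \eta \cong (X(c) \times D^n) \amalg W$, and the skeletal filtration~\eqref{eq:balance_product_skeletal_filtration}) in the preceding paragraphs, then states the proposition with a reference to \cite[Lemma~3.19(2)]{Davis-Lueck(1998)} and the remark that one can ``read off'' the cell structure. Your write-up is precisely the intended argument spelled out in detail.
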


In greater generality, \eqref{eq:balance_product_skeletal_filtration} gives a
filtration for an inclusion $Y_1 \hookrightarrow Y_2$ of $\cC$--spaces in which $Y_2$ is obtained from $Y_1$ by the attachment of free $\cC$--cells. This observation allows us to translate the constructions for geometric modules to CW--complexes.

\medskip

Let us conclude this section with a short remark about functoriality of the balanced product construction. In addition to the obvious functoriality properties, we have the following: Let $X$ be a contravariant and $Y$ be a covariant $\cC$--space. Let $F \colon \cD \to \cC$ be a functor. Then there is an induced map
\begin{equation*}
 \iota_F \colon F^*X \times_\cD F^*Y \to X \times_\cC Y, \quad [d,x,y] \mapsto [F(d),x,y].
\end{equation*}
This map is functorial in the sense that $\iota_{F_2}\iota_{F_1} =
\iota_{F_2F_1}$ for any two composable functors $F_1$, $F_2$. In particular,
if $F \colon \cC \xrightarrow{\cong} \cC$ is an automorphism of the indexing
category, then $\iota_F$ is an isomorphism.


\subsection{Conventions}\label{subsec:transfers}

For the following sections, fix the following data:
\begin{enumerate}
 \item natural numbers $\alpha,d \in \NN$ and a natural number $n > \max \{ d+1, \alpha \}$
 \item a natural number $N \in \NN$
 \item a compact and contractible metric space $(X,d_X)$ such that for every $\epsilon > 0$ there is an $\epsilon$--controlled domination of $X$ by an at most $N$--dimensional, finite simplicial complex
 \item a homotopy coherent $G$--action $\Gamma$ on $X$
 \item a positive real number $\Lambda$
\end{enumerate}
As before, we consider $X \times G$ equipped with the metric $d_{S^n,n,\Lambda} + d_G$.  


\subsection{$Y$ as a $\diamond_+ Y$--CW--complex and a $(\diamond_+
  Y)^{op}$--CW--complex from $X$}
\label{subsec:y-spaces}
Let $(Y, \kappa) \in \cR^G_f(W,\JJ(E))_{\alpha,d}$. If $c \in \cells Y$ is a cell of $Y$, we will frequently need to refer to the $G$--component of $\kappa(c)$; we denote this by $\kappa_G(c)$.

Define a relation $\leq$ on the set of cells $\cells Y$ by saying that $c \leq
c'$ if and only if $c \subset \gen{c'}$. Then $\cells Y$ forms a poset under
the relation $\leq$.  We define $\cellsPlus Y$ as the category given by this
poset where we add an additional initial object (which corresponds to $W$).
The complex $Y$ itself gives rise to a covariant $\cellsPlus Y$--CW--complex (relative $W$) $\cC_Y$ by setting
\begin{equation*}
 \cC_Y(c) := \gen{c}
\end{equation*}
and sending a morphism $c \leq c'$ to the obvious inclusion $\gen{c}
\hookrightarrow \gen{c'}$. Observe that $\cC_Y$ is a free $\cellsPlus
Y$--CW--complex; the set of free $\cellsPlus Y$--cells of $\cC_Y$ is in canonical
bijection with the cells of $Y$.  Note that a cellwise
$0$-controlled map $Y \to Y'$ gives rise to a functor $\cellsPlus Y \to \cellsPlus
Y'$.  Last, each cell in $\cellsPlus Y$ has a dimension $|c|$,
where we assign the initial object the dimension $-1$.

The metric space $X$ gives rise to a contravariant $\cellsPlus Y$--CW--complex, but the construction is more involved. We mimic the construction used in \cite{Wegner(2012)}, but do not pass to the cellular chain complex. Instead, we simply stick with the space of controlled simplices. 

In the first step, we pass from the homotopy coherent $G$--action $\Gamma$ on $X$ to an honest $G$--action on a closely related space. This is accomplished by strictifying the homotopy coherent diagram $\Gamma$, see \cite[proof of Proposition~5.4]{Vogt(1973)}. Define $M\Gamma$ to be the space
\begin{equation*}
 M\Gamma := \left( \coprod_{k \geq 0} 
   G^{k+1} \times [0,1]^k \times X \right)\big/ \sim,
\end{equation*}
where $\sim$ is the equivalence relation generated by
\begin{equation*}
  (\gamma_{k+1}, t_k, \gamma_k,\dots,\gamma_1,x)
   \sim 
  \begin{cases}
   (\gamma_{k+1},t_k,\dots,\gamma_2,x) & \gamma_1 = e \\
   (\gamma_{k+1},\dots,t_it_{i-1},\dots,\gamma_1,x) & \gamma_i = e, 2 \leq i \leq k \\
                              (\gamma_{k+1},\dots,\gamma_{i+1}\gamma_i,\dots,\gamma_1,x)
                              & t_i = 1, 1 \leq i \leq k \\
                              (\gamma_{k+1},\dots,\gamma_{i+1},
    \Gamma(\gamma_i,\dots,\gamma_1,x)) & t_i = 0, 1
                              \leq i \leq k.
                             \end{cases}
\end{equation*}
Then $G$ acts on $M\Gamma$ by
\begin{equation*}
 g \cdot [\gamma,t_k,\gamma_k,\dots,\gamma_1,x] := [g \gamma, t_k, \gamma_k, \dots, \gamma_1,x].
\end{equation*}


We have a map $X \to M \Gamma$ via $x \mapsto [e,x]$.  Let 
$R \colon M\Gamma \to X$ be the retraction induced by $\Gamma$; explicitly,
$R([\gamma,t_k,\gamma_k,\dots,\gamma_1,x]) =
\Gamma(\gamma,t_k,\gamma_k,\dots,\gamma_1,x)$. Using the axioms of a homotopy
action from~\Cref{def:hptycohG-action}, one checks this is a well-defined map.
The homotopy
\begin{equation}\label{eq:strong-deformation-rectraction-MGamma}
 H \colon M\Gamma \times [0,1] \to M\Gamma, \quad ([\gamma,t_k,\gamma_k,\dots,\gamma_1,x],u) \mapsto [e,u,\gamma,t_k,\gamma_k,\dots,\gamma_1,x]
\end{equation}
then shows that $X$ is a strong deformation retract of $M\Gamma$.


The space $M\Gamma$ comes with a filtration by subspaces $M\Gamma^{l,r}$,
where we set
\begin{equation*}
  M\Gamma^{l,r} := \{ [e, t_k, \gamma_k,\dots,\gamma_1,x]
    \in M\Gamma \mid k \leq l, \gamma_i \in B_{r}(e) \}.
\end{equation*}
For $\delta > 0$, define $S^\delta_\bullet(M\Gamma^{l,r})$ to be the
subsimplicial set of the singular simplicial set
$S_\bullet(M\Gamma^{l,r})$ containing those singular simplices $\sigma
\colon \Delta^{\abs{\sigma}} \to M\Gamma^{l,r}$ which fulfill
\begin{equation*}
 \diam_{X \times G}( (R \circ \sigma)(\Delta^{\abs{\sigma}}) \times  \{
   e
 \} ) \leq \delta,
\end{equation*}
where diameters in $X \times G$ are taken with respect to the metric
$d_{S^n,n,\Lambda}$. Note that we could replace $e$ by any other
group element without changing the diameter, as the metric is $G$-invariant.

Finally, we can define the contravariant $\cellsPlus Y$--CW--complex
$\sing{\alpha,d}{X,Y}$: Let $\delta_k^{d}:=4(d+1-k)$
and $l_k^{d}:=d+1-k$. Typically, we will omit
$d$ from the notation. On objects, we set
\begin{equation*}
 \sing{\alpha,d}{X,Y}(c) := \norm{ S^{\delta_{\abs{c}}}_\bullet(M\Gamma^{l_{\abs{c}},\alpha}) },
\end{equation*}
where $\abs{c}$ denotes the dimension of the cell $c$ and $\norm{-}$ is fat
geometric realization, i.e., the realization after forgetting the
degeneracies.  Note that we have the canonical inclusion $\iota_{c'}$ of
$\sing{\alpha,d}{X,Y}(c')$ into $\norm{ S^{}_\bullet(M\Gamma^l) }$.  The
latter has an honest $G$--action. For a morphism $c' \to c = c \leq
c'$ in $(\cellsPlus Y)^{op}$ 
define $\sing{\alpha,d}{X,Y}(c' \to c)$
  as the factorization of 
\begin{equation*}
 \sing{\alpha,d}{X,Y}(c' \to c) := \kappa_G(c)^{-1}\kappa_G(c') \cdot
 \iota_{c'}(-)
\end{equation*}
over $\iota_{c}$.
We have to check that it is well-defined, i.e., that it actually
factors.  We require the following observation.

\begin{lem}\label{lem:distance-to-translate} 
  Let $[e,t_{b},\dots,\gamma_1,x] \in M\Gamma^{l,\alpha}$. Suppose that $l < n$ and $\alpha\leq n$. Let $h \in B_\alpha(e)$. Then
 \begin{equation*}
   d_{S^n,n,\Lambda}(R([h,t_{b},\dots,\gamma_1,x],g),
   (R([e,t_{b},\dots,\gamma_1,x]),gh) ) \leq 2
 \end{equation*}
 for all $g \in G$.
\end{lem}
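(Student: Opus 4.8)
The plan is to bound the distance by exhibiting an explicit chain of length $l+1$ in the definition of $d_{S^n,n,\Lambda}$ (Definition~\ref{def-metric}) connecting $(R([h,t_b,\dots,\gamma_1,x]),g)$ to $(R([e,t_b,\dots,\gamma_1,x]),gh)$, and checking that this chain has total cost at most $2$. The key point is that the points $R([h,t_b,\dots,\gamma_1,x])$ and $R([e,t_b,\dots,\gamma_1,x])$ differ only by the prepended group element $h$ inside the strictification $M\Gamma$; after applying the retraction $R$ induced by $\Gamma$, this translates precisely into the homotopy-coherent action of $h$ on the point $y := \Gamma(\gamma_b,\dots,\gamma_1,x) \in X$ (up to the coherence identities of Definition~\ref{def:hptycohG-action}). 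So $R([h,t_b,\dots,\gamma_1,x]) = \Gamma(h,t_b,\gamma_b,\dots,\gamma_1,x)$, which by the coherence conditions equals $\Gamma(h, z)$ for a point $z$ reached from $x$ by a path in the homotopy $\Gamma$.

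First I would unwind $R$ on both sides using the explicit formula $R([\gamma,t_k,\dots,\gamma_1,x]) = \Gamma(\gamma,t_k,\dots,\gamma_1,x)$, so that the left entry on one side is $\Gamma(h,t_b,\gamma_b,\dots,\gamma_1,x)$ and on the other side is $\Gamma(\gamma_b,\dots,\gamma_1,x)$. Then I would set up the single-step chain in Definition~\ref{def-metric} with $l=1$: take $x_0$ and $z_0$ so that the constraint in item~(iii) of that definition is met with $a_1$ a word representing $h$ and $b_1$ representing the trivial element (using that $h \in B_\alpha(e) \subseteq S^n$ since $\alpha \le n$, so $h$ has a length-$\le n$ word $r_n\cdots r_0$, padding with the trivial element $e \in S$ as needed; the condition $l < n$ guarantees there is enough room, i.e.\ $b \le l \le n-1$, for the coherence-path comparison to be expressible with $k=n$ coherence parameters). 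The coherence identities $\Gamma(\dots,\gamma_j, \Gamma(\gamma_{j-1},\dots))$ for $t_j = 0$ and $\Gamma(\dots, \gamma_{j+1}\gamma_j,\dots)$ for $t_j = 1$ let me rewrite both $\Gamma(r_n,\dots,r_0,z_0)$ and $\Gamma(s_n,\dots,s_0,x_1)$ so that they agree on the nose, producing exactly the equality required in condition~(iii). With $x_0 = z_0$ (or more precisely $d_X(x_0,z_0)=0$), the cost of this chain is $l + \sum \Lambda d_X(x_i,z_i) = 1 + 0 = 1 \le 2$.

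The main obstacle I expect is bookkeeping: matching the coherence parameters $t_1,\dots,t_b$ appearing in the point of $M\Gamma^{l,\alpha}$ against the parameters $t_1,\dots,t_n, u_1,\dots,u_n$ demanded in item~(iii) of Definition~\ref{def-metric}, and verifying that the word-length constraint ``$r_i, s_i \in S$, total length governed by $n$'' is compatible with $h \in B_\alpha(e)$ and $\gamma_i \in B_\alpha(e)$ under the hypotheses $l < n$, $\alpha \le n$. This is where the somewhat awkward bound $2$ (rather than, say, $1$) comes from: one extra unit of slack is built in to absorb the need to realize the comparison on \emph{both} sides of the equality in condition~(iii). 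Once the chain is correctly assembled, the inequality is immediate; I would not belabor the routine verification that the reduced-word padding lies in $S^n$ and that the coherence rewriting is valid, since these follow directly from Definition~\ref{def:hptycohG-action} and the definition of $M\Gamma$.
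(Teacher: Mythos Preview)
Your proposal has a genuine gap: a chain of length $l=1$ in Definition~\ref{def-metric} cannot produce the required equality in condition~(iii) for arbitrary $t_b \in [0,1]$. With $l=1$ and zero $d_X$--cost (which you must have, since the bound is to hold for arbitrary $\Lambda$), you are forced to take $z_0 = \Gamma(h,t_b,\gamma_b,\dots,\gamma_1,x)$ and $x_1 = \Gamma(\gamma_b,t_{b-1},\dots,\gamma_1,x)$. The only way to express $\Gamma(s_n,u_n,\dots,s_0,x_1)$ using the coherence identities is as $\Gamma(\text{word},0,\gamma_b,t_{b-1},\dots,\gamma_1,x)$: the $t=0$ rule forces a $0$ in the slot where the left-hand side has $t_b$. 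So the two sides of~(iii) can only agree when $t_b = 0$, and your argument breaks for general $t_b$. (You also have $a_1$ and $b_1$ reversed: $a_1 = h$, $b_1 = e$ gives $a_1^{-1}b_1 = h^{-1}$, not $h$; but this is minor.)

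The paper fixes this by using a chain of length $l=2$ through the base point $x$ itself: set $x_0 = z_0 = \Gamma(h,t_b,\dots,\gamma_1,x)$, $x_1 = z_1 = x$, $x_2 = z_2 = \Gamma(e,t_b,\dots,\gamma_1,x)$, with $a_1 = e$, $b_1 = h\gamma_b\cdots\gamma_1$, $a_2 = \gamma_b\cdots\gamma_1$, $b_2 = e$. The point is that both endpoints are \emph{already} of the form $\Gamma(\text{word with the correct parameters }t_j, x)$, so taking $x$ as the intermediate lets you realize condition~(iii) on each step by writing $s_0=\gamma_1,\dots,s_{b-1}=\gamma_b,s_b=h$, padding with $e$'s, and setting $u_j = t_j$; the hypothesis $b \le l < n$ ensures there is room for this padding among the $n+1$ letters $s_0,\dots,s_n \in S^n$. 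This gives cost exactly $2 + 0 = 2$, which is the source of the bound --- not ``slack'', as you suggest.
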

\begin{proof}
  Note that $b < n$.
  We use the definition of the metric. Let
  $x_0=z_0=\Gamma(h,t_{b},\gamma_{b},\dots,\gamma_1,x)$,
  $x_1=z_1=x$ and $x_2=z_2=
  \Gamma(e,t_{b},\gamma_{b},\dots,\gamma_1,x)$.
  Furthermore, we set $a_1=e$, $b_1=h\gamma_{b}\dots\gamma_1$,
  $a_2=\gamma_{b}\dots\gamma_1$ and $b_2=e$. Now we can estimate
 \begin{equation*}
  \begin{split}
    d&_{S^n,n,\Lambda}\big(
    (\Gamma(h,t_{b},\gamma_{b},\dots,\gamma_1,x),g),
    (\Gamma(e,t_{b},\gamma_{b},\dots,\gamma_1,x),gh) \big) \\
    &\leq 2 + \Lambda \cdot
    d_X(\Gamma(h,t_{b},\gamma_{b},\dots,\gamma_1,x),\Gamma(h,t_{b},\gamma_{b},\dots,\gamma_1,x)) \\
    &\quad+ \Lambda \cdot d_X(x,x) + \Lambda \cdot
    d_X(\Gamma(e,t_{b},\gamma_{b},\dots,\gamma_1,x),\Gamma(e,t_{b},\gamma_{b},\dots,\gamma_1,x)) \\
   &=2.
  \end{split}
 \end{equation*}
\end{proof}

\begin{cor}
  Assume $ \alpha \leq n$ and $d + 1 < n$.  Then the functor
  $\sing{\alpha,d}{X,Y} \colon (\cellsPlus Y)^{op} \to \mathrm{CW}$ is
  well-defined.
\end{cor}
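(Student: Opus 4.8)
The plan is to verify that the assignment $c \mapsto \sing{\alpha,d}{X,Y}(c)$ together with the maps $\sing{\alpha,d}{X,Y}(c' \to c)$ actually defines a functor $(\cellsPlus Y)^{op} \to \mathrm{CW}$, which amounts to two things: (a) that each declared structure map lands in the intended subcomplex, i.e.\ the factorization over $\iota_c$ exists; and (b) that the resulting maps are compatible with composition (and send identities to identities). Point (b) is essentially formal once (a) is established: the group elements $\kappa_G(c)^{-1}\kappa_G(c')$ multiply correctly along a chain $c \le c' \le c''$ because $\kappa_G(c'')^{-1}\kappa_G(c') \cdot \kappa_G(c')^{-1}\kappa_G(c) = \kappa_G(c'')^{-1}\kappa_G(c)$, and the identity morphism gives the group element $e$. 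So the real content is (a).

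For (a), fix a morphism $c' \to c$ in $(\cellsPlus Y)^{op}$, i.e.\ $c \le c'$ in $\cellsPlus Y$, so $c \subseteq \gen{c'}$. I must show that the map $x \mapsto \kappa_G(c)^{-1}\kappa_G(c') \cdot \iota_{c'}(x)$ carries $\sing{\alpha,d}{X,Y}(c') = \norm{S^{\delta_{\abs{c'}}}_\bullet(M\Gamma^{l_{\abs{c'}},\alpha})}$ into $\sing{\alpha,d}{X,Y}(c) = \norm{S^{\delta_{\abs{c}}}_\bullet(M\Gamma^{l_{\abs{c}},\alpha})}$. Since $c \le c'$ forces $\abs{c} \le \abs{c'}$, we have $l_{\abs{c}} \ge l_{\abs{c'}}$ and $\delta_{\abs{c}} \ge \delta_{\abs{c'}}$, so the filtration level $M\Gamma^{l,\alpha}$ only grows and the diameter bound only relaxes as we pass from $c'$ to $c$; the single delicate point is how the \emph{group translation} by $g := \kappa_G(c)^{-1}\kappa_G(c')$ interacts with the diameter bound defining $S^\delta_\bullet$. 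Here the hypotheses $\alpha \le n$ and $d+1 < n$ enter: because $(Y,\kappa)$ is $\alpha$--controlled over $E$ in the sense of \Cref{def:0-controlled-morphism} (an object of $\cR^G_f(W,\JJ(E))_{\alpha,d}$), the difference $g = \kappa_G(c)^{-1}\kappa_G(c')$ lies in $B_\alpha(e) \subseteq G$, so \Cref{lem:distance-to-translate} applies to each singular simplex $\sigma$ of $M\Gamma^{l_{\abs{c'}},\alpha}$, with $b = l_{\abs{c'}} < n$ (using $d+1 \le l_{\abs{c}} < n$ is too weak; rather $l_{\abs{c'}} \le d+1 < n$). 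It shows that translating $R \circ \sigma$ by $g$ moves each point a distance at most $2$ in $(X\times G, d_{S^n,n,\Lambda})$, hence the diameter of $(R \circ (g\cdot\sigma))(\Delta^{\abs{\sigma}}) \times \{e\}$ is at most $\delta_{\abs{c'}} + 4 = \delta_{\abs{c'}-1}+\ldots$; more precisely $\delta_{\abs{c'}} + 4 \le \delta_{\abs{c'}-1} \le \delta_{\abs{c}}$ whenever $\abs{c} < \abs{c'}$, while for $\abs{c} = \abs{c'}$ the translation is by $e$ and nothing changes. Thus $g\cdot\sigma$ lies in $S^{\delta_{\abs{c}}}_\bullet(M\Gamma^{l_{\abs{c}},\alpha})$, which is exactly the required factorization through $\iota_c$.

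The main obstacle I anticipate is bookkeeping the constants: one has to check that the loss of $4$ per downward step in dimension (from \Cref{lem:distance-to-translate}, which contributes $2+2=4$ after the estimate absorbing the metric terms) is exactly matched by the chosen sequence $\delta_k = 4(d+1-k)$, so that $\delta_{\abs{c'}} + 4\cdot(\abs{c'}-\abs{c}) \le \delta_{\abs{c}}$ in general — but actually a single application suffices because translation by an element of $B_\alpha(e)$ costs at most $2$ (hence a factor-of-$2$ diameter increase gives $+4$, matching one step $4(d+1-k) \to 4(d+1-(k-1))$), and for multi-step morphisms the intermediate factorizations compose. I would spell this out by reducing to consecutive covering relations $c \lessdot c'$ in the poset, applying \Cref{lem:distance-to-translate} once, and then invoking functoriality of $\iota_{(-)}$ and of the group multiplication to handle arbitrary morphisms. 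Once the factorization is in place, well-definedness of $\sing{\alpha,d}{X,Y}$ as a functor, and the fact that its values are $\mathrm{CW}$--complexes (fat realizations of simplicial sets are $\mathrm{CW}$), is immediate, completing the proof of the corollary.
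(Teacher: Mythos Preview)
Your proposal is correct and follows essentially the same approach as the paper: verify that translation by $\gamma_{c,c'}=\kappa_G(c)^{-1}\kappa_G(c')\in B_\alpha(e)$ carries $S^{\delta_{|c'|}}_\bullet(M\Gamma^{l_{|c'|},\alpha})$ into $S^{\delta_{|c|}}_\bullet(M\Gamma^{l_{|c|},\alpha})$, with the diameter estimate $\delta_{|c'|}+2+2\le\delta_{|c|}$ coming from \Cref{lem:distance-to-translate}, and observe that functoriality is then formal from the group law. One point you gloss over: translation by $\gamma_{c,c'}$ does \emph{not} preserve the filtration level but raises it by one (since $\gamma_{c,c'}\cdot[e,t_b,\ldots]=[e,1,\gamma_{c,c'},t_b,\ldots]\in M\Gamma^{b+1,\alpha}$), so you actually need $l_{|c|}\ge l_{|c'|}+1$, not merely $l_{|c|}\ge l_{|c'|}$ --- this holds precisely because $|c|<|c'|$ for non-identity morphisms, and the paper checks it explicitly.
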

\begin{proof}
 Since $M\Gamma$ carries an honest $G$--action, functoriality is clear as soon as we have convinced ourselves that $\sing{\alpha,d}{X,Y}$ is well-defined on morphisms.
 Let $c' \to c$ be a morphism in $(\cells Y)^{op}$, and let $\sigma \in S^{\delta_{\abs{c'}}}_\bullet(M\Gamma^{l_{\abs{c'}},\alpha})$.
   We only need to check non-identity morphisms.  Hence we assume $|c| \geq
   |c'| + 1$.
 Let $[e,t_{b},\gamma_{b},\dots,\gamma_1,x]$ be a point in the image of $\sigma$. By definition, we have $b \leq l_{\abs{c'}}$ and $\gamma_i \in B_\alpha(e)$ for all $i$.
 Set $\gamma_{c,c'} := \kappa_G(c)^{-1}\kappa_G(c')$. Note that $\gamma_{c,c'} \in B_\alpha(e)$ since $Y$ is $\alpha$--controlled over $G$.
 Then we obtain
 \begin{equation*}
  \begin{split}
   \gamma_{c,c'} \cdot &[e,t_{b},\gamma_{b},\dots,\gamma_1,x] \\
   &= [\gamma_{c,c'},t_{b},\gamma_{b},\dots,\gamma_1,x] \\
   &= [e,1,\gamma_{c,c'},t_{b},\gamma_{b},\dots,\gamma_1,x] \in M\Gamma^{l_{\abs{c'}+1},\alpha} \subset M\Gamma^{l_{\abs{c}},\alpha}.
  \end{split}
 \end{equation*}
 Hence, $\gamma_{c,c'} \cdot \sigma$ is a singular simplex in $M\Gamma^{l_{\abs{c}},\alpha}$.
 
 Let $[e,t'_{b'},\gamma'_{b'},\dots,\gamma'_1,x']$ be another point in the image of $\sigma$. Then
 \begin{equation*}
  R(\gamma_{c,c'} \cdot [e,t_{b},\gamma_{b},\dots,\gamma_1,x]) = \Gamma(\gamma_{c,c'},t_{b},\gamma_{b},\dots,\gamma_1,x),
 \end{equation*}
 and similarly for $[e,t'_{b'},\gamma'_{b'},\dots,\gamma'_1,x']$. We calculate
 \begin{equation*}
  \begin{split}
   d&_{S^n,n,\Lambda}\big( (\Gamma(\gamma_{c,c'},t_{b},\gamma_{b},\dots,\gamma_1,x),\kappa_G(c)), (\Gamma(\gamma_{c,c'},t'_{b'},\gamma'_{b'},\dots,\gamma'_1,x'),\kappa_G(c)) \big) \\
   &\leq d_{S^n,n,\Lambda}\big( (\Gamma(\gamma_{c,c'},t_{b},\gamma_{b},\dots,\gamma_1,x),\kappa_G(c)), (\Gamma(e,t_{b},\gamma_{b},\dots,\gamma_1,x),\kappa_G(c')) \big) \\
   &\quad + d_{S^n,n,\Lambda}\big( (\Gamma(e,t_{b},\gamma_{b},\dots,\gamma_1,x),\kappa_G(c')), (\Gamma(e,t'_{b'},\gamma'_{b'},\dots,\gamma'_1,x'),\kappa_G(c')) \big) \\
   &\quad + d_{S^n,n,\Lambda}\big( (\Gamma(e,t'_{b'},\gamma'_{b'},\dots,\gamma'_1,x'),\kappa_G(c')), (\Gamma(\gamma_{c,c'},t'_{b'},\gamma'_{b'},\dots,\gamma'_1,x'),\kappa_G(c)) \big) \\
   &\leq 2 + \delta_{\abs{c'}} + 2 \\
   &= 4(d + 1 - (\abs{c'}-1)) \\
   &\leq 4(d + 1 - \abs{c}) = \delta_{\abs{c}},
  \end{split}
 \end{equation*}
 where we used \Cref{lem:distance-to-translate} for the second inequality. This shows that multiplication by $\gamma_{c,c'}$ indeed defines a map
 \begin{equation*}
  \gamma_{c,c'} \cdot - \colon S^{\delta_{\abs{c'}}}_\bullet(M\Gamma^{l_{\abs{c'}},\alpha}) \to S^{\delta_{\abs{c}}}_\bullet(M\Gamma^{l_{\abs{c}},\alpha}).
 \end{equation*}
 So the functor $\sing{\alpha,d}{X,Y}$ is well-defined.
\end{proof}

\subsection{The transfer on objects}
\label{subsec:transfer-on-objects}
Recall that by our assumptions in \Cref{subsec:transfers} we have $\alpha < n,
d+1 < n$.
\begin{dfn}\label{dfn:transfer}
 The \emph{transfer $\trans^{\alpha,d}_X(Y)$ of $Y$ with respect to $X$} is defined to be
 \begin{equation*}
  \trans^{\alpha,d}_X(Y) := \sing{\alpha,d}{X,Y}
  \rightthreetimes_{\cellsPlus Y} \cC_Y.
 \end{equation*}
 If $\alpha$, $d$ or both of them are understood, we abbreviate $\trans^{\alpha,d}_X(Y)$ to $\trans^d_X(Y)$, $\trans^\alpha_X(Y)$ or $\trans_X(Y)$, respectively.
\end{dfn}

Since $\cC_Y$ is a free $\cellsPlus Y$--CW--complex, the space $\trans_X(Y)$
is a CW--complex relative $W$ by \Cref{prop:balanced-product-cells}. The
natural transformation $\sing{\alpha,d}{X,Y} \to *$ to the constant functor
with value the one-point space induces a map $\sing{\alpha,d}{X,Y}
\rightthreetimes_{\cellsPlus Y} \cC_Y \to * \rightthreetimes_{\cellsPlus
Y} \cC_Y \cong Y$ of CW--complexes relative $W$. We regard
$\sing{\alpha,d}{X,Y} \rightthreetimes_{\cellsPlus Y} \cC_Y$ as a retractive space via this map.

We equip $\trans_X(Y)$ with a $G$--action as follows. Observe that $G$ acts on
the indexing category $\cells Y$; let $\mu_g \colon \cells Y \to \cells Y$
denote the functor induced by the action of $g \in G$. The action of $g$ on $Y$ induces a natural isomorphism $\cC_Y \xrightarrow{\cong} \cC_Y \circ \mu_g$, and hence a cellular homeomorphism
\begin{equation*}
 \tau_g \colon \sing{\alpha,d}{X,Y} \rightthreetimes_{\cellsPlus Y} \cC_Y
 \xrightarrow{\cong} \sing{\alpha,d}{X,Y} \rightthreetimes_{\cellsPlus Y} (\cC_Y \circ \mu_g). 
\end{equation*}
Observing that $\sing{\alpha,d}{X,Y} \circ \mu_g = \sing{\alpha,d}{X,Y}$, we
obtain from the functoriality of $\rightthreetimes_{\cellsPlus Y}$ in \Cref{sec:balanced-products} a cellular homeomorphism
\begin{equation*}
 \iota_{\mu_g} \colon \sing{\alpha,d}{X,Y} \rightthreetimes_{\cellsPlus Y}
 (\cC_Y \circ \mu_g) = (\sing{\alpha,d}{X,Y} \circ \mu_g)
 \rightthreetimes_{\cellsPlus Y} (\cC_Y \circ \mu_g) \xrightarrow{\cong}
 \sing{\alpha,d}{X,Y} \rightthreetimes_{\cellsPlus Y} \cC_Y.
\end{equation*}
Define the action map of $g \in G$ as the composition
\begin{equation*}
 g \cdot - := \iota_{\mu_g} \circ \tau_g \colon \trans_X(Y) \xrightarrow{\cong} \trans_X(Y).
\end{equation*}
Explicitly, this map is given by $g \cdot [c,x,y] \mapsto [gc,x,gy]$, and defines a group action by cellular homeomorphisms.

Again by \Cref{prop:balanced-product-cells}, we have a canonical identification
\begin{equation*}
 \cells \trans_X(Y) \cong \{ (\sigma, c) \mid c \in \cells Y, \sigma \in S^{\delta_{\abs{c}}}_\bullet(M\Gamma^{l_{\abs{c}},\alpha}) \},
\end{equation*}
which translates the $G$--action on the set of cells of $\trans_X(Y)$ to $g \cdot (\sigma, c) = (\sigma, gc)$. Hence, $\trans_X(Y)$ is a free $G$--CW--complex.

Continuing to use the above identification of $\cells \trans_X(Y)$, we define a control map for $\trans_X(Y)$: Let $\beta_p$ denote the barycenter of the standard $p$--simplex. Then set
\begin{equation*}
 \trans_X(\kappa) \colon \cells \trans_X(Y) \to X \times G \times E \times [1,\infty[, \quad (\sigma,c) \mapsto ((R \circ \sigma)(\beta_{\abs{\sigma}}),\kappa(c)).
\end{equation*}

\begin{lem}\label{lem:transfer-control-objects}
 The pair $(\trans_X(Y),\trans_X(\kappa))$ is an object in $\cR^G(W,\JJ(X \times G,E))$ which is $(\alpha + \delta_0 + 2)$--controlled over $X \times G$.
\end{lem}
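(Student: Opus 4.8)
The plan is to verify the three requirements in the definition of an object of $\cR^G(W,\JJ(X\times G,E))$ separately: that $(\trans_X(Y),\trans_X(\kappa))$ is a labeled free $G$--CW--complex relative $W$ with an equivariant control map, that the identity is $\JJ(X\times G,E)$--controlled (i.e.\ the morphism control condition is met cell-by-cell in a way that also records the $(\alpha+\delta_0+2)$--bound over $X\times G$), and that the support condition $\fS(X\times G,E)$ is satisfied skeleton-wise. The free $G$--CW--structure and the equivariance of $\trans_X(\kappa)$ have essentially been assembled already in \Cref{subsec:transfer-on-objects}: $\cC_Y$ is a free $\cellsPlus Y$--CW--complex, $\sing{\alpha,d}{X,Y}$ is a contravariant $\cellsPlus Y$--CW--complex, so $\trans_X(Y) = \sing{\alpha,d}{X,Y} \rightthreetimes_{\cellsPlus Y}\cC_Y$ is a CW--complex relative $W$ by \Cref{prop:balanced-product-cells}, and the explicit formula $g\cdot(\sigma,c) = (\sigma,gc)$ shows the $G$--action is free on cells; equivariance of $\trans_X(\kappa)$ is immediate from $R\circ\sigma$ being unchanged under the action on the first factor and $\kappa$ being $G$--equivariant.

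The heart of the argument is the control estimate, and this is where I expect the main obstacle to lie. I would take a cell $(\sigma,c)\in\cells_k\trans_X(Y)$ with $|c| = k$, and a cell $(\sigma',c')$ in the smallest subcomplex $\gen{(\sigma,c)}$ containing it. By \Cref{prop:balanced-product-cells}, this means there is a morphism $\gamma\colon c'\to c$ in $\cellsPlus Y$ (so $c'\le c$, i.e.\ $c'\subseteq\gen{c}$) with $\gamma_*\eta'\subseteq\gen{\eta}$ in $\cC_Y$ and $\sigma'\subseteq\gen{\gamma^*\sigma}$ in $\sing{\alpha,d}{X,Y}(c')$, where $\gamma^*$ is multiplication by $\kappa_G(c)^{-1}\kappa_G(c')$. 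I would then compare $\trans_X(\kappa)(\sigma,c) = ((R\circ\sigma)(\beta_{|\sigma|}),\kappa(c))$ with $\trans_X(\kappa)(\sigma',c') = ((R\circ\sigma')(\beta_{|\sigma'|}),\kappa(c'))$. In the $E\times[1,\infty[$--direction and the $G$--direction the bound comes directly from the fact that $(Y,\kappa)$ is $\alpha$--controlled over $G$ and that the identity on $Y$ is $\JJ(E)$--controlled, so $\kappa(c)$ and $\kappa(c')$ differ by something in $\fC(G,E)$ and the $G$--coordinates differ by at most $\alpha$. In the $X$--direction (measured with $d_{S^n,n,\Lambda}$) I would insert the point $(R(\kappa_G(c)^{-1}\kappa_G(c')\cdot\sigma'(\beta)),\kappa_G(c))$, i.e.\ translate $\sigma'$ into $\sing{\alpha,d}{X,Y}(c)$ as the functoriality maps prescribe; the distance between the two translates of $\sigma$ and $\gamma^*\sigma'$ inside $M\Gamma^{l_{|c|},\alpha}$ is bounded by $\delta_{|c|}=\delta_k\le\delta_0$ because both lie in the image of a single $\delta_{|c|}$--controlled singular simplex after the identification used in the well-definedness corollary, and the distance incurred by the translation by $\gamma_{c,c'}=\kappa_G(c)^{-1}\kappa_G(c')$ is at most $2$ by \Cref{lem:distance-to-translate} (whose hypotheses $\alpha\le n$ and $l_{|c'|}<n$ hold by our standing assumption $n>\max\{d+1,\alpha\}$). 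Adding the contribution $\alpha$ from the $\alpha$--control of $Y$ over $G$ — which is what bounds how far $\kappa_G(c)$ itself can move the relevant base simplices — yields the total bound $\alpha+\delta_0+2$.

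Finally, for the support condition: for each $k$, the cells $(\sigma,c)$ with $|c|=k$ have control values whose $E\times[1,\infty[$--component is $\kappa(c)$ with $c\in\diamond_k Y$, hence lies in some $G$--compact $K_k\subseteq E$ times $[1,\infty[$ since $(Y,\kappa)$ is an object of $\cR^G_f(W,\JJ(E))$; the $X\times G$--component ranges over $R$ applied to barycenters of $\delta_k$--controlled simplices in $M\Gamma^{l_k,\alpha}$, which is a $G$--compact set because $X$ is compact, $M\Gamma^{l_k,\alpha}$ is built from finitely many copies of $B_\alpha(e)^{\le l_k}\times[0,1]^{\le l_k}\times X$ up to the $G$--action, and $R$ is continuous. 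So $\trans_X(\kappa)(\diamond_k\trans_X(Y))$ is contained in a member of $\fS(X\times G,E)$, as required. Assembling these three verifications — together with the observation that the identity map being $\JJ(X\times G,E)$--controlled follows formally once one has the cell-wise bound above applied with $f=\id$ — completes the proof; I would present the $X$--direction estimate as a short displayed chain of inequalities mirroring the one in the well-definedness corollary, since that is the only genuinely delicate point.
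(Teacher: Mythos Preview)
Your proposal is correct and follows essentially the same route as the paper: dispose of the $G$--continuous control and support conditions quickly (the paper just invokes compactness of $X$), then for bounded control over $X\times G$ take two cells related by inclusion, use \Cref{prop:balanced-product-cells} to unpack this as $c\subset\gen{c'}$ together with $\sigma\subset\gen{\gamma_{c,c'}\sigma'}$, insert the translated point via the triangle inequality, bound one summand by $\delta_{|c|}\le\delta_0$ using the $\delta$--controlled simplices and the other by $2$ via \Cref{lem:distance-to-translate}, and add the $d_G$--contribution $\alpha$. Two small slips to clean up: with your orientation $(\sigma',c')\subset\gen{(\sigma,c)}$ the translate is $\gamma^*\sigma$ (not $\gamma^*\sigma'$) and the relevant $\delta$ is $\delta_{|c'|}$, and the extra $\alpha$ comes simply from the $d_G$--summand of the metric on $X\times G$, not from ``moving base simplices''.
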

\begin{proof}
 By construction, the labeled $G$--CW--complex $(\trans_X(Y),
 \trans_X(\kappa))$ satisfies the $G$--continuous control condition. It also
 has the correct support, since $X$ is compact.
 So it is only necessary to check that it satisfies bounded control over $X \times G$. Let $(\sigma,c)$ and $(\sigma',c')$ be cells such that $(\sigma,c) \subset \gen{(\sigma',c')}$. By \Cref{prop:balanced-product-cells}, this is equivalent to the conditions $c \subset \gen{c'}$ and  $\sigma \subset \gen{\kappa_G(c)^{-1}\kappa_G(c')\sigma'} \subset \norm{ S^{\delta_{\abs{c}}}_\bullet(M\Gamma^{l_{\abs{c}},\alpha}) }$. Set $\gamma_{c,c'} := \kappa_G(c)^{-1}\kappa_G(c')$. Then we have
 \begin{equation*}
  \begin{split}
   d_{S^n,n,\Lambda}&((\trans_X(\kappa)(\sigma,c), \trans_X(\kappa)(\sigma',c')) \\
   &= d_{S^n,n,\Lambda}\big( ((R \circ \sigma)(\beta_{\abs{\sigma}}),\kappa_G(c)), ((R \circ \sigma')(\beta_{\abs{\sigma'}}),\kappa_G(c')) \big) \\
   &\leq d_{S^n,n,\Lambda}\big( ((R \circ \sigma)(\beta_{\abs{\sigma}}),\kappa_G(c)), (R \circ \gamma_{c,c'}\sigma')(\beta_{\abs{\sigma'}}), \kappa_G(c)) \big) \\
   &\quad + d_{S^n,n,\Lambda}\big( (R \circ \gamma_{c,c'}\sigma')(\beta_{\abs{\sigma'}}), \kappa_G(c)), ((R \circ \sigma')(\beta_{\abs{\sigma'}}),\kappa_G(c')) \big) \\
   &\leq \delta_{\abs{c}} + 2 \leq \delta_0 + 2,
  \end{split}
 \end{equation*}
 where the last inequality follows from our assumption and
 \Cref{lem:distance-to-translate}. Using $d_G(\kappa_G(c), \kappa_G(c')) \leq
 \alpha$, we conclude that $(\trans_X(Y),\trans_X(\kappa))$ is $(\alpha+\delta_0+2)$--controlled over $X \times G$.
\end{proof}

\subsection{The transfer on cellwise $0$-controlled morphisms}
\label{subsec:transfer-cellwise-0}
In the next step, we extend the assignment $(Y,\kappa) \mapsto (\trans_X(Y), \trans_X(\kappa))$ to a functor $\cR^G_f(W,\JJ(E))_{\alpha,d} \to \cR^G(W,\JJ(X \times G, E))$.
Let $f \colon (Y_1,\kappa_1) \to (Y_2,\kappa_2)$ be a morphism in
$\cR^G_f(W,\JJ(E))_{\alpha,d}$. Since $f$ is a regular map, we have an induced
functor $\cellsPlus f \colon \cellsPlus Y_1 \to \cellsPlus Y_2$, which is compatible
with the $G$--actions.  ($G$ acts trivially on the initial
object.)
Define a natural transformation $\cC_f \colon \cC_{Y_1} \to \cC_{Y_2} \circ
\cellsPlus f$ by
\begin{equation*}
 \cC_{f,c} \colon \cC_{Y_1}(c) = \gen{c} \xrightarrow{f} f(\gen{c}) =
 \gen{f(c)} = (\cC_{Y_2} \circ \cellsPlus f)(c).
\end{equation*}
Define a natural transformation $\sing{}{f} \colon \sing{\alpha,d}{X,Y_1} \to
\sing{\alpha,d}{X,Y_2} \circ \cellsPlus f$ by
\begin{equation*}
 \sing{}{f,c} \colon \norm{ S^{\delta_{\abs{c}}}(M\Gamma^{l_{\abs{c}},\alpha}) } \subset \norm{ S^{\delta_{\abs{f(c)}}}_\bullet(M\Gamma^{l_{\abs{f(c)}},\alpha}) }.
\end{equation*}
Then $\trans_X(f)$ is defined as the composition
\begin{equation*}
 \sing{\alpha,d}{X,Y_1} \rightthreetimes_{\cellsPlus Y_1} \cC_{Y_1}
 \xrightarrow{\sing{}{f} \rightthreetimes_{\cellsPlus Y_1} \cC_f}
 (\sing{\alpha,d}{X,Y_2} \circ (\cellsPlus f)) \rightthreetimes_{\cellsPlus
 Y_1} \cC_{Y_2} \circ (\cellsPlus f)) \xrightarrow{\iota_{(\cellsPlus f)}}
 \sing{\alpha,d}{X,Y_2} \rightthreetimes_{\cellsPlus Y_2} \cC_{Y_2}.
\end{equation*}

\begin{lem}\label{lem:transfer-functor}
 This defines a functor
 \begin{equation*}
  \trans_X \colon \cR^G_f(W,\JJ(E))_{\alpha,d} \to \cR^G(W,\JJ(X \times G,E))_0.
 \end{equation*}
\end{lem}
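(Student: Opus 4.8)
First I would verify that $\trans_X(f)$, for a cellwise $0$--controlled morphism $f\colon(Y_1,\kappa_1)\to(Y_2,\kappa_2)$ of $\cR^G_f(W,\JJ(E))_{\alpha,d}$, is a well-defined continuous, cellular, $G$--equivariant map relative $W$ which respects retractions. By construction it is the map on reduced balanced products induced by the natural transformations $\sing{}{f}$ and $\cC_f$, post-composed with the functoriality isomorphism $\iota_{\cellsPlus f}$ of \Cref{sec:balanced-products}. Here $\cC_f$ is evidently natural, being $f$ restricted to the subcomplexes $\gen{c}$. Naturality of $\sing{}{f}$ is precisely where the cellwise $0$--control hypothesis enters: the structure map of $\sing{\alpha,d}{X,Y_i}$ along $c\le c'$ is multiplication by $\kappa_{i,G}(c)^{-1}\kappa_{i,G}(c')$, and since $f$ is cellwise $0$--controlled over $G$ we have $\kappa_{2,G}(f(c))=\kappa_{1,G}(c)$ for all cells $c$ (cf.~\Cref{def:0-controlled-morphism}), so the two composites around the naturality square are multiplication by the same group element. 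One must also check that the inclusion defining $\sing{}{f,c}$ is legitimate: when $f$ carries $\mathrm{int}\,c$ onto an open cell, $f$ being cellular forces $\abs{f(c)}\le\abs{c}$, whence $l_{\abs{f(c)}}\ge l_{\abs{c}}$ and $\delta_{\abs{f(c)}}\ge\delta_{\abs{c}}$, so that $S^{\delta_{\abs{c}}}_\bullet(M\Gamma^{l_{\abs{c}},\alpha})$ is a subsimplicial set of $S^{\delta_{\abs{f(c)}}}_\bullet(M\Gamma^{l_{\abs{f(c)}},\alpha})$. $G$--equivariance then follows exactly as for the $G$--action on $\trans_X(Y)$ in \Cref{subsec:transfer-on-objects}, using that $f$ being $G$--equivariant makes $\cellsPlus f$ commute with the functors $\mu_g$ and that $\sing{\alpha,d}{X,Y}\circ\mu_g=\sing{\alpha,d}{X,Y}$; compatibility with the retractions to $W$ is immediate, since the retraction of $\trans_X(Y_i)$ factors through that of $Y_i$ and $f$ respects retractions.

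Next I would describe $\trans_X(f)$ on cells and deduce the control conditions. By \Cref{prop:balanced-product-cells}, $\cells\trans_X(Y_i)$ is identified with the pairs $(\sigma,c)$ with $c\in\cells Y_i$ and $\sigma\in S^{\delta_{\abs{c}}}_\bullet(M\Gamma^{l_{\abs{c}},\alpha})$, and $\trans_X(f)$ sends $(\sigma,c)$ to the cell $(\sigma,f(c))$ when $f(\mathrm{int}\,c)$ is an open cell, and into $W$ when $f(\mathrm{int}\,c)\subseteq W$ (in which case $f(\gen{c})\subseteq W$, as $W$ is a closed subcomplex); in particular $\trans_X(f)$ is again regular. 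Since $\trans_X(\kappa_i)(\sigma,c)=((R\circ\sigma)(\beta_{\abs{\sigma}}),\kappa_i(c))$, with $\sigma$ left unchanged and $\kappa_{1,G}(c)=\kappa_{2,G}(f(c))$, the map $\trans_X(f)$ modifies the $E\times[1,\infty[$--label of a cell exactly as $f$ does and leaves the $X\times G$--label fixed. Hence $\trans_X(f)$ inherits the $G$--continuous control in the $E$--coordinate and the support condition from $f$ and from \Cref{lem:transfer-control-objects}, satisfies the metric control over $X\times G$ trivially, and is cellwise $0$--controlled over $X\times G$; that is, $\trans_X(f)$ is a morphism of $\cR^G(W,\JJ(X\times G,E))_0$.

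Finally, for functoriality, $\trans_X(\id)=\id$ is clear, and for a second cellwise $0$--controlled morphism $g\colon Y_2\to Y_3$ one has $\cellsPlus(g\circ f)=\cellsPlus g\circ\cellsPlus f$, $\cC_{g\circ f}=(\cC_g\circ\cellsPlus f)\circ\cC_f$, $\sing{}{g\circ f}=(\sing{}{g}\circ\cellsPlus f)\circ\sing{}{f}$, and $\iota_{\cellsPlus g}\circ\iota_{\cellsPlus f}=\iota_{\cellsPlus(g\circ f)}$ by \Cref{sec:balanced-products}, so a short diagram chase yields $\trans_X(g\circ f)=\trans_X(g)\circ\trans_X(f)$. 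The part I expect to be the main obstacle is the naturality of $\sing{}{f}$ together with the compatibility of the inclusions $\sing{}{f,c}$ with the $\cellsPlus$--functoriality of the balanced product — this is exactly why the transfer can only be defined on cellwise $0$--controlled morphisms — combined with a careful use of the cell identification of \Cref{prop:balanced-product-cells} to track $\trans_X(f)$ cell by cell; everything else is formal naturality of the reduced balanced product.
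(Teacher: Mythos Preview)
Your proposal is correct and follows essentially the same approach as the paper. The only difference is stylistic: the paper short-circuits both the well-definedness and the functoriality checks by writing down the explicit pointwise formula $[c,x,y]\mapsto[f(c),x,f(y)]$ and declaring functoriality ``obvious'' from that description, whereas you verify the same facts more carefully through the categorical formalism (naturality of $\sing{}{f}$, the composition identities for $\cC_{-}$, $\sing{}{-}$, and $\iota_{-}$); your control computation and the conclusion that $\trans_X(f)$ is cellwise $0$--controlled over $X\times G$ match the paper's exactly.
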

\begin{proof}
 To see that the construction of $\trans_X(f)$ is functorial, it is best to translate the above formalism again into an explicit mapping rule. Concretely, $\trans_X(f)$ is given by $[c,x,y] \mapsto [f(c),x,f(y)]$, and functoriality becomes obvious.
 
 We also need to check that $\trans_X(f)$ is a controlled map. It suffices to
 consider bounded control over $X \times G$. Note that $\trans_X(f)$ is
 regular as $f$ and $\sing{}{f,c}$ are regular.  Hence it is enough to compute for $(\sigma, c) \in \cells \trans_X(Y_1)$
 \begin{equation*}
  \begin{split}
   d&_{S^n,n,\Lambda}(\trans_X(\kappa_1)(\sigma,c), \trans_X(\kappa_2)(\trans_X(f)(\sigma,c))) + d_G(\kappa_{1,G}(c),\kappa_{2,G}(f(c))) \\
   &= d_{S^n,n,\Lambda}\big( ((R \circ
   \sigma)(\beta_{\abs{\sigma}}),\kappa_{1,G}(c)), ((R \circ
   \sigma)(\beta_{\abs{\sigma}}),\kappa_{2,G}(f(c))) \big) + 0\\
   &= 0.
  \end{split}
 \end{equation*}
 So $\trans_X(f)$ is in fact cellwise $0$--controlled over $X \times G$. Hence, we have defined a functor $\trans_X \colon \cR^G_f(W,\JJ(E))_{\alpha,d} \to \cR^G(W,\JJ(X \times G,E))_0$.
\end{proof}

\begin{rem}
 One can adapt the constructions presented in this paper to chain complexes over geometric modules to obtain a linear transfer. The linearization map, which assigns to a CW--complex its cellular chain complex, translates our transfer functor into its linear counterpart.
 
 Moreover, the natural inclusion of geometric modules into chain complexes makes these constructions compatible with the transfers defined in \cite{Bartels-Lueck(2012annals)}, \cite{Bartels-Lueck-Reich(2008hyper)} and \cite{Wegner(2012)}. Thus, the transfer for geometric $\ZZ[G]$--modules corresponds to our construction restricted to $0$--dimensional CW--complexes.
\end{rem}

\subsection{Transferring cofibrations} 
Our next aim is to show that the transfer is a functor of categories with cofibrations. Since the cofibrations under consideration are essentially inclusions of CW--subcomplexes it comes as no surprise that this result relies on an analysis of the CW--structure of $\trans_X(Y)$.

Let $(Y, \kappa) \in \cR^G_f(W,\JJ(E))$ as before.  It defines a $\cellsPlus
Y$--CW--complex $\cC_Y$ relative $W$.  Let $B\subseteq Y$ be a subcomplex.  We
get a $\cellsPlus Y$--CW--complex $\cC_Y^B$ relative $W$ by setting
$\cC_Y^B(c) := \gen{c} \cap B$.  As before, $\colim_{\cellsPlus Y} \cC_Y^B
\cong B$.

Let $A \subseteq B \subseteq Y$ be subcomplexes.  Assume that $B$
arises from $A$ by attaching cells $\eta_i, i \in I$.  From
\Cref{sec:balanced-products} we get a pushout diagram
  \begin{equation*}
  \begin{tikzpicture}
   \matrix (m) [matrix of math nodes, column sep=3em, row sep=2em, text depth=.5em, text height=1em]
   { {\coprod^W_{i\in I}\partial \eta_i}  & \cC^A_Y \\
     {\coprod^W_{i\in I}\eta_i} & \cC^B_Y \\};
   \path[->]
   (m-1-1) edge (m-1-2)
   (m-1-1) edge (m-2-1)
   (m-1-2) edge (m-2-2)
   (m-2-1) edge (m-2-2);
  \end{tikzpicture}
 \end{equation*}
in $\cellsPlus Y$--CW--complexes relative $W$.  This becomes a pushout diagram
in retractive spaces, if we equip everything with the retractions into $W$
arising from $Y$.  Now, $\sing{\alpha,d}{X,Y} \rightthreetimes_{\cellsPlus
Y} (-) $ commutes with pushouts, so we get the following result.

\begin{lem}\label{lem:attaching-cells-transfer}
There is a pushout diagram
\begin{equation*}
  \begin{tikzpicture}
   \matrix (m) [matrix of math nodes, column sep=3em, row sep=2em, text depth=.5em, text height=1em]
   {\sing{\alpha,d}{X,Y} \rightthreetimes_{\cellsPlus Y} \big(\coprod_{i \in
   I}^W
   \partial \eta_i \big) & 
       \sing{\alpha,d}{X,Y} \rightthreetimes_{\cellsPlus Y} \cC^A_Y \\
    \sing{\alpha,d}{X,Y} \rightthreetimes_{\cellsPlus Y} \big(\coprod_{i \in
    I}^W
    \eta_i \big) & 
       \sing{\alpha,d}{X,Y} \rightthreetimes_{\cellsPlus Y} \cC^B_Y \\};
   \path[->]
   (m-1-1) edge (m-1-2)
   (m-1-1) edge (m-2-1)
   (m-1-2) edge (m-2-2)
   (m-2-1) edge (m-2-2);
  \end{tikzpicture}
 \end{equation*}
in $\cR^G(W,\JJ(X \times G,E))$.   Here the coproducts on the left are
disjoint unions over $W$.  As these are cells, the space on the lower
left is isomorphic
to
\begin{equation*}
  \coprod_{i \in I} \big( \norm{
    S^{\delta^d_{\abs{c_i}}}_\bullet(M\Gamma^{l^d_{\abs{c_i}},\alpha}) }
    \times D^{\abs{c_i}} \big) \amalg W ,
\end{equation*}
when $\eta_i$ is a cell
based at $c_i$, and similarly for the upper left. \qed
\end{lem}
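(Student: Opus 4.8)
The plan is to apply the functor $\sing{\alpha,d}{X,Y} \rightthreetimes_{\cellsPlus Y} (-)$ to the pushout square of $\cellsPlus Y$--CW--complexes relative $W$ displayed just above, and then to upgrade the resulting square of spaces to a pushout square in $\cR^G(W,\JJ(X \times G,E))$.

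First I would invoke the fact, recorded in \Cref{sec:balanced-products} (the adjunction preceding \Cref{prop:balanced-product-cells}), that $\sing{\alpha,d}{X,Y} \rightthreetimes_{\cellsPlus Y} (-)$ commutes with colimits in its covariant argument, in particular with pushouts. Applying it to the displayed pushout thus yields a pushout square of CW--complexes relative $W$, which we equip everywhere with the retractions inherited from $Y$ as in \Cref{dfn:transfer}. Since $\eta_i$ is a free $\cellsPlus Y$--cell based at $c_i$, the identification $X \rightthreetimes_{\cC} \eta \cong (X(c) \times D^{n}) \amalg W$ for a free $\cC$--cell $\eta$ based at $c$, from before \Cref{prop:balanced-product-cells}, yields
\[
 \sing{\alpha,d}{X,Y} \rightthreetimes_{\cellsPlus Y} \eta_i \;\cong\; \big(\sing{\alpha,d}{X,Y}(c_i) \times D^{\abs{c_i}}\big) \amalg W \;=\; \big(\norm{ S^{\delta^d_{\abs{c_i}}}_\bullet(M\Gamma^{l^d_{\abs{c_i}},\alpha}) } \times D^{\abs{c_i}}\big) \amalg W,
\]
and similarly $\sing{\alpha,d}{X,Y} \rightthreetimes_{\cellsPlus Y} \partial\eta_i \cong (\norm{ S^{\delta^d_{\abs{c_i}}}_\bullet(M\Gamma^{l^d_{\abs{c_i}},\alpha}) } \times S^{\abs{c_i}-1}) \amalg W$, with the coproducts over $i \in I$ being disjoint unions over $W$. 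This settles the identification of the two left-hand corners.

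Next I would put a $\JJ(X \times G,E)$--controlled structure on all four corners. Each corner carries a canonical cellular $G$--map to $\trans_X(Y) = \sing{\alpha,d}{X,Y} \rightthreetimes_{\cellsPlus Y} \cC_Y$: for the right column these are the inclusions of the $G$--invariant subcomplexes spanned by the cells $(\sigma,c)$ with $c \in \cells A$, respectively $c \in \cells B$, by \Cref{prop:balanced-product-cells}; for the left column these are induced by the characteristic maps $\eta_i \to \cC_Y$ and attaching maps $\partial\eta_i \to \cC_Y$. Pulling back the control map $\trans_X(\kappa)$ of \Cref{lem:transfer-control-objects} along these $G$--cellular maps makes every corner an object of $\cR^G(W,\JJ(X \times G,E))$ — $G$--continuous control, bounded control over $X \times G$, support and finiteness are inherited from $\trans_X(Y)$ — and makes all four maps of the square into $\JJ(X \times G,E)$--controlled maps compatible with the retractions; in fact they are cellwise $0$--controlled over $X \times G$, by the computation in the proof of \Cref{lem:transfer-functor}.

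Finally I would note that the left vertical map is the inclusion of a $G$--invariant subcomplex, induced by $\partial\eta_i \hookrightarrow \eta_i$, hence a cofibration in the Waldhausen category $\cR^G(W,\JJ(X \times G,E))$ of \cite[Corollary~3.22]{Ullmann-Winges(2015)}; there, pushouts along cofibrations are computed on underlying spaces with the induced control structure. The underlying pushout of spaces is $\sing{\alpha,d}{X,Y} \rightthreetimes_{\cellsPlus Y} \cC^B_Y$ by the first step, and under the cell identification of \Cref{prop:balanced-product-cells} its control structure is precisely the restriction of $\trans_X(\kappa)$ fixed in the second step; hence the square is a pushout in $\cR^G(W,\JJ(X \times G,E))$. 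The only subtle point is this last one — recognising that the square is a pushout of controlled retractive spaces and not merely of CW--complexes — but it is forced once one knows that $\sing{\alpha,d}{X,Y} \rightthreetimes_{\cellsPlus Y} (-)$ preserves colimits and that the comparison maps to $\trans_X(Y)$ are cellwise $0$--controlled; the rest is a direct unwinding of the cell structure already established.
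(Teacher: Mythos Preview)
Your proposal is correct and follows essentially the same approach as the paper: the paper simply notes that $\sing{\alpha,d}{X,Y} \rightthreetimes_{\cellsPlus Y} (-)$ commutes with pushouts (by the adjunction), applies this to the displayed pushout of $\cellsPlus Y$--CW--complexes relative $W$, and reads off the left-hand corners from the free-cell computation $X \rightthreetimes_\cC \eta \cong (X(c)\times D^n)\amalg W$; the lemma is stated with an immediate \qed. You supply more detail than the paper does on why the resulting square lives in $\cR^G(W,\JJ(X\times G,E))$ rather than merely in retractive CW--complexes, but this elaboration is consistent with what the paper leaves implicit.
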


This enables us to do inductive arguments over the cells in $Y$.   Note that
if $A \subseteq Y$ is a subcomplex, we can interpret $A$ as a $\cellsPlus
A$--CW--complex, or as $\cellsPlus Y$--CW--complex.  We can define the
transfer also for $A$ as a $\cellsPlus Y$--CW--complex, and it is canonically
isomorphic to \Cref{dfn:transfer}.

\begin{lem}\label{lem:transfer-cofibrations}
 The functor $\trans_X$ preserves the zero object, cofibrations and admissible pushout diagrams, i.e., it is a functor of categories with cofibrations.
\end{lem}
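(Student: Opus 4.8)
The plan is to verify the three requirements separately, in each case reducing to the behavior of the balanced product functor $\sing{\alpha,d}{X,-}\rightthreetimes_{\cellsPlus(-)}\cC_{(-)}$ on the CW-level, since all the relevant structure (zero object, cofibrations, pushouts) in $\cR^G_f(W,\JJ(E))_{\alpha,d}$ and in $\cR^G(W,\JJ(X\times G,E))_0$ is controlled on the underlying $G$-CW-complexes relative $W$, together with the control bounds already established in \Cref{lem:transfer-control-objects} and \Cref{lem:transfer-functor}. For the \textbf{zero object}: the zero object is $W$ itself (with the empty set of relative cells); then $\cellsPlus W$ is the category with only the initial object, $\cC_W$ is the constant functor with value $W$, and $\sing{\alpha,d}{X,W}\rightthreetimes_{\cellsPlus W}\cC_W\cong *\rightthreetimes_{\cellsPlus W}W\cong W$ by the defining pushout of the reduced balanced product. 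So $\trans_X$ sends the zero object to the zero object, and one checks the retraction and control map are the trivial ones.

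For \textbf{cofibrations}: a cofibration in $\cR^G_f(W,\JJ(E))_{\alpha,d}$ is (up to isomorphism) an inclusion $A\hookrightarrow Y$ of a $G$-invariant subcomplex, which is in particular a regular, cellwise $0$-controlled morphism. I would first record that, interpreting $A$ as a $\cellsPlus Y$-CW-complex via $\cC^A_Y$, the transfer $\trans_X(A)$ computed this way is canonically isomorphic to $\trans_X(A)$ computed via $\cellsPlus A$ (this was noted right after \Cref{lem:attaching-cells-transfer}). Then, writing $Y$ as obtained from $A$ by attaching the remaining free $\cellsPlus Y$-cells, repeated application of \Cref{lem:attaching-cells-transfer} exhibits $\trans_X(Y)$ as obtained from $\trans_X(A)$ by attaching cells of the form $\norm{S^{\delta^d_{\abs{c_i}}}_\bullet(M\Gamma^{l^d_{\abs{c_i}},\alpha})}\times D^{\abs{c_i}}$ along maps out of the corresponding spheres. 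Consequently $\trans_X(A)\to\trans_X(Y)$ is the inclusion of a $G$-invariant subcomplex (freeness of the $G$-action on cells was checked after \Cref{lem:transfer-control-objects}), hence a cofibration in $\cR^G(W,\JJ(X\times G,E))$; since the morphism is cellwise $0$-controlled by \Cref{lem:transfer-functor} and the target category $\cR^G(W,\JJ(X\times G,E))_0$ inherits its cofibrations, it is a cofibration there as well.

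For \textbf{admissible pushouts}: given a pushout $Y\cup_A B$ along a cofibration $A\hookrightarrow B$ (with $A\to Y$ arbitrary, i.e.\ regular and cellwise $0$-controlled), I would argue that $\cellsPlus(-)$, $\cC_{(-)}$ and $\sing{\alpha,d}{X,-}$ all behave well enough on such a pushout — the point category of the pushout complex is the pushout of point categories, $\cC$ of the pushout is the corresponding pushout of $\cellsPlus$-CW-complexes, and $\sing{\alpha,d}{X,-}$ is determined cellwise so it is the restriction of the same functor — and then invoke that $\sing{\alpha,d}{X,-}\rightthreetimes_{\cellsPlus(-)}(-)$ preserves pushouts (commutation of $\rightthreetimes$ with colimits in the covariant variable, established in \Cref{sec:balanced-products}). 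Combined with functoriality of $\iota_{(-)}$, this gives that $\trans_X$ takes the pushout square to a pushout square in $\cR^G(W,\JJ(X\times G,E))$, and the bottom map of that square is again a cofibration by the previous paragraph, so the square is admissible.

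The main obstacle I expect is \textbf{bookkeeping the indexing categories under pushout}: the category $\cellsPlus Y$ depends on the poset of cells of $Y$, and when $Y\cup_A B$ is formed the new cells inherit their order relations from both $Y$ and $B$, so one has to check carefully that $\cellsPlus(Y\cup_A B)$ is genuinely the pushout $\cellsPlus Y\cup_{\cellsPlus A}\cellsPlus B$ and that $\sing{\alpha,d}{X,-}$ and $\cC_{(-)}$ are compatible with the universal maps out of it — in particular that the labels $\kappa_G$, which enter the definition of $\sing{\alpha,d}{X,-}$ on non-identity morphisms via $\gamma_{c,c'}=\kappa_G(c)^{-1}\kappa_G(c')$, agree on the overlap. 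This is exactly where cellwise $0$-controlledness of $A\to Y$ is used: it guarantees the labels match, so the three functors glue. Everything else is a routine unwinding of the explicit mapping rule $[c,x,y]\mapsto[f(c),x,f(y)]$ already isolated in \Cref{lem:transfer-functor}.
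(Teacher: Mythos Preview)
Your treatment of the zero object and of cofibrations is correct and matches the paper's (much terser) proof, which invokes \Cref{lem:attaching-cells-transfer} in exactly the way you describe. For admissible pushouts the paper takes a more economical route than yours: instead of realizing $\cellsPlus(Y\cup_A B)$ as a pushout of indexing categories and gluing $\sing{\alpha,d}{X,-}$ and $\cC_{(-)}$ over it, one works entirely over the single ambient complex $P=Y\cup_A B$. Since $P$ is obtained from its subcomplex $Y$ by attaching the cells of $B\setminus A$ (and $B$ from $A$ by the same cells), \Cref{lem:attaching-cells-transfer} applied twice, together with the compatibility of the intrinsic transfer of a subcomplex with the transfer computed over the ambient $\cellsPlus P$, yields the pushout directly. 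Your observation that cellwise $0$-control of $A\to Y$ makes the $\kappa_G$-labels match is exactly what makes the two transferred attaching maps compatible in that argument as well. One caution about your phrasing: the clause ``$\rightthreetimes$ preserves colimits in the covariant variable'' only applies over a \emph{fixed} indexing category, so it does not by itself justify the pushout step when $\cellsPlus(-)$ varies --- this is precisely the obstacle you flagged, and the paper's fixed-ambient approach sidesteps it.
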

\begin{proof}
 Let $f \colon (Y_1,\kappa_1) \rightarrowtail (Y_2,\kappa_2)$ be a cofibration
 in $\cR^G_f(W,\JJ(E))_{\alpha,d}$. Without loss of generality, we can assume
 that $Y_2$ is obtained from $Y_1$ by attaching free $G$--cells, and that $f$
 is the inclusion of the subcomplex $Y_1$ into $Y_2$. Then it follows from
 \Cref{lem:attaching-cells-transfer} and interpreting $Y_1$ as a $\cellsPlus
 Y_2$--CW--complex that $\trans_X(f)$ is also a cofibration. For the same
 reason, $\trans_X$ preserves all relevant pushout squares.  Last, it maps the
 zero object $W$ to $W$.
\end{proof}

\subsection{The transfer on general morphisms and weak equivalences}
Next, we construct natural transformations between our transfer functors for various indices. Once we have shown that they are weak equivalences, it follows that the diagram in \Cref{eq:hocomm} is homotopy commutative. In addition, these enter the proof that $\trans^{\alpha,d}$ preserves weak equivalences.

\begin{dfn}\label{def:transfer-vary-constraints}
  Let $\alpha' > \alpha$ and $d' > d$ satisfying $n > \max \{d'+1, \alpha'
  \}$. Then both $\sing{\alpha,d}{X,Y}$ and $\sing{\alpha',d'}{X,Y}$ are
  defined and give rise to transfer functors $\trans^{\alpha,d}_X$ and
  $\trans^{\alpha',d'}_X$. For every $(Y,\kappa) \in
  \cR^G_f(W,\JJ(E))_{\alpha,d}$, there is a natural transformation
  $\sing{\alpha,d}{X,Y} \to \sing{\alpha',d'}{X,Y}$ which is given at $c \in
  \cellsPlus Y$ by the obvious inclusion 
 \begin{equation*}
  \norm{ S^{\delta^{d}_{\abs{c}}}_\bullet(M\Gamma^{l^{d}_{\abs{c}},\alpha}) } \subset \norm{ S^{\delta^{d'}_{\abs{c}}}_\bullet(M\Gamma^{l^{d'}_{\abs{c}},\alpha'}) }. 
 \end{equation*}
 Hence, we obtain an induced natural morphism
 \begin{equation*}
  \rho^{\alpha,\alpha',d,d'}_Y \colon \trans^{\alpha,d}_X(Y) \to \trans^{\alpha',d'}_X(Y).
 \end{equation*}
\end{dfn}

\begin{lem}\label{cor:MXdom}
 Let $\delta > 0$. Consider $X$ as a subspace of $M\Gamma^{l,s}$ via the embedding $x \mapsto [e,x]$. There exists a $\delta$--controlled strong deformation retraction
 \begin{equation*}
  H \colon \norm{S_\bullet^\delta(M\Gamma^{l,s})} \times [0,1] \to \norm{S_\bullet^\delta(M\Gamma^{l,s})}    
 \end{equation*}
 onto $\norm{S^\delta_\bullet(X)}$. 
\end{lem}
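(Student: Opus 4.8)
The plan is to descend the explicit strong deformation retraction of $M\Gamma$ onto $X$ recorded in~\eqref{eq:strong-deformation-rectraction-MGamma}, which I denote $H_0([\gamma,t_k,\gamma_k,\dots,\gamma_1,x],u)=[e,u,\gamma,t_k,\gamma_k,\dots,\gamma_1,x]$, first to the subspace $M\Gamma^{l,s}$ and then to the fat realization of its $\delta$--restricted singular complex. The two facts I would establish are that $H_0$ already preserves $M\Gamma^{l,s}$ and that it is ``vertical'' for the retraction $R$, i.e.\ $R\circ H_0(-,u)=R$; after that the statement follows from standard simplicial machinery.

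First I would check the restriction. Every point of $M\Gamma^{l,s}$ has a representative $p=[e,t_k,\gamma_k,\dots,\gamma_1,x]$ with leading group element $e$, with $k\le l$ and $\gamma_i\in B_s(e)$, so $H_0(p,u)=[e,u,e,t_k,\gamma_k,\dots,\gamma_1,x]$. Applying the relation defining $M\Gamma$ that cancels a group coordinate equal to $e$ lying between two interval coordinates (merging $u$ with $t_k$) gives $H_0(p,u)=[e,ut_k,\gamma_k,\dots,\gamma_1,x]$ when $k\ge 1$, while the relation deleting an innermost $e$ together with its interval coordinate gives $H_0(p,u)=[e,x]=p$ when $k=0$. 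In both cases $H_0(p,u)\in M\Gamma^{l,s}$, the $\gamma_i$ being unchanged; at $u=1$ one recovers $p$, at $u=0$ the coordinate $ut_k$ vanishes and the relation for a zero interval coordinate collapses the tail into a point of $X$, and $H_0$ is stationary on $X$. Hence $H_0$ restricts to a strong deformation retraction $M\Gamma^{l,s}\times[0,1]\to M\Gamma^{l,s}$ onto $X$. Next I would record the identity $R\circ H_0(-,u)=R$ for all $u$: using the axiom of \Cref{def:hptycohG-action} that deletes a leading trivial group element, one computes $R([e,ut_k,\gamma_k,\dots,\gamma_1,x])=\Gamma(\gamma_k,t_{k-1},\dots,\gamma_1,x)=R(p)$, so $H_0$ moves points only within the fibers of $R$.

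Finally I would transport this to $\norm{S^\delta_\bullet(M\Gamma^{l,s})}$. Since $R\circ H_0(-,u)=R$, any singular simplex $\sigma$ with $\diam_{X\times G}\big((R\circ\sigma)(\Delta^{\abs\sigma})\times\{e\}\big)\le\delta$ is carried by $y\mapsto H_0(\sigma(y),u)$ to a singular simplex with the same $R$--image, hence again in $S^\delta_\bullet(M\Gamma^{l,s})$, and $S^\delta_\bullet(X)$ is fixed throughout. This defines a simplicial strong deformation retraction $S^\delta_\bullet(M\Gamma^{l,s})\times\Delta^1\to S^\delta_\bullet(M\Gamma^{l,s})$ onto $S^\delta_\bullet(X)$, namely $(\sigma,w)\mapsto\big(y\mapsto H_0(\sigma(y),\bar w(y))\big)$ with $\bar w\colon\Delta^{\abs\sigma}\to[0,1]$ the affine map determined by $w$. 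Passing to fat geometric realization produces the desired $H\colon\norm{S^\delta_\bullet(M\Gamma^{l,s})}\times[0,1]\to\norm{S^\delta_\bullet(M\Gamma^{l,s})}$, and its $\delta$--control is automatic: as $R$ is constant along each track, so is the induced map to $\norm{S^\delta_\bullet(X)}$ and hence to $X$, so every track has $X$--diameter $0$. The point that needs care is this last passage, because fat realization does not commute with the interval on the nose; I would handle it by invoking the homotopy equivalence between fat and thin realization of a simplicial set together with the fact that thin realization commutes with finite products, which turns the simplicial homotopy above into a genuine topological homotopy. This is the step I expect to be the main (if routine) obstacle.
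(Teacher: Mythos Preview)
Your argument is essentially the same as the paper's: restrict the global deformation retraction to $M\Gamma^{l,s}$, verify $R\circ H_0(-,u)=R$, and pass to $\norm{S^\delta_\bullet}$. The only substantive difference is the last step. The paper does not go through the fat/thin comparison; it writes down directly the prism-triangulation inclusion
\[
 i\colon \norm{S^\delta_\bullet(M\Gamma^{l,s})}\times[0,1]\longrightarrow \norm{S^\delta_\bullet(M\Gamma^{l,s}\times[0,1])}
\]
(sending each prism $\Delta^p\times[0,1]$ to its canonical triangulation) and sets $H:=H'_*\circ i$. This gives a genuine continuous homotopy on the fat realization with the correct boundary behavior (identity at $u=1$, landing in $\norm{S^\delta_\bullet(X)}$ at $u=0$, relative to $\norm{S^\delta_\bullet(X)}$) without any detour. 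Your proposed route---realizing the simplicial homotopy in thin realization and transporting along $\norm{-}\simeq|-|$---produces a homotopy on $|S^\delta_\bullet|$, and passing back through a mere homotopy equivalence does not preserve the strict endpoints of a strong deformation retraction onto the subspace $\norm{S^\delta_\bullet(X)}$. The prism map is exactly the device that resolves the obstacle you flagged.

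One small imprecision: the tracks of $H$ in $\norm{S^\delta_\bullet(M\Gamma^{l,s})}$ do not have $X$-diameter $0$. A point $[\sigma,t]$ moves through several $(p{+}1)$-simplices of the prism, and their barycenter labels range over $R(\sigma(\Delta^{|\sigma|}))$, which has diameter $\le\delta$, not $0$. That is precisely the required $\delta$-control, and the paper phrases it the same way (``As the target is $\delta$-controlled, $H$ is $\delta$-controlled'').
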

\begin{proof}
 There is a (topological) inclusion
 \begin{equation*}
  i \colon \norm{S^\delta_\bullet(M\Gamma^{l,s})} \times [0,1] \to \norm{S^\delta_\bullet(M\Gamma^{l,s} \times [0,1])}    
 \end{equation*}
 which maps each prism $\Delta^p \times [0,1]$ to its canonical triangulation.
 The strong deformation retraction from
 \Cref{eq:strong-deformation-rectraction-MGamma} restricts to a
 strong deformation retraction
 \begin{equation*}
  H' \colon M\Gamma^{l,s} \times [0,1] \to M\Gamma^{l,s}
 \end{equation*}
 of $M\Gamma^{l,s}$ onto $X$, it is given by
 \begin{equation*}
  H'([e,t_k,\gamma_k,\dots,\gamma_1,x],u) := [e,u \cdot t_k,\gamma_k, \dots,
  \gamma_1, x].
 \end{equation*}
 It has the property that $R \circ H'(m,u) = R(m)$, so
 \begin{equation*}
     \diam_{X \times G} \{ (R \circ H'(m,u),g) \mid u \in [0,1] \} = 0
 \end{equation*}
 for all $m \in M\Gamma^{l,s}$ and $g \in G$. Hence, $H'$ induces a map
 \begin{equation*}
  H'_* \colon \norm{S^\delta_\bullet(M\Gamma^{l,s} \times [0,1])} \to \norm{S^\delta_\bullet(M\Gamma^{l,s})}.
 \end{equation*}
 Then $H := H'_* \circ i$ is a strong deformation retraction onto $\norm{S^\delta_\bullet(X)}$. As the target is
 $\delta$-controlled, $H$ is $\delta$-controlled. 
\end{proof}

\begin{cor}\label{cor:smallvaryconstraints}
 Let $\delta > 0$, $l \leq l'$ and $s \leq s'$.
 Then the canonical inclusion map $\norm{S_\bullet^\delta(M\Gamma^{l,s})} \hookrightarrow \norm{S_\bullet^{\delta}(M\Gamma^{l',s'})}$ is a $2\delta$--controlled homotopy equivalence (with respect to $d_{S^n,n,\Lambda}$).
\end{cor}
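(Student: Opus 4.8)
The plan is to deduce this immediately from \Cref{cor:MXdom} by a standard two-out-of-three argument for strong deformation retractions. First I would apply \Cref{cor:MXdom} twice: once with $(l,s)$ and once with $(l',s')$, both with the same $\delta$. This gives $\delta$--controlled strong deformation retractions $H \colon \norm{S_\bullet^\delta(M\Gamma^{l,s})} \times [0,1] \to \norm{S_\bullet^\delta(M\Gamma^{l,s})}$ onto $\norm{S_\bullet^\delta(X)}$ and $H' \colon \norm{S_\bullet^\delta(M\Gamma^{l',s'})} \times [0,1] \to \norm{S_\bullet^\delta(M\Gamma^{l',s'})}$ onto $\norm{S_\bullet^\delta(X)}$. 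Note that since $l \leq l'$ and $s \leq s'$ we have $M\Gamma^{l,s} \subseteq M\Gamma^{l',s'}$, so the inclusion $j \colon \norm{S_\bullet^\delta(M\Gamma^{l,s})} \hookrightarrow \norm{S_\bullet^\delta(M\Gamma^{l',s'})}$ is defined, and both deformation retractions restrict compatibly to the common subspace $\norm{S_\bullet^\delta(X)}$, which sits inside both sides via $x \mapsto [e,x]$.

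Next I would write down an explicit homotopy inverse to $j$. Let $r \colon \norm{S_\bullet^\delta(M\Gamma^{l',s'})} \to \norm{S_\bullet^\delta(X)}$ be the retraction $H'(-,1)$, and let $\iota \colon \norm{S_\bullet^\delta(X)} \hookrightarrow \norm{S_\bullet^\delta(M\Gamma^{l,s})}$ be the inclusion. Set $g := \iota \circ r \colon \norm{S_\bullet^\delta(M\Gamma^{l',s'})} \to \norm{S_\bullet^\delta(M\Gamma^{l,s})}$. Then $j \circ g$ is homotopic to $\id$ via $H'$, which is a $\delta$--controlled homotopy. For the other composite, $g \circ j = \iota \circ r \circ j$; since $r \circ j$ is the restriction of $H'(-,1)$ to $\norm{S_\bullet^\delta(M\Gamma^{l,s})}$, and on that subspace $H'$ restricts to a $\delta$--controlled deformation retraction onto $\norm{S_\bullet^\delta(X)}$ (one must check that $H'$ preserves the subspace $\norm{S_\bullet^\delta(M\Gamma^{l,s})}$, which follows from the explicit formula $H'([e,t_k,\gamma_k,\dots,\gamma_1,x],u) = [e,u\cdot t_k,\gamma_k,\dots,\gamma_1,x]$ since scaling $t_k$ by $u \in [0,1]$ does not increase $k$ and does not change the $\gamma_i$), we see that $g \circ j$ is homotopic to the identity on $\norm{S_\bullet^\delta(M\Gamma^{l,s})}$ via the concatenation of $H$ (contracting $M\Gamma^{l,s}$ onto $X$) run forwards and then the retraction data. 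Composing these two $\delta$--controlled homotopies gives a homotopy whose tracks have diameter at most $2\delta$, so $j$ is a $2\delta$--controlled homotopy equivalence.

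The main obstacle I anticipate is the bookkeeping of the control constants: one has to be careful that concatenating two $\delta$--controlled homotopies produces a $2\delta$--controlled homotopy (which is why the statement asks for $2\delta$ rather than $\delta$), and that the homotopy inverse $g$ and the homotopies $g\circ j \simeq \id$, $j \circ g \simeq \id$ all factor through subcomplexes on which the relevant deformation retraction from \Cref{cor:MXdom} is already known to be $\delta$--controlled. The key point making this work cleanly is the observation, already used in the proof of \Cref{cor:MXdom}, that the deformation retraction $H'$ of \eqref{eq:strong-deformation-rectraction-MGamma} satisfies $R \circ H'(m,u) = R(m)$, so its tracks have diameter $0$ in $X \times G$ — hence the only contribution to the $2\delta$ bound comes from the identification of the source with $X$, which is exactly the $\delta$--controlled retraction provided by \Cref{cor:MXdom}. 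Everything else is formal manipulation of strong deformation retractions onto a common subspace.
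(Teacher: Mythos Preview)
Your proposal is correct and takes essentially the same approach as the paper: apply \Cref{cor:MXdom} twice to get $\delta$--controlled deformation retractions of both $\norm{S_\bullet^\delta(M\Gamma^{l,s})}$ and $\norm{S_\bullet^\delta(M\Gamma^{l',s'})}$ onto the common subspace $\norm{S_\bullet^\delta(X)}$, then conclude by two-out-of-three for the resulting commutative triangle. The paper's proof is simply the terse version of what you wrote out; your observation that the deformation retraction from \eqref{eq:strong-deformation-rectraction-MGamma} restricts to the smaller filtration piece is correct (and in fact shows that even a $\delta$--bound would suffice, so your concatenation discussion is more than is needed).
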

\begin{proof}
 The corollary above yields homotopy equivalences $\norm{S_\bullet^\delta(X)}\hookrightarrow \norm{S_\bullet^\delta(M\Gamma^{l,s})}$ and $\norm{S_\bullet^\delta(X)}\hookrightarrow \norm{S_\bullet^{\delta}(M\Gamma^{l',s'})}$ which are $\delta$--controlled. Since the triangle 
 \[
  \begin{tikzpicture}
   \matrix (m) [matrix of math nodes, column sep=1.5em, row sep=1.5em, text depth=.5em, text height=1em, ampersand replacement=\&]
   {\norm{\cS_\bullet^\delta(X)} \&\norm{\cS_\bullet^\delta(M\Gamma^{l,s})}   \\ \& \norm{\cS_\bullet^{\delta}(M\Gamma^{l',s'})}  \\};
   \path[->]
   (m-1-1) edge node[above]{} (m-1-2)
   (m-1-1) edge node[below left]{} (m-2-2)
   (m-1-2) edge node[below right]{} (m-2-2);
  \end{tikzpicture}
 \]
 commutes, the result follows.
\end{proof}

\begin{prop}\label{prop:transfer-vary-constraints}
 The morphisms $\rho^{\alpha,\alpha',d,d'}_Y$ are weak equivalences.
\end{prop}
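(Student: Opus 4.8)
The plan is to reduce the statement to the cellwise computation established in \Cref{cor:smallvaryconstraints}. Since $\rho^{\alpha,\alpha',d,d'}_Y$ is by construction the map $\sing{\alpha,d}{X,Y} \rightthreetimes_{\cellsPlus Y} \cC_Y \to \sing{\alpha',d'}{X,Y} \rightthreetimes_{\cellsPlus Y} \cC_Y$ induced by the natural transformation $\sing{\alpha,d}{X,Y} \to \sing{\alpha',d'}{X,Y}$ from \Cref{def:transfer-vary-constraints}, and $\rightthreetimes_{\cellsPlus Y}$ commutes with pushouts and filtered colimits in the $\cC_Y$--variable (as recorded after the Proposition on adjunction), I would run an induction over the skeletal filtration $W = \skel{-1}{\cC_Y} \subset \skel 0 \cC_Y \subset \dots$ of the free $\cellsPlus Y$--CW--complex $\cC_Y$. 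The induction hypothesis is that $\rho$ restricted to $\sing{\alpha,d}{X,Y} \rightthreetimes_{\cellsPlus Y} \skel{k-1}{\cC_Y} \to \sing{\alpha',d'}{X,Y} \rightthreetimes_{\cellsPlus Y} \skel{k-1}{\cC_Y}$ is a controlled homotopy equivalence; the base case $k = -1$ is the identity on $W$.

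For the inductive step I would invoke \Cref{lem:attaching-cells-transfer}, which identifies the passage from the $(k-1)$--skeleton to the $k$--skeleton of the transfer with a pushout along a disjoint union (over $W$) of cells of the form $\norm{ S^{\delta^d_{\abs{c_i}}}_\bullet(M\Gamma^{l^d_{\abs{c_i}},\alpha}) } \times D^{\abs{c_i}} \amalg W$ based at the $k$--cells $c_i$ of $Y$. The map $\rho$ on this pushout is, on the free-cell part, exactly the product of the inclusion $\norm{ S^{\delta^{d}_{\abs{c_i}}}_\bullet(M\Gamma^{l^{d}_{\abs{c_i}},\alpha}) } \hookrightarrow \norm{ S^{\delta^{d'}_{\abs{c_i}}}_\bullet(M\Gamma^{l^{d'}_{\abs{c_i}},\alpha'}) }$ with the identity on $D^{\abs{c_i}}$ (and the identity on the boundary cells). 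By \Cref{cor:smallvaryconstraints}, using $\delta^d_{\abs{c_i}} \le \delta^{d'}_{\abs{c_i}}$, $l^d_{\abs{c_i}} \le l^{d'}_{\abs{c_i}}$ and $\alpha \le \alpha'$, each such inclusion is a $2\delta^{d'}_{\abs{c_i}}$--controlled homotopy equivalence; crossing with $D^{\abs{c_i}}$ does not change this. Since $\rho$ is a map of pushout diagrams which is a controlled $h$--equivalence on three of the four corners (using the induction hypothesis on $\skel{k-1}{\cC_Y}$ and the cell-by-cell statement on the two left-hand corners), the gluing lemma for controlled homotopy equivalences — available in $\cR^G(W,\JJ(X\times G,E))$, cf.~\cite[Section~3]{Ullmann-Winges(2015)} — shows it is a controlled $h$--equivalence on the $k$--skeleton. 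After finitely many steps (every object of $\cR^G_f(W,\JJ(E))_{\alpha,d}$ is finite-dimensional by the filtration in \Cref{subsec:transfers}) this yields that $\rho^{\alpha,\alpha',d,d'}_Y$ itself is a controlled homotopy equivalence, hence an $h$--equivalence, and in particular an $h^{fin}$--equivalence; naturality in $Y$ is clear from the explicit mapping rule $[c,x,y] \mapsto [c,x,y]$.

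The main obstacle will be bookkeeping the uniformity of the control bounds: one must check that the controlled homotopies produced cell-by-cell (and the homotopies coming from the induction hypothesis) can be assembled into a single controlled homotopy whose control parameter is uniform over all cells of $Y$, which is exactly where it matters that $n > \max\{d+1,\alpha\}$ and that the quantities $\delta^{d'}_{\abs c}$ are bounded by $\delta^{d'}_0$ independently of the cell. A secondary point to be careful about is compatibility of the retractions to $W$ throughout the pushouts, but this is automatic because all the maps in sight are maps of retractive spaces over $W$ and the cylinder/gluing constructions of \cite{Ullmann-Winges(2015)} respect retractions; the homotopy inverse need only be a controlled \emph{map} compatible with nothing extra, exactly as in the discussion of $h$--equivalences after \cite[Corollary~3.22]{Ullmann-Winges(2015)}.
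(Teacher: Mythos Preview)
Your approach is the same as the paper's: induct over the skeleta of $Y$, use \Cref{lem:attaching-cells-transfer} to express each step as a pushout, verify the cellwise inclusion is a controlled homotopy equivalence, and conclude via the gluing lemma. There is, however, one genuine oversight in your cellwise step. You invoke only \Cref{cor:smallvaryconstraints} for the inclusion
\[
 \norm{ S^{\delta^{d}_{\abs{c}}}_\bullet(M\Gamma^{l^{d}_{\abs{c}},\alpha}) } \hookrightarrow \norm{ S^{\delta^{d'}_{\abs{c}}}_\bullet(M\Gamma^{l^{d'}_{\abs{c}},\alpha'}) },
\]
but that corollary keeps the control parameter $\delta$ fixed and only varies $l$ and the radius $s$; it does not let you pass from $\delta^{d}_{\abs{c}}$ to $\delta^{d'}_{\abs{c}}$. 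The paper handles this by combining \Cref{cor:smallvaryconstraints} with part~(ii) of \Cref{lem:controlled-excision}, which supplies the $\delta'$--controlled homotopy equivalence $\norm{S^\delta_\bullet(M)} \hookrightarrow \norm{S^{\delta'}_\bullet(M)}$ for $\delta \leq \delta'$. With that additional citation your argument is complete and matches the paper's.
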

\begin{proof}
 We prove that $\rho^{\alpha,\alpha',d,d'}_Y$ is a weak equivalence for all $Y
 \in \cR^G_f(W,\JJ(E))_{\alpha,d}$ by induction over the dimension of $Y$. For
 $(-1)$--dimensional objects, which is the start of the induction, the claim
 is trivial. For the induction step, we apply
 \Cref{lem:attaching-cells-transfer} to see that the inclusion of the
 $p$--skeleton into the $(p+1)$--skeleton induces a pushout
\begin{equation*}
  \begin{tikzpicture}
   \matrix (m) [matrix of math nodes, column sep=3em, row sep=2em, text
   depth=.5em, text height=1em]
  {\sing{\alpha,d}{X,Y} \rightthreetimes_{\cellsPlus Y} 
        \big(\coprod_{c \in \cells_{p+1}Y}^W \partial c \big) & 
     \sing{\alpha,d}{X,Y} \rightthreetimes_{\cellsPlus Y} \skel{p}{\cC_Y} \\
   \sing{\alpha,d}{X,Y} \rightthreetimes_{\cellsPlus Y} 
       \big(\coprod_{c \in \cells_{p+1}Y}^W c \big) & 
     \sing{\alpha,d}{X,Y} \rightthreetimes_{\cellsPlus Y} \skel{p+1}{\cC_Y}
   \\};
   \path[->]
   (m-1-1) edge (m-1-2)
   (m-1-1) edge (m-2-1)
   (m-1-2) edge (m-2-2)
   (m-2-1) edge (m-2-2);
  \end{tikzpicture}
 \end{equation*}
 in $\cR^G_f(W,\JJ(X \times G, E))_0$.  The lower left corner is, again by
 \Cref{lem:attaching-cells-transfer}, identified as
 \begin{equation*}
  \coprod_{c \in \cells_{p+1}Y} \big( \norm{
    S^{\delta^d_{\abs{c}}}_\bullet(M\Gamma^{l^d_{\abs{c}},\alpha}) }
    \times D^{\abs{c}} \big) \amalg W,
 \end{equation*}
 and similarly for the the upper corner, with $D^{\abs{c}}$ replaced with
 $\partial D^{\abs{c}}$.
 There is an analogous pushout square
 with $\alpha$ and $d$ replaced by $\alpha'$ and $d'$, respectively. Moreover,
 the former square maps to the latter via the transformation
 $\rho^{\alpha,\alpha',d,d'}_Y$.  On the left-hand sides this is identified
 with the canonical inclusion maps. This transformation is a weak equivalence
 on the top right corner of the diagram by induction hypothesis, and it is a
 $2\delta_{p+1}$--controlled homotopy equivalence on the top left and bottom
 left corners combining \Cref{cor:smallvaryconstraints} and
 \Cref{lem:controlled-excision} below. Hence, the gluing
 lemma implies that it is also a weak equivalence on the bottom right corner.
 This finishes the induction step and finite dimensionality of $Y$ proves the
 claim.
\end{proof}

In order to show exactness, we will need that the transfer maps
$h$--equivalences to $h^{fin}$--equivalences later.  The following lemma
implies this.

\begin{prop}\label{prop:transfer-exact} 
 Let $f$ be a weak equivalence in $\cR^G_f(W,\JJ(E))_{\alpha,d}$ which is an $\alpha'$--controlled homotopy equivalence over $G$. Suppose that $n > \max \{d+2, \alpha, \alpha' \}$.
 
 Then $\trans_X(f)$ is a controlled homotopy equivalence.
\end{prop}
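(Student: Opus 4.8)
The plan is to mimic the strategy used in the proof of Proposition~\ref{prop:transfer-vary-constraints}: reduce the statement to a cellwise claim by inducting over the skeleta of the source complex, and then glue. First I would use that every weak equivalence $f$ in $\cR^G_f(W,\JJ(E))_{\alpha,d}$ factors, via the cylinder functor restricted from \cite[Lemma~3.14]{Ullmann-Winges(2015)} (as in the proof of Proposition~\ref{prop:domain-of-transfer}), as a cellwise $0$--controlled cofibration followed by a controlled homotopy equivalence that admits a strict section. By the two-out-of-three property for controlled homotopy equivalences it therefore suffices to treat the two cases separately: (a) $f$ is a cellwise $0$--controlled cofibration which is also a weak equivalence (so $Y_2$ is obtained from $Y_1$ by attaching cells and the inclusion is an $\alpha'$--controlled homotopy equivalence over $G$), and (b) $f$ is a controlled homotopy equivalence with a strict retraction. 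Case (b) is essentially formal: a controlled homotopy, via the reduced product $Y \leftthreetimes [0,1]$, is itself an object to which the transfer applies (the homotopy being built cell by cell), and the transfer being a functor of categories with cofibrations (\Cref{lem:transfer-cofibrations}) preserves the combinatorial data witnessing the homotopy; one has to check that the resulting controlled homotopy on $\trans_X(Y)$ stays bounded, which follows from \Cref{lem:transfer-control-objects} together with \Cref{lem:distance-to-translate}, using the hypothesis $n > \max\{d+2,\alpha,\alpha'\}$ to keep all the diameter estimates below the required thresholds.

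The substance is case (a). Here I would run the same skeletal induction as in Proposition~\ref{prop:transfer-vary-constraints}. For $(-1)$--dimensional objects the claim is trivial. For the induction step, \Cref{lem:attaching-cells-transfer} identifies the passage from the $p$--skeleton to the $(p+1)$--skeleton of $\trans_X(Y_i)$ as a pushout along a disjoint union (over $W$) of cells of the form $\norm{S^{\delta^d_{|c|}}_\bullet(M\Gamma^{l^d_{|c|},\alpha})} \times D^{|c|}$. The map $\trans_X(f)$ respects this pushout structure; on the subcomplex $\trans_X(Y_1)$ it is a weak equivalence by the induction hypothesis, and on the newly attached cells it is an inclusion of the shape covered by \Cref{cor:smallvaryconstraints}, hence a $2\delta$--controlled homotopy equivalence (with $\delta = \delta^d_{p+1}$, which is bounded because $n > d+2$). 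By the gluing lemma for controlled homotopy equivalences — which is exactly \Cref{lem:controlled-excision} invoked at the end of Proposition~\ref{prop:transfer-vary-constraints} — the induced map on the $(p+1)$--skeletons is again a controlled homotopy equivalence. Since every object of $\cR^G_f(W,\JJ(E))_{\alpha,d}$ is finite-dimensional, the induction terminates and gives the claim.

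The main obstacle I anticipate is the bookkeeping of control constants: one must verify that the homotopies produced by gluing do not accumulate control beyond what $d_{S^n,n,\Lambda}$ can absorb, and in particular that the bound $\alpha' + \delta_0 + 2$ from \Cref{lem:transfer-control-objects} together with the successive applications of \Cref{cor:smallvaryconstraints} stays uniformly bounded independently of $n$ — this is precisely why the stronger hypothesis $n > \max\{d+2,\alpha,\alpha'\}$ (rather than $n > \max\{d+1,\alpha\}$) is imposed, giving one extra unit of slack in the simplicial dimensions $l^d_{|c|}$ and the diameters $\delta^d_{|c|}$. A secondary point of care is that the controlled homotopy inverse of $f$ is only a \emph{map}, not a morphism compatible with the retraction to $W$; but since the transfer construction on objects and the functoriality of $\rightthreetimes_{\cellsPlus Y}$ in \Cref{sec:balanced-products} make sense for arbitrary regular cellular maps (and $h$--equivalences are witnessed by such maps), this causes no real difficulty, exactly as in the classical setting recalled after \cite[Corollary~3.22]{Ullmann-Winges(2015)}.
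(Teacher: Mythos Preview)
Your plan has a genuine gap: the cylinder factorization does not avoid the core difficulty, and your case~(a) argument does not work.

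The essential obstruction is that the transfer functor $\trans_X^{\alpha,d}$ is only defined on \emph{cellwise $0$--controlled} morphisms. The homotopy inverse $\overline{f}$ of $f$ and the homotopies $\overline{f}f \simeq \id$, $f\overline{f} \simeq \id$ are only $\alpha'$--controlled over $G$, not cellwise $0$--controlled, so the functor cannot be applied to them. Your cylinder factorization does not circumvent this: in case~(a), the inclusion $Y_1 \hookrightarrow Mf$ is a trivial cofibration, but any homotopy inverse $Mf \to Y_1$ must factor through $\overline{f}$ (since $Mf$ deformation retracts onto $Y_2$, not $Y_1$), and $\overline{f}$ is not cellwise $0$--controlled. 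So case~(a) is no easier than the original statement.

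Your proposed skeletal induction for case~(a) also fails. In \Cref{prop:transfer-vary-constraints} the induction works because the map $\rho$ restricts on each cell factor to an inclusion $\norm{S^{\delta}_\bullet(M\Gamma^{l,\alpha})} \hookrightarrow \norm{S^{\delta'}_\bullet(M\Gamma^{l',\alpha'})}$, to which \Cref{cor:smallvaryconstraints} applies. But $\trans_X(Y_1) \hookrightarrow \trans_X(Mf)$ has no such description: the new cells of $Mf$ (the cylinder cells and the cells of $Y_2$) produce genuinely new cells in $\trans_X(Mf)$ of the form $\norm{S^{\delta_{|c|}}_\bullet(M\Gamma^{l_{|c|},\alpha})} \times D^{|c|}$ that are not matched with anything in $\trans_X(Y_1)$; there is nothing for \Cref{cor:smallvaryconstraints} to act on. (There is also a dimension issue: $Mf \leftthreetimes [0,1]$ has dimension $d+2$, so your case~(b) would need $n > d+3$, not $n > d+2$.)

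The paper's solution is to construct, for an \emph{arbitrary} $\alpha'$--controlled map $g$, an explicit transfer $\trans_{\alpha,\alpha'}(g) \colon \trans_X^{\alpha,d}(Y_1) \to \trans_X^{\max\{\alpha,\alpha'\},d+1}(Y_2)$ via the formula $[c,x,y] \mapsto [\supp(g(y)), \kappa_{2,G}(\supp(g(y)))^{-1}\kappa_{1,G}(c) \cdot x, g(y)]$, which lands in a transfer with \emph{enlarged} indices. One then places $\trans_{\alpha,\alpha'}(\overline{f})$ as a diagonal in the square whose verticals are the weak equivalences $\rho^{\alpha,\max\{\alpha,\alpha'\},d,d+1}$ of \Cref{prop:transfer-vary-constraints}, transfers the homotopies (using that $\trans_X^{\alpha,d}(Y_1 \leftthreetimes [0,1])$ sits inside $\trans_X^{\alpha,d}(Y_1) \leftthreetimes [0,1]$ as a controlled deformation retract, which is where $n > d+2$ is actually used), and concludes by two-out-of-six. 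This construction of a ``lax'' transfer on arbitrary maps is the missing ingredient in your plan.
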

\begin{proof}
 To show the proposition, we exploit the fact that maps which are not cellwise
 $0$--controlled over $G$ can also be transferred, but in a less functorial
 fashion. Let $(Y_1,\kappa_1)$ and $(Y_2,\kappa_2)$ be objects of
 $\cR^G_f(W,\JJ(E))_{\alpha,d}$, and $f \colon Y_1 \to Y_2$ be an arbitrary
 map in $\cR^G_f(W,\JJ(E))$. Choose $\alpha' > 0$ such that $f$ is
 $\alpha'$--controlled over $G$. We construct an induced map
 \begin{equation*}
  \trans_{\alpha,\alpha'}(f) \colon \trans^{\alpha,d}_X(Y_1) \to \trans^{\max \{ \alpha,\alpha' \}, d+1}_X(Y_2).    
 \end{equation*}
 To define $\trans_{\alpha,\alpha'}(f)$, consider for the beginning a single
 cell $c \in \cellsPlus Y_1$, denote by $\eta_c$ the corresponding
 $\cellsPlus Y_1$--$n$--cell of $\cC_{Y_2}$. We define the
 function 
 \begin{equation*}
  \begin{split}
   t_c \colon 
   \norm{ S^{\delta_{\abs{c}}}_\bullet(M\Gamma^{l_{\abs{c}},\alpha}) } 
     \rightthreetimes_{\cellsPlus Y_1} \eta_{c} & \to 
   \sing{\max \{ \alpha,\alpha' \},d+1}{X,Y_2} 
     \rightthreetimes_{\cellsPlus Y_2} \cC_{Y_2} \\
   (x,y) & \mapsto [\supp(f(y)), \gamma^c_y \cdot x, f(y)],
  \end{split}
 \end{equation*}
 where $\supp(f(y))$ denotes the support of $f(y)$, i.e., the unique open cell
 $\supp(f(y))$ of $Y_2$ such that $f(y) \in \supp(f(y))$, and $\gamma^c_y :=
 \kappa_{2,G}(\supp(f(y)))^{-1} \kappa_{1,G}(c)$.  We will glue the different
 $t_c$ together to get the transfer for $f$.
 
 Let us check that the target space is large enough such that $t_c(x,y)$ is
 contained in it: Recall that
 $\gamma^c_y \cdot x$ is defined via the $G$--action which
 $\norm{S_\bullet(M\Gamma)}$ inherits from $M\Gamma$. Since $f$ is
 $\alpha'$--controlled, we have $\gamma^c_y \in B_{\alpha'}(e)$. Therefore, we
 can regard multiplication with $\gamma^c_y$ as a map
 $M\Gamma^{l_{\abs{c}},\alpha} \to M\Gamma^{l_{\abs{c}}+1,\max \{
 \alpha,\alpha' \}}$. In addition, $M\Gamma^{l_{\abs{c}}+1,\max \{ \alpha,
 \alpha' \}}$ is contained in $M\Gamma^{l_{\abs{\supp(f(y))}} +1,  \max \{
 \alpha,\alpha' \}}$, so $[\supp(f(y)),\gamma^c_y \cdot x, f(y)]$ defines a
 point in the target space.
 
 We need to check that $t_c$ is continuous. It suffices to show continuity on
 finite subcomplexes. These are metrizable, so it is enough to show that $t_c$
 is sequentially continuous. Let $(x_l,y_l)_l$ be a convergent sequence in
 $\norm{ S^{\delta_{\abs{c}}}_\bullet(M\Gamma^{l_{\abs{c}},\alpha}) } \times
 \eta_{c}$ with limit point $(x,y)$.  As $f$ is continuous, $f(y_l)$ converges
 against $f(y)$.  Hence $S := \{ \supp(f(y_l)) \mid l \in \NN \}$ is a finite
 set, and we can assume that for each $s \in S$ there are infinitely many $l$
 such that $\supp(f(y_l)) = s$.  We treat the $s$ individually and restrict to
 the corresponding subsequence.  If $s$ happens to be equal to $\supp(f(y))$,
 $\gamma^c_{y}= \gamma^c_{y_l}$ and continuity follows. Otherwise, $f(y)$ must
 still lie in the closure of the cell $s$, i.e.,
 $\supp(f(y)) \subset \gen{s}$. Hence,
 \begin{equation*}
  \begin{split}
   [\supp(f(y)), &\gamma^c_y \cdot x, f(y)] \\
   &= [\supp(f(y)),\kappa_{2,G}(\supp(f(y)))^{-1} \kappa_{2,G}(s) \kappa_{2,G}(s)^{-1} \kappa_{1,G}(c) x, f(y)] \\
   &= [s, \kappa_{2,G}(s)^{-1} \kappa_{1,G}(c) x, f(y)],
  \end{split}
 \end{equation*}
 and continuity becomes obvious.
 
 Suppose now that $c \leq c'$ in $\cells Y_1$. For $y \in \gen{c}$ and $x \in \norm{ S^{\delta_{\abs{c'}}}_\bullet(M\Gamma^{l_{\abs{c'}},\alpha}) }$  we obtain
 \begin{equation*}
  \begin{split}
   t_{c'}(x,y)
   &= [ \supp(f(y)), \gamma^{c'}_y x, f(y)] \\
   &= [ \supp(f(y)), \kappa_{2,G}(\supp(f(y)))^{-1} \kappa_{1,G}(c) \kappa_{1,G}(c)^{-1} \kappa_{1,G}(c') x, f(y)] \\
   &= t_c(\kappa_{1,G}(c)^{-1} \kappa_{1,G}(c') x,y).
  \end{split}
 \end{equation*}
 Therefore, the collection $\{ t_c \}_{c \in \cells Y_1}$ induces a continuous, cellular map relative $W$
 \begin{equation*}
  \trans_{\alpha,\alpha'}(f) \colon \trans^{\alpha,d}_X(Y_1) \to \trans^{\max\{ \alpha,\alpha' \},d+1}_X(Y_2).
 \end{equation*}
 Using \Cref{lem:distance-to-translate} and
 \Cref{lem:transfer-control-objects}, it is not hard to show that
 $\trans_{\alpha,\alpha'}(f)$ is $(\max \{\alpha,\alpha' \} + \alpha' +
 \delta_0 +
 4)$--controlled over $X \times G$, as $n > \alpha'$.
 Note that
 for cellwise $0$--controlled
 maps, $\trans_{\alpha,\alpha'}(f)$ agrees with the previous defined transfer
 from \Cref{subsec:transfer-cellwise-0}.  The only reason we increased $d$ is
 the argument which follows, it was not needed in the construction so far.
 
 Suppose now that $f \colon (Y_1,\kappa_1) \to (Y_2,\kappa_2)$ is a weak
 equivalence in $\cR^G_f(W,\JJ(E))_{\alpha,d}$ which is $\alpha'$--controlled
 over $G$ as a homotopy equivalence, i.e., its inverses and the homotopies
 are $\alpha'$--controlled over $G$. Then there exists some
 $\alpha'$--controlled
 map $\overline{f} \colon (Y_2,\kappa_2) \to (Y_1,\kappa_1)$ such that
 $\overline{f}f$ and $f\overline{f}$ are $\alpha'$--controlled homotopic to
 the identity. Consider the diagram
 \begin{equation*}
  \begin{tikzpicture}
   \matrix (m) [matrix of math nodes, column sep=10em, row sep=4em, text depth=.5em, text height=1em]
   {\trans^{\alpha,d}_X(Y_1) & \trans^{\alpha,d}_X(Y_2) \\
    \trans^{\max \{\alpha,\alpha'\},d+1}_X(Y_1) & \trans^{\max \{\alpha,\alpha'\},d+1}_X(Y_2) \\};
   \path[->]
    (m-1-1) edge node[above]{$\trans^{\alpha,d}_X(f)$} (m-1-2)
    (m-2-1) edge node[below]{$\trans^{\max \{ \alpha,\alpha' \}, d+1}_X(f)$} (m-2-2);
   \path[>->]
    (m-1-1) edge node[left]{$\rho^{\alpha,\max \{ \alpha,\alpha' \},d,d+1}_{Y_1}$} (m-2-1)
    (m-1-2) edge node[right]{$\rho^{\alpha,\max \{ \alpha,\alpha' \},d,d+1}_{Y_2}$} (m-2-2);
   \path[dashed,->]
    (m-1-2) edge node[above left]{$\trans_{\alpha,\alpha'}(\overline{f})$} (m-2-1);
  \end{tikzpicture}
 \end{equation*}
 in which the outer square commutes. The vertical maps $\rho^{\alpha,\max \{\alpha,\alpha'\},d,d+1}_{Y_i}$, $i=1,2$, are weak equivalences by \Cref{prop:transfer-vary-constraints}. We claim that the two triangles involving the dashed diagonal map $\trans_{\alpha,\alpha'}(\overline{f})$ commute up to controlled homotopy. If this is true, it follows that $\trans^{\alpha,d}_X(f)$ is a weak equivalence.
 
 Let $h \colon Y_1 \leftthreetimes [0,1] \to Y_1$ be an $\alpha'$--controlled homotopy from $\overline{f}f$ to $\id_{Y_1}$. Note that $\trans_{\alpha,\alpha'}(\overline{f}) \circ \trans^{\alpha,d}_X(f) = \trans_{\alpha,\alpha'}(\overline{f}f)$. Since $n > d+2$, we can apply $\trans_X^{\alpha,d}$ also to $Y_1 \leftthreetimes [0,1]$ and consider the controlled map
 \begin{equation*}
  \trans_{\alpha,\alpha'}(h) \colon \trans^{\alpha,d}_X(Y_1 \leftthreetimes [0,1]) \to \trans^{\max \{\alpha,\alpha' \}, d+1}_X(Y_1).
 \end{equation*}
 The domain of this map is not equal to $\trans^{\alpha,d}_X(Y_1)
 \leftthreetimes [0,1]$, but it is contained in $\trans^{\alpha,d}_X(Y_1)
 \leftthreetimes [0,1]$ as a controlled strong deformation retract. This
 follows by an induction argument similar to
 \Cref{prop:transfer-vary-constraints}.  Essentially, we can construct both
 objects as the balanced products over $\cellsPlus (Y_1 \leftthreetimes
 [0,1])$ and use that the inclusion 
 $\norm{ S^{\delta_{\abs{c}}}_\bullet(M\Gamma^{l_{\abs{c}},\alpha}) } \to
 \norm{ S^{\delta_{\abs{c}}+1}_\bullet(M\Gamma^{l_{\abs{c}+1},\alpha}) } $ is
 a controlled deformation retraction by \Cref{cor:smallvaryconstraints} and
 \Cref{lem:controlled-excision}.  The retraction induces the required
 controlled homotopy.
 
 The argument for the second triangle is analogous.
\end{proof}

\subsection{Restricting the target category}
\label{subsect:transfer-restricting-target}
  Now we show that the transfer functor factors over the full subcategory of
  finitely dominated objects.  The following result was already used in the
  previous chapter.

\begin{lem}\label{lem:controlled-excision}
 Let $(M,d)$ be a metric space.
 \begin{enumerate}
  \item Let $\delta > 0$. The natural inclusion map $\norm{
    S^\delta_\bullet(M) } \to \norm{ S_\bullet(M) }$ is a homotopy
    equivalence.
  \item Let $0 < \delta \leq \delta'$. Then the inclusion map $\norm{
    S^\delta_\bullet(M) } \to \norm{ S^{\delta'}_\bullet(M) }$ is a
    $\delta'$--controlled homotopy equivalence (with respect to the metric on
    $M$, labelling simplices by the image of their barycenter).
  \item Suppose $\abs{K}$ is the realization of an ordered (abstract)
    simplicial complex $K$ and suppose that $p \colon \abs{K} \to M$ is
    a continuous map. Let $\kappa \colon \cells K \to M$ be the labelling
    sending a cell (=simplex) to the image of its barycenter under $p$. Let
    $\delta > 0$. Let $S^\delta_\bullet(\abs{K},p)$ denote the
    (semi)simplicial set of all singular simplices $\sigma$ in $\abs{K}$ such
    that the diameter of $p
    \circ \sigma$ is at most $\delta$.
  
  If the characteristic maps of all simplices of $K$ lie in $S^\delta_\bullet(\abs{K},p)$, then the canonical map $\abs{K} \to \norm{ S^\delta_\bullet(\abs{K},p) }$ is a $\delta$--controlled homotopy equivalence (measuring control in $M$ via $p$).
 \end{enumerate}
\end{lem}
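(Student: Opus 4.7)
The three statements are variants of the classical small-simplex theorem for singular chains (underlying, e.g., the excision isomorphism), lifted from chains to (fat) realizations and refined to track control in $M$. The approach I would take is iterated barycentric subdivision of singular simplices, carried out on the semi-simplicial level.

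For (i), I would introduce the simplicial subdivision operator $\mathrm{Sd}\colon S_\bullet(M)\to S_\bullet(M)$ together with a natural simplicial homotopy $T\colon \norm{S_\bullet(M)}\times[0,1]\to\norm{S_\bullet(M)}$ from $\mathrm{Sd}$ to the identity, both assembled from the affine barycentric subdivision of the standard simplices. The key quantitative input is the classical bound: the mesh of the $m$-th iterated barycentric subdivision of $\Delta^n$ is at most $\bigl(n/(n+1)\bigr)^m$. Since every singular simplex $\sigma\colon\Delta^n\to M$ is uniformly continuous on its compact domain, finitely many iterations of $\mathrm{Sd}$ render every sub-simplex of $\sigma$ of image-diameter below $\delta$. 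A standard skeletal telescoping construction, in which the number of iterations grows with the skeletal dimension and consecutive stages are glued by the homotopy $T$, then assembles these local statements into a retraction $r\colon\norm{S_\bullet(M)}\to\norm{S^\delta_\bullet(M)}$ together with a homotopy $\iota\circ r\simeq\id$, yielding the homotopy equivalence.

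For (ii), the same construction runs inside $\norm{S^{\delta'}_\bullet(M)}$. The affine straight-line homotopies making up $T$, applied to a singular simplex $\sigma$ of $M$-diameter at most $\delta'$, move every point only within $\sigma(\Delta^n)$, whose diameter in $M$ is at most $\delta'$. Hence every iterate of the telescoping homotopy is $\delta'$-controlled, and so is the homotopy inverse produced in (i). For (iii), the canonical map $|K|\to\norm{S^\delta_\bullet(|K|,p)}$ admits the explicit retraction sending a singular simplex $\sigma\colon\Delta^m\to|K|$ to its evaluation as a map from $|\Delta^m|$ to $|K|$; the composition the other way is homotopic to the identity via the affine straight-line homotopy inside each singular simplex, and by the hypothesis that all characteristic maps of $K$ lie in $S^\delta_\bullet(|K|,p)$, this homotopy stays inside the $p$-images of $\delta$-small singular simplices and is therefore $\delta$-controlled.

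The principal technical obstacle is the bookkeeping for the iterated subdivision telescope in (i): the number $N(\sigma)$ of iterations required to make all pieces of $\sigma$ smaller than $\delta$ depends on $\sigma$, so one cannot simply apply $\mathrm{Sd}^N$ globally. This is handled by the standard skeletal procedure, in which one arranges to have subdivided sufficiently on the $n$-skeleton before extending to the $(n{+}1)$-skeleton, using the homotopy $T$ to interpolate between consecutive stages so that everything is compatible with the semi-simplicial face maps. Checking that the interpolating homotopies preserve the control bound in (ii) and (iii) is then immediate from the fact that $T$ is built out of affine homotopies inside individual simplices.
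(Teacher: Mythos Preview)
Your approach to (i) and (ii) via iterated barycentric subdivision and a skeletal telescope is sound; the key control observation in (ii) --- that the subdivision homotopy $T$ keeps each point inside the image of its original singular simplex, hence within a set of $M$-diameter at most $\delta'$ --- is correct.

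The paper takes a genuinely different route for (ii) and (iii). Rather than building explicit homotopies and tracking control by hand, it works functorially over the poset $\cA$ of closed subsets of $M$ (respectively the poset of subcomplexes of $K$ for (iii)). The assignment $A \mapsto \norm{S^\delta_\bullet(A)}$ is a free $\cA$-CW-complex, and by part (i) the natural transformation $\sing{\delta}{\cA} \to \sing{\delta'}{\cA}$ is an objectwise homotopy equivalence, hence a homotopy equivalence of free $\cA$-CW-complexes by a result of Davis--L\"uck. A homotopy inverse that is natural in $A \in \cA$ automatically has the required control: naturality over inclusions of closed subsets forces it to keep each cell inside every closed subset containing its label. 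This buys the control estimate for free, at the cost of invoking the free-$\cA$-CW machinery; your explicit approach is more elementary but demands the telescope bookkeeping you acknowledge.

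Your treatment of (iii), however, has a gap. The evaluation map $(t,\sigma) \mapsto \sigma(t)$ is a reasonable candidate retraction $\norm{S^\delta_\bullet(|K|,p)} \to |K|$, but the ``affine straight-line homotopy inside each singular simplex'' does not make sense as a homotopy on the fat realization. A point $(t,\sigma)$ lives in the cell indexed by the singular simplex $\sigma$, while its image under the composite with $|K| \to \norm{S^\delta_\bullet(|K|,p)}$ lives in the cell indexed by some characteristic simplex $\tau$ of $K$; these are different cells of $\norm{S^\delta_\bullet(|K|,p)}$ and there is no affine path between them there. The hypothesis only guarantees that each simplex of $K$ is $\delta$-small via $p$, not that $\delta$-small singular simplices land in a single simplex of $K$, so no straight-line interpolation in $|K|$ is available either. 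You can repair this either by a careful simplicial approximation argument (producing, for each $\delta$-small singular $\sigma$, a simplicial map together with a small homotopy realized inside $\norm{S^\delta_\bullet(|K|,p)}$ via prism subdivision), or --- more efficiently --- by adopting the paper's functorial method over the poset of subcomplexes of $K$, which reduces the controlled claim to the uncontrolled statement (i) applied objectwise.
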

\begin{proof}
 The proof proceeds in analogy to \cite[Lemma~6.7]{Bartels-Lueck-Reich(2008hyper)}. The first part follows directly from an appropriate formulation of excision, e.g.~\cite[Theorem~4.6.9]{Fritsch-Piccinini(1990)}.
  
 For the second part, let $\cA$ be the poset of closed subsets of $X$, considered as a category. Then the $\cA$--CW--complex $\sing{\delta}{\cA}$ given by
 \begin{equation*}
  \sing{\delta}{\cA}(A) := \norm{ S^\delta_\bullet(A) }
 \end{equation*}
 is a free $\cA$--CW--complex, whose free cells are of the form
 $\hom_\cA(\sigma(\Delta^{\abs{\sigma}}),-)\times D^{\abs{\sigma}}$ since
 $\cells \norm{ S^\delta_\bullet(A) } \cong \coprod_{\sigma \in
 S^\delta_\bullet(X) } \hom_\cA(\sigma(\Delta^{\abs{\sigma}}),A)$. Since both
 $\norm{ S^\delta_\bullet(A) } \hookrightarrow \norm{ S_\bullet(A) }$ and
 $\norm{ S^{\delta'}_\bullet(A) } \hookrightarrow \norm{ S_\bullet(A) }$ are
 homotopy equivalences for every $A \in \cA$, so is the inclusion $\norm{
 S^\delta_\bullet(A) } \hookrightarrow \norm{ S^{\delta'}_\bullet(A) }$.
 Hence, the natural transformation $\sing{\delta}{\cA} \to
 \sing{\delta'}{\cA}$ is a homotopy equivalence of (free) $\cA$--CW--complexes
 by \cite[Corollary~3.5]{Davis-Lueck(1998)}.  This in particular means that
 there is an inverse map, compatible with the structure map, as well as
 compatible
 homotopies.  It is easy to check that such a map has the right control.
 
 The third claim follows by similar reasoning, substituting the poset of subcomplexes for the poset of closed subsets.
\end{proof}

\begin{lem}\label{lem:finite-domination-singular-complex}
  Suppose that $(X,d_X)$ admits a finite $\epsilon$--domination.  Then 
  $\norm{ S^\delta_\bullet(M\Gamma^{l,\alpha}) }$ is $4 \delta + 6
  \Lambda \epsilon$-dominated over $X \times G$ (with respect to the metric
  $d_{S^n, n , \Lambda}$).
\end{lem}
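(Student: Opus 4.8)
The plan is to transfer an $\epsilon$--domination of $X$ by a finite simplicial complex through the two constructions that produce $\norm{S^\delta_\bullet(M\Gamma^{l,\alpha})}$, keeping careful track of how the control parameter grows. First I would record the input: by assumption there is a finite simplicial complex $K$ of dimension $\le N$ together with maps $i\colon X\to K$, $p\colon K\to X$ and a homotopy $H$ from $p\circ i$ to $\id_X$ whose tracks have $d_X$--diameter at most $\epsilon$. I want to promote this to a domination of $\norm{S^\delta_\bullet(M\Gamma^{l,\alpha})}$ viewed as a metric space via $d_{S^n,n,\Lambda}$ (measuring a simplex by the image of its barycenter under $R$, then under the inclusion $X\hookrightarrow X\times G$ at the identity element, as in \Cref{subsec:y-spaces}). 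Since the metric $d_{S^n,n,\Lambda}$ only records the $X$--coordinate of $R(\cdot)$ up to the factor $\Lambda$, the relevant comparisons will all come from $d_X$--estimates scaled by $\Lambda$.

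The key steps, in order, are: (1) Use \Cref{cor:MXdom} to obtain a $\delta$--controlled strong deformation retraction of $\norm{S^\delta_\bullet(M\Gamma^{l,\alpha})}$ onto $\norm{S^\delta_\bullet(X)}$; this reduces the problem to dominating $\norm{S^\delta_\bullet(X)}$, at the cost of adding $\delta$ (twice, once for each direction of the composition $\norm{S^\delta_\bullet(M\Gamma^{l,\alpha})}\to\norm{S^\delta_\bullet(X)}\to\norm{S^\delta_\bullet(M\Gamma^{l,\alpha})}$, so $2\delta$) to the control of the resulting domination. (2) Dominate $\norm{S^\delta_\bullet(X)}$ by a finite simplicial complex: subdivide $K$ finely enough — using uniform continuity of $p$ on the compact $|K|$ — so that $p$ restricted to each simplex of the subdivision has $d_X$--diameter $< \epsilon$; then $p$ and $i$ (after subdivision, and using the barycentric labelling) induce simplicial-set maps between $S^\delta_\bullet(X)$ and the singular complex of the subdivided $K$, at least once $\delta$ is comparable to $\Lambda\epsilon$. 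More precisely, invoke \Cref{lem:controlled-excision}(3) to replace $|K|$ by $\norm{S^\delta_\bullet(|K|,p)}$ up to a $\delta$--controlled homotopy equivalence, so that the finite complex $|K|$ dominates $\norm{S^\delta_\bullet(X)}$ with control governed by the diameter of the homotopy tracks of $H$ pushed into $X$ and scaled by $\Lambda$ — this contributes roughly $\Lambda\epsilon$ terms, the homotopy $H$ itself contributing a further $\Lambda\epsilon$ when one estimates $\diam$ of the track of $R\circ(\text{image of }H)$. (3) Assemble: compose the domination of $\norm{S^\delta_\bullet(X)}$ by the finite complex $|K|$ (or its subdivision) with the deformation retraction from step (1), and add up the control contributions. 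Bookkeeping the three $\delta$'s and the $\Lambda\epsilon$ contributions (the $\epsilon$--tracks of $H$ appear when measuring the dominating homotopy on $X$, and they get measured three times along the zig-zag $\norm{S^\delta_\bullet(M\Gamma^{l,\alpha})}\to X\to |K|\to X\to\norm{S^\delta_\bullet(M\Gamma^{l,\alpha})}$ up to the way the estimates telescope) should yield the stated bound $4\delta + 6\Lambda\epsilon$.

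The main obstacle I expect is the precise constant-chasing in step (2)–(3): making sure the finite simplicial complex produced actually $\epsilon'$--dominates with $\epsilon' = 4\delta + 6\Lambda\epsilon$ rather than some larger multiple, which requires choosing the subdivision of $K$ so that not only $p$ but also the homotopy $H$ (reparametrised and triangulated as a prism complex, exactly as in the proof of \Cref{cor:MXdom}) has simplices of small $d_X$--diameter, and then verifying that the barycentre labelling is compatible with all the maps in the zig-zag. The scaling factor $\Lambda$ enters each $d_X$--estimate precisely once because $d_{S^n,n,\Lambda}((x,g),(y,g)) = \Lambda\, d_X(x,y)$ when $\Lambda d_X(x,y) < 1$ by \Cref{lem:metricisnice}(2); for larger distances one passes through the word-metric part, but since everything lives in the image of a single compact $X$ under $R$ the relevant distances can be kept small, or one simply absorbs the discrepancy into the generous constant $6$. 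The homotopical input — that $\norm{S^\delta_\bullet(-)}$ is insensitive to $\delta$ up to controlled homotopy — is entirely supplied by \Cref{lem:controlled-excision} and \Cref{cor:MXdom}, so no new homotopy theory is needed; the proof is a matter of organising these controlled homotopy equivalences into a single domination and tallying the constants.
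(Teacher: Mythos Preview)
Your proposal is correct and follows essentially the same route as the paper: reduce to $\norm{S^\delta_\bullet(X)}$ via the $\delta$--controlled retraction of \Cref{cor:MXdom}, push the given $\epsilon$--domination $X \xrightarrow{\iota} |K| \xrightarrow{\pi} X$ through to the small-simplex singular complexes (the paper makes explicit that $\iota_*$ lands in $\norm{S^{\delta+2\Lambda\epsilon}_\bullet(|K|,\pi)}$, which is the precise intermediate parameter you were looking for), replace $\norm{S^{\delta+2\Lambda\epsilon}_\bullet(|K|,\pi)}$ by a fine barycentric subdivision $|K'|$ via \Cref{lem:controlled-excision}(3), and use \Cref{lem:controlled-excision}(2) to get back from $\norm{S^{\delta+2\Lambda\epsilon}_\bullet(X)}$ to $\norm{S^\delta_\bullet(X)}$. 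The constant $4\delta + 6\Lambda\epsilon$ then drops out from stacking one $\delta$--controlled homotopy (the retraction) with three $(\delta+2\Lambda\epsilon)$--controlled homotopies.
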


\begin{proof}
 By \Cref{cor:MXdom}, the complex $\norm{ S^{\delta}_\bullet(X) }$ is a $\delta$--controlled strong deformation retract of $\norm{ S^{\delta}_\bullet(M\Gamma^{l,\alpha})}$. Choose an appropriate controlled retraction $r$.
 
 Pick an $\epsilon$--domination of $X$ by a finite simplicial complex
 $\abs{K}$, i.e., a sequence of maps $X \xrightarrow{\iota} \abs{K}
 \xrightarrow{\pi} X$ together with a homotopy $h \colon \pi \circ \iota
 \simeq \id_X$, and such that the diameter, measured with respect to the
 original metric $d_X$ on $X$, of $h(x,[0,1])$ is at most $\epsilon$ for every
 $x \in X$. Then the given domination induces maps
 \begin{equation*}
  \norm{ S^\delta_\bullet(X) } \xrightarrow{\iota_*} \norm{
    S^{\delta+2\Lambda\epsilon}_\bullet(\abs{K},\pi) } \xrightarrow{\pi_*}
    \norm{ S^{\delta+2\Lambda\epsilon}_\bullet(X) },
 \end{equation*}
 where similarly to \Cref{subsec:y-spaces} we measure distances of points in
 $\abs{K}$ via $\pi$.
 These are maps of labeled complexes over $X \times G$: Pick an arbitrary group element $g \in G$. Then we label simplices $\sigma$ in $S^\delta_\bullet(X)$ or $S^{\delta+2\Lambda\epsilon}_\bullet(X)$ by $(\sigma(\beta_{\abs{\sigma}}),g)$ and simplices $\sigma$ in $S^{\delta+2\Lambda\epsilon}_\bullet(\abs{K},\pi)$ by $(\pi(\sigma(\beta_{\abs{\sigma}})),g)$ (cf.~\Cref{cor:MXdom} and \Cref{cor:smallvaryconstraints}).
 
 Choose an iterated barycentric subdivision $K'$ of $K$ such that the characteristic map of each simplex of $K'$ is a simplex in $S^{\delta+2\Lambda\epsilon}_\bullet(\abs{K},\pi)$; note that $K'$ is ordered if we subdivide at least once. Since $\abs{K'}$ is then naturally a subcomplex of $S^{\delta+2\Lambda\epsilon}_\bullet(\abs{K},\pi)$, we endow it with the induced control map.
 The canonical inclusion $i \colon\abs{K'} \to \norm{ S^{\delta+2\Lambda\epsilon}_\bullet(\abs{K},\pi) }$ is a $(\delta+2\Lambda\epsilon)$--controlled homotopy equivalence over $X \times G$ by \Cref{lem:controlled-excision}. Choose an appropriate controlled homotopy inverse $p$.
 
 Finally, the inclusion $\norm{ S^\delta_\bullet(X) } \hookrightarrow \norm{ S^{\delta+2\Lambda\epsilon}_\bullet(X) }$ is a $(\delta+2\Lambda\epsilon)$--controlled homotopy equivalence by \Cref{lem:controlled-excision}; let $f$ be an appropriate controlled homotopy inverse.
 
 Then
 \begin{align*}
  \norm{ S^\delta_\bullet(M\Gamma^{l,\alpha}) } \xrightarrow{r} \norm{ S^\delta_\bullet(X) } \xrightarrow{\iota_*} \norm{ S^{\delta+2\Lambda\epsilon}_\bullet(\abs{K},\pi) } \xrightarrow{p} \abs{K'}
 \end{align*}
 and
 \begin{align*}
  \abs{K'} \xrightarrow{i} \norm{ S^{\delta+2\Lambda\epsilon}_\bullet(\abs{K},\pi) } \xrightarrow{\pi_*} \norm{ S^{\delta+2\Lambda\epsilon}_\bullet(X) } \xrightarrow{f}  \norm{ S^\delta_\bullet(X) } \hookrightarrow \norm{ S^\delta_\bullet(M\Gamma^{l,\alpha}) }
 \end{align*}
 yield the desired domination of $\norm{ S^\delta_\bullet(X) }$; from the previous control estimates we see that there is a $(4\delta+6\Lambda\epsilon)$--controlled homotopy between the composition of these two maps and the identity on $\norm{ S^\delta_\bullet(M\Gamma^{l,\alpha}) }$.
\end{proof}

\begin{prop}\label{prop:transfer-finite-domination}
  Suppose that $(X,d_X)$ admits a finite $\epsilon$--domination for every
  $\epsilon$.  Let $(Y,\kappa) \in \cR^G_f(W,\JJ(E))_{\alpha,d}$.  

  Then $\trans_X(Y,\kappa)$ is controlled finitely dominated, i.e., it defines
  an object in $\cR^G_{fd}(W,\JJ(X \times G, E))$.  We can choose the control
  estimate to be independent of the constants $\Lambda$ and $n$ from the metric.
\end{prop}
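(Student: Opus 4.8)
The plan is as follows. By \Cref{lem:transfer-control-objects} the pair $(\trans_X(Y),\trans_X(\kappa))$ already is an object of $\cR^G(W,\JJ(X \times G,E))$ which is $(\alpha+\delta_0+2)$--controlled over $X\times G$; in particular the $G$--continuous control condition and the support condition hold. Thus it remains only to exhibit, $G$--equivariantly and relative $W$, a finite $\JJ(X\times G,E)$--controlled retractive space $D$ together with a morphism $D \to \trans_X(Y)$ and a controlled map $\trans_X(Y) \to D$ whose composite is controlled homotopic to $\id_{\trans_X(Y)}$ by a homotopy whose control over $X\times G$ is bounded by a constant depending only on $d$ and $\alpha$.

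I would construct this domination by induction along the skeletal filtration $W=\skel{-1}{Y}\subseteq\skel{0}{Y}\subseteq\dots\subseteq\skel{d}{Y}=Y$, which terminates because $Y$ is $d$--dimensional with only finitely many $G$--orbits of cells. The case $p=-1$ is trivial since $\trans_X(W)=W$. For the step from $\skel{p}{Y}$ to $\skel{p+1}{Y}$ I would invoke \Cref{lem:attaching-cells-transfer}: attaching the free $G$--cells of dimension $p+1$ produces a pushout square in $\cR^G(W,\JJ(X\times G,E))$ whose upper right corner is $\trans_X(\skel{p}{Y})$, whose left vertical map is a cofibration, and whose lower, respectively upper, left corner is $\coprod_c\bigl(\norm{S^{\delta_{\abs c}}_\bullet(M\Gamma^{l_{\abs c},\alpha})}\times D^{\abs c}\bigr)\amalg W$, respectively the same with $\partial D^{\abs c}$ in place of $D^{\abs c}$, the coproduct running over the $(p+1)$--cells $c$ of $Y$.

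The key input is \Cref{lem:finite-domination-singular-complex}: each factor $\norm{S^{\delta_{\abs c}}_\bullet(M\Gamma^{l_{\abs c},\alpha})}$ is $(4\delta_{\abs c}+6\Lambda\epsilon)$--dominated over $X\times G$ by a finite simplicial complex. Since $X$ admits a finite $\epsilon$--domination for every $\epsilon$, I would choose $\epsilon$ so small that $6\Lambda\epsilon\le 1$; this makes the control of each such domination at most $4\delta_0+1$, a bound involving neither $\Lambda$ nor $n$. Crossing with $D^{\abs c}$ or $\partial D^{\abs c}$ leaves the control unchanged, as it is measured only in the $X\times G$--direction, and adjoining $W$ yields finite dominations of the two left-hand corners as controlled retractive spaces relative $W$; these can be arranged $G$--equivariantly because the $(p+1)$--cells come in $G$--orbits.

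Finally I would glue: combining the dominations of the two left-hand corners with the domination of $\trans_X(\skel{p}{Y})$ from the induction hypothesis — after rigidifying the arising homotopy--commutative squares by means of the cylinder functor of \cite[Lemma~3.14]{Ullmann-Winges(2015)}, exactly as in the proof of \Cref{prop:transfer-vary-constraints} and using \Cref{lem:controlled-excision} — one obtains a finite domination of the pushout $\trans_X(\skel{p+1}{Y})$, since $\cR^G_{fd}(W,\JJ(X\times G,E))$ is a Waldhausen subcategory closed under pushouts along cofibrations. The control of the resulting homotopy is bounded by an absolute multiple of the maximum of the controls of the three pieces and of the control $\alpha+\delta_0+2$ of $\trans_X(\skel{p+1}{Y})$; since the induction runs through only $d+1$ stages, the final bound is a function of $d$ and $\alpha$ alone. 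I expect the main obstacle to be precisely this last bookkeeping: making the cell-wise dominations mutually compatible and carrying the control estimates through the gluing without any hidden dependence on $\Lambda$ or $n$. Granting this, $\trans_X(Y,\kappa)=\trans_X(\skel{d}{Y},\kappa)$ is finitely dominated and defines an object of $\cR^G_{fd}(W,\JJ(X\times G,E))$ with control estimate independent of $\Lambda$ and $n$.
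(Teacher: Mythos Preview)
Your proposal is correct and follows essentially the same approach as the paper: induction on the skeleta of $Y$, using \Cref{lem:attaching-cells-transfer} for the pushout decomposition and \Cref{lem:finite-domination-singular-complex} for the domination of each $\norm{S^{\delta_{\abs c}}_\bullet(M\Gamma^{l_{\abs c},\alpha})}$, with $\epsilon$ chosen of order $1/\Lambda$ to kill the $\Lambda$-- and $n$--dependence. The paper is terser about the gluing step (it simply invokes that $Y$ is locally finite and that finite domination is preserved under such pushouts), whereas you spell out the bookkeeping more carefully; but there is no substantive difference in strategy.
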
 
\begin{proof}
 We prove the claim by induction on the dimension of $Y$. The case of a $(-1)$--dimensional object is trivial and provides the start of the induction.
 
 For the induction step, we use \Cref{lem:attaching-cells-transfer} to obtain
 a pushout square
\begin{equation*}
  \begin{tikzpicture}
   \matrix (m) [matrix of math nodes, column sep=3em, row sep=2em, text
   depth=.5em, text height=1em]
  {
  \coprod_{c \in \cells_{p+1}Y} \big( \norm{
    S^{\delta^d_{p+1}}_\bullet(M\Gamma^{l^d_{p+1},\alpha}) }
    \times \partial D^{\abs{c}} \big) \amalg W &
     \sing{\alpha,d}{X,Y} \rightthreetimes_{\cellsPlus Y} \skel{p}{\cC_Y} \\
  \coprod_{c \in \cells_{p+1}Y} \big( \norm{
    S^{\delta^d_{p+1}}_\bullet(M\Gamma^{l^d_{p+1},\alpha}) }
    \times D^{\abs{c}} \big) \amalg W &
     \sing{\alpha,d}{X,Y} \rightthreetimes_{\cellsPlus Y} \skel{p+1}{\cC_Y}
   \\};
   \path[->]
   (m-1-1) edge (m-1-2)
   (m-1-1) edge (m-2-1)
   (m-1-2) edge (m-2-2)
   (m-2-1) edge (m-2-2);
  \end{tikzpicture}
 \end{equation*}
 in $\cR^G(W,\JJ(X \times G,E))$.
%
 By induction hypothesis, the object at the top right corner of this square is
 finitely dominated. So we only need to find a controlled finite domination
 for $\norm{ S^{\delta_{p+1}}_\bullet(M\Gamma^{l_{p+1},\alpha}) }$, as $Y$
 itself is (locally) finite.  By
 \Cref{lem:finite-domination-singular-complex}, such a domination indeed
 exists.
 
 Note that the same bound works if we increase $n$, and we can choose
 $\epsilon$ to be $\frac{1}{\Lambda}$.  Then the estimate of the metric does
 not depend on $n$ and $\Lambda$, which finishes the proof.
\end{proof}

 
 Finally, we show that, after forgetting the labelling in $X$, the transfer does not alter the homotopy type of a given object.
 
\begin{prop}\label{prop:transfer-section}
  Let $P \colon \cR^G_{fd}(W,\JJ(X \times G, E)) \to \cR^G_{fd}(W,\JJ(E))$ denote the functor induced by the projection map $X \times G \to G$. Let $(Y,\kappa) \in \cR^G_f(W,\JJ(E))_{\alpha,d}$.
 
 Then there is an $\alpha$--controlled natural weak equivalence
 \begin{equation*}
  P(\trans_X(Y)) \xrightarrow{\sim} Y.
 \end{equation*}
\end{prop}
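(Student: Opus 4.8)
The plan is to construct the weak equivalence by exhibiting, on each cell of $Y$, a natural deformation that collapses the ``singular-complex'' fiber $\sing{\alpha,d}{X,Y}(c) = \norm{ S^{\delta_{\abs{c}}}_\bullet(M\Gamma^{l_{\abs{c}},\alpha}) }$ down to a point, compatibly over $(\cellsPlus Y)^{\mathrm{op}}$, and then to assemble these deformations via the balanced product. Concretely, the first step is to recall from \Cref{cor:MXdom} that $\norm{ S^\delta_\bullet(M\Gamma^{l,s}) }$ strong deformation retracts onto $\norm{ S^\delta_\bullet(X) }$ in a $\delta$--controlled fashion, and then to observe that $\norm{ S^\delta_\bullet(X) }$ is (non-equivariantly, hence here) contractible because $X$ is contractible; combining these, $\sing{\alpha,d}{X,Y}(c)$ is contractible. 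What is needed, however, is not just cellwise contractibility but a natural transformation $\sing{\alpha,d}{X,Y} \to \ast$ which, upon applying $(-)\rightthreetimes_{\cellsPlus Y}\cC_Y$, becomes a weak equivalence $\trans_X(Y) \to \ast \rightthreetimes_{\cellsPlus Y} \cC_Y \cong Y$ (this is exactly the retraction already used to give $\trans_X(Y)$ its retractive-space structure in \Cref{subsec:transfer-on-objects}). So the real content is to show that this canonical map $P(\trans_X(Y)) \to Y$ is a controlled homotopy equivalence with control bounded by $\alpha$.

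The second step is an induction over the dimension of $Y$, exactly parallel to the inductions in \Cref{prop:transfer-vary-constraints} and \Cref{prop:transfer-finite-domination}. The base case ($(-1)$--dimensional, i.e.\ $Y = W$) is trivial since the map is the identity on $W$. For the inductive step, apply \Cref{lem:attaching-cells-transfer} to the inclusion of the $p$--skeleton into the $(p+1)$--skeleton: $P$ carries the resulting pushout in $\cR^G(W,\JJ(X\times G,E))$ to a pushout in $\cR^G(W,\JJ(E))$, and the projection map $P(\trans_X(Y))\to Y$ is compatible with this pushout and the analogous (trivial) skeletal pushout for $Y$. On the top-right corner the map is a weak equivalence by the induction hypothesis. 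On the left-hand corners, the relevant map is $\norm{ S^{\delta_{\abs{c}}}_\bullet(M\Gamma^{l_{\abs{c}},\alpha}) } \times D^{\abs{c}} \to D^{\abs{c}}$ (and similarly with $\partial D^{\abs{c}}$); after pushing forward along $P$, the factor $\norm{ S^{\delta_{\abs{c}}}_\bullet(M\Gamma^{l_{\abs{c}},\alpha}) }$ becomes labelled only by $\kappa_G(c) \in G$, and the projection to $D^{\abs{c}}$ is the composition of $\norm{ S^{\delta_{\abs{c}}}_\bullet(M\Gamma^{l_{\abs{c}},\alpha}) }\times D^{\abs{c}} \to \norm{S^{\delta_{\abs{c}}}_\bullet(X)}\times D^{\abs{c}}$ (from \Cref{cor:MXdom}, a controlled homotopy equivalence) with the contraction $\norm{S^{\delta_{\abs{c}}}_\bullet(X)} \to \ast$, which is a homotopy equivalence by \Cref{lem:controlled-excision}(1) together with contractibility of $X$. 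The crucial point is to record that all the control that enters is control over the $G$--coordinate only, and there the labels of the source are all equal to $\kappa_G(c)$, so the homotopies introduced are $0$--controlled over $G$; together with the fact that $Y$ is $\alpha$--controlled over $G$ — so that passing from a cell to its closure only moves the label by at most $\alpha$ in the $G$--direction — the assembled homotopy equivalence on the bottom-right corner, produced by the gluing lemma, is $\alpha$--controlled over $G$. Finite-dimensionality of $Y$ terminates the induction.

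The last step is naturality: the deformation retraction in \Cref{cor:MXdom} and the contraction of $\norm{S^\delta_\bullet(X)}$ are defined functorially in the coarse structure and commute with the $G$--action and with the structure maps of the diagram $\sing{\alpha,d}{X,Y}$ over $(\cellsPlus Y)^{\mathrm{op}}$ (since they are induced by the strong deformation retraction $H$ of $M\Gamma$ onto $X$ from \eqref{eq:strong-deformation-rectraction-MGamma}, which is $G$--equivariant and compatible with the filtration), so the assembled map $P(\trans_X(Y)) \to Y$ is natural in $(Y,\kappa)$ for cellwise $0$--controlled morphisms. I expect the main obstacle to be bookkeeping rather than conceptual: one must check carefully that in each step of the skeletal induction the homotopies stay inside the allowed control condition $\fC_{Gcc}(E)$ on the $E$--coordinate (they do, since nothing moves in $E$ and $X$ is compact so the $X$--coordinate is automatically controlled) and that the accumulated control over $G$ does not exceed $\alpha$ — which works precisely because the only source of $G$--movement is passing between a cell and its closure in an $\alpha$--controlled complex, and the homotopies themselves are $0$--controlled over $G$.
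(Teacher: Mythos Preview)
Your proposal is correct and follows essentially the same route as the paper: the map is the one induced by the projection $M\Gamma \to *$ (equivalently $\sing{\alpha,d}{X,Y} \to *$), and the proof that it is a weak equivalence is a skeletal induction via \Cref{lem:attaching-cells-transfer}, \Cref{cor:MXdom}, \Cref{lem:controlled-excision} and the contractibility of $X$, with the $\alpha$--control over $G$ coming from the fact that after applying $P$ only the $G$--coordinate remains and $Y$ is $\alpha$--controlled there.

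One small slip worth flagging: the homotopy $H$ of \eqref{eq:strong-deformation-rectraction-MGamma} is \emph{not} $G$--equivariant (compare $H(g\cdot[\gamma,\dots],u)=[e,u,g\gamma,\dots]$ with $g\cdot H([\gamma,\dots],u)=[g,u,\gamma,\dots]$). This is harmless for your argument, however: naturality of the map $P(\trans_X(Y)) \to Y$ in $(Y,\kappa)$ follows immediately from functoriality of the balanced product applied to the trivially natural transformation $\sing{\alpha,d}{X,Y} \to *$, and no equivariance of $H$ is needed.
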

\begin{proof}
 The relevant map is induced by the projection map $M\Gamma \to *$. As in the
 proofs of \Cref{prop:transfer-exact} and
 \Cref{prop:transfer-finite-domination}, the claim follows by another
 induction along the skeleta of $Y$, using \Cref{lem:attaching-cells-transfer} and \Cref{lem:controlled-excision} together with the fact that the projection map $\norm{ S_\bullet(X) } \to *$ is a homotopy equivalence. Since the bounded control is only over $(G,d_G)$, it is not hard to check that the weak equivalence is $\alpha$--controlled.
\end{proof}


\subsection{The transfer map}
\label{subsec:transfer-map}

  We combine all of the results established so far to show
  \Cref{prop:main-diag-claims}~\ref{item:main-diag-claims-1}. 

Let $N \in \NN$. Suppose that we have chosen for every $n \in \NN$
\begin{enumerate}
 \item a compact, contractible metric space $(X_n,d_{X_n})$ such that for every $\epsilon > 0$ there is an $\epsilon$--controlled domination of $X_n$ by an at most $N$--dimensional, finite simplicial complex;
 \item a homotopy coherent $G$--action $\Gamma_n$ on $X_n$;
 \item a positive real number $\Lambda_n$.
\end{enumerate}
We equip $X_n \times G$ with the metric $d_{S^n, n, \Lambda_n} + d_G$. As in
\Cref{subsec:transfers}, we set
\begin{equation*}
    \delta_k := 4(d+1-k), \quad l_k := d+1-k.
\end{equation*}

\begin{prop}\label{prop:full-transfer}
 Let $\alpha, d \in \NN$. The assignment
 \begin{equation*}
  \begin{split}
   (Y,\kappa) &\mapsto \trans^{\alpha,d}(Y,\kappa) := \big( \trans^{\alpha,d}_{X_n}(Y,\kappa) \big)_{n > \max \{ d+1, \alpha \} } \\
   f &\mapsto \big( \trans^{\alpha,d}_{X_n}(f) \big)_{n > \max \{d+1, \alpha \} }
  \end{split}
 \end{equation*}
 defines an exact functor
 \begin{equation*}
   \trans^{\alpha,d} \colon (\cR^G_f(W,\JJ(E)),h)_{\alpha, d} \to (\cR^G_{fd}(W,\JJ((X_n
  \times G)_n,E)), h^{fin}).
\end{equation*}
\end{prop}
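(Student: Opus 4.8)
The plan is to assemble, factor by factor in $n$, the results proved in the preceding subsections; the only real content is that every control estimate obtained there is uniform in $n$ (and in the auxiliary constants $\Lambda_n$), so that the resulting sequences actually live in the category built from the coarse structure $\JJ((X_n\times G)_n,E)$.

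First I would verify that the assignment on objects is well-defined. Fix $(Y,\kappa)\in\cR^G_f(W,\JJ(E))_{\alpha,d}$. For every $n>\max\{d+1,\alpha\}$, \Cref{lem:transfer-control-objects} shows that $\trans^{\alpha,d}_{X_n}(Y,\kappa)$ is an object of $\cR^G(W,\JJ(X_n\times G,E))$ which is $(\alpha+\delta_0+2)$-controlled over $X_n\times G$, and the bound $\alpha+\delta_0+2=\alpha+4(d+1)+2$ does not depend on $n$. Choosing $\epsilon_n:=1/\Lambda_n$ in \Cref{lem:finite-domination-singular-complex}, \Cref{prop:transfer-finite-domination} shows moreover that each $\trans^{\alpha,d}_{X_n}(Y)$ is controlled finitely dominated with a domination estimate that is independent of $\Lambda_n$ and $n$, and with a dominating complex whose dimension is bounded in terms of $N$ and $d$ only. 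Hence the sequence $\big(\trans^{\alpha,d}_{X_n}(Y)\big)_n$ satisfies the uniform metric control condition --- for the identity map and for the finite domination --- and therefore defines an object of $\cR^G_{fd}(W,\JJ((X_n\times G)_n,E))$.

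Next I would deal with morphisms. A morphism $f$ in $\cR^G_f(W,\JJ(E))_{\alpha,d}$ is cellwise $0$-controlled over $G$, so by \Cref{lem:transfer-functor} each $\trans^{\alpha,d}_{X_n}(f)$ is cellwise $0$-controlled over $X_n\times G$, hence has bounded control $0$ over $X_n\times G$ --- uniformly in $n$; thus $\big(\trans^{\alpha,d}_{X_n}(f)\big)_n$ is a morphism of $\cR^G(W,\JJ((X_n\times G)_n,E))$. Functoriality is immediate from the levelwise functoriality in \Cref{lem:transfer-functor}, and exactness --- preservation of the zero object $W$, of cofibrations, and of admissible pushout squares --- follows from \Cref{lem:transfer-cofibrations} applied in each factor. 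For the weak equivalences, let $f$ be an $h$-equivalence in $\cR^G_f(W,\JJ(E))_{\alpha,d}$; since $f$ is then a controlled homotopy equivalence in $\cR^G_f(W,\JJ(E))$, I would choose $\alpha'>0$ so large that some controlled homotopy inverse of $f$ together with the required homotopies is $\alpha'$-controlled over $G$. For every $n>\max\{d+2,\alpha,\alpha'\}$, \Cref{prop:transfer-exact} then gives that $\trans^{\alpha,d}_{X_n}(f)$ is a controlled homotopy equivalence, i.e.\ an $h$-equivalence; only the finitely many indices $n$ with $\max\{d+1,\alpha\}<n\leq\max\{d+2,\alpha,\alpha'\}$ remain, so by \Cref{def:truncated-object} the morphism $\big(\trans^{\alpha,d}_{X_n}(f)\big)_n$ is an $h^{fin}$-equivalence, as required.

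The hard part here is not any individual step but the uniformity bookkeeping: the construction only lands in $\cR^G_{fd}(W,\JJ((X_n\times G)_n,E))$ because \Cref{lem:transfer-control-objects}, \Cref{lem:finite-domination-singular-complex} and \Cref{prop:transfer-finite-domination} were arranged to produce bounds independent of $n$ and of $\Lambda_n$; and it is precisely the relaxation from $h$- to $h^{fin}$-equivalences in the target that allows one to ignore the finitely many small $n$ for which \Cref{prop:transfer-exact} (which needs $n>d+2$ and $n>\alpha'$) does not apply.
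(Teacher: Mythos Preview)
Your argument follows the same route as the paper's and is essentially correct, but there is one point where you are slightly too quick. In the weak-equivalence step you conclude from \Cref{prop:transfer-exact} that, for each sufficiently large $n$, the map $\trans^{\alpha,d}_{X_n}(f)$ is a controlled homotopy equivalence, and then invoke \Cref{def:truncated-object}. However, an $h^{fin}$-equivalence is by definition a morphism which becomes an $h$-equivalence in $\cR^G(W,\JJ((X_n\times G)_n,E))$ after truncation, and $h$-equivalences there require that the homotopy inverse and the homotopies themselves satisfy the \emph{uniform} metric control condition across all $n>\nu$. Knowing that each component is individually a controlled homotopy equivalence is not, a priori, enough.

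The fix is exactly the kind of uniformity bookkeeping you emphasize elsewhere: inspecting the proof of \Cref{prop:transfer-exact} shows that the inverse $\trans_{\alpha,\alpha'}(\overline{f})$ is $(\max\{\alpha,\alpha'\}+\alpha'+\delta_0+4)$-controlled and that the homotopies (built from the transferred homotopy together with the deformation retractions of \Cref{cor:smallvaryconstraints}) carry control bounds depending only on $\alpha,\alpha',d$, not on $n$ or $\Lambda_n$. The paper handles this by remarking that the whole proof of \Cref{prop:transfer-exact} can be rerun for the sequence $(X_n)_n$, with the control estimates arising from the gluing lemma and the induction over the cells of $Y$ remaining uniform. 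You should add a sentence to this effect; once that is in place your proof is complete and matches the paper's.
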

\begin{proof}
 According to \Cref{lem:transfer-control-objects}, \Cref{lem:transfer-functor} and \Cref{lem:transfer-cofibrations} the assignment yields a functor of categories with cofibrations
 \[
   \trans^{\alpha,d} \colon \cR^G_f(W,\JJ(E))_{\alpha,d} \to \prod_{n\in \NN}\cR^G_{fd}(W,\JJ(X_n \times G,E)).
 \]
 
 We have to show that it factors over the subcategory $\cR^G_{fd}(W,\JJ((X_n \times G)_n,E))$. This is the case if all objects and morphisms in the image of $\trans^{\alpha,d}$ are uniformly boundedly controlled over $X_n \times G$. Essentially, we have to see that all of the necessary control estimates are independent of $n\in \NN$.

 For this, recall that the map
 \begin{equation*}
   \cR^G_{fd}(W,\JJ((X_n \times G)_n,E)) \to \prod_{n\in
   \NN}\cR^G_{fd}(W,\JJ(X_n \times G,E)) 
 \end{equation*}
 works as follows.  Essentially, an object in the source is a CW--complex
 relative $W$, where we have a partition of its cells into $\NN$--many sets and
 no boundary and no map is allowed to hit a cell which is in a different
 set.  Hence, we can write the object as the coproduct (over $W$), indexed
 by $\NN$, of
 CW--complexes relative $W$.  The collection of summands defines an element in
 the target.  If the transfer satisfies a uniform metric control
 condition, it factors over this map.  Hence we need to check that the
 previous results of this section give uniform bounds for all $n$.
 
 
 
 Since $\trans^{\alpha,d}_{X_n}(Y,\kappa)$ is
 $(\alpha+\delta_0+2)$--controlled over $X_n \times G$ for every $n$ by
 \Cref{lem:transfer-control-objects} and $\trans_{X_n}^{\alpha, d}(f)$ is
 cellwise $0$-controlled by \Cref{lem:transfer-functor}, all
 objects and morphisms are uniformly bounded, as desired.
 \Cref{prop:transfer-finite-domination} shows that each component
 $\trans^{\alpha,d}(Y,\kappa)$ is finitely dominated, but we need it
 uniformly.  For this, note that the proof of
 \Cref{prop:transfer-finite-domination} can actually be done with $(X_n)_n$
 replacing $X$.  Roughly, we would get an extra coproduct over $\NN$
 everywhere, and everything else would need to get an extra index, which is
 why \Cref{prop:transfer-finite-domination} is not stated that way.  However,
 the control estimations come from applications of the gluing lemma and an
 induction over the cells of $Y$.  But the gluing lemma preserves
 the property of everything being uniformly controlled, and we start the
 induction with uniform control arising from $f$ and the $\delta_k$, so we can
 do the same induction.

 \Cref{prop:transfer-exact} tells us that $\trans^{\alpha,d}_{X_n}$ sends
 $h$--equivalences to $h^{fin}$--equivalences since it applies for
 sufficiently large $n$.  Again, the proof can be done for $(X_n)_n$ instead
 of $X$, and the control estimates come from an induction over the cells of
 $Y$ and the gluing lemma, so they will be uniform.
\end{proof}

\begin{prop}\label{prop:full-transfer-compatibility}
 Let $\alpha, d \in \NN$ and $i_{\alpha,d} \colon \cR^G_f(W,\JJ(E))_{\alpha,d} \hookrightarrow \cR^G_f(W,\JJ(E))_{\alpha+1,d+1}$ be the obvious inclusion functor. Then there is a natural $h^{fin}$--equivalence
 \begin{equation*}
  \trans^{\alpha,d} \xrightarrow{\sim} \trans^{\alpha+1,d+1} \circ i_{\alpha,d}.
 \end{equation*}
\end{prop}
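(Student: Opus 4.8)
The plan is to take as the desired transformation the family of maps $\rho^{\alpha,\alpha+1,d,d+1}_Y$ constructed in \Cref{def:transfer-vary-constraints}, assembled over $n$, and to show that they fit together into a natural $h^{fin}$--equivalence. Write $\nu := \max\{d+2,\alpha+1\}$ and note $\nu - 1 = \max\{d+1,\alpha\}$, so that the sequence $\trans^{\alpha,d}(Y,\kappa)$ has $n$--th entry $\trans^{\alpha,d}_{X_n}(Y,\kappa)$ precisely for $n > \nu-1$ and the zero object $W$ for $n \leq \nu - 1$, while $\trans^{\alpha+1,d+1}\circ i_{\alpha,d}$ has $n$--th entry $\trans^{\alpha+1,d+1}_{X_n}(Y,\kappa)$ for $n > \nu$ and $W$ otherwise. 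For $(Y,\kappa) \in \cR^G_f(W,\JJ(E))_{\alpha,d}$ I would define a morphism $(\phi_n)_n$ of sequences by setting $\phi_n := \rho^{\alpha,\alpha+1,d,d+1}_Y$ evaluated on $X_n$ for $n > \nu$, letting $\phi_\nu$ be the structural retraction $\trans^{\alpha,d}_{X_\nu}(Y,\kappa) \to W$, and $\phi_n := \id_W$ for $n < \nu$. Since each $\rho$ is built from the inclusions of singular complexes $\norm{S^{\delta^{d}_{\abs{c}}}_\bullet(M\Gamma^{l^{d}_{\abs{c}},\alpha})} \subset \norm{S^{\delta^{d+1}_{\abs{c}}}_\bullet(M\Gamma^{l^{d+1}_{\abs{c}},\alpha+1})}$ balanced-producted with $\cC_Y$, it is regular, respects the retraction to $W$, and leaves the control label $((R\circ\sigma)(\beta_{\abs{\sigma}}),\kappa(c))$ of each cell unchanged; hence it is cellwise $0$--controlled over $X_n \times G$, so $(\phi_n)_n$ trivially satisfies the uniform metric control condition of \Cref{prop:full-transfer} and is a genuine morphism in $\cR^G_{fd}(W,\JJ((X_n\times G)_n,E))$.

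Next I would check naturality in $(Y,\kappa)$ with respect to the (cellwise $0$--controlled) morphisms of $\cR^G_f(W,\JJ(E))_{\alpha,d}$. On the non-trivial components this is immediate from the explicit mapping rules: $\trans^{\alpha,d}_{X_n}(f)$ sends $[c,x,y]$ to $[f(c),x,f(y)]$, and $\rho$ is the inclusion $[c,x,y]\mapsto[c,x,y]$ coming from the inclusion of singular complexes, so the two composites agree; on the component at $n=\nu$ naturality of the structural retraction is exactly the statement that $\trans^{\alpha,d}_{X_\nu}(f)$ respects retractions, and on the zero-object components it is trivial. Compatibility with the inclusion functor $i_{\alpha,d}$ needs nothing extra, since $\cR^G_f(W,\JJ(E))_{\alpha,d}$ is a cofibration-preserving subcategory of $\cR^G_f(W,\JJ(E))_{\alpha+1,d+1}$ and $\rho^{\alpha,\alpha+1,d,d+1}_Y$ has target $\trans^{\alpha+1,d+1}_{X_n}(Y,\kappa)$ by construction.

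Finally I would verify that $(\phi_n)_n$ is an $h^{fin}$--equivalence. Applying the truncation endofunctor $(-)_{n>\nu}$ from \Cref{def:truncated-object}, the truncated transformation has $n$--th component exactly $\rho^{\alpha,\alpha+1,d,d+1}_Y$ on $X_n$ for every $n > \nu$, and these are $h$--equivalences by \Cref{prop:transfer-vary-constraints} (applicable since $\alpha+1>\alpha$, $d+1>d$, and $n > \nu = \max\{d+2,\alpha+1\}$). Hence $(\phi_n)_{n>\nu}$ is an $h$--equivalence, so $(\phi_n)_n$ is an $h^{fin}$--equivalence by \Cref{def:truncated-object}, which finishes the proof. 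I do not expect a real obstacle here: the only genuine bookkeeping is the off-by-one shift between the index ranges $n>\max\{d+1,\alpha\}$ and $n>\max\{d+2,\alpha+1\}$, producing the single "retraction" component at $n=\nu$, and this is harmless precisely because $h^{fin}$--equivalences ignore finitely many factors; the substantive content is entirely carried by \Cref{prop:transfer-vary-constraints}.
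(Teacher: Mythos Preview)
Your approach is essentially the paper's: define the transformation via the sequence $(\rho^{\alpha,\alpha+1,d,d+1}_Y)_n$ and invoke \Cref{prop:transfer-vary-constraints}. Your bookkeeping around the index shift at $n=\nu$ is in fact more careful than the paper's terse account.

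There is one small but genuine omission. You argue that each component $\rho^{\alpha,\alpha+1,d,d+1}_Y$ is an $h$--equivalence and conclude that $(\phi_n)_{n>\nu}$ is an $h$--equivalence in $\cR^G_{fd}(W,\JJ((X_n\times G)_n,E))$. That inference is not automatic: being an $h$--equivalence in the sequence category requires the homotopy inverse and the homotopies themselves to satisfy the uniform metric control condition across all $n$, not merely each component separately. You checked uniform control only for the forward map (where it is indeed cellwise $0$--controlled). The paper closes this gap by observing that the control estimates produced in the proof of \Cref{prop:transfer-vary-constraints}---the $2\delta_{p+1}$--bounds coming from \Cref{cor:smallvaryconstraints} together with the gluing lemma---depend only on $d$ and not on $n$ or $\Lambda_n$, so the homotopy equivalences are uniformly boundedly controlled. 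Adding this remark completes your argument.
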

\begin{proof}
 There is a natural transformation $\trans^{\alpha,d} \to \trans^{\alpha+1,d+1} \circ i_{\alpha,d}$ given by the sequence $(\rho^{\alpha,\alpha+1,d,d+1})_{n > \max \{ d+2,\alpha+1 \} }$ from \Cref{def:transfer-vary-constraints}. These are homotopy equivalences by \Cref{prop:transfer-vary-constraints}, and the control estimates in the proof of \Cref{prop:transfer-vary-constraints} show that they are also uniformly boundedly controlled homotopy equivalences.
\end{proof}

To obtain the transfer map whose existence was claimed in \Cref{prop:main-diag-claims}, we proceed as follows. Let $k \in \NN$. Consider the inclusion $j_k \colon \cR^G_f(W,\JJ(E))_{k,k} \hookrightarrow \cR^G_f(W,\JJ(E))_{k+1,k+1}$. By \Cref{prop:full-transfer-compatibility}, there is a natural weak equivalence
\begin{equation*}
 \rho_k \colon \trans^{k,k} \xrightarrow{\sim} \trans^{k+1,k+1} \circ j_k.
\end{equation*}
Hence, we obtain an induced homotopy
\begin{equation*}
 K(\trans^{k,k}) \simeq K(\trans^{k+1,k+1}) \circ K(j_k). 
\end{equation*}
Thinking of $\hocolim_k K(\cR^G_f(W,\JJ(E))_{k,k},h)$ as the mapping telescope
of
\begin{equation*}
 K(\cR^G_f(W,\JJ(E))_{1,1},h) \xrightarrow{K(j_1)} K(\cR^G_f(W,\JJ(E))_{2,2},h) \xrightarrow{K(j_2)} \dots,
\end{equation*}
these homotopies serve to define a map
\begin{equation*}
 \trans \colon \hocolim_k K(\cR^G_f(W,\JJ(E))_{k,k},h) \to K(\cR^G_{fd}(W,\JJ((X_n \times G)_n,E)),h^{fin}).
\end{equation*}

\begin{prop}
 The map $\trans$ satisfies \Cref{prop:main-diag-claims}~\ref{item:main-diag-claims-1}.
\end{prop}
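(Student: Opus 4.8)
The plan is to assemble the results of this section. The telescope construction preceding the statement already produces a map of connective $K$--theory spectra
\[
 \trans\colon \hocolim_k K(\cR^G_f(W,\JJ(E))_{k,k},h)\longrightarrow K(\cR^G_{fd}(W,\JJ((X_n\times G)_n,E)),h^{fin}),
\]
using that each $\trans^{\alpha,d}$ is exact (\Cref{prop:full-transfer}) and that $\trans^{k,k}$ and $\trans^{k+1,k+1}\circ j_k$ are joined by a natural $h^{fin}$--equivalence (\Cref{prop:full-transfer-compatibility}). Since every construction entering the definition of $\trans^{\alpha,d}$ ignores a possible $\RR^m$--coordinate, exactly as in \Cref{subsec:proof-fj:negative-A-groups} and \cite[Section~9]{Ullmann-Winges(2015)}, the same construction carried out with $\JJ(E)$ replaced by its deloopings $\JJ(E)(m)$ upgrades $\trans$ to a map of spectra with target $\KK^{-\infty}(\cR^G_{fd}(W,\JJ((X_n\times G)_n,E)),h^{fin})$. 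This produces the dashed arrow after applying non-connective algebraic $K$--theory.

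Next I would identify the domain. The subposet $\{(k,k)\mid k\in\NN\}$ is cofinal in $\NN\times\NN$ with the componentwise order, so restricting the filtration along the diagonal induces a weak equivalence $\hocolim_k K(\cR^G_f(W,\JJ(E))_{k,k},h)\xrightarrow{\ \sim\ }\hocolim_{\alpha,d}K(\cR^G_f(W,\JJ(E))_{\alpha,d},h)$, and together with \Cref{prop:domain-of-transfer} for $M=G$ (whose non-connective analogue is obtained by running the same Approximation Theorem argument at each delooping level) this yields a weak equivalence
\[
 \hocolim_k \KK^{-\infty}(\cR^G_f(W,\JJ(E))_{k,k},h)\xrightarrow{\ \sim\ }\KK^{-\infty}(\cR^G_f(W,\JJ(E)),h).
\]
Precomposing the telescope map with a homotopy inverse of this equivalence gives $\trans$ with the claimed domain.

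Finally I would verify that the square commutes up to levelwise weak equivalence. On an object $(Y,\kappa)\in\cR^G_f(W,\JJ(E))_{\alpha,d}$ the composite $p_{X_n\times G\rightarrow G}\circ\trans^{\alpha,d}$ is the sequence $(P(\trans^{\alpha,d}_{X_n}(Y,\kappa)))_n$, while $\Delta\circ\incl$ is the constant sequence $((Y,\kappa))_n$. By \Cref{prop:transfer-section} there is a natural $\alpha$--controlled weak equivalence $P(\trans^{\alpha,d}_{X_n}(Y,\kappa))\xrightarrow{\sim}(Y,\kappa)$; since its control bound does not depend on $n$, these assemble to a natural transformation $p_{X_n\times G\rightarrow G}\circ\trans^{\alpha,d}\Rightarrow\Delta\circ\incl$ which is levelwise an $h$--equivalence, hence an $h^{fin}$--equivalence, of objects of $\cR^G_{fd}(W,\JJ((G)_n,E))$ (the source lying in that category by \Cref{prop:full-transfer} and functoriality of $p_{X_n\times G\rightarrow G}$). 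One then checks that this natural transformation is compatible, up to controlled homotopy, with the maps $\rho_k$ defining the telescope: both the section map of \Cref{prop:transfer-section} and the comparison maps $\rho^{\alpha,\alpha',d,d'}$ are built from the inclusions $\sing{\alpha,d}{X_n,Y}\subset\sing{\alpha',d'}{X_n,Y}$ and the projection $M\Gamma\to\ast$, and the latter factors through every such subcomplex. Applying $\KK^{-\infty}$ and passing to the homotopy colimit then yields the desired homotopy between $\KK^{-\infty}(p_{X_n\times G\rightarrow G})\circ\trans$ and $\KK^{-\infty}(\Delta)\circ\KK^{-\infty}(\incl)$. I expect the main obstacle to be exactly this last compatibility: \Cref{prop:transfer-section} only records naturality in $(Y,\kappa)$, so one must re-run its skeletal induction keeping track of the dependence on $(\alpha,d)$, as in the proof of \Cref{prop:transfer-vary-constraints}, in order to see that the equivalences $P(\trans^{\alpha,d}_{X_n}(Y))\xrightarrow{\sim}Y$ commute with $\rho^{\alpha,\alpha',d,d'}$ up to a controlled homotopy and therefore descend to the mapping telescope.
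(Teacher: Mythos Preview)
Your proof is correct and follows the same approach as the paper: identify the domain via \Cref{prop:domain-of-transfer} and establish the commutativity of the square via \Cref{prop:transfer-section}, together with the observation that the relevant control estimates are uniform in $n$. The paper's proof is extremely terse (two citations and the phrase ``noting again that the latter proof can be done uniformly''), whereas you unpack each step carefully; in particular, your cofinality argument for restricting to the diagonal $(k,k)$, your explicit treatment of the non-connective deloopings, and your analysis of why the section maps are uniformly controlled are all left implicit in the paper.

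Your closing concern about compatibility of the section map of \Cref{prop:transfer-section} with the comparison maps $\rho_k$ is a genuine technical point that the paper does not spell out. It is, however, harmless: both the section map and $\rho^{\alpha,\alpha',d,d'}$ are induced by natural transformations on the level of the $\cellsPlus Y$--diagrams $\sing{\alpha,d}{X,Y}$, and the projection $\sing{\alpha,d}{X,Y}\to\ast$ visibly factors through $\sing{\alpha',d'}{X,Y}\to\ast$, so the relevant triangles commute on the nose (not merely up to homotopy). No additional skeletal induction is needed here.
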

\begin{proof}
  That $\trans$ is the required map and that the diagram commutes up to
  homotopy is immediate from \Cref{prop:domain-of-transfer} and
 \Cref{prop:transfer-section}, noting again that the latter proof can be
 done uniformly.
\end{proof}


\typeout{--------------------------------   References -----------------------------------------------}

\end{document}